\numberwithin{equation}{section}
\newcommand{\Real}{\mathbb R}
\newcommand{\T}{\mathbb T}
\newcommand{\Integer}{\mathbb Z}
\newcommand{\I}{\mathbf I}
\newcommand{\norm}[1]{\left\lVert#1\right\rVert}
\newcommand{\abs}[1]{\left\vert#1\right\vert}
\newcommand{\set}[1]{\left\{#1\right\}}
\newcommand{\grad}{\nabla}
\newcommand{\G}{\mathcal{G}}
\newcommand{\Naturals}{\mathbb N}
\newcommand{\Torus}{\mathbb T}
\newcommand{\jap}[1]{\left\langle #1 \right\rangle} 
\newcommand{\eps}{\epsilon}
\newcommand{\avint}[1]{\mathchoice
{\mathop{\vrule width 6pt height 3 pt depth -2.5pt
\kern -8.8pt \intop}\nolimits_{#1}}%
{\mathop{\vrule width 5pt height 3 pt depth -2.6pt
\kern -6.5pt \intop}\nolimits_{#1}}%
{\mathop{\vrule width 5pt height 3 pt depth -2.6pt
\kern -6pt \intop}\nolimits_{#1}}%
{\mathop{\vrule width 5pt height 3 pt depth -2.6pt \kern -6pt
\intop}\nolimits_{#1}}}
\newtheorem{theorem}{Theorem}
\theoremstyle{definition}
\newtheorem{remark}{Remark}
\newtheorem{rmk}{Remark}
\theoremstyle{lemma}
\newtheorem{proposition}{Proposition}[section]
\newtheorem{corollary}{Corollary}
\theoremstyle{definition}
\theoremstyle{lemma}
\newtheorem{lemma}{Lemma}[section]
\begin{document}
\title{Inviscid damping and the asymptotic stability of planar shear flows in the 2D Euler equations}

\author{Jacob Bedrossian\footnote{\textit{jacob@cims.nyu.edu}, Courant Institute of Mathematical Sciences. Partially supported by NSF Postdoctoral Fellowship in Mathematical Sciences, DMS-1103765} \, and Nader Masmoudi\footnote{\textit{masmoudi@cims.nyu.edu}, Courant Institute of Mathematical Sciences. Partially supported by NSF  grant  
DMS-1211806 }}

\date{\today}
\maketitle

\section*{Abstract} 

We prove  asymptotic stability of shear flows close to the
 planar Couette flow in the 2D inviscid Euler equations on $\Torus \times \Real$.  
That is, given an initial perturbation of the  Couette flow small in a suitable regularity class, 
 specifically Gevrey space of class smaller than 2, 
 the velocity converges strongly in $L^2$ to a shear flow which is also close to the Couette flow. 
The vorticity is asymptotically driven to small scales by a linear evolution and weakly converges as $t \rightarrow \pm\infty$.
The strong convergence of the velocity field is sometimes referred to as \emph{inviscid damping}, due to the relationship with Landau damping in the Vlasov equations.  
This convergence was formally derived at the linear level  by Kelvin in 1887 
and it  occurs at  
 an   algebraic rate  first     computed  
 by Orr in 1907; our work appears to be the first rigorous 
confirmation of this behavior on the nonlinear level.

\bigskip

\setcounter{tocdepth}{1}
{\small\tableofcontents}

\newcommand{\nm}[1]{{\marginnote{n:#1}}}
\newcommand{\jb}[1]{{\marginnote{j:#1}}}
 
\section{Introduction} \label{sec:Intro}

We consider the 2D Euler system in the vorticity formulation with a background shear flow: 
\begin{equation} \label{def:2DEuler}
\left\{
\begin{array}{l}
  \omega_t + y\partial_x\omega + U \cdot \grad \omega = 0, \\ 
  U  = \grad^{\perp}(\Delta)^{-1} \omega,  \quad \quad \omega(t=0) =\omega_{in}. 
\end{array}
\right. 
\end{equation}
Here,  $(x,y) \in \mathbb T \times \Real$, $\grad^\perp = (-\partial_y,\partial_x)$ and $(U,\omega)$ are periodic in the $x$ variable with period normalized to $2\pi$.   
The physical velocity is $(y,0) + U$ where $U = (U^x,U^y)$ denotes the velocity perturbation and the total vorticity is $-1 + \omega$. 
We denote the streamfunction by $\psi = \Delta^{-1}\omega$. 
The velocity itself satisfies the momentum equation
\begin{equation} \label{def:2DEulerMomentum}
\left\{
\begin{array}{l}
  U_t + y\partial_x U + (U^y,0) + U \cdot \grad U = -\grad P, \\ 
  \grad \cdot U = 0, 
\end{array}
\right. 
\end{equation}
where $P$ denotes the pressure. 
Linearizing the vorticity equation \eqref{def:2DEuler} yields the linear evolution 
\begin{equation} \label{def:LinearizedEuler}
\left\{
\begin{array}{l}
  \omega_t + y\partial_x\omega = 0, \\ 
  U  = \grad^{\perp}(\Delta)^{-1} \omega,  \quad \quad \omega(t=0) =\omega_{in}. 
\end{array}
\right. 
\end{equation}
In this work, we are interested in the long time behavior of \eqref{def:2DEuler} for small initial perturbations 
$ \omega_{in} $. 
In particular, we show that all sufficiently small perturbations in a suitable regularity class undergo `inviscid damping' and satisfy $(y,0) + U(t,x,y) \rightarrow  (y + u_\infty(y),0)$ as $t \rightarrow \infty$ for some $u_\infty(y)$ determined by the evolution.

The field of hydrodynamic stability
started in the  nineteenth century
with Stokes, Helmholtz, Reynolds, Rayleigh, Kelvin, Orr, Sommerfeld and many others. 
Rayleigh \cite{Rayleigh80} studied the linear stability and instability of planar inviscid shear flows
using what is now referred to as the \emph{normal mode method}.
Such a method yields {\em spectral instability} or {\em spectral  stability} depending on whether or not 
  an unstable  eigenvalue exists.
In that  work, Rayleigh proves the famous inflection point theorem which gives a necessary condition for spectral instability. 
At around the same time, Kelvin \cite{Kelvin87} constructed exact solutions to the
  linearized problem  around the Couette flow (which are actually solutions of the nonlinear problem).
This was the first attempt to solve the initial value problem of the linearized problem 
which was later developed further in \cite{Orr07,Case60,MP77}. 

Even to the present day, the  methods used and the  conclusions
   of these works are debated both on   physical and  mathematical grounds 
\cite{Yaglom12}. Experimental realizations of Couette and similar spectrally stable flows show instability and transition to turbulence for sufficiently high Reynolds numbers \cite{Reynolds83,Orszag80,lundbladh91,Tillmark92,bottin98}.
However, experiments are ultimately inconclusive (mathematically) since many factors are notoriously difficult to control, such as imperfections in the walls and viscous boundary layers. 
The paradox that Couette flow is known to be spectrally stable for all Reynolds numbers in contradiction with instabilities observed in experiments is now often referred to as the `Sommerfeld paradox', or `turbulence paradox'.
Of course, from a mathematical point of view, the notion 
of linear stability was not completely precise in the early works:  was it enough that the
linear operator has no growing mode ({\em spectral stability}) or should one consider
general initial perturbation and study the time evolution under the linear equation using,
for instance, a Laplace transform in time (see  \cite{Case60,MP77,Briggs70}).
These early works also pre-dated the notion of Lyapunov stability and Sobolev spaces; indeed, the stability of \eqref{def:LinearizedEuler} depends heavily on the norm chosen. 
A more variational approach, which is based on the conserved quantities and uses the notion of 
Energy-Casimir, was introduced by  Arnold  \cite{AK98}, and yields Lyapunov stability for a class of shear flows (which does not include Couette flow).  
 We also refer to \cite{GR01,LMR12} for the use of the variational approach in the 
Vlasov case.
Recently there were many mathematical studies  of stability and 
instability of various flows  (see for instance   \cite{FSV97,BGS02,Grenier00,Lin04}). 
We also refer to the following textbooks on the topic of hydrodynamic 
  stability and  instability   \cite{Lin55,DR81,Yaglom12}. 

There were many attempts in the literature to  find an explanation to
the  Sommerfeld paradox  (see \cite{LiLin11} and the references therein). 
The  first attempt   might be  due to  Orr  \cite{Orr07} in 1907, whose work plays a central role in ours. 
Orr's observation can be summarized in modern terminology (and adapted to our infinite-in-$y$ setting) as follows.  
Given a disturbance in the vorticity, the linear evolution under \eqref{def:LinearizedEuler} is simply advected by the background shear flow: $\omega(t,x,y) = \omega_{in}(x-ty,y)$. 
If one changes coordinates to $z = x-ty$ then the stream-function $\phi(t,z,y)$ in these variables solves $\partial_{zz}\phi +
  (\partial_y - t\partial_z)^2\phi = \omega_{in}$. 
On the Fourier side, $(z,y) \to (k,\eta) \in \Integer \times \Real$,
\begin{align} \label{orr-cri} 
\hat{\phi}(t,k,\eta) = -\frac{\hat{\omega}_{in}(k,\eta)}{k^2 + \abs{\eta - kt}^2}. 
\end{align}  
From \eqref{orr-cri}, Orr made two important observations, together known now as the \emph{Orr mechanism}.  
Firstly, if  $\eta, \,  k > 0 $  and $\eta$ is very large relative to $k$, 
 then the stream-function amplifies by a  factor $\frac{\eta^2}{k^2}$  at a {\em critical time}  given by 
$t_c = \frac{\eta}k$. These modes correspond to waves tilted against the shear which are being advected to larger length-scales (lower frequencies). 
Orr suggested that this  transient growth is 
 a possible explanation for the observed \emph{practical instability} 
or at least as a reason to question the validity of the linear approximation. 
Moreover, this shows that the  Couette flow is linearly unstable 
(in the sense of Lyapunov) in the kinetic energy norm.
On the other hand, Orr states in \cite[ART. 12]{Orr07} that ``the motion 
is stable, for the most general disturbance, if sufficiently small''.  
Orr does not precise the meaning of {\em sufficiently small} but concludes in this case
  that ``the $y$ velocity-component eventually diminishes indefinitely as 
$t^{-2}$,  and, 
the $x$ component of the relative velocity as $t^{-1}$''.
In fact, on the linear level, it is not about smallness but about regularity. Indeed, rigorous proof of the stability and decay on the linear level requires the use of
a stronger norm on the initial data than on the evolution, as already noticed in
Case \cite{Case60} and Marcus and Press \cite{MP77} where this linear stability and decay 
are proved.

Physically, the decay predicted by the Orr mechanism can be understood as the transfer of enstrophy to small scales (which yields the decay of the velocity
 by the Biot-Savart law) 
and the transient growth can be understood as the time-reversed phenomenon: the transfer of enstrophy from  small scales to large scales 
 and hence the growth of the velocity
 (see also \cite{Boyd83,Lindzen88} for further discussion). 
The transfer  to small scales by mixing is now considered a fundamental mechanism intimately connected with the stability of coherent structures and the theory of 2D turbulence \cite{Kraichnan67,Gilbert88}.
However, to our knowledge, our work is the first mathematically rigorous study of this mechanism in the full 2D Euler equations.  We refer to 
\cite{Yaglom12,Shnirelman12,KS12,GSV13} 
  for the most recent developments.  
Mathematically, one can also explain the transient growth
by the non-normality of the linearized operator (also an insight first due to Orr). 
See for example \cite{TTRD93}, where the implications of this are studied in terms of the spectra and pseudospectra of 
the linearized Couette and Poiseuille flows. 
Indeed, the fact that for non-normal operators the $\eps-$pseudospectrum can be very different from the spectrum 
can be seen as another explanation of the  transient  linear growth \cite{RSH93}. 
See also \cite{SH01} for further information.    

In 1946, Landau \cite{Landau46} predicted rapid decay of the electric field in hot plasmas perturbed from homogeneous equilibrium by solving the linearized Vlasov equation with a Laplace transform. 
Now referred to as \emph{Landau damping}, this somewhat controversial prediction of collisionless relaxation in a time-reversible physical model was confirmed by experiments much later in \cite{MalmbergWharton64} and is now a well-accepted, ubiquitous phenomenon in plasma physics \cite{Ryutov99}.  
In \cite{VKampen55}, van Kampen showed that one way to interpret this mechanism was through the transfer of information
to small scales in velocity space; a scenario completely consistent with time-reversibility and conservation of entropy. 
In this scenario, the  free-streaming of 
 particles creates   rapid oscillations of the distribution function which are averaged away by the non-local Coulomb interactions 
(see also \cite{Case59,Degond86}). 
The fundamental stabilizing mechanism in this picture is the \emph{phase-mixing} due to  particle streaming.
The gap between the linear and nonlinear theory of Landau damping was only bridged recently by the ground-breaking work of Mouhot and Villani, who showed that the phase-mixing indeed persists in the nonlinear Vlasov equations for small perturbations \cite{MouhotVillani11} (see also \cite{CagliotiMaffei98,HwangVelazquez09}). 

The algebraic decay of the velocity field for solutions to \eqref{def:LinearizedEuler} predicted by Orr can be most readily understood as a consequence of \emph{vorticity mixing} driven by the shear flow, and hence can be considered as 
 a hydrodynamic analogue of Landau damping, a viewpoint furthered by many authors \cite{BouchetMorita10,SchecterEtAl00,Briggs70,BM95}.
Hence the origin of the term \emph{inviscid damping}. 
The first, and most fundamental, difference between \eqref{def:LinearizedEuler} and the linearized Vlasov equations is the fact that the velocity field induced by mean-zero solutions to \eqref{def:LinearizedEuler} in general does not converge back to the Couette flow, but in fact converges to a different nearby shear flow, whereas the electric field in the linearized Vlasov equations converges to zero. 
This `quasi-linearity' will be a major difficulty in studying inviscid damping on the nonlinear level. 
Another key difference is that unlike in the Vlasov equations, the decay of the velocity field in \eqref{def:LinearizedEuler} cannot generally be better than the algebraic rate predicted by Orr, which is not even integrable for  the $x$ component of the 
velocity; to contrast, 
in the Vlasov equations the decay is exponential for analytic perturbations.

It is well-known that the nonlinearity can change the picture dramatically.
A clear example of this are the results of Lin and Zeng
\cite{LinZeng11}  who prove  that there exists
non-trivial periodic solutions to the vorticity equation \eqref{def:2DEuler} which are arbitrarily close to the Couette flow in $H^s$ for $s < 3/2$. They have also proved the corresponding, and related, result for the Vlasov equations  \cite{LZ11b}. 
In our setting, the primary interest is to rule out the possibility that weakly nonlinear effects create a self-sustaining process and  push 
the solution out of  the linear regime. 
The idea that the interaction between nonlinear effects and non-normal transient growth can lead to instabilities is classical in fluid mechanics (see e.g. \cite{TTRD93}).
The basic mechanism suggested in \cite{TTRD93} is that nonlinear effects can repeatedly excite growing modes 
and precipitate a sustained cascade or so-called `nonlinear bootstrap', studied further in the fluid mechanics context in, for example, \cite{BaggettEtAl,VMW98,Vanneste02}. 
Actually, this effect is very similar to what is at work behind \emph{plasma echos} in the Vlasov equations, first captured experimentally in \cite{MalmbergWharton68}. 
This phenomenon is referred to as an `echo' because the measurable result of nonlinear effects can occur long after the event.   
Very similar echos have been studied and observed in 2D Euler, both numerically \cite{VMW98,Vanneste02} and experimentally \cite{YuDriscoll02,YuDriscollONeil} (interestingly, non-neutral plasmas in certain settings make excellent realizations of 2D Euler). 

The plasma echos play a pivotal role in the work of Mouhot and Villani on Landau damping \cite{MouhotVillani11}. 
Although our approach to this challenge is quite different, 
one of the main difficulties we face is to precisely understand the weakly nonlinear effects at work; sometimes called {\em nonlinear transient growth} \cite{VMW98}. 
We will need a more precise alternative to the moment estimates of  
\cite{MouhotVillani11} which is tailored to the specific structure of 2D Euler; 
what we call the ``toy model'' 
(see \S\ref{sec:Conclusion} for a detailed discussion about the relationship of our work to \cite{MouhotVillani11}). 
The toy model, formally derived in \S\ref{sec:Toy}, provides mode-by-mode upper bounds on the `worst possible' growth of high frequencies that the weakly nonlinear effects can produce. 
The model is not just a heuristic and in fact plays a key role in our work: it is used in the construction of a norm specially designed to match the evolution of \eqref{def:2DEuler}; this norm is the subject of \S\ref{sec:Growth}. 
We remark that our model has not appeared in the literature before to our knowledge, however related models have been studied in \cite{VMW98,Vanneste02}. 

 The mixing phenomenon behind the inviscid damping also appears in many other fluid models, 
for example, more general shear profiles \cite{BMSEI95,BouchetMorita10}, stratified shear flows \cite{Majda03,CV13}
and 2D Euler with the $\beta$-plane approximation to the Coriolis force \cite{Boyd83,Tung83}. 
A particularly fundamental setting is the `axisymmetrization' of vortices in 2D Euler 
which has important implications for the meta-stability of coherent vortex structures in atmosphere and ocean dynamics 
(see e.g. \cite{Gilbert88,GilbertBassom98,SchecterEtAl00,YuDriscoll02,YuDriscollONeil} for a small piece of the extensive literature).  
Actually, this stability problem was mentioned by 
 Rayleigh \cite{Rayleigh80} and was considered by Orr as well \cite{Orr07}.  
Interestingly, it is also relevant to the stability of charged particle beams in cyclotrons \cite{CerfonEtAl13}. 

In general, phase-mixing, or `continuum damping', can be directly associated with the continuous spectrum of the linearized operator
and is a phenomenon shared by a number of infinite-dimensional Hamiltonian systems, for example the damping of MHD waves \cite{TataronisGrossman73}, the Caldeira-Legget model from quantum mechanics \cite{HagstromMorrison} and synchronization models in biology \cite{StrogatzEtAl92}.
See the series of works \cite{BM95,BMSEI95,Morrison98,Morrison00,BM02} which draws a connection between the van Kampen generalized eigenfunctions and the normal form transform
to write the linearized 2D Euler and Vlasov-Poisson equations as a continuum of decoupled harmonic oscillators.  
See also \cite{BMT13} and the references therein for a recent survey which contains other examples and discusses some connections between these various models. 
 
Phase mixing also shares 
 certain similarities with scattering in the theory of dispersive wave equations
(see for instance \cite{lax90,GMS12,LR05}) as already pointed out in \cite{Degond86,CagliotiMaffei98}.
In both cases the long time behavior is governed by a linear operator, or a modified version of it due to long range interactions \cite{GV00,Nakanishi02} (something like this occurs in our Theorem \ref{thm:Main}). 
Unlike dissipative equations, the final linear evolution is usually chosen by the entire nonlinear dynamics and cannot be completely characterized by the relevant conservation laws. 
Also in both cases, the phenomena can be related to the continuous spectrum in the linear problem; for example, the RAGE theorem applies equally well to transport equations as to dispersive equations \cite{ConstantinEtAl08}. 
However, there are also clear differences since in dispersive wave equations, 
the dispersion uses the fact that different wave packets travel with different group velocities to yield decay of the $L^\infty$ norm  and hence nonlinear terms often become weaker.   
Normally, this decay costs \emph{spatial localization} rather than \emph{regularity}.
In the inviscid damping (and Landau damping), the decay is due to the combination of the 
 mixing which sends the information into high frequencies and the application of the inverse Laplacian (or any operator of negative order), which averages out the small scales.
That is, dispersion transfers information to infinity in space whereas mixing 
transfers information to infinity in frequency.

\subsection{Statement and discussion}

In this section we state our nonlinear stability result and a few immediate corollaries. 
The key aspects of the proof are discussed after the statement.
 
The data will be chosen in a Gevrey space of class $1/s$ for $s> 1/2$ 
 \cite{Gevrey18}; the origin of this restriction is (mathematically) natural and arises from the weakly nonlinear effects, discussed further in \S\ref{sec:Growth}.
We note that the analogous space for the Vlasov equations with Coloumb/Newton interaction is Gevery-3 (e.g. $s = 1/3$) \cite{MouhotVillani11}. 
It is worth noting that unlike, for example, \cite{GM13} where 
the Gevrey regularity is required due to the linear growth of high frequencies, 
here (and \cite{MouhotVillani11}) the Gevrey regularity is required because of a potential \emph{nonlinear} frequency cascade. 

 Our main result is
\begin{theorem} \label{thm:Main} 
For all $1/2 < s \leq 1$, $\lambda_0 > \lambda^\prime > 0$ there exists an $\epsilon_0  = \epsilon_0(\lambda_0,\lambda^\prime,s) \leq 1/2$ such that for all $\epsilon  \leq  \epsilon_0   $  
 if $\omega_{in}$ satisfies 
  $\int \omega_{in} dx dy= 0$, $\int \abs{y\omega_{in}(x,y)} dx dy < \epsilon$ and   
\begin{align*}  
\norm{\omega_{in}}^2_{\G^{\lambda_0}} = \sum_k\int \abs{\hat{\omega}_{in}(k,\eta)}^2 e^{2\lambda_0 \abs{k,\eta}^{s}} d\eta \leq  \epsilon^2,
\end{align*}  
then there exists $f_\infty$ with $\int f_\infty dxdy = 0$ 
 and $\norm{f_\infty}_{\G^{\lambda^\prime}} \lesssim \epsilon$ such that 
\begin{equation} \label{main-omega} 
\norm{\omega(t,x + ty + \Phi(t,y),y) - f_\infty(x,y) }_{\G^{\lambda^\prime}} \lesssim \frac{\epsilon^2}{\jap{t}}, 
\end{equation} 
where $\Phi(t,y)$ is given explicitly by 
\begin{align} 
\Phi(t,y) = \frac{1}{2\pi}\int_0^t\int_\T U^x(\tau,x,y) dx d\tau = u_\infty(y)t + O(\epsilon),    \label{def:phi}
\end{align} 
with $u_\infty = \partial_y \partial_{yy}^{-1}\frac{1}{2\pi}\int_\T f_\infty(x,y) dx$.
Moreover, the velocity field $U$ satisfies
\begin{subequations} \label{ineq:damping}
\begin{align} 
\norm{\frac{1}{2\pi}\int U^x(t,x,\cdot) dx - u_\infty}_{\G^{\lambda^{\prime}}} &\lesssim \frac{\epsilon^2}{\jap{t}^2}, \label{ineq:xdamping_slow} \\ 
\norm{U^x(t) - \frac{1}{2\pi}\int U^x(t,x,\cdot) dx}_{L^2} &\lesssim \frac{\epsilon}{\jap{t}}, \label{ineq:xdamping} \\ 
\norm{U^y(t)}_{L^2} & \lesssim \frac{\epsilon}{\jap{t}^2}. \label{ineq:ydamping}
\end{align}
\end{subequations}
\end{theorem}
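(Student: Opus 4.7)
The plan is to adapt the Mouhot--Villani strategy for Landau damping to the quasilinear, polynomial-decay structure of 2D Euler near Couette. First I would pass to coordinates adapted to the perturbed shear: set $z = x - ty - \Phi(t,y)$, $v = y$, and $f(t,z,v)=\omega(t,z+tv+\Phi,v)$, with $\Phi$ defined by \eqref{def:phi} so that the nonlinear mean-shear correction is absorbed into the frame. In these coordinates the linear transport $y\partial_x$ disappears, the streamfunction Fourier multiplier becomes the Orr denominator $-(k^2+(\eta-kt)^2)^{-1}$, and the nonlinearity splits into (i) a \emph{reaction} term, in which a Biot--Savart-filtered copy of $f$ acts on $\nabla_{z,v} f$ and can drive mass into high $(k,\eta)$; (ii) a \emph{transport} term, in which the velocity does not differentiate $f$; and (iii) coordinate-change corrections from $\partial_t\Phi$ and the $v$-dependence of $\Phi$.

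Second, I would construct a time-dependent Fourier multiplier of the schematic form $A(t,k,\eta) = e^{\lambda(t)\abs{k,\eta}^s}\jap{k,\eta}^\sigma M(t,k,\eta)$, where $\lambda(t)$ decreases slightly from $\lambda_0$ to $\lambda'$, so that differentiating $\lambda$ in time produces a large negative dissipation-like term available to absorb nonlinear losses, and $M$ is the mode-dependent weight suggested by the ``toy model'' of \S\ref{sec:Growth}. The key property of $M$ is that it grows in $t$ only near each critical time $\eta/k$, by exactly the factor predicted by the toy model for the worst-case nonlinear echo amplification; this growth is matched by a compensating gain on the \emph{lower} frequencies that appear in the reaction term, so that after commuting $A$ through the nonlinearity the reaction imbalance cancels mode by mode. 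This is precisely where $s>1/2$ is forced: the toy-model amplifications telescope along an echo cascade to a finite constant only when the Gevrey index $1/s$ is strictly less than $2$.

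Third, I would run a bootstrap on $[0,T]$ with hypotheses $\norm{A f(t)}_{L^2}\lesssim\epsilon$, a stronger Gevrey bound on the zero $z$-mode $\langle f\rangle_z$ (needed to close the equation for $u_\infty$), and Gevrey control of $\partial_t\Phi$ with $\jap{t}^{-2}$ decay. Commuting $A$ through the equation and paraproduct-decomposing the trilinear expressions into low--high, high--low, and resonant pieces, the transport term is handled as a commutator that gains regularity from the drop in $\lambda(t)$, the reaction term is split into a long-range part controlled by algebraic decay of the Orr denominator and a short-range (near-critical) part controlled by the designed cancellation in $M$, and the coordinate-change terms are handled via the bootstrap bound on $\partial_t\Phi$. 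The main obstacle is the reaction term: the multiplier $M$ must be designed so that the echo cascade described by the toy model closes as a geometric sum rather than blowing up at every resonant encounter; getting this matching right, in particular verifying that $M$ is an admissible symbol for the paraproduct calculus while still encoding the sharp echo prediction, is the technical heart of the proof.

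Finally, the uniform bound yields the conclusions of the theorem. Writing $\partial_t f$ as a bilinear form in $f$ with the Orr prefactor gives $\norm{\partial_t f}_{\G^{\lambda'}}\lesssim \epsilon^2/\jap{t}$, so $f(t)$ is Cauchy in $\G^{\lambda'}$ and converges to some $f_\infty$ at rate $\epsilon^2/\jap{t}$, proving \eqref{main-omega}. Taking $z$-averages recovers $u_\infty$ from $\langle f_\infty\rangle_z$ via Biot--Savart, and substituting into \eqref{def:phi} gives $\Phi(t,y)=u_\infty(y)t+O(\epsilon)$. The velocity estimates \eqref{ineq:xdamping_slow}--\eqref{ineq:ydamping} all follow from the moving-frame Biot--Savart identity $\hat\psi(t,k,\eta)=-\hat f(t,k,\eta)/(k^2+(\eta-kt)^2)$: for $k\neq 0$ the Gevrey regularity of $f$ in $\eta$ forces its mass out of the near-critical strip $|\eta-kt|\lesssim|k|$, producing the claimed $\jap{t}^{-1}$ rate for $U^x$-fluctuations and the $\jap{t}^{-2}$ rate for $U^y$ (the latter having one extra factor of $k$ from the $\partial_z$ in $U^y = \partial_x\psi$), while the $\jap{t}^{-2}$ rate for $\langle U^x\rangle_x - u_\infty$ follows from the observation that the evolution equation for $\langle\omega\rangle_x$ is already quadratic in the non-$x$-zero modes, each of which provides a $\jap{t}^{-1}$ factor via the Orr denominator.
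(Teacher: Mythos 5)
Your high-level architecture (solution-adapted frame, time-decreasing Gevrey index, toy-model weight encoding the echo cascade, paraproduct split into transport/reaction/remainder, bootstrap) matches the paper. But there is a genuine gap in the coordinate change, and it propagates through the rest of the argument. You set $z = x - ty - \Phi(t,y)$, $v = y$, and claim the streamfunction multiplier becomes the Orr denominator $-(k^2 + |\eta - kt|^2)^{-1}$. With $v = y$ this is false: one gets $\partial_y = \partial_v - (t + \Phi_y(t,v))\partial_z$, so the Laplacian is $\partial_z^2 + (\partial_v - (t+\Phi_y)\partial_z)^2$ with a $v$-dependent shift, and the critical times become spatially varying ($t + \Phi_y = \eta/k$), which destroys the mode-by-mode resonance analysis on which the weight $w_k(t,\eta)$ is built. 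The paper instead reparametrizes the \emph{vertical} variable, $v(t,y) = y + \frac1t\int_0^t\langle U^x\rangle(s,y)\,ds$, precisely so that the sheared derivative is exactly $\partial_v - t\partial_z$ in the new frame; the price is that Biot--Savart becomes the variable-coefficient operator $\Delta_t = \partial_{zz} + (v')^2(\partial_v - t\partial_z)^2 + v''(\partial_v - t\partial_z)$, not a Fourier multiplier. Inverting $\Delta_t$ — whose coefficients have only the regularity of time averages of $f_0$ and effectively contain a derivative of $f$ — is a major component of the proof (the lossy and precision elliptic estimates of \S\ref{sec:Elliptic}, and the coordinate-system controls of Proposition \ref{prop:CoefControl_Real} needed to close the resulting CCK terms). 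Your proposal uses the exact linear identity $\hat\psi = -\hat f/(k^2 + |\eta-kt|^2)$ throughout, including in the final velocity estimates, so this entire quasilinear layer is missing.

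A second, smaller gap: your explanation of the $\jap{t}^{-2}$ rate in \eqref{ineq:xdamping_slow} (``quadratic in non-zero modes, each giving $\jap{t}^{-1}$'') only yields an integrand of size $t^{-2}$ and hence a $t^{-1}$ convergence rate. The naive accounting is even worse, since $\langle U^y\partial_yU^xangle = -\langle U^y\omega\rangle$ and $\omega$ does not decay. The $t^{-2}$ rate relies on the cancellation of Remark \ref{rmk:FormTheta} and \S\ref{apx:LogCorrection}: after $x$-averaging, the leading $t^2\psi_{xx}$ part of $\psi_{yy}$ cancels, leaving an integrand of size $t^{-3}$; equivalently, one must write the momentum equation in the sheared variables as \eqref{def:Umomen}, where $\nabla_{z,y}\tilde U$ gains a factor $t^{-1}$.
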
 

\begin{remark} 
Of course, by time-reversibility, Theorem \ref{thm:Main} is also true $t \rightarrow -\infty$ for some $f_{-\infty}$ and $u_{-\infty}$ (which will generally not be equal to their $+\infty$ counterparts).
Also,  due to the Hamiltonian structure of \eqref{def:2DEuler}  (see e.g. \cite{AK98,Morrison98}),  one could only hope to prove asymptotic stability in a norm weaker 
than the norm in which the initial data is given. 
This is an important theme underlying our work, and the works of \cite{CagliotiMaffei98,HwangVelazquez09,MouhotVillani11}, which is that \emph{decay costs regularity}.   
\end{remark}

\begin{remark} \label{rmk:finfty}
From the proof of Theorem \ref{thm:Main}, it is clear that $\norm{\omega_{in} - f_\infty}_{\G^{\lambda^\prime}} \lesssim \epsilon^2$, as the effect of the nonlinear evolution is one order weaker than that of the linear evolution. 
\end{remark} 

\begin{remark} \label{rmk:FormTheta} 
Notice the surprisingly rapid convergence in \eqref{ineq:xdamping_slow} (it is of course matched by a similar rapid convergence of the $x$-averaged vorticity).
This arises from a subtle cancellation between the oscillations of $\omega$ and $U^y$ upon taking $x$ averages; 
 indeed it was previously believed that the convergence should be $O(t^{-1})$ and that \eqref{def:phi} involved a logarithmic correction. 
The origin of the rapid convergence rate can be best understood from studying the linearized problem \eqref{def:LinearizedEuler}, a computation that we carry out in \S\ref{apx:LogCorrection}.
\end{remark}

\begin{remark} \label{rmk:compactsupp}
The proof of Theorem \ref{thm:Main} implies that if $\omega_{in}$ is compactly supported then $\omega(t)$ remains supported in a strip $(x,y) \in \T \times [-R,R]$ for some $R > 0$ for all time.
\end{remark}

\begin{remark} 
The primary difficulty in treating more general shear flows is on the weakly nonlinear level (in contrast to the  Vlasov case), which would most clearly manifest in \S\ref{sec:Elliptic}.
More information on this difficulty, along with other related open problems, is discussed in \S\ref{sec:Conclusion}. 
\end{remark}

\begin{remark} 
Both Orr and Kelvin (and many others) expressed doubt that the inviscid problem was stable unless the set of permissible data was of a certain type, suggesting that for \emph{general data} the stability restriction would diminish with the inverse Reynolds number. 
To reconcile this viewpoint with Theorem \ref{thm:Main}, we conjecture that for high (but finite) Reynolds number flows, an analogous result to Theorem \ref{thm:Main} holds with initial data $\omega^R_{in} + \omega_{in}^\nu$ where $\omega^R_{in}$ has Gevrey-$\frac{1}{s}$ regularity uniformly in the Reynolds number and $\omega_{in}^{\nu}$ has Sobolev regularity with norm small with respect to the inverse Reynolds number.
Note that in the viscous case, the flow will return to Couette, but on a time-scale comparable to the Reynolds number.
We are currently investigating the proof of this conjecture.  
\end{remark}

\begin{remark} 
The spatial localization $\int \abs{y\omega_{in}(x,y)} dxdy < \epsilon$ is only used to assert that the velocity $U^x$ is in $L^2$ and to ensure the coordinate transformations used in the proof are not too drastic.  
This assumption can be relaxed to $\int \abs{y}^{\alpha}\abs{\omega_{in}(x,y)} dxdy < \epsilon$ for any $\alpha > 1/2$.
It might be possible to treat more general cases with $U^x \not\in L^2$ with some technical enhancements, as $U^x \in L^2$ does not play an important role in the proof. 
\end{remark}

The proof of Theorem \ref{thm:Main} (actually Remark \ref{rmk:finfty}) provides the following corollaries.

\begin{corollary} \label{cor1}
There exists an open set of smooth solutions to \eqref{def:2DEuler} for which $\set{\omega(t)}_{t \in \Real}$ is not pre-compact in $L^2$ as $t \rightarrow \pm\infty$. In particular, $\omega(t) \rightharpoonup \omega_\infty = \frac{1}{2\pi}\int_\T f_\infty(x,y) dx$ and in general $\norm{\omega_\infty}_2 < \norm{\omega(t)}_2$. 
\end{corollary}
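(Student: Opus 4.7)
The plan is to combine the strong convergence in scattering coordinates from Theorem \ref{thm:Main} with the $L^2$-conservation of 2D Euler. First I would transfer \eqref{main-omega} back to the original coordinates: since the shear $(x,y)\mapsto(x-ty-\Phi(t,y),y)$ is volume-preserving, it acts as an $L^2$ isometry, so \eqref{main-omega} together with the embedding $\G^{\lambda'}\hookrightarrow L^2$ yields
$$
\bigl\|\omega(t,x,y) - f_\infty\bigl(x-ty-\Phi(t,y),y\bigr)\bigr\|_{L^2(\T\times\Real)} \to 0 \quad \text{as } t \to \infty.
$$
Combined with the $L^2$-conservation of 2D Euler, this also gives $\|f_\infty\|_{L^2}=\|\omega_{in}\|_{L^2}$, so any nonzero $k\neq 0$ Fourier mode of $f_\infty$ in $x$ produces the strict Parseval gap $\|\omega_\infty\|_{L^2}<\|f_\infty\|_{L^2}=\|\omega(t)\|_{L^2}$.

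Next I would prove $\omega(t)\rightharpoonup\omega_\infty$ in $L^2$. By density and the uniform $L^2$ bound it suffices to test against $\phi\in C_c^\infty(\T\times\Real)$. Expanding both $f_\infty$ and $\phi$ in Fourier series in the periodic variable $x$ and applying the $L^2$-strong convergence above,
$$
\int\omega(t)\phi\,dxdy = 2\pi\sum_k\int\hat{f}_\infty(-k,y)\,\hat{\phi}(k,y)\,e^{ik(ty+\Phi(t,y))}\,dy + o(1),
$$
where $o(1)$ absorbs the $L^2$-small remainder. The $k=0$ contribution is exactly $\langle\omega_\infty,\phi\rangle_{L^2}$. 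For each $k\neq 0$, the $y$-derivative of the phase is $k(t+\partial_y\Phi(t,y)) = kt(1+\partial_y u_\infty(y)) + O(\epsilon^2)$, which is bounded below in absolute value by $|k|t/2$ for $\epsilon$ small and $t$ large; a single integration by parts, with vanishing boundary terms thanks to the compact support of $\hat{\phi}(k,\cdot)$, then yields decay $O(1/(|k|t))$ uniformly. This proves the weak convergence.

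For the failure of pre-compactness, by Remark \ref{rmk:finfty} we have $\|\omega_{in}-f_\infty\|_{\G^{\lambda'}}\lesssim\epsilon^2$, so whenever $\omega_{in}$ has a non-shear component of size $\gtrsim\epsilon$ in $\G^{\lambda_0}$ — an open condition compatible with the hypotheses of Theorem \ref{thm:Main} — then so does $f_\infty$, and the strict inequality $\|\omega_\infty\|_{L^2}<\|\omega(t)\|_{L^2}$ holds throughout this open set. If $\{\omega(t)\}_{t\in\Real}$ were pre-compact in $L^2$, a subsequence would converge strongly; the limit would necessarily equal $\omega_\infty$ by the weak convergence just established, and one would force $\|\omega_\infty\|_{L^2}=\|\omega_{in}\|_{L^2}$, a contradiction.

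The main obstacle is the non-stationary phase step in the weak convergence argument, since the phase is not linear in $y$ and the $y$-integration is over the whole real line. One needs uniform-in-$t$ control of $\partial_y\Phi(t,y) - t\,\partial_y u_\infty(y)$; this follows by $y$-differentiating \eqref{ineq:xdamping_slow} and integrating in $t$ against the integrable weight $\jap{t}^{-2}$, yielding an $O(\epsilon^2)$ correction uniformly in $(t,y)$, so that $1+\partial_y u_\infty$ remains bounded below for small $\epsilon$ and the integration by parts closes.
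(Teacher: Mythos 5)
Your proof is correct and follows exactly the route the paper intends: the paper states Corollary \ref{cor1} as an immediate consequence of \eqref{main-omega}, enstrophy conservation, and Remark \ref{rmk:finfty}, without writing out the details. Your filling-in of the weak convergence via non-stationary phase in $y$ (with the uniform control of $\partial_y\Phi - t\,\partial_y u_\infty$ supplied by integrating \eqref{ineq:xdamping_slow} in time) and the Parseval gap between $\|f_\infty\|_2$ and $\|\omega_\infty\|_2$ is the standard and correct way to make that implicit argument rigorous.
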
 
This shows the existence of solutions for which enstrophy is lost  to high frequencies 
in the limit $t \rightarrow \infty$, which to our knowledge was not previously known for 2D Euler in any setting. 
See  \cite{Shnirelman12,KS12,GSV13} 
 for further discussions on the physical 
interest of this fact and the potential relationship with 2D turbulence. 
A related corollary is the following which shows the linear growth of Sobolev norms as a direct consequence of the mixing. 
Compare with the construction of Denisov \cite{Denisov09} which yields super-linear growth of the gradient. 

\begin{corollary} \label{cor2}
There exists  an open set of  smooth solutions to \eqref{def:2DEuler} for which $\norm{\jap{\grad}^N\omega(t)}_2 \approx \jap{t}^N$ for all $N \in [0,\infty)$ and for all $s^\prime < s$,$\lambda_0 > 0$, $\norm{\omega(t)}_{\G^{\lambda_0;s^\prime}} \approx e^{\lambda_0 t^{s^\prime}}$. 
\end{corollary}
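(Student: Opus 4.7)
The plan is to use Theorem~\ref{thm:Main} together with Remark~\ref{rmk:finfty} to represent $\omega(t)$ essentially as a fixed Gevrey profile $f_\infty$ pulled back by the mixing coordinate change, and then to read off both growth rates from the resulting Fourier picture.

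To fix the open set, take $\omega_{in}(x,y)=\epsilon_1\cos(x)\chi(y)$ for a nonzero Gevrey-$\tfrac{1}{s}$ bump $\chi$ (chosen so the hypotheses of Theorem~\ref{thm:Main} hold). Since $\int_{\T}\omega_{in}\,dx\equiv 0$, Remark~\ref{rmk:finfty} gives $\bar f_\infty=O(\epsilon^2)$ in $\G^{\lambda'}$ and hence $\norm{u_\infty}_{\G^{\lambda'}}\lesssim\epsilon^2$, while $\mathcal{F}_x f_\infty(1,\cdot)$ is an $O(\epsilon^2)$ perturbation of the nontrivial $\epsilon_1\pi\chi$. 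These properties persist in a small $\G^{\lambda_0}$-ball around $\omega_{in}$, yielding the required open set. Theorem~\ref{thm:Main} then gives $\omega(t,x,y)=V(t,x-ty-\Phi(t,y),y)$, with $V(t,\cdot,\cdot)\to f_\infty$ in $\G^{\lambda'}$ and $\Phi(t,y)=u_\infty(y)\,t+O(\epsilon)$, so $\Phi_y=O(\epsilon^2 t)$.

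For the Sobolev rate, the chain rule yields $\partial_x\omega=V_z$ and $\partial_y\omega=-(t+\Phi_y)V_z+V_y$. Because $\Phi_y=O(\epsilon^2 t)$ is negligible next to $t$, the leading contribution to $\partial_y^N\omega$ is $(-(t+\Phi_y))^N\partial_z^N V$, with remainders bounded by $t^{N-1}$ times Gevrey-controlled factors. Changing variables $(x,y)\mapsto(z,y)$ (unit Jacobian) and using $\norm{\partial_z^N f_\infty}_2>0$ (a consequence of $\mathcal{F}_x f_\infty(1,\cdot)\not\equiv 0$) gives the matching bounds $\norm{\jap{\grad}^N\omega(t)}_2\approx\jap{t}^N$.

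For the Gevrey rate, Fourier transforming via $z=x-ty-\Phi$ gives
$$\hat\omega(t,k,\eta)=\widehat{\bigl[\mathcal{F}_x V(t,k,\cdot)\,e^{-ik\Phi(t,\cdot)}\bigr]}(\eta+kt),$$
and after substituting $\zeta=\eta+kt$,
$$\norm{\omega(t)}_{\G^{\lambda_0;s'}}^2=\sum_k\int|\hat G_k(\zeta)|^2\,e^{2\lambda_0|k,\zeta-kt|^{s'}}\,d\zeta,\qquad G_k:=\mathcal{F}_xV(t,k,\cdot)\,e^{-ik\Phi(t,\cdot)}.$$
The lower bound follows from the $k=1$ piece on $|\zeta|\le M=C\epsilon^2 t$: since the modulation $e^{-i\Phi(t,\cdot)}$ spreads the spectrum of $\mathcal{F}_xV(t,1,\cdot)$ by only $\sup|\Phi_y|=O(\epsilon^2 t)$, this region retains a fixed fraction of $\norm{G_1}_2^2\to\norm{\mathcal{F}_xf_\infty(1,\cdot)}_2^2>0$, and there $|1,\zeta-t|^{s'}\ge(1-O(\epsilon^2))^{s'}t^{s'}$, yielding $\norm{\omega(t)}_{\G^{\lambda_0;s'}}\gtrsim e^{\lambda_0 t^{s'}}$ upon choosing $\epsilon$ small depending on $\lambda_0$. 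The matching upper bound uses $|\hat V(t,k,\eta)|\lesssim e^{-\lambda'|k,\eta|^s}$, the Gevrey-$\tfrac{1}{s}$ regularity of $\Phi$, and $s'<s$: the $k=\pm 1$ modes contribute $\lesssim e^{2\lambda_0 t^{s'}}$, while higher $|k|$ modes are suppressed by $\norm{\omega_{in}-f_\infty}_{\G^{\lambda'}}\lesssim\epsilon^2$.

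The principal obstacle is the linearly-in-$t$ growing modulation $e^{-ik\Phi(t,y)}$: its spectral width $O(\epsilon^2 t)$ threatens to push the mass of $\hat G_1$ outside any fixed window and corrupt the lower bound. The resolution exploits the smallness $u_\infty=O(\epsilon^2)$ arising from $\bar\omega_{in}\equiv 0$, so that the spreading width $O(\epsilon^2 t)$ is absorbed into the exponent $(t-M)^{s'}=(1-O(\epsilon^2))^{s'}t^{s'}$ and the sharp rate $e^{\lambda_0 t^{s'}}$ survives.
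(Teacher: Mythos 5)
Your overall strategy---writing $\omega(t)$ as $f_\infty$ plus an $o(1)$ error composed with the shear $x\mapsto x-ty-\Phi(t,y)$ and reading the growth off the shifted Fourier support $\eta\approx -kt$---is exactly what the paper intends (the corollary is stated there without proof, as a consequence of Remark \ref{rmk:finfty}), and your treatment of the Sobolev part is sound: the polynomial weight $\jap{k,\eta}^{2N}$ changes only by a $t$-independent constant under the $O(\epsilon^2 t)$ uncertainty in the support, and $\partial_z^N f_\infty\neq 0$ follows from your choice of data together with Remark \ref{rmk:finfty}.

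The Gevrey part, however, has a genuine gap in both directions, because the weight $e^{2\lambda_0\abs{k,\eta}^{s'}}$ is \emph{not} stable under relative perturbations of the frequency. For the lower bound, restricting to $\abs{\zeta}\le C\epsilon^2 t$ only gives $\abs{1,\zeta-t}^{s'}\ge (1-C\epsilon^2)^{s'}t^{s'}$, i.e.\ a bound of the form $e^{\lambda_0 t^{s'}}e^{-c\epsilon^2\lambda_0 t^{s'}}$, whose deficit tends to zero as $t\to\infty$ for every fixed $\epsilon>0$; shrinking $\epsilon$ does not repair this. To recover the constant $1$ in the exponent you would need a fixed fraction of the mass of $\hat G_1$ within $O(t^{1-s'})$ of (or beyond) the critical frequency $\zeta=0$, and the modulation $e^{-iu_\infty(y)t}$ genuinely spreads the spectrum over a window of width $\sim\norm{u_\infty'}_\infty t\gg t^{1-s'}$ unless $u_\infty'$ vanishes on the relevant set. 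For the upper bound, the claim that the modes $\abs{k}\ge 2$ are ``suppressed by $\norm{\omega_{in}-f_\infty}_{\G^{\lambda'}}\lesssim\epsilon^2$'' cannot work: the mode-$k$ contribution carries the weight $e^{2\lambda_0(\abs{k}(1+t))^{s'}}$, which exceeds $e^{2\lambda_0 t^{s'}}$ by the unbounded factor $e^{2\lambda_0(\abs{k}^{s'}-1)t^{s'}}$, and no $t$-independent prefactor compensates; optimizing $\lambda_0\abs{k}^{s'}t^{s'}-\lambda'\abs{k}^{s}$ over $k$ shows that for data with generically nonvanishing higher modes the norm in fact grows like $\exp(c\,t^{ss'/(s-s')})$, a strictly faster stretched exponential. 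These difficulties reflect that the two-sided ``$\approx$'' must be read at the level of the leading growth rate (e.g.\ mode-by-mode, or as $\log\norm{\omega(t)}_{\G^{\lambda_0;s'}}\gtrsim \lambda_0 t^{s'}$); your argument does establish such a weakened statement, but as written it does not prove the sharp two-sided bound you claim, and the ``resolution'' offered in your final paragraph does not close either gap.
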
 

Let us now outline the main new steps in the proof of Theorem \ref{thm:Main}. 
First, we provide a (well chosen) change of variable that adapts to the solution 
as it evolves and yields a new `relative' velocity which is time-integrable while keeping the Orr critical 
times as in \eqref{orr-cri}. 
This change of variables allows us to 
work on a quantity $f(t)$ which has a strong limit as $t$ goes to infinity. 
This is related to the notion of ``profile'' used in dispersive wave equations (see \cite{GMS12} for instance)
as well as the notion of ``gliding regularity'' in \cite{MouhotVillani11}. 
However, here it is important that the coordinate transformation depends on the solution, 
a source of large technical difficulty and an expression of the `quasi-linearity' alluded to above. 
 
A second new idea is the use of a special norm that loses regularity in a very 
precise way adapted to the Orr critical times and the associated nonlinear effect.
The construction of this norm is based on the so-called ``toy model'' which mimics the worse possible growth of high frequencies (derived in \S\ref{sec:Toy}).
 This special norm allows us to control the nonlinear growth due to the resonances at the critical times.  
However, this comes with a big danger: energy estimates and cancellations 
tend to dislike `unbalanced' norms, namely norms that assign different regularities 
to different frequencies (see for instance \cite{MN08} for a similar problem). 
In particular, by design, our norm is not an algebra.  
This is one of the main technical problems that we have to overcome, and here the decay of the velocity is crucial. 

In the course of the proof, we need to gain regularity from inverting 
the Laplacian to get the streamfunction from the vorticity; indeed the ellipticity is the origin of the decay. 
However, in the new variables the Laplacian is transformed to a weakly elliptic operator with coefficients that depend on the solution.  
This additional nonlinearity presents huge difficulties due to the limited regularity of the coefficients (relative to what is desired). 
This has similarities with elliptic estimates in domains with limited regularity 
 used for water waves  (see for instance \cite[Appendix A]{SZ08}). 
 Here, the interplay between regularity and decay will be crucial to ensure that the final estimate holds. 
As in \eqref{orr-cri}, the loss of ellipticity is an expression of the Orr critical times. 
It will be important for our work that the norm derived from the toy model precisely `matches' the loss of ellipticity.  

Related to the issue of inverting the Laplacian in the new variables is the final technical ingredient in our proof, which is the need to obtain a variety of precise controls on the evolving coordinate system (see Proposition \ref{prop:CoefControl_Real} below).
This will require us to quantify the convergence of the background shear flow in several ways. 
In particular, we will need to carefully estimate how the modes that depend on $x$ force those that do not and 
in fact, this forcing loses a derivative (see the last term in \eqref{def:barh}).
However, the estimates turn out to be possible precisely under the assumption of Gevrey class with $s \geq 1/2$ (see the discussion after Proposition \ref{prop:CoefControl_Real}).
Second to the toy model, here seems to be next most fundamental use of the regularity $s \geq 1/2$.

\subsection{Notation and conventions} \label{sec:Notation}
See \S\ref{Apx:LPProduct} for the Fourier analysis conventions we are taking.
A convention we generally use is to denote the discrete $x$ (or $z$) frequencies as subscripts.   
By convention we always use Greek letters such as $\eta$ and $\xi$ to denote frequencies in the $y$ or $v$ direction and lowercase Latin characters commonly used as indices such as $k$ and $l$ to denote frequencies in the $x$ or $z$ direction (which are discrete).
Another convention we use is to denote $K,M,N$ as dyadic integers $K,M,N \in \mathbb{D}$ where  
\begin{align*} 
\mathbb{D} = \set{\frac{1}{2},1,2,...,2^j,...}. 
\end{align*}
When a sum is written with indices $K,M,M^\prime,N$ or $N^\prime$ it will always be over a subset of $\mathbb D$. 
This will be useful when defining Littlewood-Paley projections and paraproduct decompositions, see \S\ref{Apx:LPProduct}. 
Given a function $m \in L^\infty$, we define the Fourier multiplier $m(\grad) f$ by 
\begin{align*} 
(\widehat{m(\grad)f})_k(\eta) =  m( (ik,i\eta) ) \hat{f}_k(\eta). 
\end{align*}   

We use the notation $f \lesssim g$ when there exists a constant $C > 0$ independent of the parameters of interest 
such that $f \leq Cg$ (we analogously $f \gtrsim g$ define). 
Similarly, we use the notation $f \approx g$ when there exists $C > 0$ such that $C^{-1}g \leq f \leq Cg$. 
We sometimes use the notation $f \lesssim_{\alpha} g$ if we want to emphasize that the implicit constant depends on some parameter $\alpha$.
We will denote the $l^1$ vector norm $\abs{k,\eta} = \abs{k} + \abs{\eta}$, which by convention is the norm taken in our work. 
Similarly, given a scalar or vector in $\Real^n$ we denote
\begin{align*} 
\jap{v} = \left( 1 + \abs{v}^2 \right)^{1/2}. 
\end{align*} 
We use a similar notation to denote the $x$ (or $z$) average of a function: $<f> = \frac{1}{2\pi}\int f(x,y) dx = f_0$. 
We also frequently use the notation $P_{\neq 0}f = f - f_0$. 
We denote the standard $L^p$ norms by $\norm{f}_{L^p}$. 
We make common use of the Gevrey-$\frac{1}{s}$ norm with Sobolev correction defined by 
\begin{align*} 
\norm{f}_{\G^{\lambda,\sigma;s}}^2 = \sum_{k}\int \abs{\hat{f}_k(\eta)}^2 e^{2\lambda\abs{k,\eta}^s}\jap{k,\eta}^{2\sigma} d\eta. 
\end{align*} 
Since most of the paper we are taking $s$ as a fixed constant, it is normally omitted.
We refer to this norm as the $\mathcal{G}^{\lambda,\sigma;s}$ norm and occasionally refer to the space of functions
\begin{align*} 
\mathcal{G}^{\lambda,\sigma;s} = \set{f \in L^2 :\norm{f}_{\G^{\lambda,\sigma}}<\infty}. 
\end{align*}
See \S\ref{apx:Gev} for a discussion of the basic properties of this norm and some related useful inequalities.

For $\eta \geq 0$, we define $E(\eta)\in \Integer$ to be the integer part.  
We define for $\eta \in \Real$ and $1 \leq \abs{k} \leq E(\sqrt{\abs{\eta}})$ with $\eta k \geq 0$, 
$t_{k,\eta} = \abs{\frac{\eta}{k}} - \frac{\abs{\eta}}{2\abs{k}(\abs{k}+1)} =  \frac{\abs{\eta}}{\abs{k}+1} +  \frac{\abs{\eta}}{2\abs{k}(\abs{k}+1)}$ and $t_{0,\eta} = 2 \abs{\eta}$ and 
the critical intervals  
\begin{align*} 
I_{k,\eta} = \left\{
\begin{array}{lr}
[t_{\abs{k},\eta},t_{\abs{k}-1,\eta}] & \textup{ if } \eta k \geq 0 \textup{ and } 1 \leq \abs{k} \leq E(\sqrt{\abs{\eta}}), \\ 
\emptyset & otherwise.
\end{array} 
\right. 
\end{align*} 
For minor technical reasons, we define a slightly restricted subset as the \emph{resonant intervals}
\begin{align*} 
\mathbf I_{k,\eta} = \left\{
\begin{array}{lr} 
I_{k,\eta} & 2\sqrt{\abs{\eta}} \leq t_{k,\eta}, \\ 
\emptyset & otherwise.
\end{array} 
\right. 
\end{align*} 
Note this is the same as putting a slightly more stringent requirement on $k$: $k \gtrsim \frac{1}{2}\sqrt{\abs{\eta}}$.

\section{Proof of Theorem \ref{thm:Main}} 

We now give the proof of Theorem \ref{thm:Main}, stating the primary steps as propositions which are proved in subsequent sections. 

\subsection{Coordinate transform} \label{sec:CoordinateTrans}
The original equations in vorticity form are \eqref{def:2DEuler}, and we are trying essentially to prove that 
\begin{align*} 
\omega(t,x,y) \rightarrow f_\infty\left(x - ty - u_\infty(y)t,y\right), 
\end{align*} 
as $t \rightarrow \infty$, where $u_\infty(y)$ is the correction to the shear flow determined by $f_\infty$.  
 From the initial data alone, there is no simple way to determine $u_\infty$;
it is chosen by the nonlinear evolution. 
In order to deal with this lack of information about how the final state evolves we choose a coordinate system which adapts to the solution and converges to the expected form as $t \rightarrow \infty$. 
The change of  coordinates used is $(t,x,y) \to (t,z,v)$, where 
\begin{subequations} \label{def:zv}
\begin{align} 
z(t,x,y) & = x - tv \\ 
v(t,y)   & = y + \frac{1}{t}\int_0^t < U^x >(s,y) ds, \label{def:v}
\end{align}      
\end{subequations}
where we recall $< w >$ denotes the average of $w $ in the $x$ variable (or equivalently in the 
$z$ variable), namely 
$ <w>  = \frac{1}{2\pi}\int_\T  w  dx  $. 
The reason for the change   $y \to v$ is not immediately clear, however $v$ is named as such since it is an approximation for the background shear flow. 
If the velocity field in the integrand were constant in time, then we are simply transforming the $y$ variables so that the shear appears linear.
It will turn out that this choice of $v$ ensures that the Biot-Savart law is in a form amenable to Fourier analysis in the variables $(z,v)$; in particular, even when the shear is time-varying we may still study the {\em Orr critical times} as was 
explained in \eqref{orr-cri}.   
In this light, the motivation for the shift in $z$ is clear: we are following the flow in the horizontal variable to guarantee compactness, as done even by Orr \cite{Orr07}, Kelvin \cite{Kelvin87} and many authors since then.

Define $f(t,z,v)  = \omega(t,x,y)  $ and    $\phi(t,z,v) = \psi(t,x,y) $, hence
\begin{align*} 
  \partial_t \omega = \partial_t f +\partial_t z   \partial_zf    + \partial_t v \partial_{v}f, \quad\quad 
\partial_x \omega = \partial_z f, \quad\quad \partial_y \omega =\partial_{y}v   \left(\partial_{v} f - t \partial_z f\right), 
\end{align*}
where
\begin{align*}
\partial_t z & = -y - < U^x>(t,y) \\ 
\partial_t v & = \frac{1}{t}\left[< U^x >(t,y) - \frac{1}{t}\int_0^t < U^x>(s,y) ds\right] \\ 
\partial_y v & = 1 - \frac{1}{t}\int_0^t <\omega>(s,y) ds \\ 
\partial_{yy} v & = - \frac{1}{t}\int_0^t \partial_y <\omega>(s,y) ds. 
\end{align*}
Expressing $[\partial_tv](t,v) = \partial_tv(t,y)$, $v^{\prime}(t,v) = \partial_y v(t,y)$ and $v^{\prime\prime}(t,v) = \partial_{yy} v(t,y)$, we get the following evolution equation for $f$,
\begin{align*} 
\partial_t f + [\partial_t v] \partial_{v}f + \partial_t z\partial_z f = - y\partial_z f + v^\prime \left[\partial_v\phi + \partial_z\phi \partial_{v}z - \partial_z \phi \partial_{v}z \right] \partial_z f - v^{\prime}\partial_z \phi \partial_{v} f.
\end{align*} 
Using the definition of $\partial_t z$ and the Biot-Savart law to transform $<U^x>$ to $-v^\prime \partial_{v}<\phi>$ in the new variables, this becomes
\begin{align*}
\partial_t f  - \left(v^\prime \partial_{v}(\phi - <\phi>)\right)\partial_z f   
     +  \left( [\partial_t v]  + v^\prime \partial_z\phi \right)  \partial_{v}f     = 0. 
\end{align*}
The Biot-Savart law also gets transformed into: 
\begin{align}
f & = \partial_{zz}\phi  + (v^\prime)^2\left( \partial_v - t \partial_z \right)^2\phi + v^{\prime\prime}\left(\partial_{v} - t \partial_z\right) \phi = \Delta_t \phi. \label{def:Deltat}
\end{align} 
The original 2D Euler system \eqref{def:2DEuler} is expressed as 
\begin{align} \label{Euler2}
 \left\{
\begin{array}{l} 
\partial_tf + u \cdot \grad_{z,v} f = 0,  \\
u =  (0, [\partial_t v] )   + v^\prime \grad_{z,v}^\perp P_{\neq 0} \phi,  \\ 
\phi =\Delta_t^{-1}[f] . 
\end{array}
\right.
\end{align} 
It what follows we will write $\grad_{z,v} = \grad$ and specify when other variables are used. 
Next we transform the momentum equation to allow us to express $[\partial_t v]$ in a form amenable to estimates.
Denoting $\tilde u(t,z,v) = U^x(t,x,y)$ and $p(t,z,v) = P(t,x,y)$ we have by the same derivation on $f$, 
\begin{align*} 
\partial_t \tilde u + [\partial_t v] \partial_{v}\tilde u + \partial_z P_{\neq 0} \phi + v^\prime \grad^\perp P_{\neq 0} \phi \cdot \grad \tilde u = -\partial_z p. 
\end{align*} 
Taking averages in $z$ we isolate the zero mode of the velocity field, 
\begin{align} 
\partial_t \tilde u_0 + [\partial_t v] \partial_{v} \tilde u_0 +  v^\prime \left<\grad^\perp P_{\neq 0}\phi \cdot \grad \tilde u \right> = 0. \label{eq:tildeu0_moment}
\end{align}
Finally, one can express $v^{\prime}$ and $[\partial_t v]$ as solutions to a system of PDE in the $(t,v)$ variables coupled to \eqref{Euler2} (see \S\ref{sec:CordDeriv} below for a detailed derivation): 
\begin{subequations} \label{def:vPDE}
\begin{align}
\partial_t(t(v^\prime - 1)) + [\partial_t v] t\partial_v v^\prime  = -f_0 \label{def:pdevp}\\
\partial_t[\partial_t v] + \frac{2}{t}[\partial_t v] + [\partial_t v]\partial_v[\partial_t v]  = -\frac{v^\prime}{t}\jap{\grad^\perp P_{\neq 0}\phi \cdot \grad \tilde u}  \label{def:ddtv} \\
v^{\prime\prime}(t,v) = v^\prime(t,v) \partial_v v^{\prime}(t,v).  \label{def:vpp}
\end{align}
\end{subequations}
Note that to leading order in $\epsilon$, one can express $v^\prime-1$ as a time average of $-f_0$.  
Note also that we have a simple expression for $\partial_v \tilde u_0$ from the Biot-Savart law: 
\begin{align} 
\partial_{v}\tilde u_0(t,v) = \frac{1}{v^\prime(t,v)} \partial_y U^x_0(t,y) = -\frac{1}{v^\prime(t,v)}\omega_0(t,y) = -\frac{1}{v^\prime(t,v)} f_0(t,v). \label{eq:tildeu0_simple}
\end{align}

Given a priori estimates on the system \eqref{Euler2}, \eqref{def:vPDE}, we can recover estimates
on the original system \eqref{def:2DEuler} by the inverse function theorem as long as $v^\prime - 1$ remains sufficiently small (see \S\ref{sec:ProofConcl}). 
Compared to the original system  \eqref{def:2DEuler},  the system \eqref{Euler2}, \eqref{def:vPDE}   
appears much more complicated and nonlinear.  
Indeed, $u$ is not divergence free and the dependence of $\phi$ on $f$ through $\Delta_t $ is significantly more subtle than in the original variables. 
The main advantage of \eqref{Euler2} is that $u$ formally has an integrable decay, indeed, we will see that if one is willing to pay four derivatives, the decay rate is formally  $O(t^{-2}\log t)$ (the decay we deduce is not quite as sharp). 

\subsection{Main energy estimate} \label{sec:MainEnergy}
In light of the previous section, our goal is to control solutions to \eqref{Euler2} uniformly in a suitable norm as $t \rightarrow \infty$.   
The key idea we use for this is the carefully designed time-dependent norm written as 
\begin{align*} 
\norm{A(t,\grad)f}^2_2 = \sum_k\int_\eta \abs{A_k(t,\eta)\hat f_k(t,\eta)}^2 d\eta. 
\end{align*} 
The multiplier $A$ has several components,
\begin{align*} 
A_k(t,\eta) = e^{\lambda(t)\abs{k,\eta}^s }  \jap{k,\eta}^\sigma  J_k(t,\eta). 
\end{align*}
The index $\lambda(t)$ is the bulk Gevrey$-\frac{1}{s}$ regularity and will be chosen to satisfy 
\begin{subequations} \label{def:lambdat}
\begin{align} 
\lambda(t) & = \frac{3}{4}\lambda + \frac{1}{4}\lambda^\prime, \quad t \leq 1 \label{def:lambdashort} \\   
\dot\lambda(t) & = -\frac{\delta_\lambda}{\jap{t}^{2\tilde q}}(1 + \lambda(t)), \quad t > 1
\end{align} 
\end{subequations}
where $\delta_\lambda \approx \lambda - \lambda^\prime$ is a small parameter that ensures $\lambda(t) > \lambda/2 + \lambda^{\prime}/2$ and $1/2 < \tilde q \leq s/8 + 7/16$ is a parameter chosen by the proof. 
The reason for \eqref{def:lambdashort} is to account for the behavior of the solution on the time-interval $[0,1]$; see Lemma \ref{lem:shorttime} for this relatively minor detail. 
The use of a time-varying index of regularity is classical, for example the Cauchy-Kovalevskaya local existence theorem of Nirenberg \cite{Nirenberg72,Nishida77}.
For more directly relevant works which use norms of this type, see \cite{FoiasTemam89,LevermoreOliver97,Chemin04,KukavicaVicol09,CGP11,MouhotVillani11}. 
Let us remark here that to study analytic data, $s = 1$, we would need to add an additional Gevrey-$\frac{1}{\beta}$ correction to $A$ with $1/2 < \beta < 1$ as an intermediate regularity so that we may take advantage of certain beneficial properties of Gevrey spaces; see for example Lemma \ref{lem:GevProdAlg}.
In this case, the analytic regularity would simply be propagated more or less passively through the proof. 
Using the same idea, we may assume without loss of generality that $s$ is close to $1/2$ (say $s < 2/3$), which simplifies some of the technical details but is not essential. 
The Sobolev correction with $\sigma > 12$ fixed is included mostly for technical convenience so we may easily quantify loss of derivatives without disturbing the index of regularity.   
We will also use the slightly stronger multiplier $A^R(t,\eta)$ that satisfies $A^R(t,\eta) \geq A_0(t,\eta)$ to control the coefficients $v^\prime$ and $v^{\prime\prime}$; see \eqref{def:AR} below for the definition. 

The main multiplier for dealing with the Orr mechanism and the associated nonlinear growth is 
\begin{align} 
J_k(t,\eta) =   \frac{e^{\mu\abs{\eta}^{1/2}}}{w_k(t,\eta)}  + e^{\mu\abs{k}^{1/2}}, \label{def:J}
\end{align} 
where $ w_k(t,\eta)$ is constructed in \S\ref{sec:Growth} and describes the expected `worst-case' growth due to nonlinear interactions at the critical times. 
What will be important is that $J$ imposes more regularity on modes which satisfy $t \sim \frac{\eta}{k}$ (the `resonant modes') than those that do not (the `non-resonant modes').
The multiplier $J$  replaces growth in time 
by  controlled loss of regularity  and 
 is reminiscent of the notion of losing regularity 
estimates used in \cite{BC94,CM01}.
 One of the main differences is that here we have 
to be more precise in the sense that the loss of regularity occurs for different 
frequencies during different time intervals. 

With this special norm, we can define our main energy: 
\begin{align} \label{Et}
E(t) &= \frac{1}{2}\norm{A(t)f(t)}_2^2 + E_{v}(t), 
\end{align}
where, for some constants $K_v$, $K_D$ depending only on $s,\lambda,\lambda^\prime$ fixed by the proof,  
\begin{align} 
E_{v}(t) = \jap{t}^{2+2s}\norm{\frac{A}{\jap{\partial_v}^s} v^\prime \partial_v [\partial_t v](t)}_{2}^2 + \jap{t}^{4-K_D\epsilon}\norm{[\partial_t v](t)}_{\G^{\lambda(t),\sigma-6}}^2 + \frac{1}{K_v}\norm{A^R(v^\prime-1)(t)}_2^2.
\end{align}
In a sense, there are two coupled energy estimates: the one on $Af$ and the one on $E_v$. The latter quantity is encoding information about the coordinate system, or equivalently, the evolution of the background shear flow.  
It turns out $v^\prime \partial_v [\partial_t v]$ is a physical quantity that measures the convergence of the $x$-averaged vorticity to its time average (see \eqref{def:barh2} in \S\ref{sec:CordDeriv}) and 
satisfies a useful PDE (see \eqref{def:barh} in \S\ref{sec:PropCtrlReal}). 
It will be convenient to get two separate estimates on $[\partial_t v]$ as opposed to just one ($[\partial_t v]$ is essentially measuring how rapidly the $x$-averaged velocity is converging to its time average).
 
The goal is to prove by a continuity argument that this energy $E(t)$ (together with some related quantities) is uniformly 
bounded for all time if $\epsilon$ is sufficiently small.
We define the following controls referred to in the sequel as the bootstrap hypotheses,
\begin{itemize} 
\item[(B1)] $E(t) \leq 4\epsilon^2$; 
\item[(B2)] $\norm{v^\prime - 1}_\infty \leq \frac{3}{4}$ 
\item[(B3)] `CK' integral estimates (for `Cauchy-Kovalevskaya'): 
\begin{align*}
\int_0^t \left[ CK_\lambda(\tau) + CK_w(\tau) + CK_w^{v,2}(\tau) + CK_\lambda^{v,2}(\tau) \right. & \\ & \hspace{-7cm} \left. + K_v^{-1}\left( CK_w^{v,1}(\tau) + CK_\lambda^{v,1}(\tau) \right) +  K_v^{-1}\sum_{i=1}^2\left(CCK_w^i(\tau) + CCK_\lambda^i(\tau)\right) \right] d\tau \leq 8\epsilon^2 
\end{align*}
\end{itemize} 
The CK terms above that appear without the $K_v^{-1}$ prefactor arise from the time derivatives of $A(t)$
and are naturally controlled by the energy estimates we are making. 
The others are related quantities that are controlled separately in Proposition \ref{prop:CoefControl_Real} below. 

By the well-posedness theory for 2D Euler in Gevrey spaces \cite{BB77,FT98,FoiasTemam89,LevermoreOliver97,KukavicaVicol09} we may safely ignore the time interval (say) $[0,1]$ by further restricting the size of the initial data. 
That is, we have the following lemma; see \S\ref{apx:coTrans} for a sketch of the proof. 
\begin{lemma} \label{lem:shorttime}
For all $\epsilon > 0$, there exists an $\epsilon^\prime > 0$ such that if $\norm{\omega_{in}}_{\G^{\lambda_0}} < \epsilon^\prime$ 
and $\int \abs{y\omega_{in}} dx dy < \epsilon^\prime$, then 
$\sup_{t \in [0,1]}\norm{f(t)}_{\G^{3\lambda/4 + \lambda^\prime/4, \sigma}} < \epsilon$, $E(1) < \epsilon$, $\sup_{t \in [0,1]} \norm{1-v^\prime(t)}_\infty < 6/10$ with
\begin{align}
\int_0^1 \left[ CK_\lambda(\tau) + CK_w(\tau) + CK_w^{v,2}(\tau) + CK_\lambda^{v,2}(\tau) \right. & \nonumber \\ & \hspace{-7cm} \left. + K_v^{-1}\left( CK_w^{v,1}(\tau) + CK_\lambda^{v,1}(\tau) \right) +  K_v^{-1}\sum_{i=1}^2\left(CCK_w^i(\tau) + CCK_\lambda^i(\tau)\right) \right] d\tau \leq 2\epsilon^2. \label{ineq:CKctrlShort}
\end{align}
\end{lemma}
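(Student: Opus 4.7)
The proof is essentially a short-time / soft analysis argument, so the plan is to package classical local Gevrey well-posedness for 2D Euler together with the fact that on $[0,1]$ the coordinate change $(x,y) \mapsto (z,v)$ is a small perturbation of the identity whenever $\epsilon'$ is small. The plan is to first bound $\omega(t)$ on $[0,1]$ in the original variables using existing Gevrey propagation results, then transfer these bounds to $f$, $v'$, $[\partial_t v]$ in the new variables, and finally convert pointwise-in-time bounds into the stated $E(1)$ bound and CK-integral bounds by using that the interval $[0,1]$ has length $1$.

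First I would invoke the local-in-time Gevrey well-posedness theorem of Foias--Temam, Levermore--Oliver, Kukavica--Vicol (already cited in the paper): for $\omega_{in} \in \G^{\lambda_0,\sigma}$ with $\lambda_0 > 3\lambda/4 + \lambda'/4$ (and an analogous Sobolev correction), there is $T_* = T_*(\|\omega_{in}\|_{\G^{\lambda_0,\sigma}})$ such that on $[0,T_*]$ the solution lies in $\G^{3\lambda/4+\lambda'/4,\sigma}$ with norm controlled by the data. Taking $\epsilon'$ small enough to force $T_* > 1$ gives $\sup_{t \in [0,1]} \|\omega(t)\|_{\G^{3\lambda/4+\lambda'/4,\sigma}} \leq C\epsilon'$, which can be made smaller than any prescribed $\epsilon$ by shrinking $\epsilon'$.

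Next I would pass to the $(z,v)$ coordinates. The spatial-moment assumption $\int |y\omega_{in}|\,dxdy < \epsilon'$ together with the Biot--Savart law gives $\|U^x\|_{L^\infty_tL^2_y} \lesssim \epsilon'$ on $[0,1]$, hence from \eqref{def:v} the map $y \mapsto v(t,y)$ is a near-identity diffeomorphism with $\|v-y\|_\infty + \|v'-1\|_\infty \lesssim \epsilon'$, and composition with this map preserves Gevrey norms up to a constant (see, e.g., the discussion in \S\ref{apx:Gev}). This yields the same Gevrey smallness for $f(t) = \omega(t,x(z,v),y(v))$ and, via \eqref{def:pdevp} integrated in time, the bound $\|v'-1\|_\infty \leq 6/10$ and $\|A^R(v'-1)(1)\|_2 \lesssim \epsilon'$. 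The estimate on $[\partial_t v]$ follows from \eqref{def:ddtv}: to leading order $[\partial_t v] = O(\epsilon'/t)$ on the unit interval but the explicit formula shows $[\partial_t v](t,v)$ is itself $O(\epsilon')$ by cancellation of the time-average, after which \eqref{eq:tildeu0_moment} and \eqref{def:barh2} allow one to estimate $v'\partial_v[\partial_t v]$ in the same class; this gives smallness of $E_v(1)$ and hence $E(1) \leq \epsilon$ after choosing $\epsilon'$ small enough.

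Finally, to obtain \eqref{ineq:CKctrlShort}, I would observe that on $[0,1]$ the weight $\dot\lambda(t)$ is bounded, the multiplier $J$ and its time-derivative factors are comparable to their initial values (since the critical intervals $\mathbf I_{k,\eta}$ activate only for $t \geq 2\sqrt{|\eta|}$, none of which fire in a way that produces growth in $[0,1]$), and the $CK$ integrands are each pointwise bounded by constants times squared Gevrey norms already controlled by the previous step. Integrating over $[0,1]$ loses only a factor of the interval length, so choosing $\epsilon'$ sufficiently small gives the $2\epsilon^2$ bound. The main (minor) technical obstacle will be verifying the composition estimate with enough precision that the Sobolev correction $\sigma$ and the $A^R$ multiplier survive the change of variables cleanly; this is handled by Lemmas in \S\ref{apx:Gev} on Gevrey composition together with the near-identity bound on $v-y$.
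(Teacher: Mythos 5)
Your proposal is correct and follows essentially the same route as the paper: Gevrey propagation for 2D Euler on $[0,1]$ at an index slightly above $3\lambda/4+\lambda'/4$, transfer to the $(z,v)$ variables via the near-identity coordinate change using the Gevrey composition and inverse-function lemmas of the appendix, and then bounding the CK integrands pointwise and integrating over the unit interval. The only place where a bit more care is needed than your sketch suggests is that the integrands such as $CK_\lambda$ and $CK_w$ carry an extra half-derivative (or the factor $\sqrt{\partial_t w/w}$) relative to $\norm{Af}_2^2$, which is why the paper propagates the solution at a strictly larger index $\bar\lambda$ and absorbs the loss with \eqref{ineq:SobExp}; your step one already provides this margin since $\lambda_0>3\lambda/4+\lambda'/4$.
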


By Lemma \ref{lem:shorttime}, for the rest of the proof we may focus on times $t \geq 1$. 
Let $I_E$ be the connected set of times $t \geq 1$ such that the bootstrap hypotheses (B1-B3) are all satisfied.  
We will work on regularized solutions for which we know $E(t)$ takes values continuously in time, and hence $I_E$ is a closed interval $[1,T^\star]$ with $T^\star > 1$.
    The bootstrap is complete if we show that $I_E$ is also open, which is the purpose of the following proposition, the proof of which constitutes the majority of this work. 

\begin{proposition}[Bootstrap] \label{lem:bootstrap}
There exists an $\epsilon_0 \in (0,1/2)$ depending only on $\lambda,\lambda^\prime,s$ and $\sigma$ such that if $\epsilon < \epsilon_0$, and on $[1,T^\star]$ the bootstrap hypotheses (B1-B3) hold, then for $\forall \, t \in [1,T^\star]$,
\begin{enumerate} 
\item $E(t) < 2\epsilon^2$,
\item $\norm{1-v^\prime}_\infty < \frac{5}{8}$,
\item and the CK controls satisfy:
\begin{align*} 
\int_0^t \left[ CK_\lambda(\tau) + CK_w(\tau) + CK_w^{v,2}(\tau) + CK_\lambda^{v,2}(\tau) \right. & \\ & \hspace{-7cm} + \left. K_v^{-1}\left( CK_w^{v,1}(\tau) + CK_\lambda^{v,1}(\tau) \right) +  K_v^{-1}\sum_{i=1}^2\left(CCK_w^i(\tau) + CCK_\lambda^i(\tau)\right) \right] d\tau \leq 6\epsilon^2,   
\end{align*}
\end{enumerate}
from which it follows that $T^\star = +\infty$.
\end{proposition}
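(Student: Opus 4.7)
The plan is a standard continuity/bootstrap argument: we differentiate the energy $E(t)$ and the quantity $\|v'-1\|_\infty$ in time, use the equations \eqref{Euler2}, \eqref{def:vPDE} to rewrite the time derivatives, and show that under the a priori hypotheses (B1)--(B3) every resulting nonlinear contribution is controlled by $\epsilon$ times one of the CK dissipation terms, plus harmless lower-order pieces of size $\epsilon^3$. Combined with the initial-time bounds from Lemma \ref{lem:shorttime}, this closes (B1)--(B3) with strictly smaller constants. Concretely, I would write
\[
\frac{d}{dt}\tfrac{1}{2}\norm{A f}_2^2 = -CK_\lambda(t) - CK_w(t) - \langle A f, A(u\cdot \nabla f)\rangle,
\]
where the two CK terms come from $\dot\lambda<0$ and $\dot J/J<0$ (via the resonant weight $w$), and analogous identities for $\|A^R(v'-1)\|_2^2$, $\langle t\rangle^{4-K_D\epsilon}\|[\partial_tv]\|_{\G^{\lambda(t),\sigma-6}}^2$ and $\langle t\rangle^{2+2s}\|A\jap{\partial_v}^{-s}v'\partial_v[\partial_tv]\|_2^2$ from \eqref{def:pdevp}--\eqref{def:ddtv}. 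The strategy is to absorb the nonlinear cross-terms into the CK quantities, thereby bounding $\int_0^t(\cdots)\,d\tau$ by $2\epsilon^2+C\epsilon^3$ and $E(t)$ by $\epsilon^2+C\epsilon^3$.

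For the main transport term $\langle Af,A(u\cdot\nabla f)\rangle$ I would use a Littlewood--Paley paraproduct decomposition $u\cdot\nabla f = T_u\nabla f + T_{\nabla f}u + \mathcal R(u,\nabla f)$. The transport piece is commutator-like: writing $A$-commutators with the low-frequency $u$, one pays a small number of derivatives on $u$ (which is bounded in high regularity by the elliptic estimate $\phi=\Delta_t^{-1}f$ from Proposition \ref{prop:CoefControl_Real} and the decay $v'\nabla^\perp \phi = O(\langle t\rangle^{-2})$), while the remaining high-frequency $\nabla f$ is absorbed into $\|Af\|_2$. The remainder $\mathcal R$ is handled analogously, the bidegree being made mild by the Gevrey algebra properties (Lemma \ref{lem:GevProdAlg}). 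Throughout, the factor $v'$ and the second summand $(0,[\partial_tv])$ of $u$ are controlled by $E_v$.

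For the coordinate system one invokes Proposition \ref{prop:CoefControl_Real} to get $v'-1$, $v''$, $[\partial_tv]$ and $\partial_v[\partial_tv]$ in $\G^{\lambda(t),\sigma'}$ for appropriate $\sigma'$, driven by forcing $\sim f_0$ and $\langle \nabla^\perp P_{\neq 0}\phi\cdot\nabla \tilde u\rangle$. Plugging these into the energy identities for $E_v$ produces CCK terms (which appear with prefactor $K_v^{-1}$ in (B3)); choosing $K_v$ large enough makes their contribution strictly smaller than $\epsilon^2$. The sup-norm bound $\|v'-1\|_\infty<5/8$ follows from $\|A^R(v'-1)\|_2\lesssim \epsilon$ and Sobolev embedding via $\sigma>12$, since $E(t)\leq 2\epsilon^2$ yields an improvement from $3/4$ to $5/8$ once $\epsilon$ is small.

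The main obstacle, and where the toy-model design of $J$ and $w$ is essential, is the \emph{reaction} part of the paraproduct at the Orr critical times $t\sim \eta/k$. On the intervals $\mathbf I_{k,\eta}$ a low-frequency vorticity mode can force a high-frequency one and generically produce linear-in-time growth (the echo cascade); the multiplier $J_k(t,\eta)$ is built precisely so that the prefactor $\dot w/w$ in $CK_w$ dominates the reaction term, turning what would be growth into loss of regularity. Executing this requires carefully tracking which frequencies are resonant (Gevrey-$1/s$ with $s>1/2$ is tight here), and using the ellipticity estimate for $\Delta_t^{-1}$ in new coordinates, whose degeneracy at the critical times matches the loss built into $J$. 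Once these cancellations are secured, the resulting bound has the schematic form $|\text{reaction}|\lesssim \epsilon\bigl(CK_w+CK_w^{v,2}\bigr)$, which inserted into the energy inequality and integrated in time closes (B1) and (B3). A standard openness/continuity argument on $I_E$ then forces $T^\star=+\infty$.
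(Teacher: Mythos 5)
Your proposal is correct and follows essentially the same route as the paper: the same energy functional, the same paraproduct splitting into transport, reaction and remainder, the same use of the toy-model weight $w$ to convert echo growth into the $CK_w$ dissipation, the precision elliptic estimate matching the degeneracy of $\Delta_t$ at critical times, and the separate coordinate-system energy with the $K_v^{-1}$-weighted CCK terms. The only slip is attributing the elliptic estimate for $\phi = \Delta_t^{-1}f$ to Proposition \ref{prop:CoefControl_Real}; it is Lemma \ref{lem:LossyElliptic} and Proposition \ref{lem:PrecisionEstimateNeq0} that play that role.
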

The remainder of the section is devoted to the proof of Proposition \ref{lem:bootstrap}, the primary step being to show that on $[1,T^\star]$,  we have 
\begin{align}  
E(t) + \frac{1}{2}\int_1^t \left[ CK_\lambda(\tau) + CK_w(\tau) + CK_w^{v,2}(\tau) + CK_\lambda^{v,2}(\tau) \right. & \nonumber \\ & \hspace{-9cm} \left. + K_v^{-1}\left( CK_w^{v,1}(\tau) + CK_\lambda^{v,1}(\tau) \right) +  K_v^{-1}\sum_{i=1}^2\left(CCK_w^i(\tau) + CCK_\lambda^i(\tau)\right) \right] d\tau \leq E(1) + K \epsilon^3 \label{ineq:Ectrl}
\end{align} 
for some constant $K$ which is independent of $\eps$ and $T^\star$. 
If $\epsilon$ is sufficiently small then \eqref{ineq:Ectrl} implies Proposition \ref{lem:bootstrap}. 
Indeed, the control $\norm{1-v^\prime} < 5/8$ is an immediate consequence of (B1) by Sobolev embedding for $\epsilon$ sufficiently small.   

To prove \eqref{ineq:Ectrl}, it is natural to compute the time evolution of $E(t)$,
\begin{align*} 
\frac{d}{dt}E(t) & = \frac{1}{2}\frac{d}{dt}\int \abs{Af}^2 dx + \frac{d}{dt}E_{v}(t) 
\end{align*}
The first contribution is of the form 
\begin{align} 
\frac{1}{2}\frac{d}{dt}\int \abs{Af}^2 dx  
& = -CK_\lambda - CK_{w} - \int Af A(u \cdot \grad f) dx, \label{ineq:timeevoE}
\end{align}
where the CK stands for `Cauchy-Kovalevskaya' since these three terms arise from the progressive weakening of the norm in time, and are expressed as 
\begin{subequations} \label{def:CKterms}
\begin{align}
CK_\lambda & = -\dot{\lambda}(t)\norm{\abs{\grad}^{s/2}Af}_2^2 \\ 
CK_{w} & = \sum_k\int\frac{\partial_t w_k(t,\eta)}{w_k(t,\eta)} e^{\lambda(t)\abs{k,\eta}^{s}}\jap{k,\eta}^\sigma \frac{e^{\mu\abs{\eta}^{1/2}}}{w_k(t,\eta)}A_{k}(t,\eta)\abs{\hat{f}_k(t,\eta)}^2 d\eta. 
\end{align}   
\end{subequations}
In what follows we define
\begin{subequations}
\begin{align} 
\tilde{J}_k(t,\eta) & = \frac{e^{\mu\abs{\eta}^{1/2}}}{w_k(t,\eta)}, \label{def:tildeJB} \\ 
\tilde{A}_k(t,\eta) & = e^{\lambda(t)\abs{k,\eta}^{s}}\jap{k,\eta}^\sigma  \tilde J_k(t,\eta). 
\end{align} 
\end{subequations}
Note that $\tilde A \leq A$ and if $\abs{k} \leq  \abs{\eta}$ then $A \lesssim \tilde A$.

Strictly speaking, equality  \eqref{ineq:timeevoE} is not rigorous since it involves a derivative of $Af$, which is not a priori well-defined. 
 To make this calculation rigorous, we have first to approximate the initial data 
of \eqref{def:2DEuler} by (for instance) analytic initial data  and use that the global 
solutions of \eqref{def:2DEuler} stay analytic for all time (see 
\cite{BB77,FoiasTemam89,FT98}). 
Hence, we can perform all calculations on these solutions with regularized 
initial data and then perform a passage to the limit to infer that \eqref{ineq:Ectrl} still holds.
Similarly, the bootstrap is performed on these regularized solutions for which $E(t)$ takes values continuously in time.  

To treat the main term in \eqref{ineq:timeevoE}, begin by integrating by parts, as in the techniques \cite{FoiasTemam89,LevermoreOliver97,KukavicaVicol09}
\begin{align} 
\int Af A(u \cdot \grad f) dx = -\frac{1}{2}\int \grad \cdot u \abs{Af}^2 dx + \int Af \left[ A(u \cdot \grad f) - u \cdot \grad Af \right] dx. \label{eq:Aenergu}
\end{align} 
Notice that the relative velocity is not divergence free:
\begin{align*} 
\grad \cdot u = \partial_{v}[\partial_tv] + \partial_{v}v^\prime \partial_{z}\phi. 
\end{align*} 
The first term is controlled by the bootstrap hypothesis (B1). 
For the second term we use the `lossy' elliptic estimate, Lemma \ref{lem:LossyElliptic}, 
which shows that under the bootstrap hypotheses we have 
\begin{align} 
\norm{P_{\neq 0}\phi(t)}_{\G^{\lambda(t),\sigma-3}} \lesssim \frac{\epsilon}{\jap{t}^2}. \label{ineq:LossyIntro}
\end{align}
Therefore, by Sobolev embedding, $\sigma > 5$ and the bootstrap hypotheses, 
\begin{align} 
\abs{\int \grad \cdot u \abs{Af}^2 dx} \leq \norm{\grad u}_{\infty} \norm{Af}_2^2 \lesssim \frac{\epsilon}{\jap{t}^{2-K_D\epsilon/2}}\norm{Af}_2^2 \lesssim \frac{\epsilon^3}{\jap{t}^{2-K_D\epsilon/2}}. \label{ineq:div}
\end{align}

To handle the commutator, $\int Af \left[ A(u \cdot \grad f) - u \cdot \grad Af \right] dx$, we use a paraproduct decomposition (see e.g. \cite{Bony81,BCD11}).
Precisely, we define three main contributions: \emph{transport}, \emph{reaction} and \emph{remainder}: 
\begin{align}  
\int Af \left[ A(u \cdot \grad f) - u \cdot \grad Af \right] dx = \frac{1}{2\pi}\sum_{N \geq 8}T_N + \frac{1}{2\pi}\sum_{N \geq 8}R_N +  \frac1{2\pi} \mathcal{R}, \label{def:commdiv} 
\end{align}
where (the factors of $2\pi$ are for future notational convenience)
\begin{align*} 
T_N & = 2\pi\int Af \left[ A(u_{<N/8} \cdot \grad f_N) - u_{<N/8} \cdot \grad Af_N \right] dx \\ 
R_N & = 2\pi\int Af \left[ A(u_{N} \cdot \grad f_{<N/8}) - u_{N} \cdot \grad Af_{<N/8} \right] dx \\ 
\mathcal{R} & = 2\pi\sum_{N \in \mathbb D}\sum_{\frac{1}{8}N \leq N^\prime \leq 8N} \int Af \left[ A(u_{N} \cdot \grad f_{N^\prime}) - u_{N} \cdot \grad Af_{N^\prime} \right] dx. 
\end{align*}
Here $N \in \mathbb D = \set{\frac{1}{2},1,2,4,...,2^j,...}$ and $g_N$ denotes the $N$-th Littlewood-Paley projection and $g_{<N}$ means the Littlewood-Paley projection onto frequencies less than $N$ (see \S\ref{Apx:LPProduct} for the Fourier analysis conventions we are taking). 
Formally, the paraproduct decomposition \eqref{def:commdiv} represents a kind of `linearization' for the evolution of higher frequencies around the lower frequencies.
The terminology `reaction' is borrowed from Mouhot and Villani \cite{MouhotVillani11} (see \S\ref{sec:Conclusion} for more information).

Controlling the transport contribution is the subject of \S\ref{sec:Transport}, in which we prove: 
\begin{proposition}[Transport] \label{prop:Transport}
Under the bootstrap hypotheses, 
\begin{align*} 
\sum_{N \geq 8}\abs{T_N} \lesssim \epsilon CK_\lambda + \epsilon CK_{w} + \frac{\epsilon^3}{\jap{t}^{2 - K_D\epsilon/2}}. 
\end{align*} 
\end{proposition}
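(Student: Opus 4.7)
The plan is to adapt the standard Gevrey commutator estimate (as in \cite{FoiasTemam89,LevermoreOliver97,KukavicaVicol09,MouhotVillani11}) to the unbalanced multiplier $A$. On the Fourier side, the transport integrand reads
\begin{equation*}
\widehat{[A,u_{<N/8}\cdot\grad]f_N}(k,\eta) = \sum_{l}\int \bigl[A_k(\eta) - A_{k-l}(\eta-\xi)\bigr]\,\hat u_{l,<N/8}(\xi)\cdot i(k-l,\eta-\xi)\,\hat f_{k-l,N}(\eta-\xi)\, d\xi,
\end{equation*}
and the paraproduct localization enforces $\abs{l,\xi}\leq N/8$ and $\abs{k-l,\eta-\xi}\approx\abs{k,\eta}\approx N$. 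A mean-value expansion of the symbol difference reduces the commutator to $\grad A$, and I would split the logarithmic gradient
\begin{equation*}
\frac{\grad A}{A}(k,\eta) = s\lambda(t)\abs{k,\eta}^{s-2}(k,\eta) \;+\; \frac{\sigma\,(k,\eta)}{\jap{k,\eta}^2} \;+\; \frac{\grad J_k(t,\eta)}{J_k(t,\eta)}
\end{equation*}
into its Gevrey, Sobolev, and resonant-weight pieces, then bound each contribution separately.

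For the Gevrey piece the dominant factor is $\abs{l,\xi}\cdot\abs{k,\eta}^{s-1}\cdot\abs{k-l,\eta-\xi}\lesssim\abs{l,\xi}\cdot\abs{k,\eta}^s$. Since $\abs{l,\xi}\leq\abs{k,\eta}/8$, the weight $\abs{k,\eta}^s$ can be distributed symmetrically as $\abs{k,\eta}^{s/2}\abs{k-l,\eta-\xi}^{s/2}$, placing $\abs{\grad}^{s/2}$ on each $Af$ factor, while the remaining $\abs{l,\xi}$ is absorbed into $\grad u$. Cauchy--Schwarz, Plancherel, and a standard almost-orthogonality summation in $N$ then produce an upper bound of the shape $\norm{\grad u}_\infty\cdot\norm{\abs{\grad}^{s/2}Af}_2^2$. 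Using the lossy elliptic estimate \eqref{ineq:LossyIntro} together with the $E_v$-control on $[\partial_t v]$ and Sobolev embedding (enabled by $\sigma>5$), I obtain $\norm{\grad u}_\infty\lesssim\epsilon\jap{t}^{-2+K_D\epsilon/2}$. Since $-\dot\lambda(t)\sim\jap{t}^{-2\tilde q}$ with $2\tilde q<2-K_D\epsilon/2$, the time decay of $u$ comfortably beats $-\dot\lambda$ and the Gevrey contribution is absorbed into $\epsilon CK_\lambda$. The Sobolev piece has one fewer power of frequency and falls directly into the $\epsilon^3\jap{t}^{-2+K_D\epsilon/2}$ remainder without requiring CK absorption.

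For the resonant-weight piece I would import from the construction in \S\ref{sec:Growth} the structural identification of $\grad J/J$ with (up to controlled errors) $\sqrt{\partial_t w/w}$ squared, so that one half of this weight can be deposited on $\tilde A f$ and the other half on $\tilde A f_N$, reconstructing two factors of the $CK_w$ integrand. The symmetrization is legitimate provided the ratio $J_{k-l}(t,\eta-\xi)/J_k(t,\eta)$ is bounded under $\abs{l,\xi}\leq\abs{k,\eta}/8$, in which case Cauchy--Schwarz and summation in $N$ yield an upper bound $\norm{u}_{\mathcal{G}^{\lambda(t),\sigma-3}}\cdot CK_w\lesssim\epsilon\,CK_w$.

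The hard part is precisely this last hypothesis: the multiplier $J_k(t,\eta)$ is not a smooth symbol, since $w_k(t,\eta)$ is engineered to jump across the resonant intervals $\I_{k,\eta}$, and different wavenumbers $k$ and $k-l$ carry genuinely different resonant structures. I would therefore split the paraproduct sum into a ``same-resonance'' regime, where the mean-value expansion is justified and the argument above applies, and a ``different-resonance'' regime, where $J_k(t,\eta)/J_{k-l}(t,\eta-\xi)$ can be large. In the latter regime I would abandon the Taylor expansion and instead exploit the pointwise upper bounds on $w$ from \S\ref{sec:Growth} to pay a polynomial loss in $N$, and then recover this loss from the decay of $u$ combined with the Gevrey regularity gap $\lambda(t)-\lambda'>0$; this is the step where the restriction $s>1/2$ and the choice of $\tilde q$ in \eqref{def:lambdat} are essential, exactly as in the design of the toy model.
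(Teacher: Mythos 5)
Your overall architecture — splitting the multiplier difference into Gevrey, Sobolev, and $J$ pieces, distributing $\abs{\grad}^{s/2}$ symmetrically onto the two $Af$ factors, and paying the remaining full derivative with $\norm{\grad u}_\infty\lesssim\epsilon\jap{t}^{-2+K_D\epsilon/2}$ from the lossy elliptic estimate — is the paper's decomposition $T_N=T_{N;1}+T_{N;2}+T_{N;3}$, and your treatment of the Gevrey and Sobolev pieces is correct. The gap is in the ``different-resonance'' regime of the $J$ piece, which is precisely the case the whole construction of $w$ exists to handle. When $t\in I_{k,\eta}\cap I_{k,\xi}$ but the high-frequency $f$ factor sits at wavenumber $l\neq k$, Lemma \ref{lem:Jswap} gives $J_k(\eta)/J_l(\xi)\lesssim\abs{\eta}/k^2$, a loss that can be as large as $N$ itself. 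Your proposed recovery — ``pay a polynomial loss in $N$ \ldots and recover it from the decay of $u$ combined with the Gevrey regularity gap $\lambda(t)-\lambda'>0$'' — fails on both counts. The gap $\lambda(t)-\lambda'$ is between the evolving index and the final one; both $Af$ factors in the energy estimate are measured at the \emph{same} index $\lambda(t)$, so there is no index gap to convert into powers of the high frequency $N$ (the paraproduct gap $c\lambda<\lambda$ only absorbs losses polynomial in the \emph{low} frequency $\abs{k-l,\eta-\xi}$). And the fixed $t^{-2}$ decay of $u$ cannot by itself beat a loss that grows with $N$ uniformly over all $N$.

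What actually saves this case is structural, not a brute-force loss/recovery: on the support of $\chi^D$ one has $t\approx\eta/k$, so the loss factors as $\abs{l,\xi}\cdot\frac{\abs{\eta}}{k^2}\approx t^2\cdot\frac{\abs{l,\xi}}{\abs{\eta}}\lesssim t^2$, and the $t^2$ is cancelled \emph{exactly} by the $O(t^{-2})$ decay of $P_{\neq 0}u$ — crucially available because $k\neq l$ forces the velocity factor onto a nonzero $z$-mode. Moreover the sharp form of the swap, \eqref{ineq:RatJ2partt}, supplies two factors $\sqrt{\partial_t w_k(\eta)/w_k(\eta)}\,\sqrt{\partial_t w_l(\xi)/w_l(\xi)}$ alongside the $\abs{\eta}/k^2$; without them you would end with $\epsilon\norm{\tilde Af_{\sim N}}_2^2$ carrying no time decay, which is not time-integrable and cannot close the bootstrap (B3). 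With them the bound is $\epsilon\,CK_w$, which is what the statement requires. You should also note that your mean-value/$\sqrt{\partial_t w/w}$ heuristic for the ``same-resonance'' regime breaks down for $t\lesssim\sqrt{\abs{\eta}}$, where $\partial_t w\equiv 0$ and $CK_w$ gives nothing; there one must instead gain $1/2$ a derivative directly from $J_k(\eta)/J_l(\xi)-1$ (Lemma \ref{lem:JTrans}), which is enough only because $1-s\leq 1/2$.
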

The proof of Proposition \ref{prop:Transport} uses ideas from the works of \cite{FoiasTemam89,LevermoreOliver97,KukavicaVicol09}. 
Since the velocity $u$ is restricted to `low frequency', we will have the available regularity required to apply \eqref{ineq:LossyIntro}.
However, the methods of  \cite{FoiasTemam89,LevermoreOliver97,KukavicaVicol09} do not adapt immediately since $J_k(t,\eta)$ is imposing slightly different regularities to certain frequencies, which is problematic.
Physically speaking, we need to ensure that resonant frequencies do not incur a very large growth due to nonlinear interactions with non-resonant frequencies (which are permitted to be slightly larger than the resonant frequencies). 
Controlling this imbalance is why $CK_w$ appears in Proposition \ref{prop:Transport}. 

Controlling the reaction contribution in \eqref{def:commdiv} is the subject of \S\ref{sec:RigReac}. 
Here we cannot apply \eqref{ineq:LossyIntro}, as an estimate on this term requires $u$ in the highest norm on which we have control, and hence we have no regularity to spare.
Physically, here in the reaction term is where the dangerous nonlinear effects are expressed and a great deal of precision is required to control them.
In \S\ref{sec:RigReac} we prove
\begin{proposition}[Reaction] \label{prop:ReactionIntro}
Under the bootstrap hypotheses, 
\begin{align} 
\sum_{N \geq 8}\abs{R_N} &  \lesssim \epsilon CK_\lambda + \epsilon CK_{w} + \frac{\epsilon^3}{\jap{t}^{2-K_D\epsilon/2}} + \epsilon CK_\lambda^{v,1} + \epsilon CK_w^{v,1} \nonumber \\  
 & \quad  + \epsilon\norm{\jap{\frac{\partial_v}{t\partial_z}}^{-1} \left(\partial_z^2 + (\partial_v-t\partial_z)^2\right)\left(\frac{\abs{\grad}^{s/2}}{\jap{t}^s}A + \sqrt{\frac{\partial_t w}{w}}\tilde A\right) P_{\neq 0}\phi}_2^2. \label{ineq:ReacIntro}
\end{align} 
\end{proposition}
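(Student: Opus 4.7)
\medskip

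\textbf{Proof proposal for Proposition \ref{prop:ReactionIntro}.}
The plan is to exploit the paraproduct structure: in $R_N$ the velocity factor $u_N$ carries the top frequency, so up to a commutator of lower order the multiplier $A$ effectively acts on $u_N$. Concretely, one writes
\[
R_N \;=\; 2\pi\int Af\,\bigl[A,\,u_N\cdot\bigr]\grad f_{<N/8}\,dx
\]
and reduces, by standard Littlewood--Paley bookkeeping, to an estimate of the form
$|R_N|\lesssim \norm{Af}_2\,\norm{A u_N}_2\,\norm{\grad f_{<N/8}}_\infty\,N^{-\alpha}$ for a small gain $\alpha$ coming from the frequency separation. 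After summing in $N$, the $\norm{\grad f_{<N/8}}_\infty$ is absorbed by Sobolev embedding using $\sigma>12$ and bootstrap (B1), while $\norm{Af}_2\lesssim\epsilon$. The heart of the matter is then to bound $\norm{Au_N}_2$ and convert it into the CK dissipation plus the last error term.

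First I would split $u=u^{(1)}+u^{(2)}$ with $u^{(1)}=(0,[\partial_tv])$ and $u^{(2)}=v'\grad^\perp P_{\neq0}\phi$. For the contribution $R_N^{(1)}$, only $[\partial_t v]$ appears, and the bootstrap energy $E_v$ already controls it in $\G^{\lambda(t),\sigma-6}$ with time-decay $\jap{t}^{-2+K_D\eps/2}$. Interpolating one $|\grad|^{s/2}$ between two factors of $A$ produces exactly $\eps\,CK_\lambda^{v,1}$, while separating the resonant/non-resonant pieces of $J$ and using $\partial_t w/w$ as in \eqref{def:CKterms} produces $\eps\,CK_w^{v,1}$; the remainder is subcritical and fits into $\epsilon^3\jap{t}^{-2+K_D\eps/2}$.

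For $R_N^{(2)}$, I would further paraproduct-decompose $v'\,\grad^\perp P_{\neq0}\phi$: the piece where $v'$ is at low frequency (the generic case) reduces to $\grad^\perp P_{\neq0}\phi$ at frequency $\sim N$, with $v'$ estimated by $\norm{v'-1}_\infty+1$ via (B2) plus a controlled gain from $E_v$. Writing $\phi=\Delta_t^{-1}P_{\neq0}f$ and symbolically decomposing
\[
\Delta_t \;=\; \bigl(\partial_z^2+(\partial_v-t\partial_z)^2\bigr) \;+\; \bigl(((v')^2-1)(\partial_v-t\partial_z)^2+v''(\partial_v-t\partial_z)\bigr),
\]
the first (``flat'') piece is precisely the operator appearing inside the last norm in \eqref{ineq:ReacIntro}. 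To convert $Au_N^{(2)}$ into useful quantities I split $A=\tilde A+(A-\tilde A)$: on the resonant intervals $\mathbf I_{k,\eta}$, the multiplier $J$ is designed so that $\tilde A/J\cdot J$ balances the vanishing of $k^2+(\eta-kt)^2$, and the corresponding loss is accounted for by the factor $\sqrt{\partial_t w/w}$, yielding the $\eps\,CK_w$ and $\eps\,CK_\lambda$ terms (the latter via the usual $|\grad|^{s/2}/\jap{t}^s$ interpolation). The off-resonant part of $J$ uses the $e^{\mu|k|^{1/2}}$ term and closes by straightforward Gevrey estimates.

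The residual contribution, coming from the coefficient part of $\Delta_t$ that cannot be inverted within the available regularity, is precisely what feeds into the localized cutoff $\jap{\partial_v/(t\partial_z)}^{-1}$ in the last term of \eqref{ineq:ReacIntro}; this factor is $O(1)$ only near Orr critical times and so it records just the unresolvable portion, to be absorbed later by the elliptic estimates. The main obstacle is the mismatch between the paraproduct frequency localization and the non-uniform multiplier $J$, which assigns different regularities to the resonant and non-resonant bands: one must verify that the Littlewood--Paley truncations $u_N$, $f_{<N/8}$ do not mix these bands in a way that destroys the $\sqrt{\partial_t w/w}$ bookkeeping. A secondary difficulty is that $\Delta_t^{-1}$ is a nonlinear operator with coefficient $(v')^2$, $v''$ of only limited regularity relative to $f$; here the rapid decay \eqref{ineq:LossyIntro} of $P_{\neq0}\phi$ is not available (unlike in the transport term), so one has to exploit the structure of $\Delta_t$ directly and accept that the remainder is bounded only in the weak, cutoff form appearing in \eqref{ineq:ReacIntro}.
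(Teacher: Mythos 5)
Your skeleton is right (split $u$ into the $[\partial_t v]$ part and the $v'\grad^\perp P_{\neq 0}\phi$ part, run a resonant/non-resonant case analysis governed by $w$, and convert the losses into $CK$ terms), but the proposal has two genuine gaps. First, the opening reduction to $\abs{R_N}\lesssim \norm{Af}_2\norm{Au_N}_2\norm{\grad f_{<N/8}}_\infty N^{-\alpha}$ cannot work: there is no frequency-separation gain $N^{-\alpha}$ in the reaction term (the commutator structure only helps the \emph{transport} term, where $u$ is at low frequency), and $\norm{A\grad^\perp\phi_N}_2$ is \emph{not} $O(\epsilon)$ near a critical time — at $t\sim \eta/k$ one has $\hat\phi_k(\eta)\sim \hat f_k(\eta)/k^2$, so $\norm{Au_N}_2$ can be as large as $t\norm{Af_N}_2$. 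The whole content of the proposition is the mode-by-mode comparison of $J_k(\eta)$ against $J_l(\xi)$ in the four cases (R/NR for $(k,\eta)$ and for $(l,\xi)$ separately, via Lemmas \ref{lem:wellsep} and \ref{lem:Jswap}), where the loss $\abs{\eta}/k^2$ in the doubly-resonant case is split as $\sqrt{\partial_t w/w}$ on \emph{both} the $f$ side (absorbed by $CK_w$) and the $\phi$ side; your write-up gestures at this but never confronts the quantitative step that makes it close, namely the pointwise multiplier inequalities \eqref{ineq:ReactionControl} showing that each of the four resulting weights on $\phi_N$ (e.g.\ $\jap{t}^{s}\abs{\grad}^{1-s/2}\chi^{NR}$, $\sqrt{w/\partial_t w}\,\abs{\grad}\,w_R/w_{NR}\,\chi^{R}$) is dominated by $\jap{\partial_v/(t\partial_z)}^{-1}(l^2+\abs{\xi-lt}^2)$ times the corresponding multiplier — i.e.\ that $w$ exactly matches the degeneracy of $\Delta_L$.

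Second, you misplace the role of $\Delta_t$ and of the prefactor $\jap{\partial_v/(t\partial_z)}^{-1}$. Proposition \ref{prop:ReactionIntro} never inverts $\Delta_t$: the right-hand side of \eqref{ineq:ReacIntro} is deliberately stated in terms of $\Delta_L=\partial_z^2+(\partial_v-t\partial_z)^2$ applied to $A\phi$, and the passage from that to $f$ plus the CCK coefficient terms is deferred entirely to Proposition \ref{lem:PrecisionEstimateNeq0}. The factor $\jap{\partial_v/(t\partial_z)}^{-1}$ is not a record of an "unresolvable" coefficient remainder; it is an \emph{extra gain} that the reaction multipliers happen to afford (proved in \eqref{ineq:ptwiseReac} and \eqref{ineq:ptwiseReac5}) and that is later spent in the elliptic estimate to recover the derivative hidden in $v''$. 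Relatedly, the $(1-v')$ correction must be handled by a separate paraproduct in $v$ (the paper's $R_N^{\epsilon,1}$), whose high-low piece requires swapping $J_k(\eta)$ for the resonant multiplier $J^R$ on the coefficient and hence the $A^R$ control of $v'-1$ from Proposition \ref{prop:CoefControl_Real}; a pointwise bound $\norm{v'-1}_\infty$ is not enough there.
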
 
The $CK^{v,1}$ terms are defined below in \eqref{ineq:CKcoefs}. 
The first step to controlling the term in \eqref{ineq:ReacIntro} involving $\phi$ is Proposition \ref{lem:PrecisionEstimateNeq0}, proved in \S\ref{sec:Precf}. This proposition treats $\Delta_t$ as a perturbation of $\partial_{zz} + (\partial_v - t\partial_z)^2$ and passes the multipliers in the last term of \eqref{ineq:ReacIntro} onto $f$ \emph{and} the coefficients of $\Delta_t$. 
Physically, these latter contributions are indicating the nonlinear interactions between the higher modes of $f$ and the coefficients $v^\prime$, $v^{\prime\prime}$ (which involve time-averages of $f_0$ \eqref{def:vPDE}). 
\begin{proposition}[Precision elliptic control] \label{lem:PrecisionEstimateNeq0} 
Under the bootstrap hypotheses, 
\begin{align} 
\norm{\jap{\frac{\partial_v}{t\partial_z}}^{-1} \left(\partial_z^2 + (\partial_v-t\partial_z)^2\right)\left(\frac{\abs{\grad}^{s/2}}{\jap{t}^s}A + \sqrt{\frac{\partial_t w}{w}}\tilde A\right) P_{\neq 0}\phi }_2^2 & \nonumber \\ & 
\hspace{-6cm} \lesssim CK_\lambda + CK_w  + \epsilon^2 \sum_{i=1}^2 CCK_\lambda^i + CCK_w^i, \label{ineq:PrecisionPhiNeq0}
\end{align} 
where the `coefficient Cauchy-Kovalevskaya' terms are given by
\begin{subequations} \label{def:QL}
\begin{align}
CCK_\lambda^1 & =  -\dot\lambda(t)\norm{\abs{\partial_v}^{s/2}A^R\left(1-(v^{\prime})^2\right) }_2^2, \\ 
CCK_w^1 & = \norm{\sqrt{\frac{\partial_t w}{w}}A^R\left(1-(v^{\prime})^2\right)}_2^2,  \\
CCK_\lambda^2 & = -\dot{\lambda}(t)\norm{\abs{\partial_v}^{s/2}\frac{A^R}{\jap{\partial_v}} v^{\prime\prime} }_2^2, \\
CCK_w^2 & = \norm{\sqrt{\frac{\partial_t w}{w}}\frac{A^R}{\jap{\partial_v}} v^{\prime\prime} }_2^2. 
\end{align}
\end{subequations}
\end{proposition}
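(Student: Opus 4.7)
The plan is to derive the bound directly from the elliptic equation $\Delta_t \phi = f$. Since $v'$ and $v''$ depend only on $(t,v)$, they commute with $P_{\neq 0}$, so restricting \eqref{def:Deltat} to non-zero $z$-modes and rearranging would give
\[
(\partial_z^2 + (\partial_v-t\partial_z)^2)P_{\neq 0}\phi \;=\; P_{\neq 0}f \;-\; \bigl[(v')^2-1\bigr](\partial_v-t\partial_z)^2 P_{\neq 0}\phi \;-\; v''(\partial_v-t\partial_z)P_{\neq 0}\phi.
\]
Denote $M := \jap{\partial_v/(t\partial_z)}^{-1}\bigl(\tfrac{|\grad|^{s/2}}{\jap{t}^s}A + \sqrt{\partial_t w/w}\,\tilde A\bigr)$. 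Since $M$ and $\partial_z^2 + (\partial_v-t\partial_z)^2$ are both Fourier multipliers in $(z,v)$ at fixed $t$, applying $M$ to the display above and taking $L^2$ norms squared reproduces exactly the LHS of \eqref{ineq:PrecisionPhiNeq0}, and it suffices to bound the three contributions on the right.

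For the forcing $P_{\neq 0}f$, the symbol of $\jap{\partial_v/(t\partial_z)}^{-1}$ is bounded by $1$ on non-zero $z$-frequencies, so $\|M P_{\neq 0}f\|_2^2 \lesssim \|\tfrac{|\grad|^{s/2}}{\jap{t}^s}A P_{\neq 0}f\|_2^2 + \|\sqrt{\partial_t w/w}\,\tilde A P_{\neq 0}f\|_2^2$. The second piece is $\leq CK_w$ by the definition in \eqref{def:CKterms}. The first equals $\jap{t}^{-2s}\||\grad|^{s/2}AP_{\neq 0}f\|_2^2$, and using $-\dot\lambda(t) \gtrsim \delta_\lambda\jap{t}^{-2\tilde q}$ from \eqref{def:lambdat} together with $\tilde q \leq s/8 + 7/16 \leq s$ (the latter valid precisely when $s \geq 1/2$), this is bounded by a constant multiple of $CK_\lambda$. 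The Gevrey threshold $s > 1/2$ of Theorem \ref{thm:Main} enters the estimate exactly at this point.

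For the two coefficient contributions I would run a paraproduct decomposition of each product, splitting into low-high (``reaction''), high-low (``transport''), and a balanced remainder. In the reaction pieces $M$ effectively lands on the coefficient; placing $(v')^2 - 1$ or $v''$ into the $A^R$-weighted norms of \eqref{def:QL} (using $A^R \gtrsim A_0$, and noting that the Sobolev factor $\jap{\partial_v}^{-1}$ in $CCK^2_\bullet$ precisely matches the first-order versus second-order difference between the $v''$ and $(v')^2 - 1$ terms) and controlling the low-frequency $\phi$ factor in $L^\infty$ via the lossy elliptic estimate \eqref{ineq:LossyIntro} produces the required $\epsilon^2(CCK^i_\lambda + CCK^i_w)$. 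In the transport pieces one extracts $\|(v')^2 - 1\|_\infty + \|v''\|_\infty \lesssim \epsilon$ from (B1) and Sobolev embedding; the leftover factor $M(\partial_v - t\partial_z)^j P_{\neq 0}\phi$ is then dominated symbol-wise by $M(\partial_z^2 + (\partial_v - t\partial_z)^2) P_{\neq 0}\phi$ (using $|\eta - tk|^j \lesssim k^2 + (\eta - tk)^2$ for $|k| \geq 1$ and $j \in \{1,2\}$) and absorbed into the LHS for $\epsilon$ small.

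The hard part will be the degeneracy of $\partial_z^2 + (\partial_v - t\partial_z)^2$ at the Orr critical times $t \approx \eta/k$, where the good symbol collapses to $k^2$. The multiplier $\jap{\partial_v/(t\partial_z)}^{-1}$ in $M$, equal on non-zero $z$-modes to $|tk|/\sqrt{(tk)^2+\eta^2}$, is designed precisely to dampen this and to match the controlled loss of regularity that $J$ encodes inside the resonant intervals $\mathbf{I}_{k,\eta}$. Carrying out the paraproduct analysis carefully enough that every frequency configuration produced by convolution with $(v')^2 - 1$ or $v''$ is captured by one of the $CCK$ norms or absorbed back into the LHS, while respecting these resonant windows and the structural properties of $w$ developed in Section \ref{sec:Growth}, is where the bulk of the technical work lies.
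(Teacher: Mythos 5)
Your proposal follows the paper's proof of Proposition \ref{lem:PrecisionEstimateNeq0} in \S\ref{sec:Precf} essentially step for step: perturb off $\Delta_L$, observe that the $P_{\neq 0}f$ contribution is controlled by $CK_\lambda + CK_w$ (your accounting via $-\dot\lambda(t)\gtrsim \jap{t}^{-2\tilde q}$ and $\tilde q \leq s/8+7/16\leq s$ is exactly where $s\geq 1/2$ enters), and handle the two coefficient terms by a paraproduct in $v$, sending the high-coefficient-frequency pieces into the $A^R$-weighted $CCK$ norms against the lossy elliptic estimate and absorbing the low-coefficient-frequency pieces into the left-hand side for $\epsilon$ small. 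The one step that would fail as literally written is your ``transport'' piece: one cannot extract $\norm{(v^\prime)^2-1}_\infty + \norm{v^{\prime\prime}}_\infty$ and leave the multiplier acting on $\phi$ alone, because the multiplier is frequency-dependent and multiplication by the coefficient shifts frequencies; the paper instead keeps the coefficient in a Gevrey norm (available from \eqref{ineq:coefbds}) and commutes the multiplier past the product using \eqref{ineq:BasicJswap} for the $J$-part and \eqref{ineq:partialtw_endpt} for the $\sqrt{\partial_t w/w}$-part --- the latter being why the low-high estimate for $\mathcal{M}_2$ produces both $\epsilon^2\norm{\mathcal{M}_1\Delta_L\phi}_2^2$ and $\epsilon^2\norm{\mathcal{M}_2\Delta_L\phi}_2^2$ on the right before absorption. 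You correctly flag that this multiplier bookkeeping, together with the further split of the high-low and remainder pieces according to whether the $z$-frequency dominates and the swap $J\to J^R$ of \eqref{ineq:JRswap} that places the coefficient in the resonant norm, is where the remaining work lies; with those devices the outline closes exactly as the paper's argument does.
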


The next step in the bootstrap is to provide good estimates on the coordinate system and the associated CK and CCK terms, a procedure that is detailed in \S\ref{sec:CoordControls}.  
The following proposition provides controls on $v^\prime-1$,  the CCK terms arising in \eqref{def:QL}, 
the pair $[\partial_t v]$, $v^\prime \partial_v [\partial_t v]$ and finally all of the $CK^{v,i}$ terms.
The norm defined by $A^R(t)$ is stronger than that defined by $A(t)$, which we use to measure $f$. 
 It turns out that we will be able to propagate this stronger regularity on $v^\prime - 1$ due to a time-averaging effect, derived via energy estimates on \eqref{def:vPDE}.  
By contrast, $[\partial_t v]$ is expected basically to have the regularity of $\tilde u_0$ and hence even \eqref{ineq:bhc} has $s$ fewer derivatives than expected. 
On the other hand, it has a significant amount of time decay, which near critical times can be converted into regularity. 

\begin{proposition}[Coordinate system controls] \label{prop:CoefControl_Real}
Under the bootstrap hypotheses, for $\epsilon$ sufficiently small and $K_v$ sufficiently large there is a $K > 0$ such that 
\begin{subequations} \label{ineq:CCKEEsts}
\begin{align} 
\norm{A^R(v^\prime - 1)(t)}_2^2 + \frac{1}{2}\int_1^t \sum_{i=1}^2 CCK_w^{i}(\tau) d\tau + \frac{1}{2}\int_1^t  \sum_{i=1}^2 CCK_\lambda^{i}(\tau) d\tau  & \leq \frac{1}{2}K_v \epsilon^2 \label{ineq:hc} \\ 
\jap{t}^{2+2s}\norm{\frac{A}{\jap{\partial_v}^{s}} v^\prime \partial_v[\partial_t v]  }_2^2 + \frac{1}{2}\int_1^t CK_\lambda^{v,2}(\tau) + CK_w^{v,2}(\tau) d\tau & \leq \epsilon^2 + K\epsilon^3 \label{ineq:bhc}  \\ 
\jap{t}^{4-K_D\epsilon} \norm{[\partial_t v]}_{\G^{\lambda(t),\sigma-6}}^2 & \leq \epsilon^2 + K\epsilon^3  \label{ineq:partialvintro} \\ 
\int_1^t CK_\lambda^{v,1}(\tau) + CK_w^{v,1}(\tau) d\tau & \leq \frac{1}{2} K_v \epsilon^2, \label{ineq:ckvctrl}
\end{align}
\end{subequations}
where the $CK^{v,i}$ terms are given by 
\begin{subequations} \label{ineq:CKcoefs} 
\begin{align}
CK_{w}^{v,2}(\tau) & = \jap{\tau}^{2+2s} \norm{\sqrt{\frac{\partial_t w}{w}} \frac{A}{\jap{\partial_v}^s} v^\prime \partial_v [\partial_t v](\tau)}_2^2 \\ 
CK_{\lambda}^{v,2}(\tau) & = \jap{\tau}^{2+2s} (-\dot{\lambda}(\tau))\norm{\abs{\partial_v }^{s/2}\frac{A}{\jap{\partial_v}^s}v^\prime \partial_v [\partial_t v](\tau)}_2^2 \\ 
CK_{w}^{v,1}(\tau) & = \jap{\tau}^{2+2s} \norm{\sqrt{\frac{\partial_t w}{w}} \frac{A}{\jap{\partial_v}^s}[\partial_t v](\tau)}_2^2 \\ 
CK_{\lambda}^{v,1}(\tau) & = \jap{\tau}^{2+2s} (-\dot{\lambda}(\tau))\norm{\abs{\partial_v }^{s/2}\frac{A}{\jap{\partial_v}^s}[\partial_t v](\tau)}_2^2. 
\end{align}
\end{subequations}
\end{proposition}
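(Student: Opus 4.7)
The plan is to run energy estimates on the coordinate-system PDEs \eqref{def:vPDE}, feeding in the decay of $P_{\neq 0}\phi$ from Proposition \ref{lem:PrecisionEstimateNeq0} and the $L^2$ control on $f$ from hypothesis (B1). The four bounds \eqref{ineq:hc}--\eqref{ineq:ckvctrl} should be treated as a coupled system and closed by choosing $K_v$ large and $\epsilon$ small at the end.

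For \eqref{ineq:hc} I would compute $\frac{1}{2}\frac{d}{dt}\norm{A^R(v^\prime-1)}_2^2$ using \eqref{def:pdevp}. The time-derivative falling on $A^R$ produces $-CCK_\lambda^1-CCK_w^1$ with favorable sign, after absorbing the harmless discrepancy between $1-(v^\prime)^2$ and $2(1-v^\prime)$ via (B2); the damping $-(v^\prime-1)/t$ is discarded; and the forcing $-f_0/t$ contributes $O(\epsilon^2)$ by (B1). The transport $[\partial_t v]\partial_v v^\prime$ is handled by a paraproduct in $(z,v)$: the high-low and remainder pieces are controlled by the $L^\infty$ decay of $\grad [\partial_t v]$ inherited from \eqref{ineq:partialvintro} and Sobolev embedding, while the reaction piece uses the Gevrey product inequalities of \S\ref{apx:Gev} together with \eqref{ineq:partialvintro}. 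The $CCK^2$ terms controlling $v^{\prime\prime}$ are reduced to $CCK^1$ quantities via the algebraic identity \eqref{def:vpp}, $v^{\prime\prime}=v^\prime \partial_v v^\prime$, combined with the Gevrey product rule to shift the derivative against $A^R/\jap{\partial_v}$.

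For \eqref{ineq:partialvintro} I would run an energy estimate on \eqref{def:ddtv} with weight $\jap{t}^{4-K_D\epsilon}$: the $\frac{2}{t}[\partial_t v]$ damping generates the $\jap{t}^{-4}$ rate, while the forcing $\frac{v^\prime}{t}\jap{\grad^\perp P_{\neq 0}\phi\cdot\grad\tilde u}$ is bounded in $\G^{\lambda(t),\sigma-6}$ by $\epsilon^2 \jap{t}^{-3+K_D\epsilon/2}$ via \eqref{ineq:LossyIntro} and Gevrey algebra (the small $K_D\epsilon$ power absorbs the slow growth of $\norm{A\tilde u}$ through the coordinate change). For \eqref{ineq:bhc} I would commute $v^\prime\partial_v$ through \eqref{def:ddtv}, substituting \eqref{def:pdevp} for $\partial_t v^\prime$, to derive the PDE \eqref{def:barh} for $\bar h=v^\prime\partial_v[\partial_t v]$. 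Its forcing splits into a $\phi$-quadratic contribution --- absorbed by Proposition \ref{lem:PrecisionEstimateNeq0} using the downgraded multiplier $A/\jap{\partial_v}^s$ --- and a dangerous piece losing one $v$-derivative on $f_0$; this latter loss is exactly the reason for measuring $\bar h$ in $A/\jap{\partial_v}^s$ rather than $A$, burning $s$ of the Gevrey-$1/s$ budget. The CK control \eqref{ineq:ckvctrl} then drops out by integrating \eqref{ineq:partialvintro} against $\partial_t w/w$ and $(-\dot\lambda)\abs{\partial_v}^s$ in time, using $\partial_t w/w \lesssim \jap{\eta-kt}^{-2}$ near critical times together with the $\jap{t}^{-4+K_D\epsilon}$ pointwise factor.

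The principal obstacle is the coupled nature of the bootstrap: the transport term in \eqref{def:pdevp} loses one $v$-derivative on the unknown $v^\prime$, so \eqref{ineq:hc} only closes with the decay of $\partial_v[\partial_t v]$ from \eqref{ineq:partialvintro}; in turn the derived equation \eqref{def:barh} driving \eqref{ineq:bhc} loses one $v$-derivative on $f_0$, paid out of the Gevrey budget and the proximate reason $s\geq 1/2$ enters, as noted after this proposition. Taking $K_v$ large enough to dominate the $\epsilon^3$ reaction errors in \eqref{ineq:hc} by $\frac{1}{2}K_v\epsilon^2$, and then $\epsilon$ small so that the $K\epsilon^3$ cross-coupling in \eqref{ineq:bhc}--\eqref{ineq:partialvintro} is negligible, closes the loop and yields the stated bounds.
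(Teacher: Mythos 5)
Your overall architecture (energy estimates on \eqref{def:vPDE}, coupled bootstrap, lossy elliptic input for \eqref{ineq:partialvintro}) is the right one, and your reading of \eqref{ineq:bhc} — that the derived equation for $\bar h = v^\prime\partial_v[\partial_t v]$ loses one derivative on the vorticity, which is paid for by the $\jap{\partial_v}^{-s}$ gap and is where $s\geq 1/2$ bites — is essentially correct (the loss falls on $P_{\neq 0}f$ in the forcing $\frac{1}{t}\jap{v^\prime\grad^\perp P_{\neq 0}\phi\cdot\grad f}$, not on $f_0$). But your treatment of \eqref{ineq:hc} has a genuine gap at its crux. You propose to discard the damping $-(v^\prime-1)/t$ and bound the forcing $-f_0/t$ by $O(\epsilon^2)$ using (B1). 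This fails twice over. First, the energy is measured in $A^R$, and $A^Rf_0$ is \emph{not} controlled by (B1): $A^R(t,\eta)$ exceeds $A_0(t,\eta)$ by a factor as large as $\abs{\eta}$ near every critical time, so $\norm{A^Rf_0}_2$ is simply not a bootstrapped quantity. Second, and more fundamentally, even granting such a bound the forcing contributes $\int_1^t \epsilon^2\tau^{-1}d\tau = \epsilon^2\log t$, which is not uniformly bounded — so neither $\norm{A^R(v^\prime-1)}_2^2$ nor, crucially, the time-integrated $CCK$ terms close. The entire point of the proposition (cf.\ Remark \ref{rmk:CoefWeak}) is that the naive estimate you describe only yields \eqref{ineq:CoeffBd_R} and is insensitive to the convergence of the shear; to get \eqref{ineq:hc} one must keep the combination $\frac{1}{t}(-f_0-h)=\bar h=v^\prime\partial_v[\partial_t v]$ intact as the forcing, and exploit that $\bar h$ decays at the integrable rate $\jap{t}^{-1-s}$ in the norm $A/\jap{\partial_v}^s$ — with the $s$-derivative gap and the resonant/non-resonant mismatch between $A^R$ and $A_0$ bridged precisely by the $\sqrt{\partial_tw/w}$ and $(-\dot\lambda)^{1/2}\abs{\partial_v}^{s/2}$ multipliers, i.e.\ by trading against $CK_w^{v,2}$ and $CK_\lambda^{v,2}$. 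This is also where $K_v$ large is actually needed: the term $\int A^Rh\,A^R\bar h$ is linear (not cubic) in $h$, so it is dominated by $\tfrac14 CK^h_w+\tfrac14 CK^h_\lambda$ plus $C$ times the $CK^{v,2}$ integrals, and one needs $K_v\gg C\delta_\lambda^{-2}$, not merely $K_v$ large enough to beat $\epsilon^3$ errors.

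A second, smaller gap is your derivation of \eqref{ineq:ckvctrl} by ``integrating \eqref{ineq:partialvintro} in time.'' The quantities $CK^{v,1}$ are measured in $A/\jap{\partial_v}^s$, which carries the multiplier $J$ (hence the factor $e^{\mu\abs{\eta}^{1/2}}w^{-1}$, a Gevrey-$2$ loss relative to any $\G^{\lambda,\sigma^\prime}$ norm) plus six more Sobolev derivatives than $\G^{\lambda(t),\sigma-6}$; no pointwise-in-time bound in the latter norm can control them at high frequency. The correct route is to split frequencies: at low frequency \eqref{ineq:partialvintro} suffices, while at high frequency one writes $[\partial_t v]_{>1}=\partial_v^{-1}\bigl((v^\prime)^{-1}\cdot v^\prime\partial_v[\partial_t v]\bigr)_{>1}$ and controls the result by the $CK^{v,2}$ integrals together with the product lemma (Lemma \ref{lem:ProdAlg}) applied to the geometric series for $(v^\prime)^{-1}$. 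Relatedly, the reaction piece of the transport term in your estimate for \eqref{ineq:hc} cannot be handled by ``\eqref{ineq:partialvintro} plus Gevrey products'' for the same reason — the high-frequency factor $[\partial_t v]$ there must be paid for out of the integrated $CK^{v,1}$ budget, which is exactly why \eqref{ineq:ckvctrl} appears as part of the coupled system rather than as a corollary.
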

Note that neither \eqref{ineq:bhc} nor \eqref{ineq:partialvintro} controls the other: at higher frequencies the former is stronger than the latter and at lower frequencies the opposite is true. 
One of the advantages of this scheme is that $v^\prime \partial_v [\partial_t v]$ satisfies an 
equation that is simpler than $[\partial_t v]$ and so is easier to get good estimates on.
Both \eqref{ineq:bhc} and \eqref{ineq:partialvintro} are linked to the convergence of the background shear flow; in particular, they rule out that the background flow oscillates or wanders due to nonlinear effects.  
An interesting structure arises in the coupling between $v^\prime \partial_v [\partial_t v]$ and $f$ that is similar to the abstract system (see \eqref{def:barh})
\begin{align*}
\partial_t f & = \frac{1}{\jap{t}^2}g \\ 
\partial_t g & = \frac{1}{\jap{t}^2}\abs{\grad} f.
\end{align*}
One can verify that this system is globally well-posed in Gevrey-$\frac{1}{s}$ for $s \geq 1/2$ using an energy estimate coupling $f$ and $\jap{\grad}^{-s}g$, but not if $s < 1/2$.  

Finally we need to control the remainder term in \eqref{def:commdiv}. 
This is straightforward and is detailed in \S\ref{sec:RemainderEnergy}.
There we prove
\begin{proposition}[Remainders] \label{prop:RemainderIntro}
Under the bootstrap hypotheses,
\begin{align*} 
\mathcal{R} \lesssim \frac{\epsilon^3}{\jap{t}^{2 - K_D\epsilon/2}}. 
\end{align*}
\end{proposition}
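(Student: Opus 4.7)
The remainder $\mathcal{R}$ is the most benign of the three paraproduct pieces in \eqref{def:commdiv} because both factors $u_N$ and $f_{N^\prime}$ are localized at comparable Littlewood--Paley scales $N \sim N^\prime$. Consequently the product $u_N \cdot \nabla f_{N^\prime}$ has Fourier support at frequencies $\lesssim N$, and the weight $A$ applied to the product can be compared, up to harmless constants, to $A$ applied to either input by Gevrey sub-additivity $|(k,\eta)+(k^\prime,\eta^\prime)|^s \leq |(k,\eta)|^s + |(k^\prime,\eta^\prime)|^s$. No fine commutator cancellation is needed: placing $A$ on one factor and using Bernstein to put the other in $L^\infty$, then symmetrizing, gives a schematic estimate of the form
$$|R_N^{(\mathrm{rem})}| \lesssim \|Af\|_2 \cdot N \left( \|A u_N\|_2 \|\jap{\nabla}^{\sigma_0} f_{N^\prime}\|_2 + \|A f_{N^\prime}\|_2 \|\jap{\nabla}^{\sigma_0} u_N\|_2 \right),$$
for some fixed $\sigma_0 = \sigma - O(1)$ whose Sobolev loss is absorbed by the correction $\jap{k,\eta}^\sigma$ with $\sigma > 12$ that is already built into $A$.

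The next step is to sum in $N$ by Cauchy--Schwarz. The factor $\|Af\|_2$ is controlled by the bootstrap hypothesis (B1), which yields $\|Af\|_2 \leq 2\epsilon$. What remains is to produce a norm of $u$ carrying one derivative of Sobolev regularity beyond what $A$ measures, together with integrable-in-time decay. I would unpack $u = (0,[\partial_t v]) + v^\prime \nabla^\perp P_{\neq 0}\phi$: the zero-mode piece is controlled by \eqref{ineq:partialvintro}, giving $\|[\partial_t v]\|_{\mathcal{G}^{\lambda(t),\sigma-6}} \lesssim \epsilon/\jap{t}^{2 - K_D\epsilon/2}$, and the non-zero-mode piece is controlled by the lossy elliptic estimate (Lemma \ref{lem:LossyElliptic}), giving $\|P_{\neq 0}\phi\|_{\mathcal{G}^{\lambda(t),\sigma-3}} \lesssim \epsilon/\jap{t}^2$, combined with $\|v^\prime - 1\|_\infty \leq 5/8$ from (B2) to handle the prefactor $v^\prime$. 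Assembling the pieces gives
$$|\mathcal{R}| \lesssim \|Af\|_2^2 \cdot \|u\|_{\mathcal{G}^{\lambda(t),\sigma_0+1}} \lesssim \epsilon^2 \cdot \frac{\epsilon}{\jap{t}^{2 - K_D\epsilon/2}} = \frac{\epsilon^3}{\jap{t}^{2 - K_D\epsilon/2}},$$
which is the claimed bound.

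The one subtle point is the factor $J_k(t,\eta)$ inside $A$: because $J$ assigns very different weights to resonant ($t \approx \eta/k$) versus non-resonant frequencies, $A$ is not strictly a Gevrey product algebra. However, since both inputs here are at the common scale $N$, the critical intervals $\mathbf{I}_{k,\eta}$ and $\mathbf{I}_{k^\prime,\eta^\prime}$ governing the resonance structure of the two inputs and of the output $(k+k^\prime,\eta+\eta^\prime)$ are comparable. A balanced companion of Gevrey sub-additivity that incorporates the weight $w_k$, of the type presumably established alongside the construction of $J$ in \S\ref{sec:Growth}, then yields the required product inequality up to a polynomial loss absorbable by the Sobolev correction. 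This is the main technical step, but being frequency-balanced it is substantially easier than the weight manipulations required for Propositions \ref{prop:Transport} and \ref{prop:ReactionIntro}, which is consistent with the paper's assertion that the remainder is ``straightforward''.
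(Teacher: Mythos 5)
Your overall architecture is the right one and matches the paper's: exploit that $u_N$ and $f_{N'}$ sit at comparable dyadic scales, put the time decay on $u$ via Lemma \ref{lem:LossyElliptic} (and \eqref{ineq:partialvintro} for the zero mode), control the outer factor by (B1), and sum in $N$ by Cauchy--Schwarz using the derivative gap. The paper does exactly this, splitting $\mathcal{R}$ into the two terms of the commutator and treating them separately, since no cancellation is needed.

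However, your treatment of the multiplier $A$ has a genuine gap, and it sits precisely at the step you flag as "the main technical step." First, the sub-additivity you invoke, $\abs{k,\eta}^s \leq \abs{l,\xi}^s + \abs{k-l,\eta-\xi}^s$, is the trivial $c=1$ version and leaves no room to absorb anything. Second, the claim that the mismatch in $J$ between output and input frequencies is "a polynomial loss absorbable by the Sobolev correction" is false: even when $\abs{l,\xi} \approx \abs{k-l,\eta-\xi}$, the output mode $(k,\eta)$ and the input modes need not have comparable resonance structure (resonance depends on $\eta/k$ versus $t$, and $k$ can change arbitrarily within a dyadic shell), and the sharpest available comparison, Lemma \ref{lem:WFreqCompare} together with \eqref{ineq:WFreqCompRes}, costs a factor $e^{C\mu\abs{k-l,\eta-\xi}^{1/2}}$. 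In the remainder this difference is of the same order as the frequencies themselves, so the loss is of Gevrey-2 size $e^{C N^{1/2}}$, which no polynomial weight can absorb. The correct mechanism — and the only reason the remainder is easy — is the \emph{strict} improvement \eqref{lem:strivial}: because the two input frequencies are comparable, $\abs{k,\eta}^s \leq c\abs{l,\xi}^s + c\abs{k-l,\eta-\xi}^s$ with $c<1$, so one simply discards the entire $J_k(\eta) \lesssim e^{2\mu\abs{k,\eta}^{1/2}}$ (Lemma \ref{basic}) and absorbs it into the leftover $e^{(1-c)\lambda\abs{\cdot}^s}$ via \eqref{ineq:IncExp}, using $s>1/2$ in an essential way. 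A related repair is needed in your schematic estimate: you place $\norm{Au_N}_2$ on the velocity, but the $O(t^{-2})$ decay of $u$ is only available after losing several derivatives (Lemma \ref{lem:LossyElliptic} gives $\G^{\lambda,\sigma-3}$ control of $\phi$, not control in the $A$ norm); the paper avoids this by never putting $A$ on $u$ in the remainder, placing only the low norm $\G^{\lambda,\sigma-4}$ on $u_N$ after the $J$ factor has been discarded as above.
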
 

Collecting Propositions \ref{prop:Transport}, \ref{prop:ReactionIntro}, \ref{lem:PrecisionEstimateNeq0}, \ref{prop:CoefControl_Real}, \ref{prop:RemainderIntro} with \eqref{def:commdiv} and \eqref{ineq:div}, we have finally \eqref{ineq:Ectrl} for $\epsilon$ sufficiently small 
with constants independent of both $\epsilon$ and $T^\star$; hence for $\epsilon$ sufficiently small we may propagate the bootstrap control and prove Proposition \ref{lem:bootstrap}.

\subsection{Conclusion of proof} \label{sec:ProofConcl}
By Proposition \ref{lem:bootstrap} we have a global uniform bound on $E(t)$, and therefore the uniform bounds 
\begin{align} 
\norm{f(t)}_{\G^{\lambda(t),\sigma}}^2 +  \jap{t}^{4}\norm{P_{\neq 0}\phi}^2_{\G^{\lambda(t),\sigma-3}} + \jap{t}^{4-K_D\epsilon}\norm{[\partial_tv]}_{\G^{\lambda(t),\sigma-6}}^2 + K_v^{-1}\norm{v^\prime - 1}_{\G^{\lambda(t),\sigma}}^2 \lesssim \epsilon^2. \label{ineq:apriori}
\end{align} 
Define $\lambda_\infty = \lim_{t \rightarrow \infty} \lambda(t)$. 
By the method of characteristics and Sobolev embedding, Lemma \ref{lem:LossyElliptic} and \eqref{ineq:apriori} also imply 
the spatial localization $\int \abs{vf(t,z,v)}dz dv \lesssim \epsilon$. 
From this localization and \eqref{eq:tildeu0_simple} we have  
\begin{align}
\norm{\tilde u_0}_{\G^{\lambda(t),\sigma}} \lesssim \epsilon. \label{ineq:tildeu0Ctrl}
\end{align}
In order to deduce the convergence expressed in \eqref{main-omega} and \eqref{ineq:damping} we now undo the change of coordinates in $v$, switching to the more physically natural coordinates $(z,y)$.
 Writing $h(t,z,y) = f(t,z,v) = \omega(t,x,y)$ and $\tilde\psi(t,z,y) = \phi(t,z,v)$ one derives from \eqref{def:2DEuler} as in \S\ref{sec:CoordinateTrans}, 
\begin{align}
\partial_t h + \grad_{z,y}^\perp P_{\neq 0}\tilde\psi \cdot \grad_{z,y}h = 0. \label{ineq:2DhEuler}
\end{align}
In order to quantify the Gevrey regularity of $h(t,z,y) = f(t,z,v(t,y))$ we apply the composition inequality Lemma \ref{lem:CompoIneq} (with \eqref{ineq:RelatScale}) and hence we must show $v(t,y)-y$ is small in a suitable Gevrey class.  
Actually what we have a priori control on from \eqref{ineq:apriori} is $v^\prime(t,v(t,y)) = \partial_yv(t,y)$. 
From the $C^\infty$ inverse function theorem we may solve for $y = y(t,v)$, 
which implies the spatial control $\int \abs{y h(t,z,y)} dz dy \lesssim \epsilon$ (and Remark \ref{rmk:compactsupp}). 
From \eqref{ineq:tildeu0Ctrl} and \eqref{def:zv}, we also get at least the $L^2$ estimates  $\norm{v(t,y)-y}_2 + \norm{y(t,v)-v}_2 \lesssim \epsilon$. 
To get control on the Gevrey regularity, we apply
\begin{align*}
\partial_vy(t,v) = \frac{1}{v^\prime(t,v)} = 1 + \left(\frac{1}{v^\prime(t,v)} - 1\right)
\end{align*}  
together with Lemma \ref{lem:CompoIneq}, (B2) and \eqref{ineq:RelatScale} which shows that for any $\lambda_\infty^\prime \in (\lambda^\prime, \lambda_\infty)$:
\begin{align*} 
\norm{y(t,v) - v}_{\G^{\lambda_\infty^\prime}} \lesssim \epsilon.
\end{align*} 
Therefore, by a Gevrey inverse function theorem (Lemma \ref{lem:IFT}) and \eqref{ineq:RelatScale},  
for any $\lambda_\infty^{\prime\prime} \in (\lambda^\prime, \lambda_\infty^\prime)$,
we may choose $\epsilon$ sufficiently small such that $\norm{v(t,y) - y}_{\G^{\lambda_\infty^{\prime\prime}}} \lesssim \epsilon$. 
Together with Lemma \ref{lem:CompoIneq}, \eqref{ineq:RelatScale} and \eqref{ineq:apriori} this implies for any $\lambda_\infty^{\prime\prime\prime} \in (\lambda^\prime, \lambda_\infty^{\prime\prime})$ (adjusting $\epsilon$ if necessary),   
\begin{align} 
\norm{h(t)}_{\G^{\lambda_\infty^{\prime\prime\prime}}} +\jap{t}^{2}\norm{P_{\neq 0}\tilde \psi(t)}_{\G^{\lambda_\infty^{\prime\prime\prime}}} &\lesssim \epsilon. \label{ineq:hpsitildeest}
\end{align} 
Therefore by integrating \eqref{ineq:2DhEuler}, we define $f_\infty$ by the absolutely convergent integral
\begin{align}
f_\infty = h(1) - \int_1^\infty\grad_{z,y}^\perp P_{\neq 0}\tilde\psi(s) \cdot \grad_{z,y}h (s) ds.
\end{align} 
More precisely, since $\lambda^\prime < \lambda_\infty^{\prime\prime\prime}$, Minkowski, \eqref{ineq:GAlg} and \eqref{ineq:SobExp} imply 
 \begin{align}
\norm{h(t) - f_\infty}_{\G^{\lambda^{\prime}}} & = \norm{\int_t^\infty\grad_{z,y}^\perp P_{\neq 0}\tilde\psi(\tau) \cdot \grad_{z,y}h (\tau) d\tau}_{\G^{\lambda^{\prime}}} \lesssim \frac{\epsilon^2}{\jap{t}}. \label{ineq:hconvg}
\end{align}  
By the definition of $z$ and the phase $\Phi$ \eqref{def:phi}, this is equivalent to \eqref{main-omega}.
Shifting in $z$, \eqref{ineq:xdamping} and \eqref{ineq:ydamping} follow from the decay estimates on $\psi$.

Finally, an argument similar to that used to derive \eqref{ineq:hconvg} can be applied to $U^x_0(t,y) = \tilde u_0(t,v)$, which satisfies the following from \eqref{def:2DEulerMomentum} denoting $\tilde U(t,z,y) = \tilde u(t,z,v) = U^x(t,x,y)$: 
\begin{align}
\partial_t U_0^x + \jap{\grad^\perp_{z,y} P_{\neq 0} \tilde \psi \cdot \grad_{z,y} \tilde U} = 0. \label{def:Umomen}
\end{align}
 By \eqref{ineq:gradulossy} below, it follows that $\norm{P_{\neq 0} \grad_{z,y} \tilde U(t)}_{\G^{\lambda_\infty^{\prime\prime\prime}}} \lesssim \epsilon\jap{t}^{-1}$ by the argument used to deduce \eqref{ineq:hpsitildeest}.  
This, together with \eqref{ineq:hpsitildeest}, implies \eqref{ineq:xdamping_slow}
by integrating \eqref{def:Umomen} as in \eqref{ineq:hconvg} (see \S\ref{apx:LogCorrection} for more discussion). 

\section{Growth mechanism and construction of $A$}\label{sec:Growth}

\subsection{Construction of $w$}
\subsubsection{Formal derivation of toy model} \label{sec:Toy}
From \S\ref{sec:MainEnergy} we see that the basic challenge to the proof of Theorem \ref{thm:Main} is 
controlling the regularity of solutions to \eqref{Euler2}.
Since we must pay regularity to deduce decay on the velocity $u$, it is natural to consider the 
frequency interactions in the product $u\cdot \grad f$ with the frequencies of $u$ much larger than $f$.
This leads us to study a simpler model 
\begin{align} 
\partial_t f = -u\cdot \grad f_{lo}, \label{def:origlin}
\end{align}
where $f_{lo}$ is a given function that we think of as much smoother than $f$. 
As we see from \eqref{Euler2}, $u$ consists of several terms, however let us focus on the term we think should be the worst and also ignore the $v^\prime$, further reducing to the linear problem: 
\begin{align*} 
\partial_tf = \partial_v P_{\neq 0} \phi \partial_z f_{lo}.    
\end{align*} 
Suppose that instead of $f = \Delta_t\phi$, we had $f = \partial_{zz}\phi + (\partial_y - t\partial_z)^2\phi$ as in \eqref{orr-cri}, then on the Fourier side:
\begin{align*} 
\partial_t\hat{f}(t,k,\eta) = \frac{1}{2\pi}\sum_{l \neq 0}\int_\xi \frac{\xi (k-l)}{l^2 + \abs{\xi - lt}^2}\hat{f}(l,\xi) \hat f_{lo}(t,k-l,\eta-\xi) d\xi.    
\end{align*}  
Since $f_{lo}$ weakens interactions between well-separated frequencies, let us consider a discrete model with $\eta$ as a fixed parameter:  
\begin{align} 
\partial_t\hat{f}(t,k,\eta) =  \frac{1}{2\pi}\sum_{l \neq 0} \frac{\eta (k-l)}{l^2 + \abs{\eta - lt}^2}\hat{f}(l,\eta)f_{lo}(t,k-l,0). 
\end{align} 
As time advances this system of ODEs will go through resonances or ``critical times'' given by $t = \frac{\eta}{k}$, at which time the $k$ mode strongly forces the others.   
If $\abs{\eta}k^{-2} \ll 1$ then the critical time does not have a serious detriment and so focus on the case $\abs{\eta}k^{-2} > 1$.  
The scenario we are most concerned with is a high-to-low cascade in which the $k$ mode has a strong effect at time $\eta/k$ that excites the $k-1$ mode which has a strong effect at time $\eta/(k-1)$ that excites the $k-2$ mode and so on. The echoes observed experimentally in \cite{YuDriscoll02,YuDriscollONeil} arise from an effect similar to this \cite{VMW98,Vanneste02}.
Now focus near one critical time $\eta/k$ on a time interval of length roughly $\eta/k^2$ and consider the interaction between the mode $k$ and a nearby mode $l$ with $l \neq k$.
If one takes absolute values and retains only the leading order terms, then this reduces to the much simpler system of two ODEs (thinking of $f_{lo} = O(\kappa)$) which we refer to as the \emph{toy model}: 
\begin{subequations} \label{toy}
\begin{align}
\partial_tf_R & = \kappa\frac{k^2}{\abs{\eta}}f_{NR}, \\
\partial_tf_{NR} & = \kappa\frac{\abs{\eta}}{k^2 + \abs{\eta-kt}^2}f_{R}, 
\end{align}
\end{subequations} 
where we think of $f_R$ as being the evolution of the $k$ mode and $f_{NR}$ being the evolution of a nearby mode $l$ with $l \neq k$.
The factor $k^2/\abs{\eta}$ in the ODE for $f_R$ is an upper bound on the strongest interaction a non-resonant mode, for example the $k-1$ mode, can have with the resonant mode. 
Obviously \eqref{toy} represents a major simplification compared to \eqref{def:origlin}, however it will be sufficient to prove Theorem \ref{thm:Main}. See \S\ref{sec:Conclusion} for a discussion and speculation on whether it is possible to improve Theorem \ref{thm:Main} by using a model closer to \eqref{def:origlin}. 

\begin{rmk} 
The toy model dropped several terms, one of which being a weak self-interaction. 
For example, one could replace the second equation in \eqref{toy} by
$$\partial_\tau f_{NR}   =  \kappa \frac{\abs{\eta}}{k^2 + \abs{\eta-kt}^2} f_R +  \kappa \frac{k^2}{\abs{\eta}}f_{NR}.$$
However, this does not significantly change the worse possible growth predicted by the model and is not necessary for the proof of Theorem \ref{thm:Main}.  
Actually, the proof of Theorem \ref{thm:Main} strongly suggests that the most substantial simplifications in the derivation of \eqref{toy} was the replacement of $\Delta_t$ by $\partial_z^2 + (\partial_v - t\partial_z)^2$ and the omission of $[\partial_t v]$.  
\end{rmk}

\subsubsection{Construction of $w$}
For simplicity of notation in this section we usually take $\eta, k > 0$ but the work applies equally well to $\eta, k < 0$. 
Note that modes where $\eta k < 0$ do not have resonances for positive times.  
Keeping with the intuition from the derivation of \eqref{toy}, in this section we will think of $\eta$ as a fixed parameter and time varying.  
Here we will be concerned with critical times for which we have $\eta/k^2 \geq 1$. 
Accordingly, in this section we will use $I_{k,\eta}$ (see  \S\ref{sec:Notation}) to denote any resonant interval with $\eta/k^2 \geq 1$, in contrast to subsequent sections
where this notation is restricted further. 

A key feature of our methods is how the toy model is used to construct a norm which precisely matches the estimated worst-case behavior that the reaction terms create, done by choosing $w_k(t,\eta)$ to be an approximate solution to \eqref{toy}.
First we have the following (easy to check) Proposition. 

\begin{proposition}\label{grw}
Let $\tau = t - \frac{\eta}{k}$ and consider the solution $(f_R(\tau),f_{NR}(\tau))$ to \eqref{toy} with $f_R\left(-\frac{\eta}{k^2}\right) = f_{NR}\left(-\frac{\eta}{k^2}\right) = 1$. 
There exists a constant $C$ such that for all $\kappa < 1/2$ and $\frac{\eta}{k^2} \geq 1$, 
\begin{align*} 
f_R(\tau) &\leq  C     \left( {\frac{k^2}{\eta} (1 + |\tau|)} \right)^{-C \kappa}  \quad & -\frac{\eta}{k^2} \leq \tau \leq 0, \\
f_{NR}(\tau) &\leq  C    \left( { \frac{k^2}{\eta} (1 + |\tau|)} \right)^{-C \kappa-1}  \quad &  -\frac{\eta}{k^2} \leq \tau \leq 0, \\  
f_R(\tau) &\leq  C  \left(\frac{\eta}{k^2} \right)^{C\kappa}   ( 1 + |\tau|  )^{1+ C \kappa}  \quad &  0\leq \tau \leq \frac{\eta}{k^2} , \\
f_{NR}(\tau) &\leq  C   \left(\frac{\eta}{k^2} \right)^{C\kappa+1}   ( 1 + |\tau|  )^{C \kappa}     \quad &  0\leq  \tau \leq \frac{\eta}{k^2}. 
\end{align*} 
For the remainder of the paper we fix $\kappa$ such that $3/2 < (1 + 2C\kappa) < 10$.  
\end{proposition}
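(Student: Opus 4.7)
The plan is to establish the four inequalities by working directly on the integrated form of the system, splitting at the critical time $\tau = 0$ and handling the pre-resonant and post-resonant regimes by different methods. Normalizing $a = k^2/\eta \in (0,1]$, the system reads $f_R' = \kappa a f_{NR}$ and $f_{NR}' = \frac{\kappa}{a(1+\tau^2)} f_R$, with initial data $f_R(-1/a) = f_{NR}(-1/a) = 1$.

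For the pre-resonant regime $\tau \in [-1/a, 0]$, I propagate by continuity the ansatz
\[
|f_R(\tau)| \leq A\,\bigl(a(1+|\tau|)\bigr)^{-C\kappa}, \qquad |f_{NR}(\tau)| \leq A\,\bigl(a(1+|\tau|)\bigr)^{-C\kappa-1}
\]
forward from $\tau = -1/a$. Substituting the ansatz into the integral forms of the ODEs, the change of variable $u = a(1+|s|)$ in the first integral and the elementary inequality $1+s^2 \geq (1+|s|)^2/2$ in the second reduce the bounding integrals to explicit power integrals, which evaluate to at most $\frac{A}{C}(a(1+|\tau|))^{-C\kappa}$ and $\frac{2\kappa A}{C\kappa+1}(a(1+|\tau|))^{-C\kappa-1}$, respectively. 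Adding the initial contribution $1$, which is controlled by the ansatz since $a(1+|\tau|) \leq 1+a$ remains bounded, strict inequality is obtained for $C$ sufficiently large (say $C \geq 4$) and $A$ correspondingly large (say $A = 10$), closing the bootstrap. The scheme works because the antiderivative of $u^{-C\kappa-1}$ is a multiple of $u^{-C\kappa}$, giving a self-similar structure in $a(1+|\tau|)$.

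For the post-resonant regime $\tau \in [0, 1/a]$, the pre-resonant analysis supplies $|f_R(0)| \lesssim a^{-C\kappa}$ and $a|f_{NR}(0)| \lesssim a^{-C\kappa}$, and I switch to the energy $E(\tau) = f_R^2 + a^2(1+\tau^2) f_{NR}^2$. Direct computation gives $E'(\tau) = 4\kappa a f_R f_{NR} + 2 a^2 \tau f_{NR}^2$; applying Cauchy--Schwarz via $2|f_R|\cdot a\sqrt{1+\tau^2}|f_{NR}| \leq E$ in the first term and rewriting the second in terms of $E$ yields, for $\tau \geq 0$,
\[
E'(\tau) \leq \left(\frac{2\kappa}{\sqrt{1+\tau^2}} + \frac{2\tau}{1+\tau^2}\right) E(\tau).
\]
Grönwall then produces $E(\tau) \leq E(0)(1+\tau^2)(\tau+\sqrt{1+\tau^2})^{2\kappa} \lesssim E(0)(1+\tau)^{2+2\kappa}$, from which $|f_R| \leq \sqrt{E}$ and $|f_{NR}| \leq \sqrt{E}/(a\sqrt{1+\tau^2})$ yield the claimed bounds (with the exponent $\kappa$ absorbed into $C\kappa$).

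The main obstacle is the pre-resonant estimate: a pure energy estimate there only delivers the uniform bound $|f_R(\tau)| \lesssim a^{-\kappa}$ on $[-1/a, 0]$, which fails to capture the decay $(a(1+|\tau|))^{-C\kappa}$ as $\tau$ approaches $-1/a$. The $|\tau|$-sensitive ansatz employed in the bootstrap is essential precisely because the integrated ODE, unlike the energy identity, respects the power-law structure in $a(1+|\tau|)$.
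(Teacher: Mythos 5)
The paper states this proposition without proof (it is introduced as ``easy to check''), so there is no argument of the authors' to compare against; your two-regime verification is correct and is essentially the natural way to check it. On $[-\eta/k^2,0]$ the integral-form bootstrap with the ansatz $(a(1+|\tau|))^{-C\kappa}$, $(a(1+|\tau|))^{-C\kappa-1}$ closes exactly as you describe, because $\int u^{-C\kappa-1}\,du$ and $\int u^{-C\kappa-2}\,du$ reproduce the ansatz powers; and on $[0,\eta/k^2]$ the weighted energy $E=f_R^2+a^2(1+\tau^2)f_{NR}^2$ with Gr\"onwall gives $E(\tau)\lesssim E(0)(1+\tau)^{2+2\kappa}$ and hence the stated bounds, since $E(0)\lesssim a^{-2C\kappa}$ from the left-half estimate. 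Your closing remark is also the right diagnosis: a pure energy estimate on the left half only yields the endpoint bound and misses the $(1+|\tau|)$-structure that the construction of $w$ in \eqref{def:wNR}--\eqref{def:wR} actually uses. One cosmetic slip: with $C=4$ and $\kappa$ near $1/2$, the $f_{NR}$ step of the bootstrap requires $2^{C\kappa+1}+\tfrac{2\kappa A}{C\kappa+1}<A$, which forces $A>16$, so $A=10$ does not close; take $A=20$ (and note the multiplicative and exponent constants in the statement need not literally coincide, as only the exponent $1+2C\kappa$ matters downstream).
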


\begin{rmk}
It is important to notice that over the whole interval  $ \left[-\frac{\eta}{k^2}, \frac{\eta}{k^2}  \right] $, both 
$f_R$ and $f_{NR}$ are at most  amplified by roughly the same factor
$C  (\tfrac{\eta}{k^2})^{1+2C\kappa}$. 
Over the interval $[-\frac{\eta}{k^2},0 ] $,  $f_{NR}$  is amplified at most  by 
$C  (\tfrac{\eta}{k^2})^{1+C\kappa}$ and  $f_R$ is  amplified at most  by 
$C  (\tfrac{\eta}{k^2})^{C\kappa}$. Whereas, 
 over the interval $[0, \tfrac{\eta}{k^2}] $,  $f_{NR}$  is  amplified at most  by
$C  (\tfrac{\eta}{k^2})^{C\kappa}$ and $f_R$  is amplified at most  by
$C (\tfrac{\eta}{k^2})^{1+C\kappa}$.   
Near the critical time, the imbalance between $f_{NR}$ and $f_R$ is the largest - in particular, the resonant mode $f_R$ is a factor of $\frac{\eta}{k^2}$ less than $f_{NR}$ at this time. 
However by the end of the interval, the total growth of the resonant and non-resonant modes are comparable. 
The fact that $f_R$ and $f_{NR}$ are amplified 
the same over that interval will simplify the construction of $w$; specifically, we will be able to assign $w_R$ and $w_{NR}$ below to agree at the end points of the critical interval.  
\end{rmk}

On each interval $I_{k,\eta}$, growth of the resonant mode $(k,\eta)$ will 
be modeled by $w_{R}$ and the rest of the modes (which are non-resonant) 
will be modeled by  $w_{NR}$.  
By Proposition \ref{grw}, we will able to choose $w$ such that the total growth of $w_R$ and $w_{NR}$ exactly agree.

 The construction is done backward in time, starting with $k=1$. 
For $t \in I_{k,\eta}$ and $\tau = t - \frac{\eta}{k}$, we will choose
 $(w_{NR},w_R)$   such that over the interval $I_{k,\eta}$ they approximately satisfy \eqref{toy}: 
\begin{equation}  \label{w-grw}
\begin{aligned}  
 \partial_\tau w_R  & \approx  \kappa \frac{k^2}{\eta}w_{NR},  \\ 
 \partial_\tau w_{NR} & \approx \kappa \frac{\eta}{k^2 (1 + \tau^2)} w_{R},  
\end{aligned} 
\end{equation} 
We first construct the non-resonant component $w_{NR}$ and then 
explain how we should modify it over each interval  $I_{k,\eta}$ to 
construct $w_R$.  

Let $w_{NR}$ be a non-decreasing function of time with $w_{NR}(t,\eta) = 1  $ for $t \geq  2\eta $. 
For $ k \geq 1$, we assume that $w_{NR} (t_{k-1,\eta})  $ was computed.  
To compute $w_{NR}$ on the interval $I_{k,\eta} $, we use the growth predicted 
by Proposition \ref{grw}:  for $k=1,2,3,..., E(\sqrt{\eta}) $, we define 
\begin{subequations} \label{def:wNR}
\begin{align} 
w_{NR}(t,\eta) &=   \Big( \frac{k^2}{\eta}   \left[ 1 + b_{k,\eta} |t-\frac{\eta}k | \right]   \Big)^{C\kappa}  w_{NR} (t_{k-1,\eta}),  \quad& 
  \quad  \forall t \in  I^R_{k,\eta} =  \left[ \frac{\eta}k ,t_{k-1,\eta}  \right], \\ 
w_{NR}(t,\eta) &=   \Big(1 + a_{k,\eta} |t-\frac{\eta}k |   \Big)^{-1-C\kappa}  w_{NR} \left(\frac{\eta}k\right),  \quad& 
  \quad \forall  t \in  I^L_{k,\eta} =  \left[ t_{k,\eta}  , \frac{\eta}k   \right].  
\end{align} 
\end{subequations}  
The constant $b_{k,\eta}  $   is chosen to ensure that $ \frac{k^2}{\eta}   \left[ 1 + b_{k,\eta} |t_{k-1,\eta}-\frac{\eta}k | \right]  =1$, hence for $k \geq2$, we have 
\begin{align} \label{bk} 
 b_{k,\eta} = \frac{2(k-1)}{k} \left(1 - \frac{k^2}{\eta} \right)
\end{align} 
and $b_{1,\eta} = 1 - 1/\eta$. 
Similarly,  $a_{k,\eta}$ is chosen to ensure $ \frac{k^2}{\eta}\left[ 1 + a_{k,\eta} |t_{k,\eta}-\frac{\eta}k | \right] = 1$, which implies
\begin{align} \label{ak} 
 a_{k,\eta} = \frac{2(k+1)}{k} \left(1 - \frac{k^2}{\eta} \right). 
\end{align} 
  Hence, $ w_{NR} (\frac{\eta}k) = w_{NR} (t_{k-1,\eta})  \Big( \frac{k^2}{\eta} \Big)^{C\kappa}   $  
and  $ w_{NR} ( t_{k,\eta} ) = w_{NR} (t_{k-1,\eta})  \Big( \frac{k^2}{\eta} \Big)^{1+ 2 C\kappa}   $.  
The choice of $a_{k,\eta}  $  and $b_{k,\eta}  $ was made to ensure that the ratio between $w_{NR} ( t_{k,\eta} ) $ and   $w_{NR} (t_{k-1,\eta})$ is exactly $ \Big( \frac{k^2}{\eta} \Big)^{1+ 2 C\kappa}$.  
Finally, we take $ w_{NR}$ to be constant on the interval $[0, t_{E(\sqrt{\eta}),\eta }] $, namely 
 $ w_{NR}(t,\eta)   = w(t_{E(\sqrt{\eta}),\eta } ,\eta)  $  for $ t \in [0, t_{E(\sqrt{\eta}),\eta }].$ 
Note that we always have $0\leq b_{k,\eta}  < 1$ and 
 $0\leq a_{k,\eta}   < 4$, but that $a_{k,\eta}$ and $b_{k,\eta}$ approach $0$ when $k$ approaches 
$E(\sqrt{\eta})$. 
This will present minor technical difficulties in the sequel since this implies that $\partial_t w$ vanishes near this time and hence a loss of the lower bounds in \eqref{w-grw}. 

On each interval $I_{k,\eta}  $, we define $w_R(t,\eta) $ by 
\begin{subequations} \label{def:wR}
\begin{align} 
w_R(t,\eta) &=   \frac{k^2}{\eta}   \left( 1 + b_{k,\eta} \abs{t-\frac{\eta}k} \right)w_{NR}(t,\eta),   \quad& 
  \quad \forall  t \in  I^R_{k,\eta} =  \left[ \frac{\eta}k ,t_{k-1,\eta}  \right], \\ 
w_R(t,\eta) &=   \frac{k^2}{\eta}   \left( 1 + a_{k,\eta} \abs{t-\frac{\eta}k} \right)   w_{NR}(t,\eta),    \quad& 
  \quad \forall  t \in  I^L_{k,\eta} =  \left[ t_{k,\eta}  , \frac{\eta}k   \right].   
\end{align}
\end{subequations}    
Due to the choice of $ b_{k,\eta}$  and $a_{k,\eta}$, we get that 
$w_R( t_{k,\eta}  ,\eta) =w_{NR}( t_{k,\eta}  ,\eta)      $  and $ w_R(  \frac{\eta}{k}  ,\eta) 
 =\frac{k^2}{\eta} 
 w_{NR}(  \frac{\eta}{k}   ,\eta)   $.

To define the full $w_k(t,\eta)$,  we then have 
\begin{align} 
w_k(t,\eta) =
\left\{
\begin{array}{ll} 
w_k(t_{E(\sqrt{{\eta}}),\eta},\eta) & t < t_{E(\sqrt{\eta}),\eta} \\
w_{NR}(t,\eta) & t \in [t_{E(\sqrt{\eta}),\eta},2\eta]\setminus I_{k,\eta} \\
w_{R}(t,\eta) & t \in I_{k,\eta} \\
1 & t \geq 2\eta. 
\end{array}
\right. \label{def:wk}
\end{align} 
Since $w_R$ and $w_{NR}$ agree at the end-points of $I_{k,\eta}$, $w_k(t,\eta)$ is Lipschitz continuous in time. 
This completes the construction of $w$ which appears in the $J$ defined above \eqref{def:J}.  

We also define $J^R(t,\eta)$ and $A^R(t,\eta)$ to assign resonant regularity at \emph{every} critical time: 
\begin{align} 
J^R(t,\eta) & =
\left\{
\begin{array}{ll} e^{\mu\abs{\eta}^{1/2}} w^{-1}_R(t_{E(\sqrt{{\eta}}),\eta},\eta) & t < t_{E(\sqrt{\eta}),\eta} \\
  e^{\mu\abs{\eta}^{1/2}}w^{-1}_{R}(t,\eta) & t \in [t_{E(\sqrt{\eta}),\eta},2\eta] \\
e^{\mu\abs{\eta}^{1/2}} & t \geq 2\eta,
\end{array}
\right. \nonumber \\ 
A^R(t,k,\eta) & = e^{\lambda(t)\abs{\eta}^s}\jap{\eta}^\sigma J^R(t,\eta). \label{def:AR}
\end{align}
We can easily see from \eqref{def:wR} that $A^R(t,\eta) \geq A_0(t,\eta)$ and since the zero frequency is always non-resonant from \eqref{def:wk}, we see that near the critical times, $A^R$ can be as much as a factor of $\abs{\eta}$ larger. 

\subsubsection{Total growth of $w_k(t,\eta)$}
The following lemma shows that the toy model predicts a growth of high frequencies which amounts to a loss of Gevrey-2 regularity, which is the primary origin of the restriction $s > 1/2$ in Theorem \ref{thm:Main}. 
C. Mouhot and C. Villani have informed the authors that a heuristic similar to that used in Section 7 of \cite{MouhotVillani11} can also be used to predict the same Gevrey-2 regularity requirement, though note that the primary purpose of the toy model \eqref{toy} is to provide precise mode-by-mode information about how this loss can occur.  

\begin{lemma}[Growth  of $w$] \label{basic}
For $\eta > 1$, we have for $\mu = 4(1 + 2C\kappa)$, 
\begin{align}  \label{w-grwth}
\frac1{w_k(0,\eta)}  =  \frac1{w_k( t_{E(\sqrt{\eta}),\eta},\eta)} 
  \sim \frac1{\eta^{\mu/8}}  e^{\frac{\mu}{2} \sqrt{\eta} }. 
\end{align} 
Here $\sim$ is in the sense of asymptotic expansion. 
\end{lemma}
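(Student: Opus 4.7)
The plan is to first reduce the claim to a clean product computation on the non-resonant profile, and then extract the asymptotic from Stirling's formula.

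First, I would unpack the definitions. By \eqref{def:wk}, $w_k(t,\eta)$ is constant on $[0,t_{E(\sqrt{\eta}),\eta}]$, which gives the first equality in \eqref{w-grwth} for free. At the endpoints $t_{k,\eta}$ of the critical intervals, the construction \eqref{def:wR} together with the choice of $a_{k,\eta}, b_{k,\eta}$ forces $w_R(t_{k,\eta},\eta) = w_{NR}(t_{k,\eta},\eta)$, so regardless of which $k$ one fixes, one has
\[
w_k\bigl(t_{E(\sqrt{\eta}),\eta},\eta\bigr) = w_{NR}\bigl(t_{E(\sqrt{\eta}),\eta},\eta\bigr).
\]
Thus the whole problem reduces to evaluating $w_{NR}$ at the left-most critical endpoint.

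Next, I would telescope. Writing $K = E(\sqrt{\eta})$, the construction \eqref{def:wNR} and the remark that $w_{NR}(t_{k,\eta},\eta)/w_{NR}(t_{k-1,\eta},\eta) = (k^2/\eta)^{1+2C\kappa}$, combined with the boundary value $w_{NR}(t_{0,\eta},\eta) = w_{NR}(2\eta,\eta) = 1$, gives
\[
w_{NR}(t_{K,\eta},\eta) = \prod_{k=1}^{K}\left(\frac{k^2}{\eta}\right)^{1+2C\kappa} = \left(\frac{(K!)^2}{\eta^{K}}\right)^{1+2C\kappa}.
\]
So the lemma is now purely a matter of estimating $(K!)^2/\eta^K$ with $K = E(\sqrt{\eta})$.

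For this final step I would apply Stirling, $K! \sim \sqrt{2\pi K}\,K^{K}e^{-K}$, so that
\[
\frac{(K!)^2}{\eta^{K}} \sim 2\pi K\,e^{-2K}\,\frac{K^{2K}}{\eta^{K}}.
\]
With $K = E(\sqrt{\eta}) = \sqrt{\eta} + O(1)$ one has $K^{2K}/\eta^{K} = \exp\bigl(2K\log K - K\log\eta\bigr) = \exp\bigl(O(\log\eta)\bigr)$, so the dominant exponential factor is $e^{-2\sqrt{\eta}}$ and the polynomial prefactor is of order $\sqrt{\eta}$. Raising to the power $1+2C\kappa$, taking reciprocals, and using $\mu = 4(1+2C\kappa)$ to match the exponents ($2(1+2C\kappa) = \mu/2$ and $(1+2C\kappa)/2 = \mu/8$) yields \eqref{w-grwth}.

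The only mildly delicate point is handling the discrepancy between $K$ and $\sqrt{\eta}$ when expanding $K\log K - \tfrac12 K\log\eta$; a direct Taylor expansion shows this contributes only a lower-order polynomial correction absorbed into the $\sim$ asymptotic. Otherwise the argument is entirely a bookkeeping exercise with the explicit formulas \eqref{def:wNR}--\eqref{def:wR}.
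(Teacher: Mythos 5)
Your proof is correct and follows essentially the same route as the paper: telescoping the per-interval growth factors to obtain $w_{NR}(t_{E(\sqrt{\eta}),\eta},\eta) = \bigl[(K!)^2/\eta^K\bigr]^{1+2C\kappa}$ and then applying Stirling's formula, with the discrepancy $\abs{K-\sqrt{\eta}}\le 1$ handled as a bounded correction. One small caution: your intermediate bound $K^{2K}/\eta^{K} = \exp(O(\log\eta))$ is cruder than what the precise prefactor $\eta^{-\mu/8}$ requires --- the exponent $2K\log(K/\sqrt{\eta})$ is in fact $-2(\sqrt{\eta}-K)+O(\eta^{-1/2})$, i.e.\ $O(1)$, which is exactly what the Taylor expansion you defer to the final remark delivers.
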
 

\begin{proof}
Counting the growth over each interval implied by \eqref{def:wk} gives the exact formula: 
\begin{align*} 
 \frac1{w_k(0,\eta)} & =\left(   \frac{\eta}{N^2}  \right)^c  \left(    \frac{\eta}{(N-1)^2}  \right)^c ...
  \left(    \frac{\eta}{1^2}  \right)^c = \left[\frac{\eta^{N}} { (N!)^2 }\right]^c,   
\end{align*}  
where $c = 1 + 2 C\kappa $. 
Recall Stirling's formula
  $N !   \sim \sqrt{2 \pi  N}   (N/e)^N$, which implies
$$ (w_k(0,\eta)  )^{-1/c} \sim     \frac{\eta^{N}} { ( 2\pi N   ) (N/e)^{2N} } \sim 
   \frac{1} {  2\pi \sqrt{\eta}   } e^{2\sqrt{\eta}}  
     \left[ \frac{\sqrt{\eta}}{N}    e^{2N - 2 \sqrt{\eta}} 
     \Big( \frac{ \eta}{N^2}  \Big)^N   \right]    $$
 and the result follows since the term between $[..]$ is $\approx 1$ by $\abs{N - \sqrt{\eta}} \leq 1$. 
\end{proof}

\subsection{Properties of $w$ and $J$} 
In this section we prove some of the important and useful properties of $J$ and $w$. 
This section is fundamental to our work but at times the proofs are tediously combinatorial. 

The following trichotomy expresses the well-separation of critical times and is used several times in the sequel. 
\begin{lemma} \label{lem:wellsep}
Let $\xi,\eta$ be such that there exists some $\alpha \geq 1$ with $\frac{1}{\alpha}\abs{\xi} \leq \abs{\eta} \leq \alpha\abs{\xi}$ and let $k,n$ be such that $t \in I_{k,\eta}$ and $t \in I_{n,\xi}$  (note that $k \approx n$).  
Then at least one of following holds:
\begin{itemize} 
\item[(a)] $k = n$ (almost same interval); 
\item[(b)] $\abs{t - \frac{\eta}{k}} \geq \frac{1}{10 \alpha}\frac{\abs{\eta}}{k^2}$ and $\abs{t - \frac{\xi}{n}} \geq \frac{1}{10 \alpha}\frac{\abs{\xi}}{n^2}$ (far from resonance);
\item[(c)] $\abs{\eta - \xi} \gtrsim_\alpha \frac{\abs{\eta}}{\abs{n}}$ (well-separated). 
\end{itemize}
\end{lemma}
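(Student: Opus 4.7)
The plan is a proof by contradiction: assume that none of (a), (b), (c) holds and derive an impossible configuration, with the implicit constant in (c) chosen sufficiently small (depending on $\alpha$). By the symmetry $(\eta,k)\mapsto(-\eta,-k)$, which leaves $I_{k,\eta}$ invariant, and by observing that when $\eta$ and $\xi$ have opposite signs one has $\abs{\eta-\xi} \asymp \abs{\eta}$ so that (c) holds trivially, we reduce to $\eta,\xi>0$ with $k,n\geq 1$. A direct computation from the definitions shows that $t\in I_{k,\eta}$ forces $t\asymp \eta/k$ and analogously $t\asymp \xi/n$; combined with $\eta\asymp_\alpha \xi$ this yields $k\asymp_\alpha n$, justifying the parenthetical ``$k\approx n$'' in the statement.

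Now assume (a) fails, i.e.\ $k\neq n$. The negation of (b) is that $t$ is close to \emph{at least one} of the two resonance times, and by the symmetry exchanging $(k,\eta)\leftrightarrow(n,\xi)$ we may assume $\abs{t-\eta/k}<\tfrac{1}{10\alpha}\,\eta/k^2$. The goal is then to establish the lower bound (c). The key algebraic identity is
\[
\frac{\eta-\xi}{n} \;=\; \left(\frac{\eta}{k}-\frac{\xi}{n}\right) \;-\; \frac{(n-k)\eta}{kn},
\]
together with the triangle inequality $\abs{\eta/k-\xi/n} \leq \abs{t-\eta/k}+\abs{t-\xi/n}$, whose two pieces are controlled respectively by $\tfrac{1}{10\alpha}\,\eta/k^2$ (by assumption) and by $\xi/(2n(n-1)) \lesssim_\alpha \eta/n^2$ (from $t\in I_{n,\xi}$ using the explicit formulas for $t_{n,\xi}$ and $t_{n-1,\xi}$). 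The last term $(n-k)\eta/(kn)$ has magnitude at least $\eta/(kn)\asymp \eta/n^2$, capturing the elementary fact that two distinct critical times $\eta/k, \eta/n$ for a common $\eta$ are separated by at least $\eta/(kn)$.

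When $\abs{n-k}\geq 2$ the $(n-k)\eta/(kn)$ contribution handily dominates the triangle bound with constants depending only on $\alpha$, and a direct rearrangement of the identity gives $\abs{\eta-\xi}/n\gtrsim_\alpha \eta/n^2$, which is exactly (c). The delicate case is $\abs{n-k}=1$, where both sides are of the same order $\eta/n^2$ and the naive rearrangement loses information. Here we argue by perturbation: under the hypothetical failure of (c) with a small constant $c=c(\alpha)$, each endpoint $t_{m,\xi}$ differs from $t_{m,\eta}$ by $(\xi-\eta)/m$ plus a smaller correction, so $I_{n,\xi}$ is an $O(c\,\eta/n^2)$ translate of $I_{n,\eta}$; but $I_{k,\eta}$ and $I_{n,\eta}$ are \emph{adjacent} intervals sharing a single boundary point located at distance $\eta/(2k(k\pm 1))$ from $\eta/k$, so $t\in I_{k,\eta}\cap I_{n,\xi}$ forces $\abs{t-\eta/k}\geq \eta/(2k(k\pm 1)) - O(c\,\eta/n^2) > \tfrac{1}{10\alpha}\,\eta/k^2$ once $c(\alpha)$ is chosen small enough, contradicting the standing assumption. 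The main obstacle is precisely this constant-tracking step in the $\abs{n-k}=1$ case: one must verify that $c(\alpha)\lesssim 1/\alpha$ makes the translation genuinely smaller than the adjacent-interval boundary gap. Minor edge cases ($k=1$, or $k$ near $E(\sqrt{\eta})$, where the formulas for $t_{k,\eta}$ degenerate slightly) are handled by direct inspection.
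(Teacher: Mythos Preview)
Your argument is correct in spirit but takes a considerably more circuitous route than the paper's, and your ``handily dominates'' claim in the $|n-k|\ge 2$ case overstates the ease of the constant tracking. The paper's proof is a two-line argument that works uniformly for all $k\neq n$, with no case split: assuming (a) and (b) both fail and WLOG $|t-\xi/n|<\tfrac{1}{10\alpha}|\xi|/n^2$, the single observation is that $t\in I_{k,\eta}$ with $k\neq n$ forces $t\notin\operatorname{int}(I_{n,\eta})$, hence
\[
\Big|t-\frac{\eta}{n}\Big|\ \ge\ \frac{|\eta|}{2n(n+1)}.
\]
One triangle inequality then gives
\[
\frac{|\eta-\xi|}{n}\ =\ \Big|\frac{\eta}{n}-\frac{\xi}{n}\Big|\ \ge\ \Big|t-\frac{\eta}{n}\Big|-\Big|t-\frac{\xi}{n}\Big|\ \ge\ \frac{|\eta|}{2n(n+1)}-\frac{|\xi|}{10\alpha n^2}\ \gtrsim\ \frac{|\eta|}{n^2},
\]
where the $\alpha$'s cancel (since $|\xi|\le\alpha|\eta|$), yielding (c) with an \emph{absolute} implicit constant.

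Your approach via the identity $\frac{\eta-\xi}{n}=(\frac{\eta}{k}-\frac{\xi}{n})-\frac{(n-k)\eta}{kn}$ recovers the same conclusion but pays for it: in the $|n-k|\ge 2$ case, the term $\frac{|n-k|\eta}{kn}$ and the half-width bound $\frac{\xi}{2n(n-1)}$ are the same order ($\sim t/n$ after using $\eta\approx kt$, $\xi\approx nt$), so showing domination requires sharper constants than ``$\xi\lesssim_\alpha\eta$'' provides---you actually need the tight relations $\eta\in[\tfrac{2}{3}kt,\tfrac{6}{5}kt]$ etc.\ from the interval endpoints, and even then the margin is thin. In the $|n-k|=1$ case, your perturbation argument (translating $I_{n,\xi}$ to $I_{n,\eta}$ and using adjacency of $I_{k,\eta}$, $I_{n,\eta}$) is exactly the paper's observation in disguise, arrived at by contradiction rather than directly. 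So your detour through the algebraic identity buys nothing: the paper extracts the same geometric fact---disjointness of the $I_{j,\eta}$---in one line and avoids the case split entirely.
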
 
\begin{proof} 
To see that $k \approx n$ note 
\begin{align} \label{jsn} 
\frac{\abs{k}}{\abs{n}} = \frac{\abs{tk}}{\abs{tn}} = \frac{\abs{\xi}}{\abs{\eta}}\frac{\abs{tk}}{\abs{\eta}} \frac{\abs{\xi}}{\abs{tn}} \approx_\alpha 1. 
\end{align}
If $k = n$ then there is nothing to prove. 
Suppose now both (a) and (b) are false, which means one of the two inequalities in (b) fails. 
Without loss of generality suppose  $\abs{t - \frac{\xi}{n}} < \frac{1}{10 \alpha}\frac{\abs{\xi}}{n^2}$. 
Then, 
\begin{align*}
\abs{\frac{\eta}{n} - \frac{\xi}{n}} \geq \abs{t - \frac{\eta}{n}} - \abs{t - \frac{\xi}{n}} \geq \frac{\abs{\eta}}{2n(n+1)} -  \frac{1}{10 \alpha}\frac{\abs{\xi}}{n^2} \gtrsim \frac{\abs{\eta}}{n^2}, 
\end{align*}   
where we also used $k \neq n$. This proves (c). 
\end{proof} 

From the definition of $w$, for $t \in I_{k,\eta}$ and $t> 2  \sqrt{\abs{\eta}}$, we have for $\tau = t - \frac\eta{k}$: 
\begin{equation}  \label{w-up-low}
\begin{aligned}  
\partial_\tau w_R &\approx  \kappa \frac{k^2}{\abs{\eta}}w_{NR},  \\ 
\partial_\tau w_{NR} & \approx   \kappa \frac{\abs{\eta}}{k^2 (1 + \tau^2)} w_R.   
\end{aligned}
\end{equation} 
Moreover, we also have the following: 
\begin{lemma}
For $t \in I_{k,\eta}$ and $t> 2  \sqrt{\abs{\eta}}$, we have the following with $\tau = t - \frac\eta{k}$: 
\begin{align}  \label{dtw}
\frac{\partial_t w_{NR}(t,\eta)}{w_{NR}(t,\eta)}  
\approx  \frac{1}{1+\abs{\tau}}  \approx 
  \frac{\partial_t w_R(t,\eta)}{w_R(t,\eta)}. 
\end{align} 
\end{lemma}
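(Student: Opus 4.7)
The plan is to verify \eqref{dtw} by a direct computation from the explicit formulas \eqref{def:wNR} and \eqref{def:wR}, treating the two sub-intervals $I^R_{k,\eta}=[\eta/k,t_{k-1,\eta}]$ and $I^L_{k,\eta}=[t_{k,\eta},\eta/k]$ separately.

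First I would use the hypothesis $t>2\sqrt{|\eta|}$ together with $t\in I_{k,\eta}$ to get the crucial size bounds on the constants $a_{k,\eta},b_{k,\eta}$. Since $|\eta/k|\geq t>2\sqrt{|\eta|}$ forces $|k|\leq \tfrac12\sqrt{|\eta|}$, we have $k^2/|\eta|\leq 1/4$, and the explicit formulas \eqref{bk}, \eqref{ak} yield two-sided bounds $b_{k,\eta}\approx 1$ and $a_{k,\eta}\approx 1$ (uniform in $k,\eta$). This is the ingredient that lets the coefficients in front of $1/(1+|\tau|)$ be absorbed into $\approx$.

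Next I would take logarithmic derivatives. On $I^R_{k,\eta}$, from \eqref{def:wNR} one gets
\begin{equation*}
\frac{\partial_t w_{NR}}{w_{NR}}=\frac{C\kappa\, b_{k,\eta}}{1+b_{k,\eta}|\tau|},
\end{equation*}
and on $I^L_{k,\eta}$ (where $\tau<0$, so $|\tau|=-\tau$ is decreasing in $t$, making $w_{NR}$ increasing)
\begin{equation*}
\frac{\partial_t w_{NR}}{w_{NR}}=\frac{(1+C\kappa)\,a_{k,\eta}}{1+a_{k,\eta}|\tau|}.
\end{equation*}
In both expressions, the bounds on $a_{k,\eta},b_{k,\eta}$ just obtained give the desired equivalence with $1/(1+|\tau|)$. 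For $w_R$, apply the product rule to \eqref{def:wR}: on $I^R_{k,\eta}$,
\begin{equation*}
\frac{\partial_t w_R}{w_R}=\frac{b_{k,\eta}}{1+b_{k,\eta}|\tau|}+\frac{\partial_t w_{NR}}{w_{NR}}=\frac{(1+C\kappa)\,b_{k,\eta}}{1+b_{k,\eta}|\tau|},
\end{equation*}
and similarly on $I^L_{k,\eta}$ the contribution $-a_{k,\eta}/(1+a_{k,\eta}|\tau|)$ from differentiating $(1+a_{k,\eta}|\tau|)$ (note $|\tau|$ is decreasing there) combines with the $w_{NR}$-derivative to give $C\kappa\,a_{k,\eta}/(1+a_{k,\eta}|\tau|)$, which is again $\approx 1/(1+|\tau|)$.

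There is no real obstacle here; the only care required is the sign bookkeeping on $I^L_{k,\eta}$, where $\tau<0$ means that $|\tau|$ decreases as $t$ grows, so both $w_{NR}$ and the linear factor $(1+a_{k,\eta}|\tau|)$ in $w_R$ are increasing in $t$, with the derivatives combining additively with matching signs to produce a positive ratio of the claimed size. The conclusion \eqref{dtw} then follows by collecting the four cases and using $a_{k,\eta},b_{k,\eta}\approx 1$.
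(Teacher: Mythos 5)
Your computation is correct and is exactly the argument the paper intends: the lemma is stated without proof as an immediate consequence of the explicit formulas \eqref{def:wNR}--\eqref{def:wR}, and your logarithmic derivatives on $I^L_{k,\eta}$ and $I^R_{k,\eta}$, together with $a_{k,\eta},b_{k,\eta}\approx 1$, are precisely what is needed. One small slip in the justification of $a_{k,\eta},b_{k,\eta}\approx 1$: the inequality $\abs{\eta/k}\geq t$ only holds on $I^L_{k,\eta}$; on $I^R_{k,\eta}$ one has $t\geq \eta/k$, and the correct route is $2\sqrt{\abs{\eta}}<t\leq t_{k-1,\eta}\leq \tfrac{3}{2}\abs{\eta}/\abs{k}$ (for $\abs{k}\geq 2$), which still gives $k^2/\abs{\eta}\leq 9/16$ and hence the lower bounds on $a_{k,\eta},b_{k,\eta}$ from \eqref{ak}, \eqref{bk} — this is exactly the role of the hypothesis $t>2\sqrt{\abs{\eta}}$ flagged in the remark following Lemma \ref{lem:WtFreqCompare}.
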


The following two lemmas are more substantial and show that although the toy model neglected interactions in $\eta$ and $\xi$,
$w(t,\eta)$ with $w(t,\xi)$ can still be compared effectively.  

\begin{lemma} \label{lem:WtFreqCompare}
\begin{itemize}
\item[(i)] For $t \geq 1$, and $k,l,\eta,\xi$ such that  $ \max(2\sqrt{\abs{\xi}}, \sqrt{\abs{\eta}}) < t < 2  \min( \abs{\xi},  \abs{\eta})    $,
\begin{align} \label{dtw-xi} 
 \frac{\partial_t w_k(t,\eta)}{w_k(t,\eta)}\frac{w_l(t,\xi)}{\partial_t w_l(t,\xi)}
 \lesssim \jap{\eta - \xi} 
\end{align}
\item[(ii)] For all $t \geq 1$, and $k,l,\eta,\xi$, such that for some $\alpha\geq1$, $\frac{1}{\alpha}\abs{\xi} \leq \abs{\eta} \leq \alpha\abs{\xi}$,  
\begin{align}
\sqrt{\frac{\partial_t w_l(t,\xi)}{w_l(t,\xi)}} \lesssim_\alpha \left[\sqrt{\frac{\partial_t w_k(t,\eta)}{w_k(t,\eta)}} + \frac{\abs{\eta}^{s/2}}{\jap{t}^{s}}\right]\jap{\eta-\xi}. \label{ineq:partialtw_endpt}  
\end{align}
\end{itemize}
\end{lemma}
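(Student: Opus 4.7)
My plan is to reduce both parts to the pointwise approximation~\eqref{dtw}. For any $t$ in the active region $[t_{E(\sqrt{|\eta|}),\eta},2|\eta|]$, let $k^\star = k^\star(t,\eta) \in \{1,\dots,E(\sqrt{|\eta|})\}$ denote the unique index with $t \in I_{k^\star,\eta}$; the crucial observation is that~\eqref{dtw} gives $\partial_t w_k(t,\eta)/w_k(t,\eta) \approx 1/(1+|t-\eta/k^\star|)$ \emph{independently of} the label $k$. The hypotheses in part~(i) place $t$ deep inside the active region for both $(k,\eta)$ and $(l,\xi)$; the stronger requirement $t > 2\sqrt{|\xi|}$ on the denominator side (versus only $t > \sqrt{|\eta|}$ on the numerator) is needed because~\eqref{dtw} is an asymptotic equivalence requiring $a_{l^\star,\xi},b_{l^\star,\xi}$ to be bounded away from $0$, whereas on the numerator we only need an upper bound on $\partial_t w_k/w_k$, which holds on the larger region.

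For part~(i), let $l^\star = l^\star(t,\xi)$ be defined analogously, and combine the approximation with $(1+a+b)/(1+a) \leq 1+b$ to reduce to
\[
\Bigl|\frac{\eta}{k^\star} - \frac{\xi}{l^\star}\Bigr| \lesssim \langle \eta - \xi\rangle.
\]
When $k^\star = l^\star$, this is immediate from $|\eta/k^\star - \xi/k^\star| = |\eta-\xi|/k^\star$. When $k^\star \neq l^\star$, I split using the identity
\[
\frac{\eta}{k^\star} - \frac{\xi}{l^\star} = \frac{\eta-\xi}{l^\star} + \frac{\eta(l^\star-k^\star)}{k^\star l^\star};
\]
only the second term is potentially large. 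To control it, I use the geometric constraint that $t$ lies in both $I_{k^\star,\eta}$ and $I_{l^\star,\xi}$, together with the explicit bound $|t-\eta/k^\star| \leq |\eta|/[2(k^\star-1)k^\star] \lesssim t/k^\star$ (and analogously for $\xi$), to derive $|\xi-\eta| \geq (|l^\star-k^\star| - C)\,t$ for a universal $C$. A short case split on whether $|l^\star - k^\star|$ is large or small compared to $C$ then finishes; the borderline case $|l^\star - k^\star| \in \{1,2\}$ requires the sharpest accounting and makes essential use of the precise endpoint formulas for $t_{k,\eta}$ from \S\ref{sec:Notation} together with the definitions~\eqref{def:wNR}--\eqref{def:wk}.

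For part~(ii) I reduce to~(i) by a case split. Since $\partial_t w_l(t,\xi) = 0$ for $t$ outside $[t_{E(\sqrt{|\xi|}),\xi}, 2|\xi|]$, the inequality is trivial there. Inside the $\xi$-active region, the hypothesis $|\xi|\approx_\alpha|\eta|$ forces the $\eta$-active region to be comparable up to $\alpha$-dependent constants. If in addition $t > 2\sqrt{|\xi|}$, the hypotheses of~(i) are satisfied, and taking the square root of~(i) yields the bound with the factor $\sqrt{\langle\eta-\xi\rangle}$ absorbed into $\langle\eta-\xi\rangle$. In the complementary strip $t \in [1, 2\sqrt{|\xi|}]$ we have $\langle t\rangle \lesssim_\alpha \sqrt{|\eta|}$, so $|\eta|^{s/2}/\langle t\rangle^s \gtrsim_\alpha 1$, which absorbs the universal bound $\sqrt{\partial_t w_l(t,\xi)/w_l(t,\xi)} \leq 1$ coming from~\eqref{dtw}; the factor $\langle\eta-\xi\rangle \geq 1$ is trivially present.

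The main technical obstacle is the case $k^\star \neq l^\star$ in~(i). The heuristic is clear (resonant intervals for different indices at the same $t$ force $\eta$ and $\xi$ to be well-separated relative to the interval sizes), but turning this into a clean quantitative bound requires careful bookkeeping because Lemma~\ref{lem:wellsep} is not directly applicable: part~(i) does not assume $|\eta|\approx|\xi|$. A subsidiary subtlety is that the hidden constants in~\eqref{dtw} degenerate as $k^\star \to E(\sqrt{|\eta|})$ (since $a_{k^\star,\eta}, b_{k^\star,\eta} \to 0$ there), which is precisely what motivates the asymmetric lower bounds on $t$ in the hypothesis and must be handled by a direct estimate from the piecewise definition of $w_k$ at the boundary $k^\star = E(\sqrt{|\eta|})$.
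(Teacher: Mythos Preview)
For part~(i), your approach works but overlooks a key simplification. You note that Lemma~\ref{lem:wellsep} is not directly applicable since part~(i) does not assume $|\eta|\approx|\xi|$; however, the paper's first step is precisely to \emph{reduce} to $\eta\xi\geq 0$ and $\tfrac12|\eta|<|\xi|<2|\eta|$. When this fails, $|\eta-\xi|\gtrsim|\xi|$, and the crude bound $w_l(t,\xi)/\partial_t w_l(t,\xi)\lesssim\jap{\xi}$ from~\eqref{dtw} (together with $\partial_t w_k/w_k\lesssim 1$) already gives~\eqref{dtw-xi}. After this reduction Lemma~\ref{lem:wellsep} applies with $\alpha=2$ and disposes of $k^\star\neq l^\star$ in two lines: alternative~(b) gives ratio $\lesssim 1$ since numerator and denominator are both $\approx |\eta|/n^2$; alternative~(c) gives numerator $\lesssim 1+|\xi|/j^2\lesssim\jap{\eta-\xi}$. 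Your direct bound $|\xi-\eta|\geq(|l^\star-k^\star|-C)t$ and the ``sharpest accounting'' you allude to for $|l^\star-k^\star|\in\{1,2\}$ amount to rederiving that lemma by hand.

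Part~(ii) has a genuine gap. Your claim that ``if in addition $t>2\sqrt{|\xi|}$, the hypotheses of~(i) are satisfied'' is incorrect: part~(i) also requires $t<2\min(|\eta|,|\xi|)$, and when $|\eta|<|\xi|$ the regime $2|\eta|\leq t\leq 2|\xi|$ lies inside the $\xi$-active region but outside the scope of~(i). There $\partial_t w_k(t,\eta)=0$, so the first term on the right of~\eqref{ineq:partialtw_endpt} vanishes and you must bound $\sqrt{\partial_t w_l/w_l}$ by $|\eta|^{s/2}\jap{t}^{-s}\jap{\eta-\xi}$ directly. The paper handles this with a subcase split on whether $|t-|\xi||<|\xi|/(2\alpha)$: in the ``close'' subcase one deduces $|\eta-\xi|\gtrsim_\alpha|\xi|$ and the factor $\jap{\eta-\xi}$ carries the estimate; in the ``far'' subcase~\eqref{dtw} gives $\sqrt{\partial_t w_l/w_l}\lesssim_\alpha|\xi|^{-1/2}$, and one finishes using $t\approx_\alpha|\eta|$ together with $s-\tfrac12\leq\tfrac{s}{2}$. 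This case is absent from your analysis.
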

\begin{remark} 
Notice the requirement that $t > 2\sqrt{\abs{\xi}}$ in \eqref{dtw-xi} and $t > 2\sqrt{\abs{\eta}}$ in \eqref{dtw}. This is due to the fact that $\partial_t w(t,\xi) \rightarrow 0$ as $t \searrow E(\sqrt{\abs{\xi}})$, and hence we do not have the lower bounds. The upper bounds still hold.  
\end{remark}

\begin{proof}[Proof of Lemma \ref{lem:WtFreqCompare}]
We first prove \eqref{dtw-xi}.
By \eqref{dtw}, $k$ and $l$ do not play a role and are omitted for the duration of the proof.  
Without loss of generality, we may  restrict to $\eta\xi \geq 0$ and $ \eta/2  < \xi < 2  \eta  $
  as otherwise by \eqref{dtw}
\begin{align*} 
\abs{\frac{\partial_t w(t,\eta)}{w(t,\eta)}\frac{w(t,\xi)}{\partial_t w(t,\xi)}}    
 \lesssim \jap{ \xi}   \lesssim \jap{\eta - \xi}. 
\end{align*}
Let $j$ and $n$ be such that $t \in I_{j,\xi}$ and $t \in I_{n,\eta}$.
Notice that like in \eqref{jsn} above, $j \approx n$. 
From \eqref{dtw}, 
\begin{align*} 
\abs{\frac{\partial_t w(t,\eta)}{w(t,\eta)}\frac{w(t,\xi)}{\partial_t w(t,\xi)}} 
 \lesssim  \frac{1 + \abs{t - \frac{\xi}{j}}}{1 + \abs{t - \frac{\eta}{n}}}. 
\end{align*}    
In the case  $ n = j$, \eqref{dtw-xi} follows from the inequality: for $a,b \geq 0$,
\begin{align} 
\frac{1 + a}{1 + b} \leq 1 + \abs{a-b}. \label{ineq:ab}
\end{align}
In the case $n \neq j$, \eqref{dtw-xi} follows from Lemma \ref{lem:wellsep}.
Indeed, if (b) holds then since $n \approx j$ and $\eta \approx \xi$: 
\begin{align*} 
\abs{\frac{\partial_t w(t,\eta)}{w(t,\eta)}\frac{w(t,\xi)}{\partial_t w(t,\xi)}}  \lesssim     \frac{1 + \abs{ \frac{\xi}{j^2}}}{1 + \abs{\frac{\eta}{n^2}}} \lesssim 1. 
\end{align*} 
If (c) holds then it follows that 
\begin{align*} 
\abs{\frac{\partial_t w(t,\eta)}{w(t,\eta)}\frac{w(t,\xi)}{\partial_t w(t,\xi)}}  \lesssim 1 + \abs{ \frac{\xi}{j^2}} \lesssim \jap{\eta - \xi}, 
\end{align*} 
which finishes the proof of \eqref{dtw-xi}. 

Next we prove \eqref{ineq:partialtw_endpt}. 
First, there is nothing to prove unless $E(\sqrt{\abs{\xi}}) \leq t \leq 2\abs{\xi}$, so assume this is the case. 
If $2\sqrt{\abs{\eta}} < t < 2\abs{\eta}$ then \eqref{ineq:partialtw_endpt} is a consequence of \eqref{dtw-xi}. 
If $t \leq 2\sqrt{\abs{\eta}}$ then \eqref{ineq:partialtw_endpt} follows from 
\begin{align*}
\sqrt{\frac{\partial_t w_l(\xi)}{w_l(\xi)}} \lesssim 1 \lesssim \frac{\abs{\eta}^{s/2}}{\jap{t}^s}. 
\end{align*}
Lastly, consider $t \geq 2\abs{\eta}$.
If $\abs{t-\abs{\xi}} < \frac{1}{2\alpha}\abs{\xi}$ then we have
\begin{align*} 
\abs{\eta - \xi} \geq t - \abs{\eta} + \abs{\xi} - t  > \abs{\eta} - \frac{1}{2\alpha}\abs{\xi} \geq \frac{1}{2\alpha}\abs{\xi} 
\end{align*}
If instead  $\abs{t-\abs{\xi}} \geq \frac{1}{2\alpha}\abs{\xi}$ then by \eqref{dtw}, 
\begin{align*}
\sqrt{\frac{\partial_t w_l(\xi)}{w_l(\xi)}} \lesssim \frac{\alpha^{1/2}}{\sqrt{\abs{\xi}}}. 
\end{align*}
Hence, in both cases it follows that 
 \begin{align*}
\sqrt{\frac{\partial_t w_l(\xi)}{w_l(\xi)}} \lesssim_\alpha \frac{\jap{\eta - \xi}^{1/2}}{\jap{\xi}^{1/2}}. 
\end{align*}
Since $2\abs{\eta} \leq t \leq 2 \abs{\xi} \leq 2\alpha\abs{\eta}$, $t\geq 1$ and $s - 1/2 \leq s/2$, 
\begin{align*} 
\frac{1}{\jap{\xi}^{1/2}} 
\lesssim_\alpha \frac{\abs{\eta}^{s/2}}{\jap{t}^s}, 
\end{align*}
which completes the proof of \eqref{ineq:partialtw_endpt}.
\end{proof}

\begin{lemma} \label{lem:WFreqCompare}
For all $t, \eta, \xi$, we have 
\begin{align} 
\frac{w_{NR}(t,\xi)}{w_{NR}(t,\eta)} \lesssim e^{\mu\abs{\eta - \xi}^{1/2}}. \label{ineq:WFreqCompNonRes}
\end{align} 
\end{lemma}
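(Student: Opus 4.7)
The plan is a case analysis on the relative sizes of $|\eta|$ and $|\xi|$, combined with the location of $t$ relative to the ``active'' windows $[t_{E(\sqrt{|\nu|}),\nu}, 2|\nu|]$ where $w_{NR}(t,\nu)$ is non-constant. Symmetry of \eqref{def:wNR} under $\eta \mapsto -\eta$ reduces us to $\eta,\xi \geq 0$, and since the inequality is trivial when $w_{NR}(t,\xi) \leq w_{NR}(t,\eta)$ (ratio $\leq 1$), we may assume throughout that $\xi \leq \eta$.

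In the ``far'' regime $\xi \leq \eta/2$, the trivial bound $w_{NR} \leq 1$ and Lemma \ref{basic} give
$$\frac{w_{NR}(t,\xi)}{w_{NR}(t,\eta)} \leq \frac{1}{w_{NR}(t,\eta)} \lesssim e^{\mu\sqrt{\eta}/2},$$
and $\sqrt\eta/2 \leq \sqrt{\eta-\xi}$ (since $\eta - \xi \geq \eta/2$) closes the case. In the ``near'' regime $\eta/2 \leq \xi \leq \eta$, split by the position of $t$. For $t \geq 2\eta$ both $w_{NR}$ equal $1$; for $t \leq t_{E(\sqrt{\xi}),\xi}$ both attain their asymptotic minima from Lemma \ref{basic}, and the bound reduces to the elementary inequality $\sqrt\eta - \sqrt\xi \leq \sqrt{\eta-\xi}$; the mixed intermediate case (one past, one active) is handled similarly because $t \approx \sqrt\eta \approx \sqrt\xi$ in that window. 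The essential subcase is when $t$ lies in the active window of both frequencies.

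In that subcase, let $k = k_\eta \geq k' = k_\xi$ be the current interval indices. Unfolding \eqref{def:wNR} as a telescoping product gives
$$\log\frac{w_{NR}(t,\xi)}{w_{NR}(t,\eta)} = (1+2C\kappa)\Bigl[(k'-1)\log\frac{\eta}{\xi} + \sum_{j=k'}^{k-1}\log\frac{\eta}{j^2}\Bigr] + \log\frac{\theta_\xi}{\theta_\eta},$$
where $\theta_\nu \in [(k_\nu^2/\nu)^{1+2C\kappa},1]$ is the partial-interval factor for $\nu$. Using $k \approx \eta/t$, $k' \approx \xi/t$, the first bracket term is $\lesssim (\xi/t)(\eta-\xi)/\xi = (\eta-\xi)/t$, and the sum contains $\approx (\eta-\xi)/t$ terms each of size $\log(\eta/j^2) \approx \log(t^2/\eta)$, so the whole bracket is $\lesssim \frac{\eta-\xi}{t}[1+\log(t^2/\eta)]$. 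A brief monotonicity analysis of $u \mapsto \sqrt{u}/(1+\log u)$ in $u = t^2/\eta$ shows $t/[1+\log(t^2/\eta)] \gtrsim \sqrt\eta$ throughout the active window $[\sqrt\eta, 2\eta]$, whence the bracket is $\lesssim (\eta-\xi)/\sqrt\eta \leq \sqrt{\eta-\xi}$.

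The main obstacle is the residual $\log(\theta_\xi/\theta_\eta)$ in the transition zone where $k > k'$: individually $\log\theta_\nu$ can scale like $\log(\eta/k'^2) \sim \log\eta$, which is not always $O(\sqrt{\eta-\xi})$. The resolution is a cancellation visible from \eqref{def:wNR}: in this window, $\xi$ is near the left end of interval $k'=k-1$ while $\eta$ is near the right end of interval $k$, so $\theta_\xi$ is close to the full-growth value $((k-1)^2/\xi)^{1+2C\kappa}$ while $\theta_\eta$ is close to $1$, and the apparent singular contribution $\log\theta_\xi$ combines exactly with the extra $\log(\eta/k'^2)$ term in the sum to collapse into just $\log(\eta/\xi) = O((\eta-\xi)/\eta)$, which is absorbed into the main estimate. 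A minor secondary subtlety arises near the endpoint $k \approx E(\sqrt\eta)$ where $a_{k,\eta}, b_{k,\eta} \to 0$; this is handled by a direct check using the explicit definitions and contributes only lower-order corrections.
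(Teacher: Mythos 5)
Your proposal is correct and follows essentially the same route as the paper: reduce to $\eta/2\le\xi\le\eta$, dispose of the non-active time regimes via Lemma \ref{basic} and monotonicity, and in the both-active case unfold \eqref{def:wNR} as a telescoping product whose bulk contributes $e^{O(c\sqrt{\eta-\xi})}$ (your logarithmic bracket is the additive version of the paper's factor $F_1$ in \eqref{Rat}, and your sum over $j=k',\dots,k-1$ subsumes the paper's cases $j=n-1$ and $j<n-1$). The one place your write-up is looser than it needs to be is the transition zone $k=k'+1$: the cancellation between $\log\theta_\xi$ and the extra term $\log(\eta/k'^2)$ is exact only at the interval endpoints, and for general $t$ in the overlap you need the trichotomy of Lemma \ref{lem:wellsep} --- either $t$ is at distance $\gtrsim\eta/k^2$ from both critical times, which pins $\theta_\eta$ near $1$ and $\theta_\xi$ near $((k-1)^2/\xi)^{1+2C\kappa}$ so the cancellation goes through, or else $\abs{\eta-\xi}\gtrsim\eta/k$, in which case no cancellation is needed because the uncancelled loss $(\eta/k^2)^{O(1)}$ is absorbed by $e^{c\sqrt{\abs{\eta-\xi}}}$ via \eqref{ineq:SobExp}. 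With that dichotomy inserted (and the constant bookkeeping needed to keep the final exponent at or below $\mu$), the argument is complete.
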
 

\begin{proof}[Proof of Lemma \ref{lem:WFreqCompare}   ]
For the proof of Lemma \ref{lem:WFreqCompare}, we use $w(t,\eta) = w_{NR}(t,\eta)$ and $w(t,\xi) = w_{NR}(t,\xi)$ as there is no possible ambiguity. 
 Switching the roles of $\xi$ and $\eta$, we may assume without loss of generality that $|\xi| \leq |\eta|$
and prove instead of \eqref{ineq:WFreqCompNonRes} that 
\begin{align}
e^{-\mu\abs{\eta - \xi}^{1/2}} \lesssim \frac{w_{NR}(t,\xi)}{w_{NR}(t,\eta)} \lesssim e^{\mu\abs{\eta - \xi}^{1/2}}. \label{ineq:WFreqpf} 
\end{align}
If $ |\xi| < |\eta|/2$, then \eqref{ineq:WFreqpf}  
is clear since by Lemma \ref{basic}, 
$$    e^{-\frac{\mu}{2} \sqrt{\xi}  } \leq  w(t,\xi) \leq 1,     $$ 
 $|\xi| \leq |\eta -\xi| $  and $ |\eta| \leq {2} |\eta -\xi|  $.
Hence, in the sequel we may assume that $\eta,\xi \geq 0$ and  $\eta/2 \leq   \xi \leq  \eta$. 

First, if $ t \geq 2\eta$, then  $ {w(t,\xi)} = {w(t,\eta)} = 1$ so there is nothing to prove. 

If $ t \leq  \min( t_{E(\sqrt{\xi}),\xi} , t_{E(\sqrt{\eta}),\eta})  $, then by Lemma \ref{basic}, \eqref{ineq:WFreqpf} follows by: 
$$   \frac{w(t,\xi)}{w(t,\eta)} =  \frac{w(0,\xi)}{w(0,\eta)}  
   \approx  \left(\frac{\xi}{\eta}\right)^{\mu/8}
  e^{\frac{\mu}{2} (\sqrt{\eta} - \sqrt{\xi})  }  .   $$

If $  2\xi \leq  t \leq 2\eta$, then, since $w(t,\eta)$ is non-decreasing in time and $w(t,\xi)$ is constant for $t \geq 2\xi$, 
$$    1 \leq   \frac{w(t,\xi)}{w(t,\eta)}  \leq    \frac{w(2\xi ,\xi)}{w(2\xi,\eta)}. 
    $$ 

If  $   t_{E(\sqrt{\xi}),\xi } \leq  t \leq t_{E(\sqrt{\eta}),\eta}$ then by similar logic,
$$    \frac{w(0,\xi)}{w(0,\eta)}   \leq   \frac{w(t,\xi)}{w(t,\eta)}  \leq  
  \frac{w(  t_{E(\sqrt{\eta}),\eta} ,\xi)}{w( t_{E(\sqrt{\eta}),\eta}  ,\eta)}, 
    $$  

and if  $   t_{E(\sqrt{\eta}),\eta } \leq  t \leq t_{E(\sqrt{\xi}),\xi}$ (note that this can occur even if $\eta \geq \xi$), then 
$$  \frac{w(  t_{E(\sqrt{\xi}),\xi} ,\xi)}{w( t_{E(\sqrt{\xi}),\xi}  ,\eta)} 
   \leq   \frac{w(t,\xi)}{w(t,\eta)}  \leq   \frac{w(0,\xi)}{w(0,\eta)}  
  . 
    $$  
Hence, \eqref{ineq:WFreqpf} is reduced to the case where 
$   \max( t_{E(\sqrt{\xi}),\xi} , t_{E(\sqrt{\eta}),\eta})  \leq  t  \leq 2\xi $. 
Let $j$ and $n$ be such that   $t \in I_{n,\eta}$ and $t \in I_{j,\xi}$. 
Arguing as in \eqref{jsn}, we see that $n \approx j \leq n$.
We consider three cases. 

\textit{Case $ j=n$:} \\
 Assume first that $t \in I_{n,\eta}^R \cap   I_{n,\xi}^R  $, hence denoting $c = 1 + 2C\kappa$ by the definition \eqref{def:wNR}, 
\begin{align} 
 {w(t,\eta)}  &=\left(   \frac{1^2}{\eta}  \right)^{c}  \left(  
  \frac{2^2}{\eta}  \right)^{c} ...
  \left(    \frac{(n-1)^2}{\eta}  \right)^{c} 
\Big( \frac{n^2}{\eta}   [ 1 + b_{n,\eta} |t-\frac{\eta}n | ]   \Big)^{C\kappa} 
 \label{teta},  \\
  \label{txi}   {w(t,\xi)}  &=\left(   \frac{1^2}{\xi}  \right)^{c}  \left(  
  \frac{2^2}{\xi}  \right)^{c} ...
  \left(    \frac{(n-1)^2}{\xi}  \right)^{c} 
\Big( \frac{n^2}{\xi}   [ 1 + b_{n,\xi} |t-\frac{\xi}n | ]   \Big)^{C\kappa} , \quad 
\hbox{and}
\end{align} 
\begin{align} \label{Rat}   
 \frac{w(t,\xi)}{w(t,\eta)}  = \left( \frac{\eta}{\xi}  \right)^{c(n-1) + C\kappa} 
  \left( \frac{1 + b_{n,\xi} |t-\frac{\xi}n | }{ 1 + b_{n,\eta} |t-\frac{\eta}n | }
  \right)^{C\kappa} = F_1 (F_2)^{C\kappa}. 
 \end{align}  
The first factor on the right-hand side of \eqref{Rat}  satisfies:   
\begin{align} 
  1 \leq F_1 =   \left( \frac{\eta}{\xi}  \right)^{c(n-1) + C\kappa} 
  \lesssim  \left( 1 +  \frac{\eta-\xi}{\xi}  \right)^{c \sqrt{\xi}}  
    \lesssim   e^{c \frac{\eta-\xi}{\sqrt{\xi}}} \leq  e^{c \sqrt{\eta-\xi}} \label{ineq:F1}
 \end{align}  
For the second factor in \eqref{Rat}, from \eqref{ineq:ab}, 
\begin{align*} 
 \max\left(F_2, \frac{1}{F_2}\right) & \leq  1 + \left|  b_{n,\xi} |t-\frac{\xi}n |  - b_{n,\eta} |t-\frac{\eta}n | 
     \right| \\  
& \lesssim  1 +  b_{n,\xi}\abs{\frac{\eta- \xi}n}  + |b_{n,\eta}  - b_{n,\xi}  | \frac{\eta}{n^2}
\\
& \lesssim  1 +  |\eta- \xi|  + \abs{\frac1{\xi}  - \frac1{\eta} } \eta 
 \lesssim 
 \jap{\eta- \xi} . 
 \end{align*}  
   
The case where   $t \in I_{n,\eta}^L \cap   I_{n,\xi}^L  $ can be treated in the same way. 

Assume now that $t \in I_{n,\eta}^L \cap   I_{n,\xi}^R  $, hence ${w(t,\xi)}$ 
is given by \eqref{txi}  and   ${w(t,\eta)}$ by (from \eqref{def:wNR}), 

\begin{align} 
 {w(t,\eta)}  &=\left(   \frac{1^2}{\eta}  \right)^{c}  \left(  
  \frac{2^2}{\eta}  \right)^{c} ...
  \left(    \frac{(n-1)^2}{\eta}  \right)^{c}  
 \left(    \frac{ n^2}{\eta}  \right)^{C\kappa} 
\Big(  [ 1 + a_{n,\eta} |t-\frac{\eta}n | ]   \Big)^{-1-C\kappa} \quad 
\hbox{and}  
 \label{tetaL}
\end{align}  
\begin{align*} 
 \frac{w(t,\xi)}{w(t,\eta)}  = \left( \frac{\eta}{\xi}  \right)^{c(n-1) + C\kappa} 
  \left( {1 + b_{n,\xi} |t-\frac{\xi}n | }
  \right)^{C\kappa}\Big(  [ 1 + a_{n,\eta} |t-\frac{\eta}n | ]   \Big)^{1+C\kappa}.
 \end{align*}  
Using that $ \frac{\xi}n \leq  t \leq \frac{\eta}n $ and that $b_{n,\xi}, a_{n,\eta} <4   $, we get from \eqref{ineq:F1} and \eqref{ineq:SobExp}, 
\begin{align*} 
1 \leq 
 \frac{w(t,\xi)}{w(t,\eta)}  \lesssim 
 e^{c \sqrt{\eta-\xi}} 
  \left( {1 + 4 \abs{\eta - \xi } }
  \right)^{1+ 2C\kappa} \lesssim e^{2c\sqrt{\eta - \xi}}. 
 \end{align*}  

\textit{Case $ j=n-1$:} \\ 
If 
 $t \in I_{n,\eta}^L $ then  $t_{n-1,\xi} \leq   \frac{\eta}n  $. 
 If  $ t \in    I_{n-1,\xi}^R  $, then  $  \frac{\xi}{n-1}  < t_{n-1,\eta}   $. 
In either one of these  cases, we deduce that 
 $   \frac{\xi}{n^2}  \lesssim    \frac{\eta-\xi}{n}     $  
and we conclude in a similar way to \eqref{Rat2c} below. 

Next assume that $t \in I_{n,\eta}^R \cap   I_{n-1,\xi}^L  $, which implies
\begin{align} 
 {w(t,\eta)}  &=\left(   \frac{1^2}{\eta}  \right)^{c}  \left(  
  \frac{2^2}{\eta}  \right)^{c} ...
  \left(    \frac{(n-1)^2}{\eta}  \right)^{c} 
\left( \frac{n^2}{\eta}   [ 1 + b_{n,\eta} |t-\frac{\eta}n | ]   \right)^{C\kappa} 
 \label{teta2},  \\
  {w(t,\xi)}  &=\left(   \frac{1^2}{\xi}  \right)^{c}  \left(  
  \frac{2^2}{\xi}  \right)^{c} ...
  \left(    \frac{(n-1)^2}{\xi}  \right)^{C\kappa} 
\left(   1 + a_{n-1,\xi} |t-\frac{\xi}{n-1}|    \right)^{-1-C\kappa} , \quad 
\hbox{and}
  \label{txi2} 
\end{align} 
\begin{align} \label{Rat2}   
 \frac{w(t,\xi)}{w(t,\eta)}  = \left( \frac{\eta}{\xi}  \right)^{c(n-2) + C\kappa} 
 \left(   \frac{(n-1)^2}{\eta}     [ 1 + a_{n-1,\xi} |t-\frac{\xi}{n-1}| ] 
    \right)^{-1-C\kappa} 
 \Big( \frac{n^2}{\eta}   [ 1 + b_{n,\eta} |t-\frac{\eta}n | ]   \Big)^{-C\kappa}.  
 \end{align}  
The result now follows from Lemma \ref{lem:wellsep}: if (b) holds then \eqref{Rat2} and $\eta \approx \xi$ imply
 \begin{align} \label{Rat2b}   
 \frac{w(t,\xi)}{w(t,\eta)}  \approx  \left( \frac{\eta}{\xi}  \right)^{c(n-2) + C\kappa} 
 \end{align}   
and we conclude as \eqref{ineq:F1}. If Lemma \ref{lem:wellsep} (c) holds then 
 \begin{align} \label{Rat2c}   
  \left( \frac{\eta}{\xi}  \right)^{c(n-2) + C\kappa}  
\lesssim  \frac{w(t,\xi)}{w(t,\eta)}  \lesssim
  \left( \frac{\eta}{\xi}  \right)^{c(n-2) + C\kappa}  {\jap{\eta-\xi}}^{1+ 2 C\kappa}, 
 \end{align} 
and we again apply \eqref{ineq:F1} and \eqref{ineq:SobExp} to deduce \eqref{ineq:WFreqpf}. 

\medskip

\textit{Case $j <  n-1 $: } \\ 
In  this case, it is easy to see that  
 $   \frac{\xi}{n^2}  \lesssim    \frac{\eta-\xi}{n}     $  
and we may conclude in a similar way to \eqref{Rat2c}. 
\end{proof}

A consequence of Lemma \ref{lem:WFreqCompare} is the following, 
which allows to easily exchange $J_k(\eta)$ for $J_l(\xi)$.

\begin{lemma}\label{lem:Jswap} 
In general we have 
\begin{align}
\frac{J_k(\eta)}{J_l(\xi)} \lesssim \frac{\abs{\eta}}{k^2\left( 1+ \abs{t - \frac{\eta}{k}} \right)} e^{9{\mu}\abs{k-l,\eta - \xi}^{1/2}}. \label{ineq:WFreqCompRes}
\end{align}  
If any one of the following holds: ($t \not\in I_{k,\eta}$) or ($k = l$) or ($t \in I_{k,\eta}$, $t \not\in I_{k,\xi}$ and $\frac{1}{\alpha}\abs{\xi} \leq \abs{\eta} \leq \alpha\abs{\xi}$ for some $\alpha \geq 1$) then we have the improved estimate  
\begin{align} 
\frac{J_k(\eta)}{J_l(\xi)} \lesssim e^{10 {\mu}\abs{k-l,\eta - \xi}^{1/2}}. \label{ineq:BasicJswap} 
\end{align} 
Finally if $t \in I_{l,\xi}$, $t \not\in I_{k,\eta}$ and $\frac{1}{\alpha}\abs{\xi} \leq \abs{\eta} \leq \alpha\abs{\xi}$ for some $\alpha> 0$ then 
\begin{align} 
\frac{J_k(\eta)}{J_l(\xi)} \lesssim \frac{l^2\left(1 + \abs{t - \frac{\xi}{l}}\right)}{\abs{\xi}}e^{11{\mu}\abs{k-l,\eta - \xi}^{1/2}}. \label{ineq:WFreqCompNRGain}
\end{align}
\begin{remark} \label{rmk:GainLoss}
The leading factors in \eqref{ineq:WFreqCompRes} and \eqref{ineq:WFreqCompNRGain} both are due to 
ratios of $w_{R}$ and $w_{NR}$.
Moreover, in the case $t \in I_{k,\eta} \cap I_{k,\xi}$, $k \neq l$,  the only case where \eqref{ineq:WFreqCompRes} is needed, we also have $\abs{\eta} \approx \abs{\xi}$ and hence from the definition \eqref{def:wR} and the proof of \eqref{dtw} it follows that 
\begin{align}
\frac{J_k(\eta)}{J_l(\xi)} \lesssim e^{20{\mu}\abs{\eta-\xi}^{1/2}} \frac{w_{NR}(t,\xi)}{w_{R}(t,\xi)}. \label{ineq:WLoss}
\end{align}
A version often used is if $t \in \mathbf I_{k,\eta} \cap \mathbf I_{k,\xi}, k\neq l$, then
by \eqref{ineq:WFreqCompRes}, \eqref{dtw}, Lemma \ref{lem:WtFreqCompare} and \eqref{ineq:SobExp}, 
\begin{align} 
\frac{J_k(\eta)}{J_l(\xi)} \lesssim \frac{\abs{\eta}}{k^2}\sqrt{\frac{\partial_t w_k(t,\eta)}{w_k(t,\eta)}}\sqrt{\frac{\partial_t w_l(t,\xi)}{w_l(t,\xi)}}e^{20\mu\abs{k-l,\eta-\xi}^{1/2}}. \label{ineq:RatJ2partt}
\end{align}
\end{remark} 
\end{lemma}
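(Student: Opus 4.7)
The starting point is to reduce all three estimates to a comparison of the weights $w_l(t,\xi)/w_k(t,\eta)$. Writing $J_k(\eta) = \widetilde J_k(\eta) + e^{\mu\abs{k}^{1/2}}$ with $\widetilde J_k(\eta) = e^{\mu\abs{\eta}^{1/2}}/w_k(\eta)$, using the two lower bounds $J_l(\xi) \ge \widetilde J_l(\xi)$ and $J_l(\xi) \ge e^{\mu\abs{l}^{1/2}}$, and invoking the elementary inequality $\bigl|\abs{a}^{1/2}-\abs{b}^{1/2}\bigr| \le \abs{a-b}^{1/2}$, one obtains
\begin{align*}
\frac{J_k(\eta)}{J_l(\xi)} \lesssim e^{\mu\abs{\eta-\xi}^{1/2}}\frac{w_l(t,\xi)}{w_k(t,\eta)} + e^{\mu\abs{k-l}^{1/2}}.
\end{align*}
So every remaining difficulty is buried in the weight ratio.

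For the general estimate \eqref{ineq:WFreqCompRes}, I would use $w_l(t,\xi) \le w_{NR}(t,\xi)$ (which holds because $w_R \le w_{NR}$ on any resonant interval, by the choice of $b_{k,\eta}, a_{k,\eta}$) together with Lemma \ref{lem:WFreqCompare} to replace the numerator by $e^{\mu\abs{\eta-\xi}^{1/2}}w_{NR}(t,\eta)$. The remaining ratio $w_{NR}(t,\eta)/w_k(t,\eta)$ equals $1$ off $I_{k,\eta}$, and equals $\abs{\eta}/(k^2(1+b_{k,\eta}\abs{t-\eta/k}))$ on $I_{k,\eta}$; the latter I would bound by $\abs{\eta}/(k^2(1+\abs{t-\eta/k}))$ up to a constant by splitting on $b_{k,\eta} \ge 1/2$ (direct comparison) versus $b_{k,\eta} < 1/2$ (which forces $k > \sqrt{\abs\eta}/2$, hence $1+\abs{t-\eta/k} \lesssim 1$, so both quantities are $O(1)$). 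For the improved bound \eqref{ineq:BasicJswap}, case (a) is immediate since $w_k = w_{NR}$; in case (b), if $t\in I_{k,\xi}$ then both weights are $w_R$ and I would expand their ratio, bounding $\abs\eta/\abs\xi \le 1+\abs{\eta-\xi}/\abs\xi$ and $(1+b_{k,\xi}\abs{t-\xi/k})/(1+b_{k,\eta}\abs{t-\eta/k}) \le 1 + \abs{\eta-\xi}/k$ via the explicit formula for $b_{k,\cdot}$ and $\abs{\eta/k - \xi/k} = \abs{\eta-\xi}/k$, then absorbing everything into $e^{c\mu\abs{\eta-\xi}^{1/2}}$ using $1 + x \lesssim e^{c\sqrt{x}}$ for $x \ge 0$; if $t \notin I_{k,\xi}$ it reduces to case (c). Case (c) is the delicate one: since $\abs\xi \approx \abs\eta$ and $t \in I_{k,\eta}$, the point $t$ lies in some $I_{n,\xi}$ with $n \approx k$ (the boundary case where $t$ is outside all $I_{n,\xi}$ forces $\abs{\eta - \xi} \gtrsim \abs\eta$, which is trivial); since $n \ne k$ by hypothesis, Lemma \ref{lem:wellsep} then yields either $\abs{t-\eta/k} \gtrsim \abs\eta/k^2$ (which collapses the dangerous factor $\abs\eta/(k^2(1+\abs{t-\eta/k}))$ to $O(1)$) or $\abs{\eta-\xi} \gtrsim \abs\eta/k$ (which allows $e^{\mu\abs{\eta-\xi}^{1/2}}$ to absorb $\abs\eta/k^2 \le \abs\eta/k$ via $x \lesssim e^{c\sqrt{x}}$ on $x \ge 1$, using $\abs\eta/k \ge \sqrt{\abs\eta} \ge 1$).

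For the reversed gain \eqref{ineq:WFreqCompNRGain}, the same scheme applies with the roles of $(k,\eta)$ and $(l,\xi)$ exchanged: the hypothesis $t \in I_{l,\xi}$, $t \notin I_{k,\eta}$ gives $w_l(t,\xi) \lesssim (l^2/\abs\xi)(1+\abs{t-\xi/l})w_{NR}(t,\xi)$ and $w_k(t,\eta) = w_{NR}(t,\eta)$, and Lemma \ref{lem:WFreqCompare} again controls the resulting non-resonant ratio. I expect the principal obstacle to be the ratio-of-resonant-weights calculation in case (b) when $t \in I_{k,\eta}\cap I_{k,\xi}$: verifying that the middle factor $(1+b_{k,\xi}\abs{t-\xi/k})/(1+b_{k,\eta}\abs{t-\eta/k})$ is well-controlled requires tracking how $b_{k,\cdot}$ varies with the continuous parameter and carefully absorbing the resulting $O(\abs{\eta-\xi}/k)$ error into the Gevrey-$\tfrac12$ exponential; once this is done, all three inequalities fall out with a uniform multiplicative constant.
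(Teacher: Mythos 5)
Your reduction and your treatment of \eqref{ineq:WFreqCompRes} and \eqref{ineq:BasicJswap} follow essentially the same route as the paper: the split $\frac{a+b}{c+d}\le \frac{a}{c}+\frac{b}{d}$ reduces everything to the weight ratio $w_l(t,\xi)/w_k(t,\eta)$, which you then control exactly as the paper does via Lemma \ref{lem:WFreqCompare}, Lemma \ref{lem:wellsep}, and the explicit formulas for $b_{k,\cdot}$, $a_{k,\cdot}$ (your explicit handling of the degeneracy $b_{k,\eta}\to 0$ near $k\approx E(\sqrt{\abs{\eta}})$ is a point the paper leaves implicit, and it is correct). Also note, as the paper's own proof makes clear, that \eqref{ineq:WFreqCompRes} is only established (and only needed) when $t\in I_{k,\eta}$, where the prefactor is $\gtrsim 1$; off $I_{k,\eta}$ one proves \eqref{ineq:BasicJswap} instead.

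There is, however, a genuine gap in your treatment of \eqref{ineq:WFreqCompNRGain}. Your reduction pairs $e^{\mu\abs{k}^{1/2}}$ with the lower bound $J_l(\xi)\ge e^{\mu\abs{l}^{1/2}}$, leaving an additive term $e^{\mu\abs{k-l}^{1/2}}$. For the first two estimates this is harmless because the target prefactor is $\gtrsim 1$, but in \eqref{ineq:WFreqCompNRGain} the prefactor $l^2\bigl(1+\abs{t-\tfrac{\xi}{l}}\bigr)/\abs{\xi}$ can be as small as $1/\abs{\xi}$ (take $l=1$, $t=\xi$), so the additive constant is \emph{not} dominated by the right-hand side: with $\eta=\xi$, $l=1$, $k=2$, $t=\xi$, your scheme yields a bound of the form $\abs{\xi}^{-1}e^{C\mu}+e^{\mu(\sqrt{2}-1)}$, which does not prove the claimed $O(\abs{\xi}^{-1})$ gain. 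The fix, which is what the paper does, is to change the pairing for this estimate: bound $e^{\mu\abs{k}^{1/2}}/J_l(\xi)\le e^{\mu\abs{k}^{1/2}}/\tilde J_l(\xi)= w_l(t,\xi)\,e^{\mu\abs{k}^{1/2}-\mu\abs{\xi}^{1/2}}$, then use that $t\in I_{l,\xi}$ forces $l^2\le\abs{\xi}$, hence $\abs{k}^{1/2}\le\abs{k-l}^{1/2}+\abs{\xi}^{1/4}$, so that $e^{\mu\abs{\xi}^{1/4}-\mu\abs{\xi}^{1/2}}$ supplies, via \eqref{ineq:SobExp}, the polynomial smallness $\lesssim \abs{\xi}^{-1}\le l^2\bigl(1+\abs{t-\tfrac{\xi}{l}}\bigr)/\abs{\xi}$. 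With that cross-comparison inserted, the rest of your argument for \eqref{ineq:WFreqCompNRGain} (resonant $w_l$ versus non-resonant $w_k$, plus Lemma \ref{lem:WFreqCompare} and Lemma \ref{lem:wellsep} for the case $t\notin I_{l,\eta}$) goes through as in the paper.
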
 
\begin{proof} 
For $a,b,c,d > 0$, note the basic inequality: 
\begin{align} 
\frac{a+b}{c+d} \leq  \frac{a}{c} + \frac{b}{d},  \label{ineq:abcd}
\end{align}
which implies 
\begin{align*} 
\frac{J_k(\eta)}{J_l(\xi)} \leq \frac{w_l(t,\xi)}{w_k(t,\eta)}e^{\mu\abs{\eta-\xi}^{1/2}} + e^{\mu\abs{k-l}^{1/2}}. 
\end{align*} 
Hence, if $t \in I_{k,\eta}$ then \eqref{ineq:WFreqCompRes} follows from \eqref{ineq:WFreqCompNonRes}, the definition of $w_R$, \eqref{def:wR} and the definition of $w_k(t,\eta)$, \eqref{def:wk}. 
If $t \not\in I_{k,\eta}$ then \eqref{ineq:BasicJswap} follows from \eqref{ineq:WFreqCompNonRes}. 
Now consider the remaining cases. 

\textit{Proof of \eqref{ineq:BasicJswap} when $t \in I_{k,\eta}$, $t \not\in I_{k,\xi}$ and $\frac{1}{\alpha}\abs{\xi} \leq \abs{\eta} \leq \alpha\abs{\xi}$ for some $\alpha \geq 1$:}\\
In this case, \eqref{ineq:BasicJswap} follows from \eqref{ineq:WFreqCompRes} together with Lemma \ref{lem:wellsep} (b) or (c) and \eqref{ineq:SobExp}. Indeed, (a) is ruled out by $t \in I_{k,\eta}$, $t \not\in I_{k,\xi}$; if (b) holds then there is no loss and if (c) holds then \eqref{ineq:SobExp} can be used to absorb the loss. 

\textit{Proof of \eqref{ineq:BasicJswap} when $t \in I_{k,\eta}, t \in I_{k,\xi}$ and $k = l$:}\\
Inequality \eqref{ineq:BasicJswap} follows from \eqref{ineq:WFreqCompNonRes} and the definition of $w_k(t,\eta)$, \eqref{def:wk}. 

\textit{Proof of \eqref{ineq:WFreqCompNRGain}:} \\ 
First consider the case that $t \in I_{l,\eta}$, which also implies $k \neq l$ and $\eta \approx \xi$. 
By \eqref{def:wR},\eqref{def:wk} and \eqref{ineq:WFreqCompNonRes},  
\begin{align*} 
\frac{J_k(\eta)}{J_l(\xi)} & \leq \frac{w_l(t,\xi)}{w_k(t,\eta)}e^{\mu\abs{\eta-\xi}^{1/2}} + w_l(t,\xi)e^{\mu\abs{k}^{1/2} - \mu\abs{\xi}^{1/2}} \\
& \lesssim \frac{w_{NR}(t,\xi)}{w_{R}(t,\xi)}e^{10\mu\abs{\eta-\xi}^{1/2}} + e^{\mu\abs{k}^{1/2} - \mu\abs{\xi}^{1/2}}.  
\end{align*} 
Since $\abs{k}^2 \leq \frac{1}{4}\abs{\eta}$ then \eqref{ineq:IncExp} and \eqref{ineq:SobExp} deals with the second term and \eqref{ineq:WFreqCompNRGain} follows.   

In the case $t \not\in I_{l,\eta}$, \eqref{ineq:WFreqCompNRGain} follows from Lemma \ref{lem:wellsep}. 
Indeed, if (b) holds then 
\begin{align*} 
1 \lesssim_\delta  \frac{w_R(t,\xi)}{w_{NR}(t,\xi)}, 
\end{align*}
and \eqref{ineq:WFreqCompNRGain} follows from \eqref{ineq:BasicJswap} whereas if (c) holds then \eqref{ineq:WFreqCompNRGain} follows from \eqref{ineq:BasicJswap} and \eqref{ineq:SobExp}.  
\end{proof} 

The following variant of the previous lemmas is used in \S\ref{sec:Transport} to recover $1/2$ derivatives.
\begin{lemma} \label{lem:JTrans}
Let $t \leq \frac{1}{2}  \min(\sqrt{\abs{\eta}},\sqrt{\abs{\xi}})$. 
Then,  
\begin{align}  \label{JJ1}
\abs{\frac{J_k(\eta)}{J_l(\xi)} - 1} \lesssim 
\frac{\jap{\eta-\xi,k-l}  }{ \sqrt{ \abs{\xi} + \abs{\eta} +\abs{k} + \abs{l} }  } 
 e^{11\mu\abs{k-l,\eta-\xi}^{1/2}}.  
\end{align}

\end{lemma}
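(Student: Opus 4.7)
The plan is to exploit that at such small times the multiplier $J_k(t,\eta)$ becomes essentially independent of $k$ and admits an explicit asymptotic description coming from Lemma~\ref{basic}, so that the ratio $J_k(\eta)/J_l(\xi)$ reduces to comparing two explicit functions of nearby pairs of frequencies.

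First, I would use the hypothesis $t \leq \tfrac12 \min(\sqrt{|\eta|},\sqrt{|\xi|}) < \min(t_{E(\sqrt{|\eta|}),\eta}, t_{E(\sqrt{|\xi|}),\xi})$ together with the definition \eqref{def:wk} and the fact that $w_{NR}(\cdot,\eta)$ is constant on $[0,t_{E(\sqrt{|\eta|}),\eta}]$ to conclude $w_k(t,\eta) = w_{NR}(0,\eta)$ \emph{independently of $k$}, and likewise $w_l(t,\xi) = w_{NR}(0,\xi)$. Combined with Lemma~\ref{basic},
\[
J_k(t,\eta) = \frac{e^{\mu\sqrt{|\eta|}}}{w_{NR}(0,\eta)} + e^{\mu\sqrt{|k|}} \sim \frac{e^{(3\mu/2)\sqrt{|\eta|}}}{|\eta|^{\mu/8}} + e^{\mu\sqrt{|k|}}.
\]

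Second, I would write
\[
J_k(\eta) - J_l(\xi) = \left(\frac{e^{\mu\sqrt{|\eta|}}}{w_{NR}(0,\eta)} - \frac{e^{\mu\sqrt{|\xi|}}}{w_{NR}(0,\xi)}\right) + \left(e^{\mu\sqrt{|k|}} - e^{\mu\sqrt{|l|}}\right)
\]
and estimate each piece by the elementary bound $|e^a-e^b|\leq |a-b|\max(e^a,e^b)$ paired with $|\sqrt{x}-\sqrt{y}| = |x-y|/(\sqrt{x}+\sqrt{y})$. The second piece is directly
\[
|e^{\mu\sqrt{|k|}} - e^{\mu\sqrt{|l|}}| \lesssim \frac{|k-l|}{\sqrt{|k|}+\sqrt{|l|}} \max\bigl(e^{\mu\sqrt{|k|}}, e^{\mu\sqrt{|l|}}\bigr),
\]
while for the first piece I would use the explicit product formula from the proof of Lemma~\ref{basic} to compare $w_{NR}(0,\eta)$ and $w_{NR}(0,\xi)$, writing the ratio as a product of $(|\eta|/|\xi|)^c$ factors (with $c = 1+2C\kappa$), and treating the polynomial $|\eta|^{-\mu/8}$ correction as strictly lower order, absorbable into an exponential via \eqref{ineq:SobExp}.

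Third, I would divide by $J_l(\xi) \gtrsim \max\bigl(e^{(3\mu/2)\sqrt{|\xi|}}|\xi|^{-\mu/8},\, e^{\mu\sqrt{|l|}}\bigr)$ and use $\sqrt{|\eta|} \leq \sqrt{|\xi|} + \sqrt{|\eta-\xi|}$ (and its $k,l$ analogue) to absorb the exponentials into factors of the form $e^{c\mu\sqrt{|\eta-\xi|}}$ and $e^{c\mu\sqrt{|k-l|}}$. The ratios $\max(F(\eta),F(\xi))/F(\xi)$ and $\max(e^{\mu\sqrt{|k|}},e^{\mu\sqrt{|l|}})/e^{\mu\sqrt{|l|}}$ are then both controlled by such exponential factors, yielding
\[
\left|\frac{J_k(\eta)}{J_l(\xi)} - 1\right| \lesssim \left(\frac{|\eta-\xi|}{\sqrt{|\eta|}+\sqrt{|\xi|}} + \frac{|k-l|}{\sqrt{|k|}+\sqrt{|l|}}\right) e^{c\mu|k-l,\eta-\xi|^{1/2}}.
\]
Finally, the two separate denominators are combined into the common $\sqrt{|\eta|+|\xi|+|k|+|l|}$ by splitting into the regimes where $|\eta|+|\xi|$ dominates versus $|k|+|l|$ dominates and paying one more exponential factor via \eqref{ineq:SobExp}.

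The main obstacle will be the careful case analysis around which of the two terms in $J_l(\xi)$ dominates: when $|l|$ is comparable to or larger than $|\xi|$ the lower bound $J_l(\xi) \gtrsim e^{\mu\sqrt{|l|}}$ is effective and the first piece of the decomposition must be controlled by $F(\xi) \leq J_l(\xi)$ rather than by its Stirling asymptotic; in the opposite regime one uses $F(\xi) \gtrsim e^{(3\mu/2)\sqrt{|\xi|}}|\xi|^{-\mu/8}$. Reconciling these two regimes, together with upgrading the denominators $\sqrt{|\eta|}+\sqrt{|\xi|}$ and $\sqrt{|k|}+\sqrt{|l|}$ to the single denominator $\sqrt{|\eta|+|\xi|+|k|+|l|}$ and absorbing the polynomial $|\eta|^{-\mu/8}$ Stirling correction, is precisely what inflates the exponential constant to the stated value $11\mu$.
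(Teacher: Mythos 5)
Your proposal is correct and follows essentially the same route as the paper: freeze $J$ at $t=0$, split $J_k(\eta)-J_l(\xi)$ into the $\tilde J$ difference and the $e^{\mu|k|^{1/2}}-e^{\mu|l|^{1/2}}$ difference, compare $w_{NR}(0,\eta)$ with $w_{NR}(0,\xi)$ via the exact product formula (not merely the Stirling asymptotic), and close with a case analysis on whether $|k|+|l|$ or $|\eta|+|\xi|$ dominates. The only step you gloss over is the preliminary reduction to the regime $|\eta-\xi|+|k-l|\ll \sqrt{|\eta|}+\sqrt{|\xi|}+\sqrt{|k|}+\sqrt{|l|}$ (the complementary regime being trivial from \eqref{ineq:BasicJswap}), which is what guarantees $|E(\sqrt{|\eta|})-E(\sqrt{|\xi|})|\le 1$ so that the two products defining $w_{NR}(0,\eta)$ and $w_{NR}(0,\xi)$ have essentially the same number of factors and the ratio really does reduce to powers of $|\eta|/|\xi|$.
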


\begin{proof} 
Due to the assumption on $t$, we have that $ J_k(t, \eta) = J_k(0, \eta)$ and  $ J_l(t, \xi) = J_l(0, \xi)$. 
If $ \abs{\xi}^{1/2} + \abs{\eta}^{1/2} +\abs{k}^{1/2} + \abs{l}^{1/2}  \lesssim
     |\xi-\eta| + |k - l|    $, then \eqref{JJ1} follows from \eqref{ineq:BasicJswap}. 
Similarly, we may restrict to $ \abs{\xi}^{1/2} + \abs{\eta}^{1/2} +\abs{k}^{1/2} + \abs{l}^{1/2} \gtrsim 1$, 
as otherwise \eqref{JJ1} is weaker than \eqref{ineq:BasicJswap}.

From now on, we assume that 
\begin{align} \label{s100} 
 |\xi-\eta| + |k - l|    \leq \frac1{100} 
( \abs{\xi}^{1/2} + \abs{\eta}^{1/2} +\abs{k}^{1/2} + \abs{l}^{1/2} ) .  
 \end{align} 
\textit{Case  1: $  \frac1{10} (\abs{k}  + \abs{l} )  \leq \abs{\xi}  + \abs{\eta}  \leq 10 (\abs{k}  + \abs{l} )$:}\\
In this case, recalling the definition \eqref{def:tildeJB}
\begin{align} \label{JJkl} 
\abs{\frac{J_k(\eta)}{J_l(\xi)} - 1} \leq \frac{ \abs{\tilde J_k(\eta) 
- \tilde J_l(\xi)       } } { \tilde J_l(\xi)  +e^{ \mu |l|^{1/2}}  }   + \frac{ \abs{ 
 e^{ \mu |k|^{1/2}} -  e^{ \mu |l|^{1/2}}   } } { e^{ \mu |l|^{1/2}} +   \tilde J_l(\xi)    } .   
\end{align} 
The first term on the right-hand side of \eqref{JJkl} is controlled by 
\begin{align*}  
 \frac{ \abs{\tilde J_k(\eta) 
- \tilde J_l(\xi)       } } { \tilde J_l(\xi)}   \leq   
 \frac{w(0,\xi )}{w(0,\eta) }  \abs{ e^{ \mu  (|\eta|^{1/2}  -|\xi|^{1/2} )  }   -1}
 +  \abs{ \frac{w(0,\xi )}{w(0,\eta) }  -1}.  
\end{align*} 
To control the first term, we use  \eqref{ineq:WFreqCompNonRes} and $\abs{e^{x} - 1} \leq xe^x$, 
\begin{align*}  
 \abs{e^{ \mu  (|\eta|^{1/2}  -|\xi|^{1/2} )  }   -1  }  =  
\abs{ e^{ \mu   \frac { |\eta  -\xi| } {  \abs{\xi}^{1/2} + \abs{\eta}^{1/2}    }  }   -1 } \leq 
  \mu   \frac { |\eta  - \xi| }{  \abs{\xi}^{1/2} + \abs{\eta}^{1/2} }e^{\mu\abs{\eta - \xi}^{1/2}}   
 \lesssim   \frac{\jap{\eta-\xi,k-l}  }{ \sqrt{ \abs{\xi} + \abs{\eta} +\abs{k} + \abs{l} }  }e^{\mu\abs{\eta - \xi}^{1/2}}   .     
\end{align*}    
To control the second term, we  notice that the condition \eqref{s100} (together with our assumption $\abs{k} + \abs{l} \approx \abs{\eta} + \abs{\xi}$) implies that $  | E(\sqrt{\abs{\eta}})  -  E(\sqrt{\abs{\xi}})  | \leq 1    $.   
 We first  look at  the case 
  $ E(\sqrt{\abs{\eta}})  = E(\sqrt{\abs{\xi}})  $. 
Using the inequality: for all $ a, b$, $|a| < 1$ and $b > 1$,
$$  (1 + \frac{a}{b^2})^b - 1 \leq  e\frac{|a|}{b},     $$ 
we have (denoting $c = 1 + 2C\kappa$), 
\begin{align*}  
 \abs{   \frac{w(0,\xi )}{w(0,\eta) }  -1 }  =  
 \abs{  \left( \frac{ \abs{\eta}}{\abs{\xi}}  \right)^{c E(\sqrt{\abs{\eta}}) }  - 1 } 
\lesssim \frac{|\eta-\xi| } { \sqrt{\abs{\xi}}}. 
\end{align*} 
If $ E(\sqrt{\abs{\eta}})  = E(\sqrt{\abs{\xi}}) + 1  $, then $\sqrt{\abs{\xi}} 
<   E(\sqrt{\abs{\eta}}) \leq \sqrt{\abs{\eta}}  $ and 
\begin{align*}  
 \abs{   \frac{w(0,\xi )}{w(0,\eta) }  -1 }  =  
 \abs{  \left( \frac{ \abs{\eta}}{\abs{\xi}}  \right)^{c E(\sqrt{\abs{\xi}}) } 
\left(   \frac{|\eta|}{ E(\sqrt{\abs{\eta}})^2   }  \right)^{c}     - 1 } 
\lesssim \frac{|\eta-\xi| } { \sqrt{\abs{\xi}}}  +   \abs{ 
\left(   \frac{|\eta|}{ E(\sqrt{\abs{\eta}})^2   }  \right)^{c}     - 1 } 
\end{align*} 
and we conclude since 
\begin{align*}  
  \abs{ 
\left(   \frac{|\eta|}{ E(\sqrt{\abs{\eta}})^2   }  \right)^{c}     - 1 }  
\lesssim  \frac{\jap{\eta-\xi} } { {\abs{\xi}}} . 
\end{align*} 
 The case  $ E(\sqrt{\abs{\eta}})  = E(\sqrt{\abs{\xi}}) - 1  $ is treated in 
the same way. 

The second term  on the right-hand side of \eqref{JJkl} is controlled by \eqref{ineq:TrivDiff} and $\abs{e^x - 1} \leq xe^x$,
\begin{align*}   
 \abs{e^{ \mu  (|k|^{1/2}  -|l|^{1/2} )  }   -1} \lesssim  
  \mu   \frac { |k  - l| }{  \abs{k}^{1/2} + \abs{l}^{1/2} } e^{ \mu  \abs{k-l}^{1/2}  }      
 \lesssim   \frac{\jap{\eta-\xi,k-l}  }{ \sqrt{ \abs{\xi} + \abs{\eta} +\abs{k} + \abs{l} }  }e^{ \mu  \abs{k-l}^{1/2}  }.     
\end{align*} 

 \textit{Case  2: $\abs{\xi}  + \abs{\eta} \geq 10 (\abs{k}  + \abs{l} )$:} Here we can treat the 
  first  term on the right-hand side of \eqref{JJkl} as above 
 and use $\abs{\xi}  \geq 4 (\abs{k}  + \abs{l}  )   $ together with \eqref{ineq:SobExp} to treat the second term.

\textit{Case 3:$   \abs{k}  + \abs{l}  \geq 10 (\abs{\xi}  + \abs{\eta} )   $:} Here we can treat the 
  second   term on the right-hand side of \eqref{JJkl} as above 
 and use $\abs{l}  \geq 4 (\abs{\xi}  + \abs{\eta}  )   $  together with \eqref{ineq:SobExp} to treat the first term.
\end{proof}

\subsection{Product lemma and other basic properties of $A$}
Unlike the $\mathcal{G}^{\lambda,\sigma}$ norm (see \S\ref{apx:Gev}), the norm defined by $A$ is \emph{not} an algebra due to the discrepancy between resonant and non-resonant modes 
 which is as large as an entire derivative near the critical times.
 However, $A$ does define an algebra when restricted to the zero mode, as the zero mode is never resonant. 
Although $A$ defines an algebra on the zero modes, the multipliers that appear in the CK terms do not, hence more generally, we have the following product lemma.  
\begin{lemma}[Product lemma] \label{lem:ProdAlg}
For some $c \in (0,1)$, all $\sigma > 1$, all $\beta > -\sigma + 1$ and $\alpha \geq 0$, the following inequalities hold for all $f,g$ which depend only on $v$, 
\begin{subequations} \label{ineq:AProdProps}
\begin{align} 
\norm{\abs{\partial_v}^{\alpha}\jap{\partial_v}^\beta A(fg)}_2 & \lesssim \norm{f}_{\G^{c\lambda,\sigma}} \norm{\abs{\partial_v}^{\alpha}\jap{\partial_v}^\beta Ag}_2 + \norm{g}_{\G^{c\lambda,\sigma}} \norm{\abs{\partial_v}^{\alpha}\jap{\partial_v}^\beta Af}_2 \label{ineq:AProd} \\
\norm{\sqrt{\frac{\partial_tw}{w}}\jap{\partial_v}^\beta A(fg)}_2 & \lesssim \norm{g}_{\G^{c\lambda,\sigma}}\norm{\left(\sqrt{\frac{\partial_tw}{w}} + \frac{\abs{\partial_v}^{s/2}}{\jap{t}^s} \right)\jap{\partial_v}^\beta Af}_2 \nonumber \\ & \quad + \norm{f}_{\G^{c\lambda,\sigma}}\norm{\left(\sqrt{\frac{\partial_tw}{w}} + \frac{\abs{\partial_v}^{s/2}}{\jap{t}^s} \right)\jap{\partial_v}^\beta Ag}_2.   \label{ineq:wtAProd}
\end{align}
\end{subequations}
We also have for $\beta > -\sigma+1$ the algebra property,
\begin{align} 
\norm{\jap{\partial_v}^\beta A(fg)}_2 & \lesssim \norm{\jap{\partial_v}^\beta Af}_2 \norm{\jap{\partial_v}^\beta Ag}_2. \label{ineq:Aalg}
\end{align} 
Moreover, \eqref{ineq:AProdProps} and \eqref{ineq:Aalg} both hold for $A$ replaced by $A^R$. 
\end{lemma}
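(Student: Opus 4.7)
Since $f,g$ depend only on $v$, their Fourier supports lie in the line $k=0$, so it suffices to control $A_0(t,\eta)=e^{\lambda(t)|\eta|^s}\jap{\eta}^\sigma J_0(t,\eta)$, and by $I_{0,\eta}=\emptyset$ the multiplier $J_0$ is built from $w_{NR}$ only. The whole proof reduces to a symbol comparison at the Fourier level, combined with a paraproduct decomposition
\[
\widehat{fg}_0(\eta)=\int\hat f(\eta-\xi)\hat g(\xi)\,d\xi=\mathrm{LH}(\eta)+\mathrm{HL}(\eta)+\mathrm{R}(\eta),
\]
split according to $|\eta-\xi|\le|\xi|/8$ (low–high), $|\xi|\le|\eta-\xi|/8$ (high–low), and $|\xi|\approx|\eta-\xi|$ (balanced remainder).

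The heart of the argument is the low–high region, where $|\eta|\approx|\xi|$. There I bound each factor of $A_0(\eta)$ by the corresponding factor at $\xi$ times a low–frequency factor at $\eta-\xi$: (i) the smoothness estimate $||\eta|^s-|\xi|^s|\le C|\eta-\xi|\jap{\xi}^{s-1}$ together with $|\eta-\xi|\jap{\xi}^{s-1}=|\eta-\xi|^s(|\eta-\xi|/\jap{\xi})^{1-s}\le 8^{-(1-s)}|\eta-\xi|^s$ gives $e^{\lambda(|\eta|^s-|\xi|^s)}\le e^{c\lambda|\eta-\xi|^s}$ with $c<1$; (ii) $\jap{\eta}^\sigma\lesssim\jap{\xi}^\sigma$ and $|\eta|^\alpha\lesssim|\xi|^\alpha+|\eta-\xi|^\alpha$; (iii) Lemma~\ref{lem:Jswap}, in the ``good'' case $k=l=0$ of~\eqref{ineq:BasicJswap}, yields $J_0(\eta)/J_0(\xi)\lesssim e^{10\mu|\eta-\xi|^{1/2}}$; (iv) for \eqref{ineq:wtAProd}, Lemma~\ref{lem:WtFreqCompare}(ii)'s estimate \eqref{ineq:partialtw_endpt} produces the extra $|\partial_v|^{s/2}/\jap{t}^s$ term on the high–frequency side with a $\jap{\eta-\xi}$ loss on the low–frequency side. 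Because $s>1/2$, Young's inequality gives $10\mu|\eta-\xi|^{1/2}\le\epsilon\lambda|\eta-\xi|^s+C_\epsilon$, so all the $J_0$ swap losses and polynomial $\jap{\eta-\xi}$ losses can be absorbed into $e^{c\lambda|\eta-\xi|^s}$ while keeping $c$ strictly less than $1$. Collecting,
\[
|\partial_v|^\alpha\jap{\partial_v}^\beta A_0(\eta)|\mathrm{LH}(\eta)|\lesssim\int e^{c\lambda|\eta-\xi|^s}\jap{\eta-\xi}^{\sigma}|\hat f(\eta-\xi)|\cdot|\partial_v|^\alpha\jap{\partial_v}^\beta A_0(\xi)|\hat g(\xi)|\,d\xi,
\]
which by Young's $L^1\!\ast\! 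L^2\to L^2$ is bounded by $\|f\|_{\G^{c\lambda,\sigma}}\cdot\|\,|\partial_v|^\alpha\jap{\partial_v}^\beta Ag\|_2$, using Cauchy–Schwarz and $\sigma>1$ to pass from $L^2$ to $L^1$ on the $f$-side. The high–low region is treated symmetrically and produces the second term in \eqref{ineq:AProd}.

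For the balanced remainder $|\xi|\approx|\eta-\xi|$, I use subadditivity $|\eta|^s\le|\xi|^s+|\eta-\xi|^s$, the sub\-multi\-plica\-tivity $\jap{\eta}^\sigma\lesssim\jap{\xi}^\sigma\jap{\eta-\xi}^\sigma$, and the bound $w_{NR}(\eta)\gtrsim w_{NR}(\xi)w_{NR}(\eta-\xi)e^{-\mu|\eta-\xi|^{1/2}}$ (from Lemma~\ref{lem:WFreqCompare} applied to the larger of $|\xi|,|\eta-\xi|$), absorbing the residual exponential into the Gevrey weight on either factor via $s>1/2$. This gives $A_0(\eta)\lesssim A_0(\xi)A_0(\eta-\xi)$ up to a loss absorbable into either the $\xi$ or the $\eta-\xi$ Gevrey weight. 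One then splits the resulting $L^2\ast L^2$ convolution into $L^1\cdot L^2$ by using $\sigma>1$ (so $\jap{\cdot}^{-\sigma}\in L^2$) to trade regularity; placing the $L^1$ factor on either side yields the two terms of \eqref{ineq:AProd} and \eqref{ineq:wtAProd}. The algebra bound \eqref{ineq:Aalg} is obtained by applying the same submultiplicative estimate globally and invoking Young with $\sigma>1$ (equivalently $\beta>-\sigma+1$) to trade between the two factors; no $c<1$ trick is needed since both factors carry the full norm.

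Finally, the statement for $A^R$ follows line by line: $A^R$ differs from $A$ only through replacing $J_0$ by $J_0^R$, and since we are in the $k=l=0$ setting where Lemma~\ref{lem:WFreqCompare} applies uniformly (the zero mode is non-resonant), the same comparison estimates hold for $J_0^R$. The main obstacle throughout is the bookkeeping of the $e^{\mu|\eta-\xi|^{1/2}}$ and $\jap{\eta-\xi}$ losses accumulating from the $J_0$ swap and the $\sqrt{\partial_t w/w}$ endpoint estimate; the crucial structural fact making everything go through is the strict inequality $s>1/2$, which is exactly what allows these subleading losses to be absorbed into a Gevrey weight of strictly smaller constant $c\lambda<\lambda$.
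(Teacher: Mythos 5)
Your strategy is the paper's: a paraproduct in $v$, symbol comparisons at fixed Fourier variables using \eqref{lem:scon}, \eqref{ineq:BasicJswap}, \eqref{ineq:WFreqCompNonRes} and \eqref{ineq:partialtw_endpt}, absorption of the $e^{C\mu\abs{\eta-\xi}^{1/2}}$ and $\jap{\eta-\xi}$ losses into a Gevrey weight of index $c\lambda<\lambda$ via \eqref{ineq:IncExp}--\eqref{ineq:SobExp} (this is where $s>1/2$ enters), and an $L^1\ast L^2$ Young inequality; the low--high and high--low regions are handled exactly as in the paper.

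One spot in your sketch is genuinely thin: the remainder (balanced) term of \eqref{ineq:wtAProd}. There $\abs{\xi}\approx\abs{\eta-\xi}$ but the \emph{output} frequency $\abs{\eta}$ can be much smaller than both, so \eqref{ineq:partialtw_endpt} --- which requires $\abs{\eta}\approx\abs{\xi}$ --- cannot be used to move $\sqrt{\partial_t w(t,\eta)/w(t,\eta)}$ onto an input factor, and your phrase ``placing the $L^1$ factor on either side yields the two terms'' does not explain where the multiplier $\sqrt{\partial_t w/w}+\abs{\partial_v}^{s/2}\jap{t}^{-s}$ on the right-hand side comes from. The paper's fix is to observe that $\partial_t w(t,\eta)$ vanishes unless $1\le t\le 2\abs{\eta}\lesssim\abs{\xi}+\abs{\eta-\xi}$, so that $\sqrt{\partial_t w/w}\le 1\lesssim \jap{t}^{-s}\abs{\eta}^{s/2}\jap{\eta}^{s/2}$; the factor $\abs{\eta}^{s/2}$ is then split between the two inputs and the surplus $\jap{\eta}^{s/2}$ is absorbed into the Gevrey gap. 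Relatedly, in this region plain subadditivity $\abs{\eta}^s\le\abs{\xi}^s+\abs{\eta-\xi}^s$ leaves no room to absorb anything; you need the strict version \eqref{lem:strivial} with constant $c<1$, which is available precisely because the two input frequencies are comparable. With these two points made explicit your argument coincides with the paper's proof.
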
 
\begin{remark} 
Writing $(v^\prime)^2 - 1 = (v^\prime - 1)^2 + 2(v^\prime - 1)$ and $v^{\prime\prime} = \partial_v(v^\prime - 1) + (v^\prime - 1)\partial_v(v^\prime - 1)$ (recall \eqref{def:vpp}), by the bootstrap hypotheses on $v^\prime - 1$ combined with \eqref{ineq:Aalg} we have,
\begin{subequations} \label{ineq:coefbds}
\begin{align}
\norm{A^R\left(1 - (v^\prime)^2\right)}_2 \lesssim \norm{A^R\left(1 - v^\prime\right)}_2 + \norm{A^R\left(1 - v^\prime\right)}_2^2 & \lesssim \epsilon \label{ineq:vp2m1bd} \\ 
\norm{\frac{A^R}{\jap{\partial_v}} v^{\prime\prime}}_2 = \norm{\frac{A^R}{\jap{\partial_v}} \left(v^\prime \partial_v v^\prime \right)}_2 \lesssim \norm{A^R\left(1 - v^\prime\right)}_2 +  \norm{A^R\left(1 - v^\prime\right)}_2^2 & \lesssim \epsilon. 
\end{align}
\end{subequations}
\end{remark}
\begin{proof}[Proof of Lemma \ref{lem:ProdAlg}] 
The proof of \eqref{ineq:AProd} follows from Lemmas \ref{lem:WFreqCompare} and \ref{lem:WtFreqCompare} combined with a paraproduct decomposition; 
the argument is similar to many used in the sequel so is omitted.  

Let us now focus on \eqref{ineq:wtAProd} which is more intricate. 
Let us also just treat the case $\beta = 0$; actually any $\beta > 1-\sigma$ can be treated by adjusting $c$ accordingly and using \eqref{ineq:SobExp}. 
Moreover, we focus on $t \geq 1$; the case $t \in (0,1)$ is easier and is not actually necessary for the proof of Theorem \ref{thm:Main}. 
Write 
\begin{align*}
\mathcal{M}(t,\xi) = \sqrt{\frac{\partial_t w(t,\xi)}{w(t,\xi)}}A_0(t,\xi),  
\end{align*}
and decompose $fg$ with a paraproduct (see \S\ref{Apx:LPProduct}):
\begin{align*} 
fg = T_f g + T_g f + \mathcal{R}(f,g). 
\end{align*} 
Consider the first term on the Fourier side: 
\begin{align*} 
\widehat{\mathcal{M} T_fg}(\xi)  = \frac{1}{2\pi}\sum_{M \geq 8}\mathcal{M}(t,\xi) \int_{\xi^\prime} \hat g(\xi^\prime)_{M}\hat f(\xi-\xi^\prime)_{<M/8} d\xi^\prime. 
\end{align*}  
The goal is to use \eqref{ineq:partialtw_endpt} to pass $\mathcal M$ onto $g$. 
On the support of the integrand (see \S\ref{Apx:LPProduct}), 
\begin{align}
\abs{\abs{\xi} - \abs{\xi^\prime}} \leq \abs{\xi - \xi^\prime} \leq 3/2 \left(\frac{M}{16}\right) \leq 6\abs{\xi^\prime}/32, \label{ineq:ProdRuleFreqLoc}
\end{align} 
and hence $26\abs{\xi^\prime}/32 \leq \abs{\xi} \leq 38\abs{\xi^\prime}/32$ and $26\abs{\xi^\prime}/32 \leq \abs{\xi} \leq 38\abs{\xi^\prime}/32$.
Therefore \eqref{lem:scon} implies for some $c^\prime \in (0,1)$ that $\abs{\xi}^s \leq \abs{\xi^\prime}^s + c^\prime\abs{\xi - \xi^\prime}^s$, and hence by \eqref{ineq:ProdRuleFreqLoc}, 
\begin{align*}
\abs{\widehat{\mathcal{M} T_fg}(\xi)} \lesssim \sum_{M \geq 8} \int_{\xi^\prime} \sqrt{\frac{\partial_t w(t,\xi)}{w(t,\xi)}}\jap{\xi^\prime}^{\sigma}e^{\lambda\abs{\xi^\prime}^s} J_0(\xi) \abs{\hat g(\xi^\prime)_{M}} e^{c^\prime\lambda\abs{\xi-\xi^\prime}^s}\abs{\hat f(\xi-\xi^\prime)_{<M/8}} d\xi^\prime.
\end{align*} 
By \eqref{ineq:BasicJswap} it follows 
\begin{align*} 
\abs{\widehat{\mathcal{M} T_fg}(\xi)} \lesssim \sum_{M \geq 8} \int_{\xi^\prime} \sqrt{\frac{\partial_t w(t,\xi)}{w(t,\xi)}}
A_0(\xi^\prime)\abs{\hat g(\xi^\prime)_{M}} e^{10\mu\abs{\eta-\xi}^{1/2} + c^\prime\lambda\abs{\xi-\xi^\prime}^s}\abs{\hat f(\xi-\xi^\prime)_{<M/8}} d\xi^\prime.
\end{align*} 
Then by \eqref{ineq:partialtw_endpt}, \eqref{ineq:IncExp} and \eqref{ineq:SobExp}, for any $c \in (c^\prime,1)$,  
\begin{align*} 
\abs{\widehat{\mathcal{M} T_fg}(\xi)} \lesssim \sum_{M \geq 8} \int_{\xi^\prime} \left[ \sqrt{\frac{\partial_t w(t,\xi^\prime)}{w(t,\xi^\prime)}} + \frac{\abs{\xi^\prime}^{s/2}}{\jap{t}^s}\right]
A_0(\xi^\prime)\abs{\hat g(\xi^\prime)_{M}}e^{c\lambda\abs{\xi-\xi^\prime}^s} \abs{\hat f(\xi-\xi^\prime)_{<M/8}} d\xi^\prime.
\end{align*} 
Since $\abs{\xi} \approx M$ by \eqref{ineq:ProdRuleFreqLoc}, \eqref{ineq:GeneralOrtho} and \eqref{ineq:L2L1} imply 
\begin{align*} 
\norm{\mathcal{M} T_fg}_2^2 & \lesssim \sum_{M \geq 8} \norm{\left(\sqrt{\frac{\partial_tw}{w}} + \frac{\abs{\partial_v}^{s/2}}{\jap{t}^s} \right) Ag_M}_{2}^2\norm{f_{<M/8}}_{\G^{c\lambda,\sigma}}^2 \\ 
   & \lesssim \norm{\left(\sqrt{\frac{\partial_tw}{w}} + \frac{\abs{\partial_v}^{s/2}}{\jap{t}^s} \right)Ag}_{2}^2\norm{f}_{\G^{c\lambda,\sigma}}^2. 
\end{align*} 
The contribution from $\mathcal{M}T_g f$ is analogous and yields the other term in \eqref{ineq:wtAProd}. 
Let us now turn to the remainder, 
\begin{align*} 
\widehat{\mathcal{M}\mathcal{R}(f,g)}(\xi) = \frac{1}{2\pi} \sum_{M \in \mathbb D} \sum_{M/8 \leq M^\prime \leq 8M} \int_{\xi^\prime}\mathcal{M}(t,\xi)g(\xi^\prime)_{M}f(\xi-\xi^\prime)_{M^\prime} d\xi^\prime.   
\end{align*} 
On the support of the integrand (see \S\ref{Apx:LPProduct}) $\frac{1}{24}\abs{\xi - \xi^\prime}  \leq \abs{\xi^\prime} \leq 24\abs{\xi-\xi^\prime}$,
hence \eqref{lem:strivial} implies for some $c^\prime$, $\abs{\xi}^{s} \leq c^\prime\abs{\xi^\prime}^s + c^\prime\abs{\xi - \xi^\prime}^s$, which gives 
\begin{align*}
\abs{\mathcal{M}\mathcal{R}(f,g)(\xi)} \lesssim \sum_{M \in \mathbb D} \sum_{M/8 \leq M^\prime \leq 8M} \int_{\xi^\prime}
 \sqrt{\frac{\partial_t w(t,\xi)}{w(t,\xi)}}\jap{\xi}^{\sigma}e^{c^\prime\lambda\abs{\xi^\prime}^s} J_0(\xi) \abs{g(\xi^\prime)_{M}} e^{c^\prime\lambda\abs{\xi-\xi^\prime}^s}\abs{f(\xi-\xi^\prime)_{M^\prime}} d\xi^\prime.
\end{align*}
Since $1 \leq t \leq 2\abs{\xi}$ on the support of the integrand, by Lemma \ref{basic}, \eqref{ineq:IncExp} and \eqref{ineq:SobExp}, 
\begin{align*}  
\abs{\mathcal{M}\mathcal{R}(f,g)(\xi)} & \lesssim \sum_{M \in \mathbb D} \sum_{M/8 \leq M^\prime \leq 8M} \int_{\xi^\prime} \frac{\abs{\xi}^{s/2}}{\jap{t}^s}\jap{\xi}^{\sigma + s/2}e^{c^\prime\lambda\abs{\xi^\prime}^s} J_0(\xi) \abs{g(\xi^\prime)_{M}} e^{c^\prime\lambda\abs{\xi-\xi^\prime}^s}\abs{f(\xi-\xi^\prime)_{M^\prime}} d\xi^\prime \\ 
& \lesssim \sum_{M \in \mathbb D} \sum_{M/8 \leq M^\prime \leq 8M} \int_{\xi^\prime} \left(\frac{\abs{\xi^\prime}^{s/2}}{\jap{t}^s} + \frac{\abs{\xi-\xi^\prime}^{s/2}}{\jap{t}^s} \right)e^{c\lambda\abs{\xi^\prime}^s}\frac{1}{\jap{\xi^\prime}}\abs{g(\xi^\prime)_{M}} e^{c\lambda\abs{\xi-\xi^\prime}^s}\abs{f(\xi-\xi^\prime)_{M^\prime}} d\xi^\prime, 
\end{align*}
for any $c \in (c^\prime,1)$. 
Therefore \eqref{ineq:L2L1} and implies 
\begin{align*} 
\norm{\mathcal{M}\mathcal{R}(f,g)(\xi)}_2 & \lesssim \sum_{M \in \mathbb D}\sum_{M/8 \leq M^\prime \leq 8M}\norm{\frac{\abs{\partial_v}^{s/2}}{\jap{t}^s}g_M}_{\G^{c\lambda,\sigma-1}}\norm{f_{M^\prime}}_{\G^{c\lambda}} + \norm{g_M}_{\G^{c\lambda,\sigma-1}}\norm{\frac{\abs{\partial_v}^{s/2}}{\jap{t}^s}f_{M^\prime}}_{\G^{c\lambda}} \\ 
& \lesssim \left(\sum_{M \in \mathbb D}\norm{\frac{\abs{\partial_v}^{s/2}}{\jap{t}^s}g_M}^2_{\G^{c\lambda,\sigma}}\right)^{1/2}\norm{f}_{\G^{c\lambda}} + \left(\sum_{M \in \mathbb D}\norm{g_M}^2_{\G^{c\lambda,\sigma}}\right)^{1/2}\norm{\frac{\abs{\partial_v}^{s/2}}{\jap{t}^s}f}_{\G^{c\lambda}}, 
\end{align*} 
which by \eqref{ineq:GeneralOrtho}, proves \eqref{ineq:wtAProd}. 

The proof in the case with $A$ replaced by $A^R$ proceeds the same. Indeed, from Lemma \ref{lem:Jswap} we see that it is not a matter of $w_{NR}$ vs $w_R$, it is only a matter of having either one or other, but not both. 
\end{proof} 

\section{Elliptic Estimates} \label{sec:Elliptic}
The purpose of this section is to provide a thorough analysis of $\Delta_t$. In particular, in this section we prove Proposition \ref{lem:PrecisionEstimateNeq0}.  

\subsection{Lossy estimate}
The following is the fundamental estimate on $\phi$ which allows to trade the regularity of $f$ in a high norm for decay of the streamfunction in a slightly lower norm. 
This estimate is clear for the elliptic operator that arises from the linearized problem, which we denote by 
\begin{align}
\Delta_L = \partial_{z}^2 + (\partial_v - t\partial_z)^2. \label{def:DeltaL}
\end{align} 
As can be easily seen from examining $\Delta_L^{-1}$ (e.g. \eqref{orr-cri}), we cannot expect to gain $O(t^{-2})$ decay without paying two derivatives. 
Notice that since the coefficient $v^{\prime\prime}$ effectively contains a derivative on $f$ (see \eqref{def:vPDE}), the estimate below loses \emph{three} derivatives. 
This loss can be treated with more precision, which is necessary in \S\ref{sec:Precf}. 

Due to the `lossy' nature of the lemma, this can only be used when $\phi$ is being measured in a low norm, however, this occurs in many places in the proof, most notably the treatment of transport in \S\ref{sec:Transport}, the treatment of $[\partial_t v]$ and $v^\prime \partial_v [\partial_t v]$ in Proposition \ref{prop:CoefControl_Real} and even the proof of the more precise elliptic estimate in \S\ref{sec:Precf}. 

\begin{lemma}[Lossy elliptic estimate] \label{lem:LossyElliptic}
Under the bootstrap hypotheses, for $\epsilon$ sufficiently small, 
\begin{align} 
\norm{P_{\neq 0}\phi(t)}_{\G^{\lambda(t), \sigma - 3}} \lesssim \frac{\norm{f(t)}_{\G^{\lambda(t),\sigma-1}}}{1+t^2}. \label{ineq:lossyelliptic}
\end{align} 
\end{lemma}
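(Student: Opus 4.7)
The plan is to treat $\Delta_t$ as a perturbation of the linearized elliptic operator $\Delta_L = \partial_{zz} + (\partial_v - t\partial_z)^2$ from \eqref{def:DeltaL}, exploiting the smallness of $v'-1$ and $v''$ supplied by the bootstrap hypotheses. Writing
\[
\Delta_t = \Delta_L + \mathcal E, \qquad \mathcal E = \bigl((v')^2 - 1\bigr)(\partial_v - t\partial_z)^2 + v''(\partial_v - t\partial_z),
\]
and observing that the coefficients of $\mathcal E$ are functions of $v$ alone so that $P_{\neq 0}$ commutes with $\mathcal E$, the equation $\Delta_t\phi = f$ rewrites on nonzero $z$-modes as
\[
\Delta_L (P_{\neq 0}\phi) = P_{\neq 0} f - \mathcal E(P_{\neq 0}\phi).
\]

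The first step is two symbolic bounds for $\Delta_L^{-1} P_{\neq 0}$. The elementary estimate $k^2 t^2 \leq 2(\eta-kt)^2 + 2\eta^2$ together with $|k|\geq 1$ yields
\[
\frac{1}{k^2 + (\eta-kt)^2} \lesssim \frac{\jap{\eta}^2}{1+t^2},
\]
and since $\jap{k,\eta}^{\sigma-3}\jap{\eta}^2 \leq \jap{k,\eta}^{\sigma-1}$ this passes through the Gevrey--Sobolev norm as
\[
\norm{\Delta_L^{-1} P_{\neq 0} h}_{\G^{\lambda(t),\sigma-3}} \lesssim \frac{1}{1+t^2}\norm{h}_{\G^{\lambda(t),\sigma-1}}.
\]
The symbols $|\eta-kt|^j/(k^2+(\eta-kt)^2)$ are uniformly bounded by $1$ for $j \in \{0,1,2\}$ (AM--GM for $j=1$), which gives
\[
\norm{(\partial_v - t\partial_z)^j \Delta_L^{-1} P_{\neq 0} h}_{\G^{\lambda(t),\sigma-1}} \lesssim \norm{h}_{\G^{\lambda(t),\sigma-1}}, \qquad j=0,1,2.
\]

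Applying both bounds to $h = P_{\neq 0} f - \mathcal E(P_{\neq 0}\phi)$ produces
\begin{align*}
\norm{P_{\neq 0}\phi}_{\G^{\lambda,\sigma-3}} &\lesssim \frac{1}{1+t^2}\bigl(\norm{f}_{\G^{\lambda,\sigma-1}} + \norm{\mathcal E(P_{\neq 0}\phi)}_{\G^{\lambda,\sigma-1}}\bigr), \\
\norm{(\partial_v - t\partial_z)^j P_{\neq 0}\phi}_{\G^{\lambda,\sigma-1}} &\lesssim \norm{f}_{\G^{\lambda,\sigma-1}} + \norm{\mathcal E(P_{\neq 0}\phi)}_{\G^{\lambda,\sigma-1}}, \quad j=1,2.
\end{align*}
Since $\G^{\lambda(t),\sigma-1}$ is an algebra (as $\sigma > 12$), combining these with the coefficient bounds $\norm{(v')^2 - 1}_{\G^{\lambda,\sigma-1}} + \norm{v''}_{\G^{\lambda,\sigma-1}} \lesssim \epsilon$ (which follow from the $A^R$-control \eqref{ineq:hc} on $v'-1$, the identity $v'' = v'\partial_v v'$, and \eqref{ineq:coefbds}) gives
\[
\norm{\mathcal E(P_{\neq 0}\phi)}_{\G^{\lambda,\sigma-1}} \lesssim \epsilon\bigl(\norm{f}_{\G^{\lambda,\sigma-1}} + \norm{\mathcal E(P_{\neq 0}\phi)}_{\G^{\lambda,\sigma-1}}\bigr).
\]
For $\epsilon$ sufficiently small the error is absorbed, leaving $\norm{\mathcal E(P_{\neq 0}\phi)}_{\G^{\lambda,\sigma-1}} \lesssim \epsilon\norm{f}_{\G^{\lambda,\sigma-1}}$, which substituted back closes \eqref{ineq:lossyelliptic}.

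The main technical obstacle is legitimizing the absorption: it requires $\norm{\mathcal E(P_{\neq 0}\phi)}_{\G^{\lambda,\sigma-1}}$ to be a priori finite before moving it to the left. As indicated in \S\ref{sec:MainEnergy}, the entire proof is carried out on regularized analytic solutions, along which all these norms are smoothly controlled in time, with a passage to the limit at the end; a continuity-in-time argument using the bootstrap window is equally acceptable. A secondary bookkeeping point is that the $v''$ contribution effectively costs one derivative through $v'' = v'\partial_v v'$, which is comfortably covered by the two-derivative gap between the input regularity $\sigma-1$ and the slightly stronger $A^R$-based control on $v'-1$ inherited from \eqref{ineq:hc}.
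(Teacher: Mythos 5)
Your argument is correct and follows essentially the same route as the paper: write $\Delta_t=\Delta_L+\mathcal E$, use the symbol bounds $(k^2+|\eta-kt|^2)^{-1}\lesssim \jap{\eta}^2/\jap{t}^2$ and $|\eta-kt|^j/(k^2+|\eta-kt|^2)\leq 1$ for $k\neq 0$, invoke the algebra property together with the $A^R$-controls on $1-(v')^2$ and $v''$ from the bootstrap (i.e.\ \eqref{ineq:coefbds}), and absorb the $O(\epsilon)$ error for $\epsilon$ small. The only cosmetic difference is that you absorb into $\norm{\mathcal E(P_{\neq 0}\phi)}$ where the paper absorbs into $\norm{\Delta_L P_{\neq 0}\phi}$; these are equivalent, and your remark on a priori finiteness is handled exactly as the paper does, via the regularized analytic solutions.
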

\begin{proof}
Omitting the time-dependence in $\lambda$, $\phi$ and $f$, first note
\begin{align} 
\norm{P_{\neq 0}\phi}^2_{\G^{\lambda,\sigma-3}} & = \sum_{k \neq 0}\int_\eta e^{2\lambda \abs{k,\eta}^s}\jap{k,\eta}^{2\sigma-6} \abs{\hat{\phi}(k,\eta)}^2 d\eta \nonumber \\ 
& = \sum_{k \neq 0}\int_\eta e^{2\lambda \abs{(k,\eta)}^s}\frac{\jap{k,\eta}^{2\sigma-2}}{\jap{k,\eta}^4(k^2 + \abs{\eta - kt}^2)^2} (k^2 + \abs{\eta - kt}^2)^2 \abs{\hat{\phi}(k,\eta)}^2 d\eta \nonumber \\ 
& \lesssim \frac{1}{\jap{t}^4}\norm{\Delta_L P_{\neq 0}\phi}^2_{\G^{\lambda,\sigma-1}}. \label{ineq:DeltaLphi}
\end{align}
We write $\Delta_t$ as a perturbation of $\Delta_L$ via (recall the definitions \eqref{def:Deltat},\eqref{def:DeltaL}),
\begin{align*}  
\Delta_LP_{\neq 0}\phi = P_{\neq 0}f + (1 - (v^{\prime})^2)(\partial_y - t\partial_z)^2P_{\neq 0}\phi - v^{\prime\prime}(\partial_y - t\partial_z)P_{\neq 0}\phi. 
\end{align*} 
By the algebra inequality \eqref{ineq:GAlg}, 
\begin{align*} 
\norm{\Delta_L P_{\neq 0}\phi}_{\G^{\lambda,\sigma-1}} \lesssim \norm{f}_{\G^{\lambda,\sigma-1}} + \norm{1 - (v^\prime)^2}_{\G^{\lambda,\sigma-1}}\norm{\Delta_L P_{\neq 0}\phi}_{\G^{\lambda,\sigma-1}} + \norm{v^{\prime\prime}}_{\G^{\lambda,\sigma-1}} \norm{(\partial_y - t\partial_z)P_{\neq 0}\phi}_{\G^{\lambda,\sigma-1}}. 
\end{align*}
Therefore, \eqref{ineq:coefbds} implies, 
\begin{align*} 
\norm{\Delta_L P_{\neq 0}\phi}_{\G^{\lambda,\sigma-1}} \lesssim \norm{f}_{\G^{\lambda,\sigma-1}} + \epsilon\norm{\Delta_L P_{\neq 0}\phi}_{\G^{\lambda,\sigma-1}}. 
\end{align*} 
 Together with \eqref{ineq:DeltaLphi}, this implies the a priori estimate \eqref{ineq:lossyelliptic} provided that $\epsilon$ is sufficiently small. 
\end{proof}

\subsection{Precision elliptic control} \label{sec:Precf}
Now we turn to the proof of Proposition \ref{lem:PrecisionEstimateNeq0}, announced in \S\ref{sec:MainEnergy}.
This is the main elliptic estimate which underlies the treatment of reaction in \S\ref{sec:RigReac} and the key estimates of \S\ref{sec:CoordControls}.
If $\Delta_t$ were simply $\Delta_L$ then the estimate would be trivial. 
However, the coefficients depend on the vorticity, which both couples all of the frequencies in the $v$ direction together and introduces the potential for losing regularity (it is key that the coefficients only depend on $v$).
The simplest effect one can see is the appearance of the CK multipliers on the coefficients collected in \eqref{def:QL}, which occur when `derivatives' taken on the LHS of \eqref{ineq:PrecisionPhiNeq0} land on the coefficients of $\Delta_t$. 
Notice that these `CCK' terms contain the more dangerous \emph{resonant} regularity (see \eqref{def:AR}).
This effect is controlled in Proposition \ref{prop:CoefControl_Real}. 
The other effect one sees is the $\jap{\partial_v(\partial_z t)^{-1}}^{-1}$ on the LHS, which is a precise way of treating the loss due to the fact that $v^{\prime\prime}$ effectively contains a derivative on $f$. 

\begin{proof}[Proof of Proposition \ref{lem:PrecisionEstimateNeq0}]
Since the coefficients only depend on $v$, $\Delta_t \phi = f$ decouples mode-by-mode in the $z$ frequencies. 
Hence, we essentially prove a mode-by-mode analogue of \eqref{ineq:PrecisionPhiNeq0} and then sum. 

As in Lemma \ref{lem:LossyElliptic} write (recall \eqref{def:Deltat},\eqref{def:DeltaL}), 
\begin{align*} 
\Delta_L \phi = f + (1-(v^\prime)^2)(\partial_v - t\partial_z)^2\phi - v^{\prime\prime}(\partial_v - t\partial_z)\phi.  
\end{align*}
Define the multipliers 
\begin{align*} 
\mathcal M_1(t,l,\xi) & = \jap{\frac{\xi}{lt}}^{-1}\frac{\abs{l,\xi}^{s/2}}{\jap{t}^s}A P_{\neq 0} \\
\mathcal M_2(t,l,\xi) & = \jap{\frac{\xi}{lt}}^{-1}\sqrt{\frac{\partial_t w}{w}} \tilde A P_{\neq 0}. 
\end{align*} 
Clearly, 
\begin{align*} 
\sum_{i=1,2}\norm{\mathcal{M}_i f}_2^2 \lesssim \frac{1}{\jap{t}^{2s}}\norm{\abs{\grad}^{s/2}P_{\neq 0}Af}_2^2 + \norm{\sqrt{\frac{\partial_t w}{w}}P_{\neq 0}\tilde Af}_2^2,
\end{align*} 
and hence the proposition would be trivial if $\Delta_L\phi = f$. 
Define, 
\begin{align*} 
T^1 & = (1-(v^\prime)^2)(\partial_v - t\partial_z)^2\phi \\ 
T^2 & = - v^{\prime\prime}(\partial_v - t\partial_z)\phi
\end{align*} 
and divide each via a paraproduct decomposition in the $v$ variable only 
\begin{align*} 
T^1 & = \sum_{M \geq 8} (1-(v^\prime)^2)_M(\partial_v - t\partial_z)^2\phi_{<M/8} + \sum_{M \geq 8} (1-(v^\prime)^2)_{<M/8}(\partial_v - t\partial_z)^2\phi_{M} \\  
& \quad + \sum_{M \in \mathbb D} \sum_{\frac{1}{8}M \leq M^\prime \leq 8M } (1-(v^\prime)^2)_{M^\prime}(\partial_v - t\partial_z)^2\phi_{M} \\ 
& = T^1_{HL} + T^1_{LH} + T^1_{\mathcal R} \\  
T^2 & = -\sum_{M \geq 8} (v^{\prime\prime})_M(\partial_v - t\partial_z)\phi_{<M/8} - \sum_{M \geq 8} (v^{\prime\prime})_{<M/8}(\partial_v - t\partial_z)\phi_{M} \\  
& \quad  - \sum_{M \in \mathbb D} \sum_{\frac{1}{8}M \leq M^\prime \leq 8M}(v^{\prime\prime})_{M^\prime}(\partial_v - t\partial_z)\phi_{M} \\  
& = T^2_{HL} + T^2_{LH} + T^2_{\mathcal R}. 
\end{align*}
The basic idea is to treat the HL terms by passing $\mathcal{M}_i$ onto the coefficients and to treat the $LH$ terms by passing the $\mathcal{M}_i$ onto $\phi$ and using $\epsilon$ sufficiently small to absorb these terms on the left-hand side of \eqref{ineq:PrecisionPhiNeq0}. 
Each step has several complications, dealt with and discussed below.

\subsubsection{Low-High interactions} \label{sec:lowhighElliptic}
Since $T^1_{LH}$ contains more derivatives on $\phi$ than $T^2_{LH}$, the former is strictly harder so we treat only  $T^1_{LH}$. 
In what follows we use the shorthand 
\begin{align*}
G(\xi) = \widehat{(1 - (v^\prime)^2)}(\xi). 
\end{align*}
Writing $T^1_{LH}$ on the frequency side with this convention gives 
\begin{align*}
\widehat{\mathcal{M}_1T^1_{LH}}(l,\xi) & = -\frac{1}{2\pi}\sum_{M \geq 8} \int_{\xi^\prime} \mathcal{M}_1(t,l,\xi) G(\xi-\xi^\prime)_{<M/8}(\xi^\prime-lt)^2 \hat{\phi}_l(\xi^\prime)_{M} d\xi^\prime. 
\end{align*} 
On the support of the integrand (see \S\ref{Apx:LPProduct}), 
\begin{align}
\abs{\abs{l,\xi} - \abs{l,\xi^\prime}} \leq \abs{\xi - \xi^\prime} \leq 3/2 \left(\frac{M}{16}\right) \leq 6\abs{\xi^\prime}/32 \leq 6\abs{l,\xi^\prime}/32, \label{ineq:TLHFreqLoc}
\end{align} 
and hence $26\abs{l,\xi^\prime}/32 \leq \abs{l,\xi} \leq 38\abs{l,\xi^\prime}/32$ and $26\abs{\xi^\prime}/32 \leq \abs{\xi} \leq 38\abs{\xi^\prime}/32$.   
Therefore, \eqref{lem:scon} implies that there exists a $c \in (0,1)$ such that 
\begin{align*} 
e^{\lambda\abs{l,\xi}^s} \leq e^{\lambda\abs{l,\xi^\prime}^s + c\lambda\abs{\xi-\xi^\prime}^s}.
\end{align*}  
Therefore (also using $\abs{\xi} \approx \abs{\xi^\prime}$): 
\begin{align*} 
\abs{\widehat{\mathcal{M}_1T^1_{LH}}(l,\xi)} & \lesssim \sum_{M \geq 8} \int_{\xi^\prime} \jap{\frac{\xi^\prime}{lt}}^{-1}\frac{\abs{l,\xi^\prime}^{s/2}}{\jap{t}^s}\jap{l,\xi^\prime}^\sigma J_l(\xi) \\ & \quad\quad \times e^{c\lambda\abs{\xi-\xi^\prime}^s}\abs{G(\xi-\xi^\prime)_{<M/8}} \abs{\xi^\prime-lt}^2 \abs{\hat{\phi}_l(\xi^\prime)_{M}}e^{\lambda\abs{l,\xi^\prime}^s} d\xi^\prime. 
\end{align*}
The goal is now to pass the multiplier $\mathcal{M}_1$ onto $\phi$. 
It is important here that we are not comparing different modes in $z$, and hence \eqref{ineq:BasicJswap} applies. Hence by \eqref{ineq:IncExp} (since $c < 1$),  
\begin{align*}
\abs{\widehat{\mathcal{M}_1T^{1}_{LH}}(l,\xi)} & \lesssim \sum_{M \geq 8} \int_{\xi^\prime}\jap{\frac{\xi^\prime}{lt}}^{-1}\frac{\abs{l,\xi^\prime}^{s/2}}{\jap{t}^s} e^{\lambda\abs{\xi-\xi^\prime}^s} \abs{G(\xi-\xi^\prime)_{<M/8}} \abs{\xi^\prime-lt}^2A_l(\xi^\prime)\abs{\hat{\phi}_l(\xi^\prime)_{M}} d\xi^\prime. 
\end{align*} 
Then \eqref{ineq:L2L1}, \eqref{ineq:GeneralOrtho} and \eqref{ineq:vp2m1bd} imply 
\begin{align*} 
\norm{\mathcal{M}_1T^{1}_{LH}}_2^2 = \sum_{l\neq 0}\norm{\mathcal{M}_1T^{1}_{LH}(l)}_2^2 & \lesssim \sum_{l\neq 0}\sum_{M \geq 8} \norm{(1-(v^\prime)^2)_{<M/8}}^2_{\G^{\lambda,2}} \norm{\mathcal{M}_1\Delta_L P_{\neq 0} (\phi_l)_{M}}^2_2  \\ & \lesssim \epsilon^2\norm{\mathcal{M}_1\Delta_L P_{\neq 0} \phi}^2_2, 
\end{align*} 
completing the treatment of $\mathcal{M}_1T^{1}_{LH}$, as this can be absorbed by the LHS of \eqref{ineq:PrecisionPhiNeq0}.

Now turn to $\mathcal M_2 T_{LH}^1$, which by the frequency localization on the support of the integrand \eqref{ineq:TLHFreqLoc} together with \eqref{lem:scon}, is bounded by
\begin{align*}
\abs{\widehat{\mathcal{M}_2T^1_{LH}}(l,\xi)} & \lesssim \sum_{M \geq 8} \int_{\xi^\prime} \jap{\frac{\xi^\prime}{lt}}^{-1}\sqrt{\frac{\partial_t w_l(\xi)}{w_l(\xi)}}\jap{l,\xi^\prime}^\sigma \tilde J_l(\xi) \\ & \quad\quad \times e^{c\lambda\abs{\xi-\xi^\prime}^s}G(\xi-\xi^\prime)_{<M/8} \abs{\xi^\prime-lt}^2\abs{\hat{\phi}_l(\xi^\prime)_{M}}e^{\lambda\abs{l,\xi^\prime}^s} d\xi^\prime.
\end{align*}
Now, by \eqref{ineq:partialtw_endpt}.  it follows that a proof similar to that used to treat $\mathcal{M}_1T^{1}_{LH}$ implies
\begin{align*} 
\norm{\mathcal{M}_2T^{1}_{LH}}_2^2 \lesssim \epsilon^2 \norm{\mathcal{M}_1 \Delta_L \phi}_2^2  + \epsilon^2 \norm{\mathcal{M}_2 \Delta_L \phi}_2^2,  
\end{align*} 
which completes the treatment of $\norm{\mathcal{M}_2T^{1}_{LH}}_2^2$. 

\subsubsection{High-Low interactions} \label{sec:HLElliptic}
Consider first $\mathcal{M}_1T^2_{HL}$. 
The notation is deceptive: the frequency in $z$ could be very large and hence more `derivatives' are appearing on $\phi$ and we will be in a situation like the LH terms. 
Hence we break into two cases: 
\begin{align*} 
\widehat{\mathcal{M}_1T^2_{HL}}(l,\xi) & = -\frac{1}{2\pi}\sum_{M \geq 8} \int_{\xi^\prime} \left[\mathbf{1}_{\abs{l} \geq \frac{1}{16}\abs{\xi}} + \mathbf{1}_{\abs{l} < \frac{1}{16}\abs{\xi}}  \right] \mathcal{M}_1(t,l,\xi)\widehat{v^{\prime\prime}}(\xi-\xi^\prime)_M i(\xi^\prime-lt)\hat{\phi}_l(\xi^\prime)_{<M/8} d\xi^\prime \\ 
& = \widehat{\mathcal{M}_1T^{2,z}_{HL}}(l,\xi) + \widehat{\mathcal{M}_1T^{2,v}_{HL}}(l,\xi). 
\end{align*} 
First consider $\mathcal{M}_1T^{2,z}_{HL}$, which we treat as a Low-High term.
On the support of the integrand, we claim that there is some $c \in (0,1)$ such that,
\begin{align} 
\abs{l,\xi}^s \leq \abs{l,\xi^\prime}^s + c\abs{\xi - \xi^\prime}^s. \label{ineq:lgainScon}
\end{align} 
To see this, one can consider separately the cases $\frac{1}{16}\abs{\xi} \leq \abs{l} \leq 16\abs{\xi}$ and $\abs{l} > 16\abs{\xi}$, applying \eqref{lem:strivial}  and \eqref{lem:scon} respectively. 
It follows that 
\begin{align*} 
\abs{\widehat{\mathcal{M}_1T^{2,z}_{HL}}(l,\xi)} & \lesssim \sum_{M \geq 8} \int_{\xi^\prime}\mathbf{1}_{\abs{l} > \frac{1}{16}\abs{\xi}} \frac{\abs{l}^{s/2}}{\jap{t}^s}\jap{l}^\sigma J_l(\xi) e^{c\lambda\abs{\xi-\xi^\prime}^s} \abs{\widehat{v^{\prime\prime}}(\xi-\xi^\prime)_M} \abs{\xi^\prime-lt}\abs{\hat{\phi}_l(\xi^\prime)_{<M/8}}e^{\lambda\abs{l,\xi^\prime}^s} d\xi^\prime,  
\end{align*} 
where we also used $\abs{\xi} \lesssim \abs{lt}$ to remove the leading factor $\jap{\xi/lt}^{-1}$.
As in the treatment of $\mathcal{M}_1T^{1}_{LH}$, we apply \eqref{ineq:BasicJswap} (as we are not comparing different $z$ modes) and \eqref{ineq:IncExp} to deduce 
\begin{align*} 
\abs{\widehat{\mathcal{M}_1T^{2,z}_{HL}}(l,\xi)} & \lesssim \sum_{M \geq 8} \int_{\xi^\prime}\mathbf{1}_{\abs{l} > \frac{1}{16}\abs{\xi}}  \frac{\abs{l}^{s/2}}{\jap{t}^s}e^{\lambda\abs{\xi-\xi^\prime}^s}\abs{\widehat{v^{\prime\prime}}(\xi-\xi^\prime)_M} \abs{\xi^\prime-lt}A_l(\xi^\prime)\abs{\hat{\phi}_l(\xi^\prime)_{<M/8}}d\xi^\prime. 
\end{align*} 
Since $\abs{\xi^\prime - lt} \leq \abs{l}^2 + \abs{\xi^\prime -lt}^2$, applying \eqref{ineq:L2L1}, \eqref{ineq:GeneralOrtho} and \eqref{ineq:coefbds}, 
\begin{align} 
\norm{\mathcal{M}_1T^{2,z}_{HL}}_2^2 = \sum_{l\neq 0}\norm{\mathcal{M}_1T^{2,z}_{HL}(l)}_2^2 & \lesssim \sum_{M \geq 8} \norm{v^{\prime\prime}_M}^2_{\G^{\lambda,3}} \norm{\mathcal{M}_1\Delta_L P_{\neq 0} \phi}^2_2 \nonumber  \\ & \lesssim \epsilon^2\norm{\mathcal{M}_1\Delta_L P_{\neq 0} \phi}^2_2.  \label{ineq:M1T2z}
\end{align} 

Next consider $\mathcal{M}_1T^{2,v}_{HL}$.
On the support of the integrand, the `derivatives' are all landing on $v^{\prime\prime}$ and since 
this function essentially contains a derivative of $f$ it is here where we need the $\jap{\xi/lt}^{-1}$ (see \eqref{def:vPDE}).
 In this case, using $\abs{\xi^\prime} \leq \frac{3}{16}\abs{\xi-\xi^\prime}$ analogous to \eqref{ineq:TLHFreqLoc},
\begin{align}
 \abs{\abs{\xi - \xi^\prime}-\abs{l,\xi}} \leq \abs{l,\xi^\prime} \leq \abs{\xi}/16 + \abs{\xi^\prime} \leq \abs{\xi-\xi^\prime}/16 + 17\abs{\xi^\prime}/16 \leq \frac{67}{256}\abs{\xi-\xi^\prime}. \label{ineq:THLFreqLoc}
\end{align} 
Therefore by \eqref{lem:scon}, there exists some $c \in (0,1)$ such that 
\begin{align*} 
\abs{\widehat{\mathcal{M}_1T^{2,v}_{HL}}(l,\xi)} & \lesssim \sum_{M \geq 8} \int_{\xi^\prime}\mathbf{1}_{\abs{l} < \frac{1}{16}\abs{\xi}} \jap{\frac{\xi}{lt}}^{-1}\frac{\abs{l,\xi}^{s/2}}{\jap{t}^s}\jap{l,\xi}^\sigma J_l(\xi)e^{\lambda\abs{\xi-\xi^\prime}^s} \\ & \quad\quad \times 
\abs{\widehat{v^{\prime\prime}}(\xi-\xi^\prime)_M} \abs{\xi^\prime-lt}\abs{\hat{\phi}_l(\xi^\prime)_{<M/8}}e^{c\lambda\abs{l,\xi^\prime}^s} d\xi^\prime. 
\end{align*}  
Since we will pass $\mathcal{M}_1$ onto $v^{\prime\prime}$, Lemma \ref{lem:Jswap} could imply a loss.
However, from Lemma \ref{lem:Jswap} and the definitions \eqref{def:AR}, \eqref{def:wR}, we see that on the support of the integrand we have (using that $\abs{\xi} \gtrsim \abs{l}$): 
\begin{align} 
J_l(\xi) \lesssim J^R(\xi-\xi^\prime)e^{20\mu \abs{\xi^\prime}^{1/2}}. \label{ineq:JRswap}
\end{align}
Since $c < 1$ and $s > 1/2$, then \eqref{ineq:JRswap} and \eqref{ineq:IncExp} (also $\abs{l} \lesssim \abs{\xi} \approx \abs{\xi-\xi^\prime}$) imply
\begin{align*} 
\abs{\widehat{\mathcal{M}_1T^{2,v}_{HL}}(l,\xi)} & \lesssim \sum_{M \geq 8} \int_{\xi^\prime}\jap{\frac{\xi-\xi^\prime}{lt}}^{-1}\frac{\abs{\xi-\xi^\prime}^{s/2}}{\jap{t}^s}A^R(\xi-\xi^\prime)\abs{\widehat{v^{\prime\prime}}(\xi-\xi^\prime)_M} \abs{\xi^\prime-lt}\abs{\hat{\phi}_l(\xi^\prime)_{<M/8}}e^{\lambda\abs{l,\xi^\prime}^s} d\xi^\prime. 
\end{align*}
Therefore, 
\begin{align*} 
\abs{\widehat{\mathcal{M}_1T^{2,v}_{HL}}(l,\xi)}& \lesssim \sum_{M \geq 8} \int_{\xi^\prime} \frac{1}{\jap{\frac{\xi-\xi^\prime}{lt}}\jap{lt}} \frac{\abs{\xi-\xi^\prime}^{s/2}}{\jap{t}^s} A^R(\xi-\xi^\prime)\abs{\widehat{v^{\prime\prime}}(\xi-\xi^\prime)_M} \jap{lt}\abs{\xi^\prime-lt}\abs{\hat{\phi}_l(\xi^\prime)_{<M/8}}e^{\lambda\abs{l,\xi^\prime}^s} d\xi^\prime \\ 
& \lesssim \sum_{M \geq 8} \int_{\xi^\prime} \frac{\abs{\xi-\xi^\prime}^{s/2}}{\jap{t}^s}\frac{ A^R(\xi-\xi^\prime)}{\jap{\xi-\xi^\prime}}\abs{\widehat{v^{\prime\prime}}(\xi-\xi^\prime)_M} \jap{lt}\abs{\xi^\prime-lt}\abs{\hat{\phi}_l(\xi^\prime)_{<M/8}}e^{\lambda\abs{l,\xi^\prime}^s} d\xi^\prime. 
\end{align*}
Finally, by \eqref{ineq:L2L1} ($\sigma > 6$), \eqref{ineq:GeneralOrtho}
Lemma \ref{lem:LossyElliptic} and the bootstrap hypotheses we have
\begin{align} 
 \norm{\mathcal{M}_1T^{2,v}_{HL}}_2^2 & \lesssim \sum_{M \geq 8}\frac{1}{\jap{t}^{2s}}\norm{\abs{\partial_v}^{s/2}\frac{A^R}{\jap{\partial_v}} v^{\prime\prime}_M }_2^2 \norm{t^2 \phi_{<M/8}}_{\G^{\lambda,\sigma-3}}^2 \nonumber \\ 
& \lesssim  \frac{\epsilon^2}{\jap{t}^{2s}}\norm{\abs{\partial_v}^{s/2}\frac{A^R}{\jap{\partial_v}} v^{\prime\prime} }_2^2 \lesssim \epsilon^2 CCK_\lambda^2. \label{ineq:M1T2vR}
\end{align}
This is sufficient to treat $\mathcal{M}_1T^{2}_{HL}$. 

The corresponding argument to show 
\begin{align*} 
\norm{\mathcal{M}_1T^1_{HL}}_2^2 \lesssim \epsilon^2 \norm{\mathcal{M}_1\Delta_L \phi}^2_2 + \epsilon^2 CCK^1_\lambda. 
\end{align*}
is similar and hence omitted. Note that in the case of $T^1_{HL}$ no derivative needs to be recovered on the coefficients (and would be impossible due to lack of time-decay). This completes the treatment of the High-Low terms involving $\mathcal{M}_1$. 

The argument for $\mathcal{M}_2$ is only slightly different. 
We treat $\mathcal{M}_2T^{2}_{HL}$; the case $\mathcal{M}_2T^{1}_{HL}$ is analogous.
As in $\mathcal{M}_1T^2_{HL}$, we divide into separate cases: 
\begin{align*} 
\widehat{\mathcal{M}_2T^2_{HL}}(l,\xi) & = -\frac{1}{2\pi}\sum_{M \geq 8} \int_{\xi^\prime} \left[\mathbf{1}_{\abs{l} \geq \frac{1}{16}\abs{\xi}} + \mathbf{1}_{\abs{l} < \frac{1}{16}\abs{\xi}}  \right] \mathcal{M}_2(t,l,\xi)\widehat{v^{\prime\prime}}(\xi-\xi^\prime)_M i(\xi^\prime-lt)\hat{\phi}_l(\xi^\prime)_{<M/8} d\xi^\prime \\ 
& = \widehat{\mathcal{M}_2T^{2,z}_{HL}}(l,\xi) + \widehat{\mathcal{M}_2T^{2,v}_{HL}}(l,\xi). 
\end{align*} 
First consider $\mathcal{M}_2T^{2,z}_{HL}$, which like $\mathcal{M}_1T^{2,z}_{HL}$, we treat as a Low-High term.
As there, \eqref{ineq:lgainScon} applies on the support of the integrand and hence by \eqref{ineq:BasicJswap} (since we are not comparing different $z$ frequencies) and \eqref{ineq:IncExp}: 
\begin{align*} 
\abs{\widehat{\mathcal{M}_2T^{2,z}_{HL}}(l,\xi)} & \lesssim \sum_{M \geq 8} \int_{\xi^\prime}\mathbf{1}_{\abs{l} \geq \frac{1}{16}\abs{\xi}}\sqrt{\frac{\partial_tw_l(\xi)}{w_l(\xi)}}e^{\lambda\abs{\xi-\xi^\prime}^s} \abs{\widehat{v^{\prime\prime}}(\xi-\xi^\prime)_M} \abs{\xi^\prime-lt}\tilde A_l(\xi^\prime)\abs{\hat{\phi}_l(\xi^\prime)_{<M/8}}d\xi^\prime.
\end{align*} 
As in the treatment of $\mathcal{M}_2T^{1}_{LH}$ above in \S\ref{sec:lowhighElliptic}, we apply \eqref{ineq:partialtw_endpt}, and then a proof similar to that used to treat $\mathcal{M}_1T^{2,z}_{HL}$ in \eqref{ineq:M1T2z} implies 
\begin{align} 
\norm{\mathcal{M}_2T^{2,z}_{HL}}_2^2 \lesssim \epsilon^2 \norm{\mathcal{M}_2 \Delta_L \phi}_2^2 + \epsilon^2 \norm{\mathcal{M}_1 \Delta_L \phi}_2^2, \label{ineq:M2T2z}
\end{align}
which completes the treatment of $\mathcal{M}_2T^{2,z}_{HL}$. 

Now turn to $\mathcal{M}_2T^{2,v}_{HL}$. 
As in the treatment of $\mathcal{M}_1T^{2,v}_{HL}$, \eqref{ineq:THLFreqLoc} holds on the support of the integrand, and hence so does \eqref{ineq:JRswap} with $J_l(\xi)$ replaced by $\tilde J_l(\xi)$ (recall \eqref{def:tildeJB}).
Therefore, by \eqref{lem:scon} for some $c \in (0,1)$ followed by \eqref{ineq:JRswap} and \eqref{ineq:IncExp} implies
\begin{align*} 
\abs{\widehat{\mathcal{M}_2T^{2,v}_{HL}}(l,\xi)} & \lesssim \sum_{M \geq 8} \int_{\xi^\prime}\mathbf{1}_{\abs{l} < \frac{1}{16}\abs{\xi}} \jap{\frac{\xi}{lt}}^{-1}\sqrt{\frac{\partial_t w_l(\xi)}{w_l(\xi)}}\jap{\xi-\xi^\prime}^\sigma \tilde J_l(\xi) e^{\lambda\abs{\xi-\xi^\prime}^s} \\ & \quad\quad\quad \times \abs{\widehat{v^{\prime\prime}}(\xi-\xi^\prime)_M} \abs{\xi^\prime-lt}\abs{\hat{\phi}_l(\xi^\prime)_{<M/8}}e^{c\lambda\abs{l,\xi^\prime}^s} d\xi^\prime \\ 
& \lesssim \sum_{M \geq 8} \int_{\xi^\prime}\frac{\mathbf{1}_{\abs{l} < \frac{1}{16}\abs{\xi}}}{\jap{\frac{\xi}{lt}} \abs{lt}}\sqrt{\frac{\partial_t w_l(\xi)}{w_l(\xi)}}A^R(\xi-\xi^\prime) \abs{\widehat{v^{\prime\prime}}(\xi-\xi^\prime)_M} \\ & \quad\quad \times \abs{lt}\abs{\xi^\prime-lt}\abs{\hat{\phi}_l(\xi^\prime)_{<M/8}}e^{\lambda\abs{l,\xi^\prime}^s} d\xi^\prime.
\end{align*}  
Then by $\abs{l} \lesssim \abs{\xi} \approx \abs{\xi-\xi^\prime}$ and \eqref{ineq:partialtw_endpt} we have
\begin{align*} 
\abs{\widehat{\mathcal{M}_2T^{2,v}_{HL}}(l,\xi)} & \lesssim \sum_{M \geq 8} \int_{\xi^\prime}\left(\sqrt{\frac{\partial_t w_0(\xi - \xi^\prime)}{w_0(\xi-\xi^\prime)}} + \frac{\abs{\xi-\xi^\prime}^{s/2}}{\jap{t}^{s}} \right) \frac{A^R(\xi-\xi^\prime)}{\jap{\xi-\xi^\prime}} \abs{\widehat{v^{\prime\prime}}(\xi-\xi^\prime)_M} \\ & \quad\quad \times \abs{lt}\abs{\xi^\prime-lt}\jap{\xi^\prime}\abs{\hat{\phi}_l(\xi^\prime)_{<M/8}}e^{\lambda\abs{l,\xi^\prime}^s} d\xi^\prime.
\end{align*} 
Hence, by \eqref{ineq:L2L1} ($\sigma > 7$), \eqref{ineq:GeneralOrtho}, 
Lemma \ref{lem:LossyElliptic} and the bootstrap hypotheses we have
\begin{align*} 
 \norm{\mathcal{M}_2T^{2,v}_{HL}}_2^2 & \lesssim \sum_{M \geq 8}\left(\frac{1}{\jap{t}^{2s}}\norm{\abs{\partial_v}^{s/2}\frac{A^R}{\jap{\partial_v}} v^{\prime\prime}_M }_2^2 + \norm{\sqrt{\frac{\partial_t w}{w}} \frac{A^R}{\jap{\partial_v}} v^{\prime\prime}_M}_2^2 \right) \norm{t^2 \phi_{<M/8}}_{\G^{\lambda,\sigma-3}}^2 \nonumber \\ 
& \lesssim \epsilon^2 CCK_{\lambda}^2 + \epsilon^2 CCK_w^2. 
\end{align*}
Together with \eqref{ineq:M2T2z}, this completes the treatment of $\mathcal{M}_2T^{2}_{HL}$. The treatment of $\mathcal{M}_2T^{1}_{HL}$ is analogous and omitted.

\subsubsection{Remainders} \label{sec:EllipticRemainder}
The last terms to consider are $T^1_{\mathcal{R}}$ and $T^2_{\mathcal{R}}$.
In these terms powers of $\partial_v$  can be split evenly between the two factors.  
However, the same is not true of $l$. For this reason, we treat both remainders as Low-High terms. 
The difference between $T^1$ and $T^2$ here is insignificant since it is straightforward to gain a power of $\jap{\partial_v}^{-1}$ for $v^{\prime\prime}$. 
Hence we focus only on $T^1_{\mathcal{R}}$. 

Begin with $\mathcal{M}_1T^1_{\mathcal{R}}$ and divide into two cases based on the relative size of $\xi^\prime$ and $l$, 
\begin{align*} 
\abs{\widehat{\mathcal{M}_1T^1_{\mathcal R}}(l,\xi)}
 & \lesssim \sum_{M \in \mathbb D}\sum_{M/8 \leq M^\prime \leq 8M} \int \left[ \mathbf{1}_{\abs{l} > 100\abs{\xi^\prime}} + \mathbf{1}_{\abs{l} \leq 100\abs{\xi^\prime}}\right] \mathcal{M}_1(t,l,\xi) \abs{G(\xi-\xi^\prime)_{M^\prime}} \abs{\xi^\prime-lt}^2\abs{\hat{\phi}_l(\xi^\prime)_{M}} d\xi^\prime \\ 
& = \abs{\widehat{\mathcal{M}_1T^{1,z}_{\mathcal R}}(l,\xi)} + \abs{\widehat{\mathcal{M}_1T^{1,v}_{\mathcal R}}(l,\xi)}. 
\end{align*} 
Consider first $\mathcal{M}_1T^{1,z}_{\mathcal R}$. 
Since on the support of the integrand,
\begin{align} 
\abs{\abs{l,\xi} - \abs{l,\xi^\prime}} \leq \abs{\xi - \xi^\prime} \leq \frac{3M^\prime}{2} \leq 12 M \leq 24\abs{\xi^\prime} \leq \frac{24}{100}\abs{l,\xi^\prime},\label{ineq:M1TR}
\end{align}
inequality \eqref{lem:scon} implies, 
\begin{align*} 
\abs{\widehat{\mathcal{M}_1T^{1,z}_{\mathcal{R}}(l,\xi)}} & \lesssim \sum_{M \in \mathbb D}\sum_{M/8 \leq M^\prime \leq 8M} \int \mathbf{1}_{\abs{l} > 100\abs{\xi^\prime}} 
\jap{\frac{\xi}{lt}}^{-1}\frac{\abs{l,\xi^\prime}^{s/2}}{\jap{t}^s}\jap{l,\xi^\prime}^\sigma J_l(\xi)e^{c\lambda\abs{\xi-\xi^\prime}^s} \\ & \quad\quad \times \abs{G(\xi-\xi^\prime)_{M^\prime}} \abs{\xi^\prime-lt}^2\abs{\hat{\phi}_l(\xi^\prime)_{M}}e^{\lambda\abs{l,\xi^\prime}^s} d\xi^\prime.
\end{align*}  
By $\frac{1}{24}\abs{\xi^\prime} \leq \abs{\xi-\xi^\prime} \leq 24\abs{\xi^\prime}$ we have the rough bound from \eqref{w-grwth}, 
\begin{align} 
\frac{J_l(\xi)}{J_l(\xi^\prime)} & \lesssim e^{\frac{3\mu}{2}\abs{\xi}^{1/2}} \lesssim e^{50\mu\abs{\xi-\xi^\prime}^{1/2}}. \label{ineq:RemainderRough}
\end{align}
Therefore, \eqref{ineq:IncExp} implies 
\begin{align*} 
\abs{\widehat{\mathcal{M}_1T^{1,z}_{\mathcal{R}}}(l,\xi)} & \lesssim \sum_{M \in \mathbb D}\sum_{M/8 \leq M^\prime \leq 8M} \int \mathbf{1}_{\abs{l} > 100\abs{\xi^\prime}}\jap{\frac{\xi}{lt}}^{-1}\frac{\abs{l,\xi}^{s/2}}{\jap{t}^s} \\ & \quad\quad \times  e^{\lambda\abs{\xi-\xi^\prime}^s}\abs{G(\xi-\xi^\prime)_{M^\prime}} \abs{\xi^\prime-lt}^2A_l(\xi^\prime) \abs{\hat{\phi}_l(\xi^\prime)_{M}}d\xi^\prime.
\end{align*} 
Since $t \geq 1$, and $\jap{\xi^\prime/lt} \approx 1$ on the support of the integrand, by \eqref{ineq:M1TR},
\begin{align*} 
\abs{\widehat{\mathcal{M}_1T^{1,z}_{\mathcal{R}}}(l,\xi)} & \lesssim \sum_{M \in \mathbb D}\sum_{M/8 \leq M^\prime \leq 8M} \int \mathbf{1}_{\abs{l} > 100\abs{\xi^\prime}} \mathcal{M}_1(t,l,\xi^\prime)e^{\lambda\abs{\xi-\xi^\prime}^s}\abs{G(\xi-\xi^\prime)_{M^\prime}} \abs{\xi^\prime-lt}^2\abs{\hat{\phi}_l(\xi^\prime)_{M}} d\xi^\prime.
\end{align*}
Taking the $L^2$ norm in $\xi$, applying \eqref{ineq:L2L1} and \eqref{ineq:GeneralOrtho} (note the $M^\prime$ sum only contains 7 terms), 
\begin{align*} 
\norm{\mathcal{M}_1T^{1,z}_{\mathcal{R}}(l)}_2  & \lesssim \sum_{M^\prime \in \mathbb D} \norm{(1-(v^\prime)^2)_{M^\prime}}_{\G^{\lambda,2}} \sum_{M^\prime/8 \leq M \leq 8M^\prime} \norm{\mathcal{M}_1\Delta_L (\phi_l)_{M}}_2 \\ 
 & \lesssim \sum_{M^\prime \in \mathbb D} \norm{(1-(v^\prime)^2)_{M^\prime}}_{\G^{\lambda,2}}\norm{\mathcal{M}_1\Delta_L (\phi_l)_{\sim M^\prime}}_2 \\ 
& \lesssim \left(\sum_{M^\prime \in \mathbb D} \norm{(1-(v^\prime)^2)_{M^\prime}}^2_{\G^{\lambda,2}}\right)^{1/2} \left(\sum_{M^\prime \in \mathbb D}\norm{\mathcal{M}_1\Delta_L (\phi_l)_{\sim M^\prime}}_2^2 \right)^{1/2} \\ 
& \lesssim \epsilon\norm{\mathcal{M}_1\Delta_L \phi_l}_2,  
\end{align*} 
where the last line followed from \eqref{ineq:vp2m1bd}.
Taking squares and summing over $l \neq 0$ implies
\begin{align} 
\norm{\mathcal{M}_1T^{1,z}_{\mathcal{R}}}_2 & \lesssim \epsilon^2\norm{\mathcal{M}_1\Delta_L P_{\neq 0}\phi}^2_2.  \label{ineq:M1T1zR}
\end{align}
Turn now to $\mathcal{M}_1T^{1,v}_{\mathcal R}$. 
On the support of the integrand in this case, 
\begin{align*} 
\abs{\xi - \xi^\prime} & \leq 24\abs{\xi^\prime} \leq 24\abs{l,\xi^\prime} \\ 
\abs{l,\xi^\prime} & \leq 101\abs{\xi^\prime} \leq 2424\abs{\xi - \xi^\prime}. 
\end{align*}
Therefore by \eqref{lem:strivial} there exists a $c \in (0,1)$ such that
\begin{align*} 
\abs{l,\xi}^{s} \leq \abs{\abs{l,\xi^\prime} + \abs{\xi-\xi^\prime}}^s \leq c\abs{l,\xi^\prime}^s + c\abs{\xi-\xi^\prime}^s. 
\end{align*}
Hence, 
\begin{align*} 
\abs{\widehat{\mathcal{M}_1T^{1,v}_{\mathcal{R}}}(l,\xi)} & \lesssim \sum_{M \in \mathbb D} \sum_{M/8 \leq M^\prime \leq 8M} \int \mathbf{1}_{\abs{l} \leq 100\abs{\xi^\prime}} \jap{\frac{\xi}{lt}}^{-1}\frac{\abs{l,\xi}^{s/2}}{\jap{t}^s}\jap{l,\xi^\prime}^{\sigma/2}\jap{\xi-\xi^\prime}^{\sigma/2} \\ & \quad\quad \times J_l(\xi)e^{c\lambda\abs{\xi-\xi^\prime}^s}\abs{G(\xi-\xi^\prime)_{M^\prime}} \abs{\xi^\prime-lt}^2\abs{\hat{\phi}_l(\xi^\prime)_{M}}e^{c\lambda\abs{l,\xi^\prime}^s} d\xi^\prime.
\end{align*} 
Using again \eqref{ineq:RemainderRough} and \eqref{ineq:IncExp} with $\abs{l,\xi^\prime} \approx \abs{\xi-\xi^\prime}$, and $\jap{\frac{\xi^\prime}{lt}}\jap{\frac{\xi}{lt}}^{-1} \lesssim \jap{\xi^\prime}$ implies 
\begin{align*} 
\abs{\widehat{\mathcal{M}_1T^{1,v}_{\mathcal{R}}}(l,\xi)} & \lesssim \sum_{M \in \mathbb D}\sum_{M/8 \leq M^\prime \leq 8M} \int \mathbf{1}_{\abs{l} \leq 100\abs{\xi^\prime}} 
\jap{\frac{\xi^\prime}{lt}}^{-1}\frac{\abs{l,\xi^\prime}^{s/2}}{\jap{t}^s} e^{\lambda\abs{\xi-\xi^\prime}^s} 
\\ & \quad\quad \times \jap{\xi-\xi^\prime}^{\sigma/2+1}
\abs{G(\xi-\xi^\prime)_{M^\prime}}\abs{\xi^\prime-lt}^2A_l(\xi^\prime)\abs{\hat{\phi}_l(\xi^\prime)_{M}} d\xi^\prime.
\end{align*}
An argument similar to that used to complete the proof of $\norm{\mathcal{M}_1T^{1,z}_{\mathcal{R}}}_2$ in \eqref{ineq:M1T1zR} implies 
\begin{align*} 
\norm{\mathcal{M}_1T^{1,v}_{\mathcal{R}}}_2^2 
 & \lesssim \epsilon^2 \norm{\mathcal{M}_1\Delta_L\phi}_2^2.
\end{align*} 
This completes the treatment of $\mathcal{M}_1T^1_{\mathcal{R}}$; as discussed above, $\mathcal{M}_1T^2_{\mathcal{R}}$ is treated in a similar manner and is hence omitted.
Combining this argument with those used to treat $\mathcal{M}_2T^1_{LH}$ we may also easily treat $\mathcal{M}_2T^{1}_{\mathcal{R}}$ and $\mathcal{M}_2T^{2}_{\mathcal{R}}$.  
The proof is omitted and the result is 
\begin{align*} 
\norm{\mathcal{M}_2T^1_{\mathcal R}}_2^2 + \norm{\mathcal{M}_2T^2_{\mathcal R}}_2^2  & \lesssim  \epsilon^2 \norm{\mathcal{M}_1\Delta_L\phi}_2^2 + \epsilon^2 \norm{\mathcal{M}_2\Delta_L\phi}_2^2.  
\end{align*}
This completes the treatment of the remainder terms and hence the proof of Proposition \ref{lem:PrecisionEstimateNeq0}.
\end{proof}

\section{Transport} \label{sec:Transport}
In this section we prove Proposition \ref{prop:Transport}. 
As discussed above, we adapt methods similar to \cite{FoiasTemam89,LevermoreOliver97,KukavicaVicol09} for this purpose. 
This adaptation is not completely straightforward since $J_k(\eta)$ assigns slightly different
regularities to modes which are near the critical time. 
Dealing with this will require special attention and all of the available time decay from the velocity field.

In the methods of \cite{FoiasTemam89,LevermoreOliver97,KukavicaVicol09} the goal is to gain $1-s$ derivatives from the difference $A_k(\eta) - A_l(\xi)$, and hence be able to absorb 
the leading contributions of $T_N$ with $CK_\lambda$. 
Decompose this difference:
\begin{align*} 
A_k(\eta) - A_l(\xi) & = A_l(\xi)\left[e^{\lambda\abs{k,\eta}^s - \lambda \abs{l,\xi}^s} - 1\right] + A_l(\xi)e^{\lambda\abs{k,\eta}^s - \lambda \abs{l,\xi}^s}\left[\frac{J_k(\eta)}{J_l(\xi)} - 1\right] \frac{\jap{k,\eta}^\sigma}{\jap{l,\xi}^\sigma} \\ &  \quad + A_l(\xi)e^{\lambda\abs{k,\eta}^s - \lambda \abs{l,\xi}^s}\left[ \frac{\jap{k,\eta}^\sigma}{\jap{l,\xi}^\sigma} - 1\right] 
\end{align*} 
In what follows we write
\begin{align*} 
T_{N} & = i\sum_{k,l}\int_{\eta,\xi} A_k(\eta)\bar{\hat{f}}_k(\eta) \hat{u}_{k-l}(\eta-\xi)_{<N/8} \cdot (l,\xi)A_l(\xi)\hat{f}_l(\xi)_{N}\left[e^{\lambda\abs{k,\eta}^s - \lambda \abs{l,\xi}^s} - 1\right] d\eta d\xi \\ 
&\quad + i\sum_{k,l}\int_{\eta,\xi} A_k(\eta)\bar{\hat{f}}_k(\eta) \hat{u}_{k-l}(\eta-\xi)_{<N/8} \cdot (l,\xi)A_l(\xi)\hat{f}_l(\xi)_{N} e^{\lambda\abs{k,\eta}^s - \lambda \abs{l,\xi}^s}\left[\frac{J_k(\eta)}{J_l(\xi)} - 1\right] \frac{\jap{k,\eta}^\sigma}{\jap{l,\xi}^\sigma} d\eta d\xi \\ 
& \quad + i\sum_{k,l}\int_{\eta,\xi} A_k(\eta)\bar{\hat{f}}_k(\eta) \hat{u}_{k-l}(\eta-\xi)_{<N/8} \cdot (l,\xi)A_l(\xi)\hat{f}_l(\xi)_{N} e^{\lambda\abs{k,\eta}^s - \lambda \abs{l,\xi}^s}\left[ \frac{\jap{k,\eta}^\sigma}{\jap{l,\xi}^\sigma} - 1\right]  d\eta d\xi \\
& = T_{N;1} + T_{N;2} + T_{N;3}
\end{align*}

\subsection{Term $T_{N;1}$: exponential regularity}
First we treat $T_{N;1}$ for which the methods of \cite{FoiasTemam89,LevermoreOliver97,KukavicaVicol09} easily adapt. 
By $\abs{e^{x}-1} \leq xe^{x}$ and \eqref{ineq:TrivDiff},
\begin{align*} 
\abs{T_{N;1}} & \leq \sum_{k,l}\int_{\eta,\xi} \abs{A\hat{f}_k(\eta)} \abs{\hat{u}_{k-l}(\eta-\xi)_{<N/8}} \abs{l,\xi} A_l(\xi)\abs{\hat{f}_l(\xi)_{N}}\lambda\abs{\abs{k,\eta}^s - \abs{l,\xi}^s}e^{\lambda \abs{\abs{k,\eta}^s - \abs{l,\xi}^s}} d\eta d\xi \\ 
& \lesssim \lambda\sum_{k,l}\int_{\eta,\xi} \abs{A\hat{f}_k(\eta)} \abs{\hat{u}_{k-l}(\eta-\xi)_{<N/8}} \abs{l,\xi} A_l(\xi)\abs{\hat{f}_l(\xi)_{N}}\frac{\abs{\abs{k,\eta} - \abs{l,\xi}}}{\abs{k,\eta}^{1-s} + \abs{l,\xi}^{1-s}} e^{\lambda \abs{\abs{k,\eta}^s - \abs{l,\xi}^s}} d\eta d\xi.   
\end{align*} 
Since on the support of the integrand (see \S\ref{Apx:LPProduct}),  
\begin{subequations} \label{ineq:TransFreqCon}
\begin{align} 
\abs{\abs{k,\eta} - \abs{l,\xi}} & \leq \abs{k-l,\eta-\xi} \leq \frac{6}{32}\abs{l,\xi}, \\
(26/32)\abs{l,\xi} &\leq \abs{k,\eta} \leq (38/32)\abs{l,\xi},  
\end{align} 
\end{subequations}
inequalities \eqref{lem:scon} and \eqref{ineq:SobExp} imply, for some $c \in (0,1)$, 
\begin{align*} 
\abs{T_{N;1}} & \lesssim \lambda\sum_{k,l}\int_{\eta,\xi} \abs{A\hat{f}_k(\eta)} \abs{\hat{u}_{k-l}(\eta-\xi)_{<N/8}} \abs{l,\xi}^{s/2}\abs{k,\eta}^{s/2} A_l(\xi)\abs{\hat{f}_l(\xi)_{N}}e^{c\lambda\abs{k-l,\eta - \xi}^s} d\eta d\xi. 
\end{align*} 
Hence \eqref{ineq:L2L2L1} implies (since $\sigma > 6$), 
\begin{align*} 
\abs{T_{N;1}} & \lesssim \lambda\norm{\abs{\grad}^{s/2}Af_{\sim N}}_2\norm{\abs{\grad}^{s/2} Af_{N}}_2\norm{u_{<N/8}}_{\G^{\lambda;\sigma-4}}.
\end{align*} 
Therefore by Lemma \ref{lem:LossyElliptic} and the bootstrap hypotheses,
\begin{align} 
\abs{T_{N;1}} & \lesssim \epsilon\frac{\lambda}{\jap{t}^{2-K_D\epsilon/2}}\norm{\abs{\grad}^{s/2}Af_{\sim N}}_2\norm{\abs{\grad}^{s/2} Af_{N}}_2. \label{ineq:TN1}
\end{align} 

\subsection{Term $T_{N;2}$: effect of $J$}
The most difficult of the three terms in $T_{N}$ is $T_{N;2}$ since $J$ is sensitive to where it is being evaluated in $(t,k,\eta)$.
We divide the integral as follows
\begin{align*} 
T_{N;2} &= i\sum_{k,l}\int_{\eta,\xi} \left[\chi^S +\chi^{L}\right]A\bar{\hat{f}}_k(\eta) \hat{u}_{k-l}(\eta-\xi)_{<N/8} \cdot (l,\xi)A_l(\xi)\hat{f}_l(\xi)_{N} \\ 
& \quad\quad\quad \times e^{\lambda\abs{k,\eta}^s - \lambda \abs{l,\xi}^s}\left[\frac{J_k(\eta)}{J_l(\xi)} - 1\right]\frac{\jap{k,\eta}^\sigma}{\jap{l,\xi}^{\sigma}} d\eta d\xi \\ 
& = T_{N;2}^{S} + T_{N;2}^{L}, 
\end{align*}
where $\chi^{S} = \mathbf{1}_{t \leq \frac{1}{2}\min(\sqrt{\abs{\xi}},\sqrt{\abs{\eta}})}$ and $\chi^L =  1- \chi^S$. 

Focus first on $T_{N;2}^{S}$. 
In this term we apply Lemma \ref{lem:JTrans} to gain $1/2$ derivatives. 
Indeed, on the support of the integrand, \eqref{ineq:TransFreqCon} holds and hence by \eqref{lem:scon} and Lemma \ref{lem:JTrans} we deduce 
\begin{align*} 
\abs{T_{N;2}^{S}} & \lesssim \sum_{k,l}\int_{\eta,\xi} \chi^{S}\abs{A\hat{f}_k(\eta)} \abs{l,\xi}^{1/2} \abs{\hat{u}_{k-l}(\eta-\xi)_{<N/8}} A_l(\xi)\abs{\hat{f}_l(\xi)_{N}} \\ 
& \quad \quad \times \jap{k-l,\eta - \xi}e^{c\lambda\abs{k-l,\eta - \xi}^{s}} d\xi d\eta.   
\end{align*}
Since $c < 1$ it follows by \eqref{ineq:TransFreqCon}, \eqref{ineq:IncExp} and \eqref{ineq:SobExp} that 
\begin{align*} 
\abs{T_{N;2}^{S}} & \lesssim \sum_{k,l}\int_{\eta,\xi} \chi^{S}\abs{A\hat{f}_k(\eta)} \abs{l,\xi}^{1/2} \abs{\hat{u}_{k-l}(\eta-\xi)_{<N/8}} A_l(\xi)\abs{\hat{f}_l(\xi)_{N}} e^{\lambda\abs{k-l,\eta - \xi}^{s}} d\xi d\eta \\ 
 & \lesssim \sum_{k,l}\int_{\eta,\xi} \chi^{S} \abs{A\hat{f}_k(\eta)} \left(1 +  \abs{l,\xi}^{s/2}\abs{k,\eta}^{s/2}\right) \abs{\hat{u}_{k-l}(\eta-\xi)_{<N/8}} A_l(\xi)\abs{\hat{f}_l(\xi)_{N}} e^{\lambda\abs{k-l,\eta - \xi}^{s}} d\xi d\eta. 
\end{align*}
Hence by \eqref{ineq:L2L2L1} followed by the bootstrap hypotheses and Lemma \ref{lem:LossyElliptic},   
\begin{align} 
\abs{T_{N;2}^{S}} & \lesssim \norm{u_{<N/8}}_{\G^{\lambda, \sigma - 4}}\norm{\abs{\grad}^{s/2}Af_{\sim N}}_2\norm{\abs{\grad}^{s/2}Af_{N}}_2 + \norm{u_{<N/8}}_{\G^{\lambda, \sigma - 4}}\norm{Af_{\sim N}}_2\norm{Af_{N}}_2 
\nonumber \\ 
 & \lesssim \frac{\epsilon}{\jap{t}^{2-K_D\epsilon/2}}\norm{\abs{\grad}^{s/2}Af_{\sim N}}_2^2 + \frac{\epsilon}{\jap{t}^{2-K_D\epsilon/2}}\norm{Af_{\sim N}}_2^2. \label{ineq:TNs}
\end{align} 

Now focus on the more difficult $T_{N;2}^{L}$, where the resonant and non-resonant modes are being assigned slightly different regularities. 
There is a potential problem if Lemma \ref{lem:Jswap} incurs a loss. 
Hence we divide into the two natural cases:
\begin{align*} 
T_{N;2}^{L} & =  i\sum_{k,l}\int_{\eta,\xi}\chi^L A_k(\eta)\bar{\hat{f}}_k(\eta) \hat{u}_{k-l}(\eta-\xi)_{<N/8} \cdot (l,\xi) \left[\chi^D + \chi^{\ast}\right] A_l(\xi) \\ &\quad\quad\quad \times \hat{f}_l(\xi)_{N} e^{\lambda\abs{k,\eta}^s - \lambda \abs{l,\xi}^s}\left[\frac{J_k(\eta)}{J_l(\xi)} - 1\right] d\eta d\xi \\ 
& =  T_{N;2}^{D} + T_{N;2}^{\ast}, 
\end{align*} 
where $\chi^{D} = \mathbf{1}_{t \in \I_{k,\eta}}\mathbf{1}_{t \in \I_{k,\xi}}\mathbf{1}_{k \neq l}$ and $\chi^{\ast} = 1 - \chi^D$ (`D' for `difficult'). 
  
First focus on $T_{N;2}^{D}$ which is expected to be challenging.
We will throw away any possible gain from the $-1$ and apply \eqref{ineq:RatJ2partt}; the time decay will make this unimportant.
By \eqref{ineq:TransFreqCon} and \eqref{lem:scon}, 
\begin{align*} 
\abs{T_{N;2}^{D}} & \lesssim \sum_{k,l\neq 0}\int_{\eta,\xi} \abs{A\hat{f}_k(\eta)}\abs{\hat{u}_{k-l}(\eta-\xi)_{<N/8}} \abs{l,\xi} \chi^{D} A_l(\xi)\abs{\hat{f}_l(\xi)_{N}} \\ 
& \quad\quad \times e^{c\lambda\abs{k-l,\eta-\xi}^s}\frac{\abs{\eta}}{k^2}\sqrt{\frac{\partial_t w_k(t,\eta)}{w_k(t,\eta)}}\sqrt{\frac{\partial_t w_l(t,\xi)}{w_l(t,\xi)}}e^{20\mu\abs{k-l,\eta - \xi}^{1/2}} d\eta d\xi. 
\end{align*} 
Applying \eqref{ineq:IncExp} implies 
\begin{align*} 
\abs{T_{N;2}^{D}} & \lesssim \sum_{k,l\neq 0}\int_{\eta,\xi} \abs{A\hat{f}_k(\eta)} \abs{\hat{u}_{k-l}(\eta-\xi)_{<N/8}} \chi^{D} A_l(\xi)\abs{\hat{f}_l(\xi)_{N}} \abs{l,\xi}\\ 
&\quad\quad \times e^{\lambda\abs{k-l,\eta-\xi}^s} \frac{\abs{\eta}}{k^2}\sqrt{\frac{\partial_t w_k(t,\eta)}{w_k(t,\eta)}}\sqrt{\frac{\partial_t w_l(t,\xi)}{w_l(t,\xi)}} d\eta d\xi.  
\end{align*}
On the support of the integrand $A_l(\xi) \lesssim \tilde A_l(\xi)$ and $A_k(\eta) \lesssim \tilde A_k(\eta)$ and since $1 \leq t \approx \frac{\eta}{k}$, 
\begin{align*} 
\abs{T_{N;2}^{D}} & \lesssim \sum_{k,l\neq 0}\int_{\eta,\xi} \abs{\tilde A\hat{f}_k(\eta)} t^2\abs{\hat{u}_{k-l}(\eta-\xi)_{<N/8}} \chi^{D} \tilde A_l(\xi)\abs{\hat{f}_l(\xi)_{N}} \\ 
& \quad\quad\quad \times \frac{\abs{l,\xi}}{\abs{\eta}}e^{\lambda\abs{k-l,\eta-\xi}^s}\sqrt{\frac{\partial_t w_k(t,\eta)}{w_k(t,\eta)}}\sqrt{\frac{\partial_t w_l(t,\xi)}{w_l(t,\xi)}}d\eta d\xi.  
\end{align*}
By the definition of $\chi^D$ and \eqref{ineq:TransFreqCon}, we have $\abs{l,\xi} \lesssim \abs{\eta}$, hence \eqref{ineq:L2L2L1} implies: 
\begin{align*} 
\abs{T_{N;2}^{D}} & \lesssim t^2\norm{P_{\neq 0}u_{<N/8}}_{\G^{\lambda,\sigma-4}}\norm{\sqrt{\frac{\partial_t w}{w}}\tilde Af_{\sim N}}_2\norm{\sqrt{\frac{\partial_t w}{w}}\tilde Af_N}_2, 
\end{align*}  
where note that since $k \neq l$ by the definition of $\chi^D$, we may restrict to non-zero modes in $u$, crucial to get the full $O(t^{-2})$ decay. 
Therefore by Lemma \ref{lem:LossyElliptic} and the bootstrap hypotheses, 
\begin{align} 
\abs{T_{N;2}^{D}} & \lesssim \epsilon\norm{\sqrt{\frac{\partial_t w}{w}}\tilde Af_{\sim N}}_2\norm{\sqrt{\frac{\partial_t w}{w}}\tilde Af_N}_2. \label{ineq:TNRNR}
\end{align} 
This completes the treatment of $T_{N;2}^{D}$. 

It remains to treat $T_{N;2}^{\ast}$. 
We divide into two cases based on the relative size of $\abs{l}$ and $\abs{\xi}$:
\begin{align*} 
T_{N;2}^{\ast} & =  i\sum_{k,l}\int_{\eta,\xi}\chi^L A_k(\eta)\bar{\hat{f}}_k(\eta) \hat{u}_{k-l}(\eta-\xi)_{<N/8} \cdot (l,\xi)\chi^{\ast}
 \left[\mathbf{1}_{\abs{l} > 100\abs{\xi}} + \mathbf{1}_{\abs{l} \leq 100\abs{\xi}}\right]
A_l(\xi)\hat{f}_l(\xi)_{N} \\ 
& \quad\quad\quad \times e^{\lambda\abs{k,\eta}^s - \lambda \abs{l,\xi}^s}\left[\frac{J_k(\eta)}{J_l(\xi)} - 1\right] d\eta d\xi \\ 
& =  T_{N;2}^{\ast,z} + T_{N;2}^{\ast,v}.  
\end{align*}
First consider $T_{N;2}^{\ast,z}$.
On the support of the integrand, note $\abs{\eta} < \frac{313}{1000}\abs{l}$ and hence by \eqref{ineq:TrivDiff},
 \eqref{w-grwth} and \eqref{ineq:SobExp}, 
\begin{align*} 
\abs{\frac{J_k(\eta)}{J_l(\xi)} - 1} = 
\abs{\frac{w_k(t,\eta)^{-1} e^{\mu\abs{\eta}^{1/2}} + e^{\mu\abs{k}^{1/2}}}{w_l(t,\xi)^{-1}e^{\mu\abs{\xi}^{1/2}} +  e^{\mu\abs{l}^{1/2}}} - 1}
 & \lesssim e^{\frac{3}{2}\mu\abs{\eta}^{1/2} - \mu\abs{l}^{1/2}} + \abs{e^{\mu\abs{k}^{1/2} - \mu\abs{l}^{1/2}} - 1} \\
& \lesssim_{\mu} \frac{1}{\abs{l}^{1/2}} +  \frac{\abs{k - l}}{\abs{k}^{1/2} + \abs{l}^{1/2}} e^{\mu\abs{k - l}^{1/2}}.
\end{align*}
Therefore by \eqref{ineq:IncExp}, \eqref{ineq:SobExp} and $\abs{l,\xi} \lesssim \abs{l}$,
\begin{align*} 
\abs{T_{N;2}^{\ast,z}} & \lesssim \sum_{k,l}\int_{\eta,\xi} \chi^L\chi^{\ast} \abs{A\hat{f}_k(\eta)} \mathbf{1}_{\abs{l} > 100\abs{\xi}} \abs{l}^{1/2} \abs{\hat{u}_{k-l}(\eta-\xi)_{<N/8}} A_l(\xi)\abs{\hat{f}_l(\xi)_{N}} e^{\lambda\abs{k-l,\eta-\xi}^s}d\eta d\xi \\ 
& \lesssim \sum_{k,l}\int_{\eta,\xi} \chi^L\chi^{\ast} \abs{A\hat{f}_k(\eta)} \mathbf{1}_{\abs{l} > 100\abs{\xi}} \abs{l}^{s} \abs{\hat{u}_{k-l}(\eta-\xi)_{<N/8}} A_l(\xi)\abs{\hat{f}_l(\xi)_{N}} e^{\lambda\abs{k-l,\eta-\xi}^s}d\eta d\xi.
\end{align*}
Applying \eqref{ineq:L2L2L1}, the bootstrap hypotheses and Lemma \ref{lem:LossyElliptic} as in the treatment of $T_{N;2}^{S}$,
\begin{align}
\abs{T_{N;2}^{\ast,z}} & \lesssim \frac{\epsilon}{\jap{t}^{2-K_D\epsilon/2}}\norm{\abs{\grad}^{s/2}Af_{\sim N}}_2^2. \label{ineq:TNastl}
\end{align}
Turn now to $T_{N;2}^{\ast,v}$. Note that on the support of the integral, $\abs{\eta} \approx \abs{\xi}$.  
By definition of $\chi^{\ast}$, we may apply \eqref{ineq:BasicJswap}, which together with \eqref{lem:scon} and \eqref{ineq:IncExp} implies 
\begin{align*} 
\abs{T_{N;2}^{\ast,v}} & \lesssim \sum_{k,l}\int_{\eta,\xi}\chi^L \chi^{\ast} \mathbf{1}_{\abs{l} \leq 100\abs{\xi}} \abs{A\hat{f}_k(\eta)} \abs{\hat{u}_{k-l}(\eta-\xi)_{<N/8}} \abs{l,\xi} A_l(\xi)\abs{\hat{f}_l(\xi)_{N}} e^{\lambda\abs{k-l,\eta-\xi}^s} d\eta d\xi.  
\end{align*} 
Since $\abs{l,\xi} \lesssim \abs{\xi} \lesssim t^2$, we have $\abs{l,\xi} \lesssim \abs{k,\eta}^{s/2}\abs{l,\xi}^{s/2} t^{2-2s}$
and therefore, 
\begin{align*} 
\abs{T_{N;2}^{\ast,v}} & \lesssim \sum_{k,l}\int_{\eta,\xi} \chi^\ast\chi^L \abs{k,\eta}^{s/2}\abs{A\hat{f}_k(\eta)} t^{2-2s}\abs{\hat{u}_{k-l}(\eta-\xi)_{<N/8}} \abs{l,\xi}^{s/2} A_l(\xi)\abs{\hat{f}_l(\xi)_{N}} e^{\lambda\abs{k-l,\eta-\xi}^s} d\eta d\xi. 
\end{align*}
By \eqref{ineq:L2L2L1}, Lemma \ref{lem:LossyElliptic} and the bootstrap hypotheses, 
\begin{align}
\abs{T_{N;2}^{\ast,v}} \lesssim \frac{\epsilon}{\jap{t}^{2s-K_D\epsilon/2}}\norm{\abs{\grad}^{s/2}Af_{\sim N}}_2^2, \label{ineq:TNastv}
\end{align}
where note $2s - K_D\epsilon/2 \geq s + 1/2$ for $\epsilon$ sufficiently small. 

Combining \eqref{ineq:TNs},\eqref{ineq:TNRNR},\eqref{ineq:TNastl} and \eqref{ineq:TNastv} 
we have for $\epsilon$ sufficiently small,  
\begin{align}
\abs{T_{N;2}} \lesssim \frac{\epsilon}{\jap{t}^{s+1/2}}\norm{\abs{\grad}^{s/2}Af_{\sim N}}_2^2 + \epsilon\norm{\sqrt{\frac{\partial_t w}{w}}\tilde Af_{\sim N}}_2^2 + \frac{\epsilon}{\jap{t}^{2-K_D\epsilon/2}}\norm{Af_{\sim N}}_2^2, \label{ineq:TN2a}
\end{align}  
which completes the treatment of $T_{N;2}$. 

\subsection{Term $T_{N;3}$: Sobolev correction}
Next, turn to $T_{N;3}$ which is the easiest to treat. 
By the mean value theorem and  \eqref{ineq:TransFreqCon},
\begin{align*} 
\abs{\frac{\jap{k,\eta}^\sigma}{\jap{l,\xi}^\sigma} - 1 } \lesssim \frac{\abs{k-l,\eta - \xi}}{\jap{l,\xi}},  
\end{align*} 
which implies 
arguments similar to those applied above can deduce
\begin{align} 
\abs{T_{N;3}} & \lesssim \frac{\epsilon}{\jap{t}^{2 - K_D \epsilon/2}}\norm{Af_{\sim N}}_2\norm{Af_N}_2.  \label{ineq:TN2b}
\end{align} 
Indeed, putting \eqref{ineq:TN1}, \eqref{ineq:TN2a} and \eqref{ineq:TN2b} together with \eqref{ineq:GeneralOrtho} proves Proposition \ref{prop:Transport}.

\section{Reaction}  \label{sec:RigReac}
Focus first on an individual frequency shell and divide each one into several natural pieces 
\begin{align*} 
R_N & = R_N^1 + R_N^{\epsilon,1} + R_N^2 + R_N^3   
\end{align*}
where 
\begin{align*} 
R_N^1 & = \sum_{k,l\neq 0}\int_{\eta,\xi} A \bar{\hat f}_k(\eta) A_k(\eta) (\eta l - \xi k) \hat{\phi}_l(\xi)_N \hat{f}_{k-l}(\eta - \xi)_{< N/8} d\eta d\xi \\ 
R_N^{\epsilon,1} & = -\sum_{k,l\neq 0}\int_{\eta,\xi} A \bar{\hat f}_k(\eta) A_k(\eta)(\eta l - \xi k) \left[\widehat{(1-v^\prime)\phi}_l\right](\xi)_N \hat{f}_{k-l}(\eta - \xi)_{< N/8} d\eta d\xi \\ 
R_N^2 & = \sum_{k}\int_{\eta,\xi} A \bar{\hat f}_k(\eta) A_k(\eta)\widehat{[\partial_t v]}(\xi)_N \widehat{\partial_v f}_{k}(\eta - \xi)_{< N/8} d\eta d\xi \\ 
R_N^3 & = -\sum_{k,l}\int_{\eta,\xi} A \bar{\hat f}_k(\eta) A_{k-l}(\eta-\xi)\hat{u}_l(\xi)_N\widehat{\grad f}_{k-l}(\eta - \xi)_{<N/8} d\eta d\xi. 
\end{align*} 

\subsection{Main contribution} \label{sec:RN1} 
The main contribution comes from $R_N^1$. 
We subdivide this integral depending on whether or not $(l,\xi)$ and/or $(k,\eta)$ are resonant as each combination requires a slightly different treatment.  
Define the partition: 
\begin{align*} 
1 &= \mathbf{1}_{t \not\in\I_{k,\eta}, t \not\in\I_{l,\xi}} +\mathbf{1}_{t \not\in\I_{k,\eta}, t \in\I_{l,\xi}} + \mathbf{1}_{t \in\I_{k,\eta}, t \not\in\I_{l,\xi}} + \mathbf{1}_{t \in\I_{k,\eta}, t \in\I_{l,\xi}} \\ 
& = \chi^{NR,NR} + \chi^{NR,R} + \chi^{R,NR} + \chi^{R,R}, 
\end{align*}
where the NR and R denotes `non-resonant' and `resonant' respectively referring to $(k,\eta)$ and $(l,\xi)$. 
Correspondingly, denote
\begin{align*} 
R_N^{1} & = \sum_{k,l\neq 0}\int_{\eta,\xi} \left[\chi^{NR,NR} + \chi^{NR,R} + \chi^{R,NR} + \chi^{R,R}\right] A \bar{\hat f}_k(\eta) A_k(\eta) (\eta l - \xi k) \hat{\phi}_l(\xi)_N \hat{f}_{k-l}(\eta - \xi)_{< N/8} d\eta d\xi \\ 
& = R_N^{NR,NR} + R_N^{NR,R} + R_N^{R,NR} + R_N^{R,R}.
\end{align*}

\subsubsection{Treatment of $R_N^{NR,NR}$}
Since on the support of the integrand of $R_N^{1}$,
\begin{align} 
\abs{\abs{l,\xi} - \abs{k,\eta}} \leq \abs{k-l,\eta - \xi} \leq \frac{6}{32}\abs{l,\xi}, \label{ineq:RN1lxictrl} 
\end{align}
it follows from \eqref{lem:scon} that for some $c \in (0,1)$,
\begin{align*} 
\abs{R_N^{NR,NR}} & \leq \sum_{k,l\neq 0}\int_{\eta,\xi} \abs{\chi^{NR,NR} A \bar{\hat f}_k(\eta) J_{k}(\eta) e^{\lambda\abs{l,\xi}^s}e^{c\lambda\abs{k-l,\eta-\xi}^s}} \\ & \quad\quad \times \jap{k,\eta}^\sigma\abs{\eta l - \xi k} \abs{\hat{\phi}_l(\xi)_N \hat{f}_{k-l}(\eta - \xi)_{< N/8}} d\eta d\xi. 
\end{align*} 
Moreover, by \eqref{ineq:BasicJswap}, \eqref{ineq:RN1lxictrl} (which implies $\abs{k,\eta} \approx \abs{l,\xi}$)  and \eqref{ineq:IncExp},
\begin{align*} 
\abs{R_N^{NR,NR}} & \lesssim  \sum_{k,l\neq 0}\int_{\eta,\xi}\chi^{NR,NR} \abs{A\hat f_k(\eta)} e^{\lambda\abs{k-l,\eta-\xi}^s} \abs{\eta l - \xi k} A_l(\xi) \abs{\hat{\phi}_l(\xi)_N \hat{f}_{k-l}(\eta - \xi)_{< N/8}} d\eta d\xi.  
\end{align*}
Again by \eqref{ineq:RN1lxictrl}, $\abs{\eta l - \xi k} \lesssim \abs{l,\xi}^{1-s/2}\abs{k,\eta}^{s/2}\abs{k-l,\eta-\xi}$ which implies by \eqref{ineq:L2L2L1}, 
\begin{align} 
\abs{R_N^{NR,NR}} & \lesssim \sum_{k,l\neq 0}\int_{\eta,\xi} \abs{k,\eta}^{s/2} \chi^{NR,NR}\abs{A \hat f_k(\eta)}  \abs{l,\xi}^{1-s/2}A_l(\xi)\abs{\hat{\phi}_l(\xi)_N} \\ & \quad\quad \times \abs{k-l,\eta-\xi} \abs{\hat{f}_{k-l}(\eta - \xi)_{< N/8}} e^{\lambda\abs{k-l,\eta-\xi}^s} d\eta d\xi \nonumber \\
& \lesssim \norm{\abs{\grad}^{s/2}Af_{\sim N}}_2 \norm{\abs{\grad}^{1-s/2}AP_{\neq 0}\chi^{NR}\phi_N}_2\norm{f}_{\G^{\lambda,\sigma}} \nonumber \\
& \lesssim \epsilon\norm{\abs{\grad}^{s/2}Af_{\sim N}}_2 \norm{\abs{\grad}^{1-s/2}AP_{\neq 0}\chi^{NR}\phi_N}_2, \label{ineq:RNNRNR}
\end{align} 
where the last line followed from the bootstrap hypotheses.
Here we are denoting $\chi^{NR}f$ the multiplier $\widehat{\chi^{NR}f}(t,l,\xi) = \mathbf{1}_{t \not\in\I_{l,\xi}} \hat{f}_{l}(t,\xi)$.  

\subsubsection{Treatment of $R_N^{R,NR}$}
Next we turn to $R_N^{R,NR}$ which is one of the terms $w$ was designed to treat. 
Physically, it describes the action of the non-resonant modes on the resonant modes. 
By \eqref{ineq:RN1lxictrl} and \eqref{lem:scon}, for some $c \in (0,1)$, 
\begin{align*} 
\abs{R_N^{R,NR}} \lesssim \sum_{k,l\neq 0}\int_{\eta,\xi}\chi^{R,NR} \abs{A \hat f_k(\eta)} J_{k}(\eta) e^{\lambda\abs{l,\xi}^s}e^{c\lambda\abs{k-l,\eta-\xi}^s}\jap{l,\xi}^\sigma\abs{l,\xi}\abs{\hat{\phi}_l(\xi)_N \widehat{\grad f}_{k-l}(\eta - \xi)_{< N/8}} d\eta d\xi.
\end{align*}  
Consider separately the following cases: 
\begin{align*} 
\abs{R_N^{R,NR}} & \lesssim \sum_{k,l\neq 0}\int_{\eta,\xi}\chi^{R,NR}\left[\mathbf{1}_{t \in\I_{k,\xi}} + \mathbf{1}_{t \not\in\I_{k,\xi}} \right] \abs{A \hat f_k(\eta)} J_{k}(\eta) \\ & \quad\quad \times e^{\lambda\abs{l,\xi}^s}e^{c\lambda\abs{k-l,\eta-\xi}^s}\jap{l,\xi}^\sigma\abs{l,\xi}\abs{\hat{\phi}_l(\xi)_N \widehat{\grad f}_{k-l}(\eta - \xi)_{< N/8}} d\eta d\xi \\ 
& = R_N^{R,NR;D} + R_N^{R,NR;\ast}. 
\end{align*}  
The toy model is adapted to treat $R_N^{R,NR;D}$, so consider this first. 
Note that on the support of the integrand in this case, we have $\abs{\eta} \approx \abs{\xi}$. 
Therefore, applying \eqref{ineq:RatJ2partt} and \eqref{ineq:IncExp}, 
\begin{align*} 
R_N^{R,NR;D} & \lesssim \sum_{k,l\neq 0}\int_{\eta,\xi}\chi^{R,NR} \mathbf{1}_{t \in\I_{k,\xi}} \abs{A \hat f_k(\eta)} J_{l}(\xi)\frac{\abs{\eta}}{k^2}\sqrt{\frac{\partial_t w_k(t,\eta)}{w_k(t,\eta)}}\sqrt{\frac{\partial_t w_l(t,\xi)}{w_l(t,\xi)}} \\
& \quad\quad \times e^{\lambda\abs{l,\xi}^s}e^{\lambda\abs{k-l,\eta-\xi}^s}\jap{l,\xi}^\sigma\abs{l,\xi}\abs{\hat{\phi}_l(\xi)_N \widehat{\grad f}_{k-l}(\eta - \xi)_{< N/8}} d\eta d\xi.
\end{align*}
Since $l^2k^{-2} \leq \jap{l-k}^2$, $\abs{\eta} \approx \abs{\xi}$ and $\abs{l} < \frac{1}{4}\abs{\xi}$ (hence $J_l(\xi)\lesssim \tilde J_l(\xi)$), 
\begin{align*} 
R_N^{R,NR;D} & \lesssim \sum_{k,l\neq 0}\int_{\eta,\xi}\chi^{R,NR} \mathbf{1}_{t \in\I_{k,\xi}}  \abs{\tilde A\hat f_k(\eta)} \tilde J_{l}(\xi) \frac{\abs{\xi}^2}{l^2\jap{k-l}^2}\sqrt{\frac{\partial_t w_k(t,\eta)}{w_k(t,\eta)}}\sqrt{\frac{\partial_t w_l(t,\xi)}{w_l(t,\xi)}} \\
& \quad\quad \times e^{\lambda\abs{l,\xi}^s}e^{\lambda\abs{k-l,\eta-\xi}^s}\jap{l,\xi}^\sigma\abs{\hat{\phi}_l(\xi)_N \jap{k-l}^4 \widehat{\grad f}_{k-l}(\eta - \xi)_{< N/8}} d\eta d\xi.
\end{align*}
Applying $\abs{k - t^{-1}\eta} \leq 1$, \eqref{ineq:L2L2L1} and the bootstrap hypotheses (denoting $\chi^r(t,\eta) = \mathbf{1}_{2\sqrt{\abs{\eta}} \leq t \leq 2\abs{\eta}}$): 
\begin{align} 
R_N^{R,NR;D}   & \lesssim \epsilon\norm{\sqrt{\frac{\partial_t w}{w}}Af_{\sim N}}_2\norm{\sqrt{\frac{\partial_t w}{w}}\frac{\abs{\partial_v}^2}{\partial_z^2\jap{t^{-1}\partial_v-\partial_z}^2}\chi^{r}\chi^{NR}\tilde AP_{\neq 0}\phi_N}_2. \label{ineq:RNRNRD} 
\end{align}

Turn now to $R_N^{R,NR;\ast}$.  
In this case we may apply \eqref{ineq:BasicJswap} (as opposed to \eqref{ineq:WFreqCompRes}) and hence we can exchange $J_k(\eta)$ for $J_l(\xi)$ without incurring a major cost.  
Therefore, by applying the same argument used to treat $R_N^{NR,NR}$ we deduce: 
\begin{align} 
R_N^{R,NR;\ast} \lesssim \epsilon \norm{\abs{\grad}^{s/2}Af_{\sim N}}_2\norm{\abs{\grad}^{1-s/2}\chi^{NR}AP_{\neq 0}\phi_N}_2. \label{ineq:RNRNRast}
\end{align}

\subsubsection{Treatment of $R_N^{NR,R}$}
The next term we treat is $R_N^{NR,R}$, in which case $(k,\eta)$ is non-resonant and $(l,\xi)$ is resonant. 
It follows that $4\abs{l}^2 \leq \abs{\xi}$ and since $N \geq 8$, \eqref{ineq:RN1lxictrl} implies 
$N/4 \leq \abs{\xi} \leq 3N/2$ and  $\abs{\eta} \approx \abs{\xi}$. 
By \eqref{ineq:partialtw_endpt}, 
\begin{align} 
1 & \lesssim \sqrt{\frac{w_l(t,\xi)}{\partial_t w_l(t,\xi)}}\left[\sqrt{\frac{\partial_t w_k(t,\eta)}{w_k(t,\eta)}} + \frac{\abs{k,\eta}^{s/2}}{\jap{t}^{s}}\right]\jap{\eta-\xi}. \label{ineq:NRRpartialtw}
\end{align}
Applying \eqref{ineq:NRRpartialtw}, \eqref{ineq:WFreqCompNRGain}, \eqref{lem:scon} (using \eqref{ineq:RN1lxictrl}) \eqref{ineq:SobExp} and \eqref{ineq:IncExp} ($s > 1/2$), 
\begin{align*} 
\abs{R_N^{NR,R}} & \lesssim \sum_{k,l\neq 0}\int_{\eta,\xi}\chi^{NR,R}\left[\sqrt{\frac{\partial_t w_k(\eta)}{w_k(\eta)}} + \frac{\abs{k,\eta}^{s/2}}{\jap{t}^{s}}\right]\abs{A \hat f_k(\eta)} J_{l}(\xi) \frac{w_R(\xi)}{w_{NR}(\xi)}\sqrt{\frac{w_l(t,\xi)}{\partial_t w_l(t,\xi)}} \\
& \quad\quad \times e^{\lambda\abs{l,\xi}^s}e^{\lambda\abs{k-l,\eta-\xi}^s}\jap{l,\xi}^\sigma\abs{l,\xi}\abs{\hat{\phi}_l(\xi)_N \widehat{\grad f}_{k-l}(\eta - \xi)_{< N/8}} d\eta d\xi. 
\end{align*} 
On the support of the integrand, it follows from \eqref{ineq:RN1lxictrl} that $\abs{k} < \abs{\eta}$ and hence $A_k(\eta) \lesssim \tilde A_k(\eta)$. Similarly, $A_l(\xi) \lesssim \tilde A_l(\xi)$. Therefore \eqref{ineq:L2L2L1} and the bootstrap hypotheses imply
\begin{align} 
\abs{R_N^{NR,R}} & \lesssim \epsilon\left(\norm{\sqrt{\frac{\partial_t w}{w}} \tilde Af_{\sim N}}_2 + \frac{1}{\jap{t}^{s}}\norm{\abs{\grad}^{s/2}Af_{\sim N}}_2\right)\norm{\sqrt{\frac{w}{\partial_t w}}\abs{\grad}\frac{w_{R}}{w_{NR}} \chi^{R} \tilde A \phi_N}_2. \label{ineq:RNNRR}
\end{align}

\subsubsection{Treatment of $R_N^{R,R}$}
In this case both $(k,\eta)$ and $(l,\xi)$ are resonant, an interaction that was neglected in the derivation of the toy model.  
We claim that on the support of the integrand of $R_N^{R,R}$: 
\begin{align}
\abs{\eta l - \xi k}\frac{J_k(\eta)}{J_l(\xi)} \lesssim \sqrt{\frac{\partial_t w_k(t,\eta)}{w_k(t,\eta)}} \left[ \abs{l,\xi}\frac{w_R(t,\xi)}{w_{NR}(t,\xi)}  + \abs{l}\right] \sqrt{\frac{w_l(t,\xi)}{\partial_t w_l(t,\xi)}} e^{12\mu\abs{k-l,\eta-\xi}^{1/2}}. \label{ineq:JRRCtrl}
\end{align}
Indeed, if $k = l$ then \eqref{ineq:BasicJswap} and Lemma \ref{lem:WtFreqCompare} imply, 
\begin{align*} 
\abs{l}\abs{\eta - \xi}\frac{J_l(\eta)}{J_l(\xi)} \lesssim \abs{l}\jap{\eta - \xi}^2\sqrt{\frac{\partial_t w_k(t,\eta)}{w_k(t,\eta)}}\sqrt{\frac{w_l(t,\xi)}{\partial_t w_l(t,\xi)}} e^{10\mu\abs{k-l,\eta-\xi}^{1/2}}, 
\end{align*}
from which \eqref{ineq:JRRCtrl} follows by \eqref{ineq:SobExp}. 
If $k \neq l$ then as in the proof of \eqref{ineq:WFreqCompNRGain} we apply Lemma \ref{lem:wellsep}.
If Lemma \ref{lem:wellsep} (b) holds then by Lemma \ref{lem:WtFreqCompare}, \eqref{ineq:BasicJswap} (note that on the support of the integrand $\abs{\eta} \approx \abs{\xi}$ by \eqref{ineq:RN1lxictrl} with $k^2 < \frac{1}{4}\abs{\eta}$, $l^2 < \frac{1}{4}\abs{\xi}$) and the definitions \eqref{def:wNR}, \eqref{def:wR}: 
\begin{align*} 
\abs{\eta l - \xi k}\frac{J_k(\eta)}{J_l(\xi)} \lesssim \abs{l,\xi}\jap{k-l,\eta-\xi}^2\sqrt{\frac{\partial_t w_k(t,\eta)}{w_k(t,\eta)}}\sqrt{\frac{w_l(t,\xi)}{\partial_t w_l(t,\xi)}}\frac{w_R(t,\xi)}{w_{NR}(t,\xi)}e^{10\mu\abs{k-l,\eta-\xi}^{1/2}},
\end{align*} 
which again implies \eqref{ineq:JRRCtrl} by \eqref{ineq:SobExp}. 
Finally, if Lemma \ref{lem:wellsep} (c) holds then by Lemma \ref{lem:WtFreqCompare}, \eqref{ineq:BasicJswap} and \eqref{ineq:SobExp}, 
\begin{align*} 
\abs{\eta l - \xi k}\frac{J_k(\eta)}{J_l(\xi)} \lesssim \abs{l,\xi}\abs{k-l,\eta-\xi}e^{9\mu\abs{k-l,\eta-\xi}^{1/2}} \lesssim \abs{l}\sqrt{\frac{\partial_t w_k(t,\eta)}{w_k(t,\eta)}}\sqrt{\frac{w_l(t,\xi)}{\partial_t w_l(t,\xi)}} e^{10\mu\abs{k-l,\eta-\xi}^{1/2}},   
\end{align*} 
which proves \eqref{ineq:JRRCtrl} in the remaining case. 
Applying \eqref{ineq:JRRCtrl}, \eqref{lem:scon} (using \eqref{ineq:RN1lxictrl}) and \eqref{ineq:IncExp} implies
\begin{align*}
\abs{R_N^{R,R}} & \lesssim \sum_{k,l\neq 0}\int_{\eta,\xi} \chi^{R,R}\sqrt{\frac{\partial_t w_k(t,\eta)}{w_k(t,\eta)}} \abs{A \hat f_k(\eta)} \\ & \quad\quad \times \left[ \abs{l,\xi}\frac{w_R(t,\xi)}{w_{NR}(t,\xi)}  + \abs{l}\right] \sqrt{\frac{w_l(t,\xi)}{\partial_t w_l(t,\xi)}} A_l(\xi)\abs{\hat{\phi}_l(\xi)_N} e^{\lambda\abs{k-l,\eta-\xi}^s}\abs{\hat{f}_{k-l}(\eta - \xi)_{< N/8}} d\eta d\xi. 
\end{align*} 
Since $(k,\eta)$ and $(l,\xi)$ are both resonant, $A_k(\eta) \lesssim \tilde A_k(\eta)$ and $A_l(\xi) \lesssim \tilde A_l(\xi)$. 
Then by \eqref{ineq:L2L2L1} and the bootstrap hypotheses, 
\begin{align} 
\abs{R_N^{R,R}} & \lesssim \epsilon \norm{\sqrt{\frac{\partial_t w}{w}} \tilde Af_{\sim N}}_2\left(\norm{\sqrt{\frac{w}{\partial_t w}}\abs{\grad}\frac{w_{R}}{w_{NR}} \chi^{R} \tilde A \phi_N}_2 +  \norm{\sqrt{\frac{w}{\partial_t w}}\abs{\partial_z}\chi^{R} \tilde A \phi_N}_2\right), \label{ineq:RNRRF}
\end{align} 
which completes the treatment of $R_N^{R,R}$. 

\subsubsection{Contribution to \eqref{ineq:ReacIntro}} 
Combining \eqref{ineq:RNNRNR}, \eqref{ineq:RNRNRD}, \eqref{ineq:RNRNRast}, \eqref{ineq:RNNRR}, \eqref{ineq:RNRRF} and Cauchy-Schwarz we deduce, 
\begin{align} 
\abs{R_N^1} & \lesssim \frac{\epsilon}{\jap{t}^{2s}}\norm{\abs{\grad}^{s/2}Af_{\sim N}}^2_2 + \epsilon \norm{\sqrt{\frac{\partial_t w}{w}} Af_{\sim N}}^2_2 \nonumber \\
& \quad + \epsilon \jap{t}^{2s} \norm{\abs{\grad}^{1-s/2}\chi^{NR} AP_{\neq 0}\phi_N}^2_2 
+ \epsilon \norm{\sqrt{\frac{\partial_t w}{w}}\frac{\abs{\partial_v}^2}{\partial_z^2\jap{t^{-1}\partial_v-\partial_z}^2}\chi^r\chi^{NR}\tilde{A}\phi_N}^2_2 \nonumber \\
& \quad + \epsilon\norm{\sqrt{\frac{w}{\partial_t w}}\abs{\grad}\frac{w_{R}}{w_{NR}} \chi^{R} \tilde A \phi_N}^2_2 + \epsilon\norm{\sqrt{\frac{w}{\partial_t w}}\abs{\partial_z}\chi^{R} \tilde A \phi_N}^2_2,\label{ineq:RN1ctrl}
\end{align}
where $\chi^r(t,\eta) = \mathbf{1}_{2\sqrt{\abs{\eta}} \leq t \leq 2\abs{\eta}}$. 
The treatment of $R_N^1$ will then be complete once we have the following lemma to relate the latter four terms to those in \eqref{ineq:ReacIntro}, on which Proposition \ref{lem:PrecisionEstimateNeq0} can be applied.  
The primary complication in \eqref{ineq:ReactionControl} below is the leading factor $\jap{\partial_v(t\partial_z)^{-1}}^{-1}$ which will require some additional care to include.
Recall the presence of this multiplier arises from the $\partial_v$ in the expression of $v^{\prime\prime}$ (see \eqref{def:vpp}) which appears as a coefficient in $\Delta_t$; see \S\ref{sec:Precf} for more information.
This lemma expresses something important: that the multipliers coming from $w$ exactly `match' the loss of ellipticity in $\Delta_L$.

\begin{lemma}
Under the bootstrap hypotheses, 
\begin{subequations} \label{ineq:ReactionControl}
\begin{align} 
\jap{t}^{2s}\norm{\abs{\grad}^{1-s/2}\chi^{NR}AP_{\neq 0}\phi}^2_2 & \lesssim \norm{\jap{\frac{\partial_v}{t\partial_z}}^{-1}\frac{\abs{\grad}^{s/2}}{\jap{t}^s}\Delta_LAP_{\neq 0} \phi}_2^2 \label{ineq:ReacC4} \\  
\norm{\sqrt{\frac{\partial_t w}{w}}\frac{\abs{\partial_v}^2}{\partial_z^2\jap{t^{-1}\partial_v - \partial_z}^2}\chi^r\chi^{NR} \tilde AP_{\neq 0}\phi}^2_2 & \lesssim \norm{\jap{\frac{\partial_v}{t\partial_z}}^{-1}\sqrt{\frac{\partial_t w}{w}}\Delta_L \tilde A P_{\neq 0}\phi}_2^2 \label{ineq:ReacC5} \\    
\norm{\sqrt{\frac{w}{\partial_t w}}\abs{\grad}\frac{w_{R}}{w_{NR}}\chi^{R} \tilde A P_{\neq 0}\phi}^2_2 
& \lesssim \norm{\jap{\frac{\partial_v}{t\partial_z}}^{-1}\sqrt{\frac{\partial_t w}{w}} \Delta_L \tilde A P_{\neq 0}\phi}_2^2 \label{ineq:ReacC6} \\    
\norm{\sqrt{\frac{w}{\partial_t w}} \abs{\partial_z} \chi^{R} \tilde A P_{\neq 0} \phi}^2_2 & \lesssim \norm{\jap{\frac{\partial_v}{t\partial_z}}^{-1}\sqrt{\frac{\partial_t w}{w}}\Delta_L \tilde A P_{\neq 0}\phi}_2^2. \label{ineq:ReacC7}   
\end{align} 
\end{subequations} 
\end{lemma}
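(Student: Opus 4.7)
My plan is to apply Plancherel in $(z, v)$ to reduce each of the four inequalities to a pointwise multiplier estimate in the Fourier variables $(l, \xi)$, $l \neq 0$ (from $P_{\neq 0}$), at each fixed $t$. Using $\widehat{\Delta_L \phi_l}(\xi) = -(l^2 + (\xi-lt)^2)\hat\phi_l(\xi)$, $\jap{\xi/(lt)}^{-2} = l^2 t^2/(l^2 t^2 + \xi^2)$, and
\[
\frac{|\partial_v|^2}{\partial_z^2 \jap{t^{-1}\partial_v - \partial_z}^2} \longleftrightarrow \frac{\xi^2 t^2}{l^2\bigl(t^2 + (\xi - lt)^2\bigr)},
\]
together with the observation that $A$, $\tilde A$, and $e^{\mu|\xi|^{1/2}}$ appear identically on both sides of each inequality (hence cancel), each estimate becomes a scalar inequality in $(l, \xi, t)$ on the support of the appropriate indicator $\chi^{NR}$, $\chi^R$, or $\chi^r\chi^{NR}$.

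For \eqref{ineq:ReacC4}, the reduced inequality is
\[
\jap{t}^{4s} |l,\xi|^{2-2s}\, \mathbf{1}_{t \notin \mathbf{I}_{l,\xi}} \lesssim \jap{\xi/(lt)}^{-2}\bigl(l^2 + (\xi - lt)^2\bigr)^2.
\]
I would verify this by splitting into the three ways $t$ can fail to lie in $\mathbf{I}_{l,\xi}$: (i) $l^2 > |\xi|$, where $l^2 + (\xi-lt)^2 \geq l^2 \gtrsim |l,\xi|$; (ii) $l\xi \le 0$, where $|\xi - lt|\geq |\xi|$ for $t \geq 0$; and (iii) $1 \le l \le E(\sqrt{|\xi|})$, $l\xi > 0$, $t \notin \mathbf{I}_{l,\xi}$, where the endpoint estimate $|t - \xi/l| \gtrsim |\xi|/l^2$ from \S\ref{sec:Notation} yields $(l^2 + (\xi-lt)^2)^2 \gtrsim \xi^4/l^4$. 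Within each case one sub-divides on $|lt|$ versus $|\xi|$ to extract $\jap{\xi/(lt)}^{-2}$, and concludes by elementary power counting.

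For \eqref{ineq:ReacC6} and \eqref{ineq:ReacC7} on the resonant region $\chi^R$, I would substitute the sharp asymptotics from \eqref{dtw} and \eqref{def:wR}, namely $\partial_t w/w \approx 1/(1 + |t-\xi/l|)$ and $w_R/w_{NR} \approx (l^2/|\xi|)(1 + |t-\xi/l|)$, and use that on $\mathbf{I}_{l,\xi}$ we have $t \approx \xi/l$, $l^2 \lesssim |\xi|$, $\jap{\xi/(lt)}^{-2} \gtrsim 1$, and $(l^2+(\xi-lt)^2) \gtrsim l^2\bigl(1+(t-\xi/l)^2\bigr)$. Each estimate then reduces to an elementary polynomial inequality in $l$ and $\tau = |t-\xi/l|$, verified by treating $\tau \leq 1$ and $\tau > 1$ separately. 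Inequality \eqref{ineq:ReacC5} lives on the buffer region $\chi^r \chi^{NR}$; there I would combine the LHS multiplier simplification above with the ellipticity bound $(l^2 + (\xi-lt)^2)^2 \gtrsim \xi^4/l^4$ inherited from case (iii) of \eqref{ineq:ReacC4} and the exact cancellation of $\partial_t w/w$ across the two sides.

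The main obstacle is the tight case in \eqref{ineq:ReacC4}(iii) at $l^2 \sim |\xi|$ (the largest admissible resonant $l$) with $t$ near the boundary of $\mathbf{I}_{l,\xi}$: the power counting then requires precisely $l^{4-4s} \lesssim |\xi|^{2-2s}$, i.e.\ $l^2 \lesssim |\xi|$, which is saturated exactly at $s = 1/2$. This is the Fourier-analytic manifestation of the Gevrey-$2$ threshold in Theorem \ref{thm:Main}: the elliptic bookkeeping encoded in $\jap{\partial_v/(t\partial_z)}^{-1}$ and in the construction of $J$ is sharp at $s = 1/2$ and would fail below it. The analogous sharp verification for \eqref{ineq:ReacC5} near the boundary of $\mathbf{I}_{l,\xi}$ from outside requires some care, but ultimately reduces to showing that the ratio of the two sides is uniformly bounded in $l \geq 1$ by an absolute constant; the precise bookkeeping here is what forces both the structure of the multiplier $|\partial_v|^2/(\partial_z^2\jap{t^{-1}\partial_v - \partial_z}^2)$ and the matching powers in the toy-model construction of $w$.
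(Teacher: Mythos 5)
Your proposal is correct and follows essentially the same route as the paper's proof: reduce each estimate by Plancherel to a pointwise multiplier inequality in $(l,\xi,t)$ (the factors of $A$, $\tilde A$ and the common $\sqrt{\partial_t w/w}$ cancel), then verify it by elementary case analysis using $\abs{t-\xi/l}\gtrsim \abs{\xi}/l^2$ off the resonant interval together with the asymptotics $\partial_t w/w \approx (1+\abs{t-\xi/l})^{-1}$ and $w_R/w_{NR}\approx (l^2/\abs{\xi})(1+\abs{t-\xi/l})$ on it, and $\xi\approx lt$ on $\mathbf{I}_{l,\xi}$ to discard $\jap{\xi/(lt)}^{-1}$ in \eqref{ineq:ReacC6}--\eqref{ineq:ReacC7}. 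The only difference is organizational: for \eqref{ineq:ReacC4} the paper splits first on $\abs{\xi}$ versus $\abs{lt}$ (to extract the $\jap{\xi/(lt)}^{-1}$ factor) and then invokes non-resonance, whereas you split first on the reason $t\notin\I_{l,\xi}$ and subdivide afterwards; both orderings close by the same power counting.

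One caveat on your closing remark: the claim that \eqref{ineq:ReacC4} ``is saturated exactly at $s=1/2$ and would fail below it'' is not right. Your own tight-case computation gives $l^{4-4s}\lesssim\abs{\xi}^{2-2s}$, which is equivalent to $l^2\lesssim\abs{\xi}$ for \emph{every} $s<1$ --- i.e.\ it is exactly the resonance condition $l\leq E(\sqrt{\abs{\xi}})$ and carries no information about $s$. The lemma holds for all $s\in(0,1]$; the Gevrey-$2$ threshold enters elsewhere (the total growth $w_k(0,\eta)^{-1}\sim e^{\mu\sqrt{\eta}/2}$ of Lemma \ref{basic}, and the losses of $1-s\leq s$ derivatives in the coefficient estimates of Proposition \ref{prop:CoefControl_Real}), not in this multiplier comparison. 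This does not affect the validity of your proof of the lemma itself.
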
 
\begin{proof}
Note that \eqref{ineq:ReactionControl} is only a statement about the Fourier multipliers and has nothing really to do with $A\phi$. 
Indeed, \eqref{ineq:ReacC4} follows from the pointwise inequality: for all $t \geq 1$ $l \neq 0$ and $\xi \in \Real$, 
\begin{align}
\jap{t}^{s}\abs{l,\xi}^{1-s/2}\mathbf{1}_{t \not\in\I_{l,\xi}} \lesssim \jap{\frac{\xi}{lt}}^{-1}\left(l^2 + \abs{\xi - lt}^2\right)\frac{\abs{l,\xi}^{s/2}}{\jap{t}^s}. \label{ineq:ptwiseReac}
\end{align}  

\textit{Proof of \eqref{ineq:ptwiseReac}:}\\ 
Consider the case $\frac{1}{2}\abs{lt} \leq \abs{\xi} \leq 2\abs{lt}$. 
By the presence of $\mathbf{1}_{t \not\in\I_{l,\xi}}$, either $\abs{\xi - tl} \gtrsim \abs{\xi/l} \approx t$ (if $t^2 \gtrsim \abs{\xi}$) or $l^2 \gtrsim t$ (if $t^2 \lesssim \abs{\xi}$). 
In either case, 
\begin{align*} 
\jap{t}^{2s}\abs{l,\xi}^{1-s} \lesssim \abs{l}^{1-s}t^{1+s} \leq l^2 + t^2 \lesssim l^2 + \abs{\xi-lt}^2, 
\end{align*}
which implies \eqref{ineq:ptwiseReac}. 
Next consider the case $\abs{\xi} < \abs{lt}/2$, which implies 
\begin{align*} 
\jap{t}^{2s}\abs{l,\xi}^{1-s} \lesssim t^{2s}\abs{lt}^{1-s} \lesssim \abs{lt}^{1+s} \lesssim l^2 + \abs{\xi - lt}^2, 
\end{align*}
which again implies \eqref{ineq:ptwiseReac}. 
Finally consider the case $\abs{\xi} \geq 2\abs{lt}$, in which the leading $\jap{\xi/lt}^{-1}$ plays a role. 
In this case (note since $t \geq 1$, $\abs{\xi} \geq 2\abs{l}$), 
\begin{align*} 
\jap{t}^{2s}\abs{l,\xi}^{1-s} \lesssim \abs{\xi}^{2-s} \jap{t}^{2s-1} \frac{\abs{lt}}{\abs{\xi}} \lesssim \abs{\xi}^{1 + s} \frac{\abs{lt}}{\abs{\xi}} \lesssim \left(l^2 + \abs{\xi-lt}^2\right)\frac{\abs{lt}}{\abs{\xi}}, 
\end{align*}  
which implies \eqref{ineq:ptwiseReac}. As all cases have been covered, this proves \eqref{ineq:ReacC4}.

\textit{Proof of \eqref{ineq:ReacC5}:} \\ 
Inequality \eqref{ineq:ReacC5} follows from the pointwise inequality: Let $j$ be such that $t \in\I_{j,\xi}$, then for all $t \geq 1$, $l \neq 0$ and $\xi \in \Real$ with $2\sqrt{\abs{\xi}} \leq t \leq 2\abs{\xi}$: 
\begin{align} 
\frac{\abs{\xi}^2}{\abs{l}^2\jap{j-l}^2}\mathbf{1}_{t \not\in\I_{l,\xi}} \lesssim \jap{\frac{\xi}{lt}}^{-1}\left(l^2 + \abs{\xi-lt}^2\right). \label{ineq:ptwiseReac5}
\end{align} 
Since $t \not\in\I_{l,\xi}$, $\abs{\xi}^2 \lesssim l^2 \abs{\xi-lt}^2$. 
 If $2\abs{lt} \geq \abs{\xi}$ then the factor $\jap{\frac{\xi}{lt}}^{-1}$ does not play a role and \eqref{ineq:ptwiseReac5} follows immediately. 
Next consider the case $2\abs{lt} \leq \abs{\xi}$, which implies $\abs{\xi-lt} \gtrsim \abs{\xi}$. 
In this case, since $\xi \approx jt$ and $\abs{j} \leq \abs{l}\abs{l-j}$, 
\begin{align*} 
\frac{\abs{\xi}^2}{\abs{l}^2\jap{j-l}^2} \lesssim \frac{\abs{\xi}}{\jap{j-l}^2\abs{lt}}  \abs{\frac{lt}{\xi}}\abs{\xi-lt}^2 \lesssim \frac{\abs{j}}{\abs{l}\jap{j-l}^2}  \abs{\frac{lt}{\xi}}\abs{\xi-lt}^2 \lesssim \abs{\frac{lt}{\xi}}\abs{\xi-lt}^2. 
\end{align*}
This verifies \eqref{ineq:ptwiseReac5} in every case and hence \eqref{ineq:ReacC5}. 

\textit{Proof of \eqref{ineq:ReacC6}:}
By the definitions of $w_{R}$ and $w_{NR}$ \eqref{def:wNR},\eqref{def:wR}  and \eqref{dtw}, 
\begin{align*} 
\sqrt{\frac{w(t,\xi)}{\partial_t w(t,\xi)}}\abs{l,\xi}\frac{w_{R}(t,\xi)}{w_{NR}(t,\xi)}\mathbf{1}_{t \in\I_{l,\xi}} & \lesssim \abs{\xi}\left(1 + \abs{t - \frac{\xi}{l}}\right)^{1/2}\frac{w_{R}(t,\xi)}{w_{NR}(t,\xi)}\mathbf{1}_{t \in\I_{l,\xi}}  \lesssim \left(l^2 + \abs{\xi - tl}^2\right)\sqrt{\frac{\partial_t w(t,\xi)}{w(t,\xi)}}, 
\end{align*}
which proves \eqref{ineq:ReacC6}. 

\textit{Proof of \eqref{ineq:ReacC7}:}
Similarly, by \eqref{dtw}
\begin{align*}
\sqrt{\frac{w(t,\xi)}{\partial_t w(t,\xi)}}\abs{l}\mathbf{1}_{t \in\I_{l,\xi}} & \lesssim \abs{l}\sqrt{1 + \abs{t - \frac{\xi}{l}}}\mathbf{1}_{t \in\I_{l,\xi}}  \lesssim \abs{l}\left(1 + \abs{t - \frac{\xi}{l}}\right)\sqrt{\frac{\partial_t w(t,\xi)}{w(t,\xi)}}, 
\end{align*}
which proves \eqref{ineq:ReacC7}.
\end{proof} 

Finally, Lemma \ref{ineq:ReactionControl} completes the treatment of $R_N^1$; in particular, after summing in $N$, \eqref{ineq:GeneralOrtho}, \eqref{ineq:RN1ctrl} and Lemma \ref{ineq:ReactionControl} yield only terms appearing on the RHS of \eqref{ineq:ReacIntro}. 

\subsection{Corrections} 
\subsubsection{Term $R_N^{\epsilon,1}$: $O(\epsilon)$ correction to $R_N^1$} \label{sec:RNeps}
In this section we treat $R_N^{\epsilon,1}$ which is higher order in $\epsilon$ than $R_N^1$. 
We expand $(1-v^\prime)\phi_l$ with a paraproduct \emph{only in $v$}: 
\begin{align*} 
R_N^{\epsilon,1} & = -\frac{1}{2\pi}\sum_{M \geq 8} \sum_{k,l\neq 0}\int_{\eta,\xi,\xi^\prime} A \bar{\hat f}_k(\eta) A_k(\eta)(\eta l - \xi^\prime k)\rho_N(l,\xi)\left[\widehat{(1-v^\prime)}(\xi^\prime - \xi)\right]_{<M/8} \widehat{\phi}_l(\xi^\prime)_M \hat{f}_{k-l}(\eta - \xi)_{< N/8} d\eta d\xi d\xi^\prime \\ 
&  \quad -\frac{1}{2\pi} \sum_{M \geq 8} \sum_{k,l\neq 0}\int_{\eta,\xi,\xi^\prime} A \bar{\hat f}_k(\eta) A_k(\eta)(\eta l - \xi^\prime k)\rho_N(l,\xi)\left[\widehat{(1-v^\prime)}(\xi^\prime - \xi)\right]_{M} \widehat{\phi}_l(\xi^\prime)_{<M/8} \hat{f}_{k-l}(\eta - \xi)_{< N/8} d\eta d\xi d\xi^\prime \\ 
&  \quad - \frac{1}{2\pi}\sum_{M \in \mathbb D}\sum_{\frac{1}{8}M \leq M^\prime \leq 8M} \sum_{k,l\neq 0}\int_{\eta,\xi,\xi^\prime} A \bar{\hat f}_k(\eta) A_k(\eta)(\eta l - \xi^\prime  k)\rho_N(l,\xi) \\ & \hspace{5cm} \times \left[\widehat{(1-v^\prime)}(\xi^\prime - \xi)\right]_{M^\prime}\widehat{\phi}_l(\xi^\prime)_M \hat{f}_{k-l}(\eta - \xi)_{< N/8} d\eta d\xi d\xi^\prime \\
& = R_{N;LH}^{\epsilon,1} + R_{N;HL}^{\epsilon,1} + R_{N;HH}^{\epsilon,1}. 
\end{align*} 
We recall that $\rho_N$ denotes the Littlewood-Paley cut-off to the $N$-th dyadic shell in $\Integer \times \Real$; see \eqref{Apx:LPProduct}. 
The intuition is as follows: $R_{N;LH}^{\epsilon,1}$ can be treated in a manner very similar to $R_N^{1}$ as here $(1-v^\prime)$ appears essentially as part of the background and $R_{N;HL}^{\epsilon,1}$ should be manageable since $(1-v^\prime)$ is controlled by the bootstrap hypotheses and $\phi_l$ provides decay in time.

Begin first with $R_{N;LH}^{\epsilon,1}$. 
On the support of the integrand, 
\begin{subequations} \label{ineq:ketalxiRepsilon}
\begin{align} 
\abs{ \abs{k,\eta} - \abs{l,\xi} } & \leq \abs{k-l,\eta-\xi} \leq \frac{3}{16}\abs{l,\xi}, \\
\abs{ \abs{l,\xi^\prime} - \abs{l,\xi} } & \leq \abs{\xi^\prime-\xi} \leq \frac{3}{16}\abs{\xi^\prime} \leq  \frac{3}{16}\abs{l,\xi^\prime}, 
\end{align}
\end{subequations}
and hence by two applications of \eqref{lem:scon}, there is some $c \in (0,1)$ such that  
\begin{align*} 
e^{\lambda\abs{k,\eta}^s} \leq e^{\lambda\abs{l,\xi^\prime}^s + c\lambda\abs{k-l,\eta-\xi}^s  + c\lambda\abs{\xi^\prime-\xi}^s}. 
\end{align*} 
Therefore, (using that $\abs{k,\eta} \approx \abs{l,\xi^\prime}$ from \eqref{ineq:ketalxiRepsilon}), 
\begin{align*}
\abs{ R_{N;LH}^{\epsilon,1}} & \lesssim \sum_{M \geq 8} \sum_{k,l\neq 0}\int_{\eta,\xi,\xi^\prime} \abs{A\hat f_k(\eta)} \abs{\eta l - \xi^\prime k}\rho_N(l,\xi) J_k(\eta)\jap{l,\xi^\prime}^\sigma e^{\lambda\abs{l,\xi^\prime}^s}\abs{\widehat{\phi}_l(\xi^\prime)_M} \\ 
& \quad\quad \times e^{c\lambda\abs{k-l,\eta-\xi}^s  + c\lambda\abs{\xi^\prime-\xi}^s} \abs{\widehat{(1-v^\prime)}(\xi^\prime - \xi)_{<M/8}} \abs{\hat{f}_{k-l}(\eta - \xi)_{< N/8}} d\eta d\xi d\xi^\prime. 
\end{align*}
From here we may proceed analogous to the treatment of $R_N^{1}$ with $(l,\xi^\prime)$ playing the role of $(l,\xi)$ and using \eqref{ineq:L2L2L1L1} (instead of \eqref{ineq:L2L2L1}) together with the bootstrap hypotheses to deal with the low-frequency factors. We omit the details and simply conclude that the result is analogous to \eqref{ineq:RN1ctrl}, except with an additional power of $\epsilon$.  

Turn now to $R_{N;HL}^{\epsilon,1}$. 
Similar to what occurs in \S\ref{sec:HLElliptic}, $l$ could be large relative to $\xi - \xi^\prime$ hence more `derivatives' are appearing on $\phi$ than $(1-v^\prime)$ and we are again in a situation similar to $R_{N;LH}^{\epsilon,1}$. 
As in \S\ref{sec:HLElliptic} we divide the integral based on the relative size of $l$ and $\xi$: 
\begin{align*} 
 R_{N;HL}^{\epsilon,1} & = -\frac{1}{2\pi}\sum_{M \geq 8} \sum_{k,l\neq 0}\int_{\eta,\xi,\xi^\prime} A \bar{\hat f}_k(\eta) \left[\mathbf{1}_{16\abs{l} \geq \abs{\xi}} + \mathbf{1}_{16\abs{l} < \abs{\xi}}  \right] A_k(\eta)(\eta l - \xi^\prime k)\rho_N(l,\xi) \\ & \quad\quad \times \widehat{(1-v^\prime)}(\xi^\prime - \xi)_{M} \widehat{\phi}_l(\xi^\prime)_{<M/8} \hat{f}_{k-l}(\eta - \xi)_{< N/8} d\eta d\xi d\xi^\prime \\ 
& = R_{N;HL}^{\epsilon,1;z} + R_{N;HL}^{\epsilon,1;v}.
\end{align*}

First consider $R_{N;HL}^{\epsilon,1;z}$, where on the support of the integrand, $16\abs{l} \geq \abs{\xi}$.
\begin{subequations} \label{ineq:ketalxiRepsilonHL}
\begin{align} 
\abs{ \abs{k,\eta} - \abs{l,\xi} } & \leq \abs{k-l,\eta-\xi} \leq 3\abs{l,\xi}/16, \\
\abs{\abs{l,\xi} - \abs{l,\xi^\prime}} & \leq \abs{\xi-\xi^\prime} \leq 38\abs{\xi}/32 \lesssim \abs{l}.
\end{align} 
\end{subequations}
If $\abs{l} \geq 16\abs{\xi}$, then in fact $38\abs{\xi}/32 < \abs{l}/4$, therefore by applying twice \eqref{lem:scon}, for some $c \in (0,1)$, 
\begin{align*} 
e^{\lambda\abs{k,\eta}^s} \leq e^{\lambda\abs{l,\xi}^s + c\lambda\abs{k-l,\eta-\xi}^s} \leq e^{\lambda\abs{l,\xi^\prime}^s + c\lambda\abs{\xi-\xi^\prime}^s + c\lambda\abs{k-l,\eta-\xi}^s}. 
\end{align*}
Alternatively, if  $\frac{1}{16}\abs{\xi} \leq \abs{l} \leq 16\abs{\xi}$ then $\abs{\xi - \xi^\prime} \approx \abs{l,\xi}$ and hence
\eqref{lem:scon} and \eqref{lem:strivial} imply for some (different) $c \in (0,1)$ we have,
\begin{align*} 
e^{\lambda\abs{k,\eta}^s} \leq e^{\lambda\abs{l,\xi}^s + c\lambda\abs{k-l,\eta-\xi}^s} \leq e^{c\lambda\abs{l,\xi^\prime}^s + c\lambda\abs{\xi-\xi^\prime}^s + c\lambda\abs{k-l,\eta-\xi}^s}. 
\end{align*}
In both cases, it follows that (using also $\jap{k,\eta} \approx \jap{l,\xi} \lesssim \jap{l}$ from \eqref{ineq:ketalxiRepsilonHL}), 
\begin{align*} 
 \abs{R_{N;HL}^{\epsilon,1;z}} & \lesssim \sum_{M \geq 8} \sum_{k,l\neq 0}\int_{\eta,\xi,\xi^\prime}\mathbf{1}_{16\abs{l} \geq \abs{\xi}}  \abs{A\hat f_k(\eta)} \abs{\eta l - \xi^\prime k}\rho_N(l,\xi) J_k(\eta)\jap{l}^\sigma e^{\lambda\abs{l,\xi^\prime}^s}\abs{\widehat{\phi}_l(\xi^\prime)_{<M/8}} \\ 
& \quad\quad \times e^{c\lambda\abs{\xi-\xi^\prime}^s + c\lambda\abs{k-l,\eta-\xi}^s} \abs{\widehat{(1-v^\prime)}(\xi^\prime - \xi)_{M}} \abs{\hat{f}_{k-l}(\eta - \xi)_{< N/8}} d\eta d\xi d\xi^\prime. 
\end{align*} 
For minor technical reasons divide into low and high frequencies: for some $M_0 \geq 8$,  
\begin{align*}
 \abs{R_{N;HL}^{\epsilon,1;z}} & \lesssim \left(\sum_{M \leq M_0} + \sum_{M \geq M_0} \right) \sum_{k,l\neq 0}\int_{\eta,\xi,\xi^\prime}\mathbf{1}_{16\abs{l} \geq \abs{\xi}}  \abs{A\hat f_k(\eta)} \abs{\eta l - \xi^\prime k}\rho_N(l,\xi) J_k(\eta)\jap{l}^\sigma e^{\lambda\abs{l,\xi^\prime}^s} \\ 
& \quad\quad \times \abs{\widehat{\phi}_l(\xi^\prime)_{<M/8}}e^{c\lambda\abs{\xi-\xi^\prime}^s + c\lambda\abs{k-l,\eta-\xi}^s} \abs{\widehat{(1-v^\prime)}(\xi^\prime - \xi)_{M}} \abs{\hat{f}_{k-l}(\eta - \xi)_{< N/8}} d\eta d\xi d\xi^\prime \\ 
& = R_{N;HL;L}^{\epsilon,1;z} +  R_{N;HL;H}^{\epsilon,1;z}. 
\end{align*} 
Consider next $R_{N;HL;H}^{\epsilon,1;z}$.  
By choosing $M_0$ sufficiently large (relative to our $O(1)$ arithmetic conventions), on the support of the integrand $(k,\eta)$ and $(l,\xi^\prime)$ are both non-resonant by \eqref{ineq:ketalxiRepsilonHL}. 
Therefore, by \eqref{ineq:BasicJswap} followed by \eqref{ineq:IncExp}, 
\begin{align*} 
 R_{N;HL;H}^{\epsilon,1;z} & \lesssim \sum_{M \geq 8} \sum_{k,l\neq 0}\int_{\eta,\xi,\xi^\prime}\mathbf{1}_{16\abs{l} \geq \abs{\xi}}  \abs{A \hat f_k(\eta)} \abs{\eta l - \xi^\prime k}\rho_N(l,\xi) J_l(\xi^\prime) \jap{l}^\sigma e^{\lambda\abs{l,\xi^\prime}^s}
\\ & \quad\quad \times \mathbf{1}_{t \not\in\I_{l,\xi^\prime}}\abs{\widehat{\phi}_l(\xi^\prime)_{<M/8}} e^{\lambda\abs{\xi-\xi^\prime}^s + \lambda\abs{k-l,\eta-\xi}^s} \abs{\widehat{(1-v^\prime)}(\xi^\prime - \xi)_{M}} \abs{\hat{f}_{k-l}(\eta - \xi)_{< N/8}} d\eta d\xi d\xi^\prime.
\end{align*} 
Since $l$ cannot be zero and 
by \eqref{ineq:ketalxiRepsilonHL}, we have $\abs{l,\xi} \lesssim \abs{l}^{1-s/2}\abs{k,\eta}^{s/2}$, which implies
\begin{align*} 
 R_{N;HL;H}^{\epsilon,1;z} & \lesssim \sum_{M \geq 8} \sum_{k,l\neq 0}\int_{\eta,\xi,\xi^\prime}\mathbf{1}_{16\abs{l} \geq \abs{\xi}} \abs{k,\eta}^{s/2} \abs{A \hat f_k(\eta)}\rho_N(l,\xi)\abs{l}^{1-s/2} J_l(\xi^\prime)\jap{l}^\sigma e^{\lambda\abs{l,\xi^\prime}^s}\mathbf{1}_{t \not\in\I_{l,\xi^\prime}}
\\ & \quad\quad \times \abs{\widehat{\phi}_l(\xi^\prime)_{<M/8}} e^{\lambda\abs{\xi-\xi^\prime}^s + \lambda\abs{k-l,\eta-\xi}^s} \abs{\widehat{(1-v^\prime)}(\xi^\prime - \xi)_{M}} \abs{k-l,\eta-\xi} \abs{\hat{f}_{k-l}(\eta - \xi)_{< N/8}} d\eta d\xi d\xi^\prime.
\end{align*} 
Therefore, since $\abs{k,\eta} \approx \abs{l,\xi} \approx \abs{l,\xi^\prime}$ from \eqref{ineq:ketalxiRepsilonHL}, by \eqref{ineq:L2L2L1L1} and \eqref{ineq:GeneralOrtho} (denoting $\chi^{NR}_l(t,\xi) = \mathbf{1}_{t \not\in\I_{l,\xi}}$),  
\begin{align*} 
R_{N;HL;H}^{\epsilon,1;z} & \lesssim \sum_{M \geq 8} \norm{\abs{\grad}^{s/2}Af_{\sim N}}_2 \norm{\abs{\grad}^{1-s/2}\chi^{NR}AP_{\neq 0}\phi_{\sim N}}_2 \norm{(1-v^\prime)_M}_{\G^{\lambda, \sigma-1}} \norm{f_{<N/8}}_{\G^{\lambda,\sigma}} \\ 
 & \lesssim \sum_{M \geq 8} \norm{\abs{\grad}^{s/2}Af_{\sim N}}_2 \norm{\abs{\grad}^{1-s/2}\chi^{NR} AP_{\neq 0}\phi_{\sim N}}_2 \frac{1}{M}\norm{(1-v^\prime)_M}_{\G^{\lambda, \sigma}} \norm{f_{<N/8}}_{\G^{\lambda,\sigma}} \\
  & \lesssim \norm{\abs{\grad}^{s/2}Af_{\sim N}}_2 \norm{\abs{\grad}^{1-s/2}\chi^{NR}AP_{\neq 0}\phi_{\sim N}}_2 \norm{f_{<N/8}}_{\G^{\lambda,\sigma}} \left(\sum_{M \geq 8}\norm{(1-v^\prime)_M}^2_{\G^{\lambda, \sigma}}\right)^{1/2}, 
\end{align*} 
where the last line followed by Cauchy-Schwarz. By \eqref{ineq:LPOrthoProject} and the bootstrap hypotheses,  
\begin{align} 
R_{N;HL;H}^{\epsilon,1;z} & \lesssim \epsilon^2\norm{\abs{\grad}^{s/2}Af_{\sim N}}_2 \norm{\abs{\grad}^{1-s/2} \chi^{NR}AP_{\neq 0}\phi_{\sim N}}_2.  \label{ineq:RNepsLHz}
\end{align}
The treatment of $R_{N;HL;L}^{\epsilon,1;z}$ is straightforward by a similar argument but now simply using Lemma \ref{lem:LossyElliptic} to handle $\phi$. We omit the details and state the result 
\begin{align}
\sum_{N \geq 8}R_{N;HL;L}^{\epsilon,1;z} \lesssim \frac{\epsilon^4}{\jap{t}^2}, \label{ineq:RNepsLHzL}
\end{align}  
completing the treatment of $R_{N;HL}^{\epsilon,1;z}$. 

Next we turn to $R_{N;HL}^{\epsilon,1;v}$, in which case we can consider all of the `derivatives' to be landing on $1-v^\prime$. 
On the support of the integrand, 
\begin{subequations} \label{ineq:RnHLeps_FreqLoc} 
\begin{align}
\abs{ \abs{k,\eta} - \abs{l,\xi} } & \leq \abs{k-l,\eta-\xi} \leq 3\abs{l,\xi}/16, \\
\abs{\abs{\xi - \xi^\prime}-\abs{l,\xi}} & \leq \abs{l,\xi^\prime} \leq \abs{\xi}/16 + \abs{\xi^\prime} \leq \abs{\xi-\xi^\prime}/16 + 17\abs{\xi^\prime}/16 \leq 67\abs{\xi-\xi^\prime}/100, 
\end{align}
\end{subequations}
which implies by two applications of \eqref{lem:scon} there exists some $c \in (0,1)$ such that 
\begin{align*} 
e^{\lambda\abs{k,\eta}^s} \leq e^{\lambda\abs{l,\xi}^s + c\lambda\abs{k-l,\eta-\xi}^s} \leq e^{\lambda\abs{\xi-\xi^\prime}^s + c\lambda\abs{l,\xi^\prime}^s + c\lambda\abs{k-l,\eta-\xi}^s}. 
\end{align*} 
Therefore, since also $\abs{l,\xi} \approx \abs{\xi-\xi^\prime}$ by \eqref{ineq:RnHLeps_FreqLoc},  
\begin{align*} 
\abs{R_{N;HL}^{\epsilon,1;v}} 
& \lesssim \sum_{M \geq 8} \sum_{k,l\neq 0}\int_{\eta,\xi,\xi^\prime} \abs{A\hat f_k(\eta)}\mathbf{1}_{16\abs{l} \leq \abs{\xi}} J_k(\eta) \jap{\xi-\xi^\prime}^\sigma \rho_N(l,\xi)e^{\lambda\abs{\xi-\xi^\prime}^s}\abs{\widehat{(1-v^\prime)}(\xi^\prime - \xi)_{M}} \\
  & \quad\quad \times \abs{k-l,\eta-\xi} \abs{l,\xi^\prime} e^{c\lambda\abs{l,\xi^\prime}^s + c\lambda\abs{k-l,\eta-\xi}^s}\abs{\widehat{\phi}_l(\xi^\prime)_{<M/8} \hat{f}_{k-l}(\eta - \xi)_{< N/8}} d\eta d\xi d\xi^\prime. 
\end{align*} 
We will now use the following analogue of \eqref{ineq:JRswap}, which applies on the support of the integrand due to the frequency localizations  \eqref{ineq:RnHLeps_FreqLoc}: 
\begin{align} 
J_k(\eta) \lesssim J^R(\xi-\xi^\prime)e^{20\mu \abs{\xi^\prime}^{1/2} + 20\mu \abs{\eta - \xi}^{1/2}}. \label{ineq:JRswap2}
\end{align} 
Applying this together with \eqref{ineq:IncExp} and \eqref{ineq:SobExp} implies 
\begin{align*}  
\abs{R_{N;HL}^{\epsilon,1}} & \lesssim \sum_{M \geq 8} \sum_{k,l\neq 0}\int_{\eta,\xi,\xi^\prime} \abs{A \hat f_k(\eta)} A^R(\xi-\xi^\prime) \abs{\widehat{(1-v^\prime)}(\xi^\prime - \xi)_{M}} \\ & \quad\quad \times \rho_N(l,\xi) \mathbf{1}_{16\abs{l} \leq \abs{\xi}}
e^{\lambda\abs{l,\xi^\prime}^s + \lambda\abs{k-l,\eta-\xi}^s}\abs{\widehat{\phi}_l(\xi^\prime)_{<M/8} \hat{f}_{k-l}(\eta - \xi)_{< N/8}} d\eta d\xi d\xi^\prime. 
\end{align*}
Since $\abs{l,\xi} \approx \abs{\xi-\xi^\prime}$ by \eqref{ineq:RnHLeps_FreqLoc}, the sum only includes boundedly many terms.
Therefore, by \eqref{ineq:L2L2L1L1}, Lemma \ref{lem:LossyElliptic} and the bootstrap hypotheses,
\begin{align} 
\abs{R_{N;HL}^{\epsilon,1}} & \lesssim \norm{Af_{\sim N}}_2\norm{A^R(1-v^\prime)_{\sim N}}_2\norm{f}_{\G^{\lambda,\sigma-4}}\norm{P_{\neq}\phi}_{\G^{\lambda,\sigma- 4}}\nonumber \\ 
& \lesssim \frac{\epsilon}{\jap{t}^2} \norm{Af_{\sim N}}_2^2 + \frac{\epsilon^3}{\jap{t}^2}\norm{A^R(1-v^\prime)_{\sim N}}^2_2, \label{ineq:RNepsHLD}
\end{align} 
which completes our treatment of $R_{N;HL}^{\epsilon,1}$.

We turn to the remainder term $R_{N;HH}^{\epsilon,1}$. 
Analogous to \S\ref{sec:EllipticRemainder}, there is a problem when $l$ is large compared to $\xi$. 
The situation here is only slightly more subtle. 
We divide into two cases: 
\begin{align*}   
R_{N;HH}^{\epsilon,1} & = -\frac{1}{2\pi}\sum_{M \in \mathbb D}\sum_{\frac{1}{8}M \leq M^\prime  \leq  8M} \sum_{k,l\neq 0}\int_{\eta,\xi,\xi^\prime} A \bar{\hat f}_k(\eta) \left[\mathbf{1}_{\abs{l} > 100\abs{\xi^\prime}} + \mathbf{1}_{\abs{l} \leq 100\abs{\xi^\prime}}\right]
\\ & \quad\quad \times  A_k(\eta)(\eta l - \xi^\prime k)\rho_N(l,\xi)\widehat{(1-v^\prime)}(\xi^\prime - \xi)_{M^\prime} \widehat{\phi}_l(\xi^\prime)_M \hat{f}_{k-l}(\eta - \xi)_{< N/8} d\eta d\xi d\xi^\prime \\
& = R_{N;HH}^{\epsilon,1;z} + R_{N;HH}^{\epsilon,1;v}.  
\end{align*} 
First consider $R_{N;HH}^{\epsilon,1;z}$. On the support of the integrand we have 
\begin{subequations} \label{ineq:RNHHzFreqLoc} 
\begin{align} 
\abs{ \abs{k,\eta} - \abs{l,\xi} } & \leq \abs{k-l,\eta-\xi} \leq 6\abs{l,\xi}/32, \\
\abs{\abs{l,\xi} - \abs{l,\xi^\prime}} & \leq \abs{\xi - \xi^\prime} \leq 24\abs{\xi^\prime} \leq \frac{24}{100}\abs{l,\xi^\prime}. 
\end{align}
\end{subequations} 
Therefore by two applications of \eqref{lem:scon}, 
\begin{align*}
 e^{\lambda\abs{k,\eta}^s} \leq e^{\lambda\abs{l,\xi}^s + c\lambda\abs{k-l,\eta-\xi}^s} \leq e^{\lambda\abs{l,\xi^\prime}^s + c\lambda\abs{\xi-\xi^\prime}^s + c\lambda\abs{k-l,\eta-\xi}^s}.
\end{align*} 
By $\abs{l} > 100\abs{\xi^\prime}$ and \eqref{ineq:RNHHzFreqLoc}, it follows that $\abs{\xi} \leq \frac{2524}{10000}\abs{l}$ and hence 
$(l,\xi)$ cannot be resonant and $\abs{\eta} \leq 1.531\abs{k}$, which implies by $N \geq 8$ that $(k,\eta)$ cannot be resonant.  
Also using
\begin{align*} 
\abs{\eta l - \xi^\prime k} \leq \abs{l,\xi^\prime}\abs{k-l,\eta-\xi^\prime} \leq  \abs{l,\xi^\prime}\left(\abs{k-l,\eta-\xi} + \abs{\xi - \xi^\prime}\right),   
\end{align*}
Then \eqref{ineq:BasicJswap}, \eqref{ineq:IncExp} and \eqref{ineq:SobExp} together imply
\begin{align*} 
\abs{R_{N;HH}^{\epsilon,1;z}} & \lesssim \sum_{M \in \mathbb D}\sum_{M^\prime \approx M} \sum_{k,l\neq 0}\int_{\eta,\xi,\xi^\prime} \abs{A \hat f_k(\eta)}\mathbf{1}_{\abs{l} > 100\abs{\xi^\prime}}\rho_N(l,\xi)A_l(\xi^\prime)\abs{l,\xi^\prime}\widehat{\phi}_l(\xi^\prime)_M \\ & \quad\quad \times e^{\lambda\abs{\xi-\xi^\prime}^s}\abs{\widehat{(1-v^\prime)}(\xi^\prime - \xi)_{M^\prime}} e^{\lambda\abs{k-l,\eta-\xi}^s}\abs{\hat{f}_{k-l}(\eta - \xi)_{< N/8}} d\eta d\xi d\xi^\prime.
\end{align*} 
By \eqref{ineq:L2L2L1L1}, $\abs{l,\xi^\prime} \approx N$ (by \eqref{ineq:RNHHzFreqLoc}) and \eqref{ineq:GeneralOrtho}, 
\begin{align} 
\abs{R_{N;HH}^{\epsilon,1;z}} & \lesssim \norm{\abs{\grad}^{s/2}Af_{\sim N}}\norm{f_{<N/8}}_{\G^{\lambda,\sigma}}\norm{\abs{\grad}^{1-s/2} \chi^{NR} A\phi_{\sim N}}_2 \sum_{M^\prime \in \mathbb D} \norm{(1-v^\prime)_{M^\prime}}_{\G^{\lambda, \sigma-1}} \nonumber \\
& \lesssim \norm{\abs{\grad}^{s/2}Af_{\sim N}}\norm{f_{<N/8}}_{\G^{\lambda,\sigma}}\norm{\abs{\grad}^{1-s/2} \chi^{NR} A\phi_{\sim N}}_2 \left(\sum_{M^\prime \in \mathbb D} \norm{(1-v^\prime)_{M^\prime}}^2_{\G^{\lambda, \sigma}}\right)^{1/2} \nonumber \\
& \lesssim \epsilon^2\norm{\abs{\grad}^{s/2}Af_{\sim N}}_2 \norm{\abs{\grad}^{1-s/2}\chi^{NR} AP_{\neq 0}\phi_{\sim N}}_2, \label{ineq:RNepsHH}
\end{align}
where the last line followed from the bootstrap hypotheses. 

Turn to $R_{N;HH}^{\epsilon,1;v}$. 
On the support of the integrand there holds 
\begin{subequations} \label{ineq:RNepsHHvFreqLoc}
\begin{align} 
\abs{ \abs{k,\eta} - \abs{l,\xi} } & \leq \abs{k-l,\eta-\xi} \leq 3\abs{l,\xi}/16, \\
\abs{\xi - \xi^\prime} & \leq 24\abs{\xi^\prime} \leq 24\abs{l,\xi^\prime} \\ 
\abs{l,\xi^\prime} & \leq 101\abs{\xi^\prime} \leq 2424\abs{\xi - \xi^\prime},
\end{align}
\end{subequations}
and hence by \eqref{lem:scon} followed by \eqref{lem:strivial} for some $c \in (0,1)$, 
\begin{align*} 
e^{\lambda\abs{k,\eta}^s} \leq e^{\lambda\abs{l,\xi}^s + c\lambda\abs{k-l,\eta-\xi}^s} \leq e^{c\lambda\abs{l,\xi^\prime}^s + c\lambda\abs{\xi-\xi^\prime}^s + c\lambda\abs{k-l,\eta-\xi}^s}.
\end{align*} 
Notice that here $N \lesssim \abs{\xi,l} \lesssim M$. 
By Lemma \ref{basic} and \eqref{ineq:RNepsHHvFreqLoc}, 
\begin{align*} 
J_k(\eta) & \lesssim e^{2\mu\abs{k,\eta}^{1/2}} \lesssim e^{2 \mu\abs{k-l,\eta-\xi}^{1/2} + 2\mu\abs{l,\xi^\prime}^{1/2} + 2\mu\abs{\xi^\prime-\xi}^{1/2}}. 
\end{align*}
The previous two estimates together with \eqref{ineq:IncExp} and \eqref{ineq:SobExp} imply 
\begin{align*} 
\abs{R_{N;HH}^{\epsilon,1;v}} & \lesssim \sum_{M \in \mathbb D} \sum_{M^\prime \approx M} \sum_{k,l\neq 0}\int_{\eta,\xi,\xi^\prime} \abs{A\hat f_k(\eta)}\mathbf{1}_{\abs{l} \leq 100\abs{\xi^\prime}}\rho_N(l,\xi) \\ & \quad\quad \times e^{\lambda\abs{\xi - \xi^\prime}^s}\abs{\widehat{(1-v^\prime)}(\xi^\prime - \xi)_{M^\prime}} e^{\lambda\abs{l,\xi^\prime}^s}\abs{\widehat{\phi}_l(\xi^\prime)_M} e^{\lambda\abs{k-l,\eta - \xi}^s}\abs{\hat{f}_{k-l}(\eta - \xi)_{< N/8}} d\eta d\xi d\xi^\prime. 
\end{align*} 
By applying \eqref{ineq:L2L2L1L1}, Lemma \ref{lem:LossyElliptic} and the bootstrap hypotheses similar to above,
\begin{align*} 
\abs{R_{N;HH}^{\epsilon,1;v}} & \lesssim   \sum_{M \in \mathbb D} \sum_{M^\prime \approx M} \norm{Af_{\sim N}}_{2}\norm{P_{\neq 0}\phi_{M}}_{\G^{\lambda,\sigma-3}}\norm{(1-v^\prime)_{M^\prime}}_{\G^{\lambda,\sigma - 2}}\norm{f_{<N/8}}_{\G^{\lambda,\sigma}} \\ 
& \lesssim \epsilon  \sum_{M \in \mathbb D} \sum_{M^\prime \approx M}\norm{Af_{\sim N}}_{2} \frac{1}{M^2} \norm{P_{\neq 0}\phi_{M}}_{\G^{\lambda,\sigma-3}}\norm{(1-v^\prime)_{M^\prime}}_{\G^{\lambda,\sigma}} \\
& \lesssim \frac{\epsilon^3}{N\jap{t}^2}\norm{Af_{\sim N}}_{2}, 
\end{align*} 
where the last two lines followed from $N \lesssim M \approx M^\prime$, Cauchy-Schwarz and \eqref{ineq:GeneralOrtho}. 
Together with \eqref{ineq:RNepsHH}, \eqref{ineq:RNepsHLD} \eqref{ineq:RNepsLHz} and \eqref{ineq:RNepsLHzL}, this completes the treatment of $R_{N}^{\epsilon,1}$ after summing $N$ and applying \eqref{ineq:GeneralOrtho}, 
hence  reducing to terms appearing on the RHS of \eqref{ineq:ReacIntro}. 

\subsubsection{Term $R_N^3$: remainder from commutator} \label{sec:RN3}
Now we treat $R_N^3$, which arose from the integration by parts intended for treating transport.
By \eqref{ineq:L2L2L1} (for $\sigma > 6$), the bootstrap hypotheses, 
\begin{align*}
\abs{R_N^3} & \lesssim \sum_{k,l}\int_{\eta,\xi} \abs{A \hat f_k(\eta)} \abs{k-l,\eta-\xi}\abs{\hat{u}_l(\xi)_N} A_{k-l}(\eta-\xi)\abs{\widehat{f}_{k-l}(\eta - \xi)_{<N/8}} d\eta d\xi \\ 
& \lesssim \norm{Af_{\sim N}}_2\norm{u_{N}}_{H^{\sigma-4}}\norm{Af_{<N/8}}_2 \\ 
&  \lesssim \frac{\epsilon}{\jap{t}^{2-K_D\epsilon/2}}\norm{Af_{\sim N}}^2_2 + \epsilon\jap{t}^{2-K_D\epsilon/2}\norm{u_{N}}^2_{H^{\sigma-4}}, 
\end{align*} 
where we also used that $\abs{k-l,\eta-\xi} \lesssim \abs{l,\xi}$ on the support of the integrand. 
By \eqref{ineq:GeneralOrtho}, Lemma \ref{lem:LossyElliptic} and the bootstrap hypotheses, 
\begin{align*} 
\sum_{N \geq 8} \abs{R_N^3} & \lesssim \frac{\epsilon^3}{\jap{t}^{2-K_D\epsilon/2}}. 
\end{align*}
This completes the treatment of $R_N^3$, as this appears on the RHS of \eqref{ineq:ReacIntro}. 

\subsection{Zero mode reaction term} \label{sec:ZeroMdReac}
Next we turn to $R_N^2$, which is the part of the reaction term involving $[\partial_t v]$. 
Here we need to make sure that assigning slightly less regularity to $[\partial_t v]$ than $f$ is consistent with $R_N^2$.
Also note that $[\partial_t v]$ has non-resonant regularity, but forces resonant frequencies here, expressed in the loss that Lemma \ref{lem:Jswap} could incur. 
Write $R_N^2$ on the frequency-side and divide into the two natural cases
\begin{align*} 
R_N^{2} & =-\sum_{k}\int_{\eta,\xi} A \bar{\hat f}_k(\eta) \left[\chi^{D} + \chi^{\ast}\right] A_k(\eta)\widehat{[\partial_t v]}(\xi)_Ni(\eta-\xi)\hat{f}_{k}(\eta - \xi)_{< N/8} d\eta d\xi \\
& = R_N^{2;D} + R_N^{2;\ast},  
\end{align*}
where $\chi^{D} = \mathbf{1}_{t \in\I_{k,\eta}}\mathbf{1}_{t \in\I_{k,\xi}}$ and $\chi^{\ast} = 1 - \chi^{D}$.  
Next note that on the support of the integrand we have 
\begin{align}
\abs{k} + \abs{\eta - \xi} \leq 3N/32 \leq 3\abs{\xi}/16 \label{ineq:zeroReacFreqLoc}
\end{align}
which implies $\abs{k} \lesssim \abs{\eta} \approx \abs{\xi}$. 
Also note $\abs{\xi} \gtrsim N$. 

First treat the term $R_N^{2;\ast}$. 
 By \eqref{lem:scon}, \eqref{ineq:BasicJswap}, \eqref{ineq:IncExp} and \eqref{ineq:SobExp}, 
\begin{align*} 
\abs{R_N^{2;\ast}} & \lesssim \sum_{k}\int_{\eta,\xi} \chi^\ast \abs{A \hat f_k(\eta)} A_0(\xi)\abs{\widehat{[\partial_t v]}(\xi)_N} e^{\lambda\abs{k,\eta-\xi}^{s}}\abs{\hat{f}_{k}(\eta - \xi)_{< N/8}} d\eta d\xi. 
\end{align*} 
To deal with the norm imbalance between $[\partial_t v]$ and $f$, by $\abs{\xi} \gtrsim 1$ and $\abs{\eta} \approx \abs{\xi}$, 
\begin{align*} 
\abs{R_N^{2;\ast}}& \lesssim \sum_{k}\int_{\eta,\xi} \chi^\ast \abs{A \hat f_k(\eta)} \frac{\abs{\eta}^{s/2}\abs{\xi}^{s/2}}{\jap{\xi}^s} A_0(\xi)\abs{\widehat{[\partial_t v]}(\xi)_N} e^{\lambda\abs{k,\eta-\xi}^{s}}\abs{\hat{f}_{k}(\eta - \xi)_{< N/8}} d\eta d\xi. 
\end{align*} 
It follows from \eqref{ineq:L2L2L1} and the bootstrap hypotheses that
\begin{align} 
\abs{R_{N}^{2;\ast}} & \lesssim \epsilon\norm{\abs{\grad}^{s/2}Af_{\sim N}}_2 \norm{\abs{\partial_v}^{s/2}\frac{A}{\jap{\partial_v}^s}[\partial_t v]_N}_2 \nonumber \\
& \lesssim  \frac{\epsilon}{\jap{t}^{2s}}\norm{\abs{\grad}^{s/2}Af_{\sim N}}_2^2 + \epsilon\jap{t}^{2s}\norm{\abs{\partial_v}^{s/2}\frac{A}{\jap{\partial_v}^s}[\partial_t v]_N}_2. 
 \label{ineq:RN2ast}
\end{align}
The former is absorbed by $CK_\lambda$ and the latter is controlled by $CK_\lambda^{v,1}$.

Next turn to $R_N^{2;D}$. Applying \eqref{lem:scon} and \eqref{ineq:zeroReacFreqLoc}, there exists a $c \in (0,1)$ such that, 
\begin{align*} 
\abs{R_N^{2;D}} & \lesssim \sum_{k}\int_{\eta,\xi} \chi^{D} \abs{A \hat f_k(\eta)} A_0(\xi)\frac{J_k(\eta)}{J_0(\xi)}\abs{\widehat{[\partial_t v]}(\xi)_N} e^{c\lambda\abs{k,\eta-\xi}^{s}}\abs{\eta-\xi}\abs{\hat{f}_{k}(\eta - \xi)_{< N/8}} d\eta d\xi. 
\end{align*}
Since $t \in\I_{k,\eta}$, $A_k(\eta) \lesssim \tilde A_k(\eta)$. 
Moreover, by \eqref{ineq:RatJ2partt}, \eqref{ineq:IncExp} and \eqref{ineq:SobExp}, 
\begin{align*} 
\abs{R_N^{2;D}} & \lesssim \sum_{k}\int_{\eta,\xi} \chi^{D} \abs{\tilde A \hat f_k(\eta)} A_0(\xi)\abs{\widehat{[\partial_t v]}(\xi)_N}\frac{\abs{\eta}}{k^2}\sqrt{\frac{\partial_tw_k(\eta)}{w_k(\eta)}}\sqrt{\frac{\partial_tw_0(\xi)}{w_0(\xi)}}e^{\lambda\abs{k,\eta-\xi}^{s}}\abs{\hat{f}_{k}(\eta - \xi)_{< N/8}} d\eta d\xi. 
\end{align*} 
Since $t \approx \frac{\eta}{k}$, by \eqref{ineq:zeroReacFreqLoc}, \eqref{ineq:L2L2L1} and the bootstrap hypotheses, 
\begin{align} 
\abs{R_{N}^{2;D}} & \lesssim \sum_{k}\int_{\eta,\xi} \chi^{D} \abs{\tilde A \hat f_k(\eta)} A_0(\xi)\abs{\widehat{[\partial_t v]}(\xi)_N} \frac{t^{1+s}}{\abs{k}^{1-s}\jap{\xi}^{s}}\sqrt{\frac{\partial_tw_k(\eta)}{w_k(\eta)}}\sqrt{\frac{\partial_tw_0(\xi)}{w_0(\xi)}} e^{\lambda\abs{k,\eta-\xi}^{s}}\abs{\hat{f}_{k}(\eta - \xi)_{< N/8}} d\eta d\xi \nonumber \\ 
 & \lesssim \epsilon t^{1+s}\norm{\sqrt{\frac{\partial_t w}{w}}\tilde Af_{\sim N}}_2 \norm{\sqrt{\frac{\partial_t w}{w}}\frac{A}{\jap{\partial_v}^s}[\partial_t v]_N}_2 \nonumber \\ 
& \lesssim \epsilon\norm{\sqrt{\frac{\partial_t w}{w}}\tilde Af_{\sim N}}_2^2 + \epsilon t^{2+2s}\norm{\sqrt{\frac{\partial_t w}{w}}\frac{A}{\jap{\partial_v}^s}[\partial_t v]_N}^2_2.  \label{ineq:RN2D}
\end{align} 
The first term is absorbed by $CK_w$ and the latter is controlled by $CK_w^{v,1}$. 
This completes the treatment of $R^2_N$.
Combining the results of \eqref{ineq:RN2ast}, \eqref{ineq:RN2D} with \S\ref{sec:RNeps}, \S\ref{sec:RNeps} and \S\ref{sec:RN3} 
and applying \eqref{ineq:GeneralOrtho} completes the treatment of $R_N$, proving Proposition \ref{prop:ReactionIntro}. 

\section{Remainder} \label{sec:RemainderEnergy}
In this section we prove Proposition \ref{prop:RemainderIntro}. 
The commutator cannot gain us anything so we may as well treat each term separately, 
\begin{align*} 
\mathcal{R} & = 2\pi\sum_{N \in \mathbb{D}}\sum_{\frac{1}{8}N \leq N^\prime \leq 8N} \int Af\left[A(u_{N}\grad f_{N^\prime})\right] dx dv - 2\pi\sum_{N \in \mathbb{D}}\sum_{\frac{1}{8}N \leq N^\prime \leq 8N} \int Af u_{N}\grad A f_{N^\prime} dx dv \\  
& = \mathcal{R}_{a} + \mathcal{R}_{b}. 
\end{align*}
Consider first $\mathcal{R}_{a}$, written on the Fourier side: 
\begin{align*} 
\mathcal{R}_{a} & = \sum_{N \in \mathbb D}\sum_{N^\prime \approx N}\sum_{k,l}\int_{\eta,\xi} A \bar{\hat f}_k(\eta)A_k(\eta) \hat{u}_l(\xi)_{N} \cdot \widehat{\grad f}_{k-l}(\eta-\xi)_{N^\prime} d\xi d\eta.   
\end{align*}
On the support of the integrand, $\abs{l,\xi} \approx \abs{k-l,\eta-\xi}$ and hence by \eqref{lem:strivial} for some $c \in (0,1)$,
\begin{align*}
\abs{k,\eta}^s \leq c\abs{k-l,\eta-\xi}^s + c\abs{l,\xi}^s.
\end{align*}
Hence, 
\begin{align*} 
\abs{\mathcal{R}_{a}} & \lesssim \sum_{N \in \mathbb D}\sum_{N^\prime \approx N}\sum_{k,l}\int_{\eta,\xi} \abs{A\hat f_k(\eta)} J_k(\eta)\jap{l,\xi}^{\sigma/2+1}e^{c\lambda\abs{l,\xi}^{s}} \abs{\hat{u}_l(\xi)_{N}}  \\ & \quad\quad \times \jap{k-l,\eta-\xi}^{\sigma/2 - 1}e^{c\lambda\abs{k-l,\eta-\xi}^{s}}\abs{\widehat{\grad f}_{k-l}(\eta-\xi)_{N^\prime}} d\xi d\eta.
\end{align*} 
By Lemma \ref{basic}, \eqref{ineq:IncExp} and \eqref{ineq:SobExp} (since $c < 1$ and $s > 1/2$),
\begin{align*} 
\abs{\mathcal{R}_{a}} & \lesssim \sum_{N \in \mathbb D}\sum_{N^\prime \approx N}\sum_{k,l}\int_{\eta,\xi} \abs{A\hat f_k(\eta)}e^{\lambda\abs{l,\xi}^{s}} \abs{\hat{u}_l(\xi)_{N}} e^{\lambda\abs{k-l,\eta-\xi}^{s}}\abs{\widehat{\grad f}_{k-l}(\eta-\xi)_{N^\prime}} d\xi d\eta.
\end{align*} 
Hence by \eqref{ineq:L2L2L1}, Lemma \ref{lem:LossyElliptic} and the bootstrap hypotheses, 
\begin{align*} 
\abs{\mathcal{R}_{a}} & \lesssim \sum_{N \in \mathbb D} \sum_{N^\prime \approx N} \norm{Af}_2\norm{u_N}_{\G^{\lambda,\sigma-4}}\norm{f_{N^\prime}}_{\G^{\lambda,\sigma-1}}  \lesssim \sum_{N^\prime \in \mathbb D}\frac{\epsilon}{t^{2-K_D\epsilon/2} N^\prime} \norm{Af}_2\norm{f_{N^\prime}}_{\G^{\lambda,\sigma}} \lesssim  \frac{\epsilon^3}{t^{2-K_D\epsilon/2}}. 
\end{align*}
This completes the treatment of $\mathcal{R}_{a}$. 
The term $\mathcal{R}_{b}$ is similar and hence omitted. 
This completes the proof of Proposition \ref{prop:RemainderIntro}. 

\section{Coordinate system controls} \label{sec:CoordControls}
In this section we detail the controls on \eqref{def:vPDE} and prove Proposition \ref{prop:CoefControl_Real}. 

\subsection{Derivation of \eqref{def:vPDE}} \label{sec:CordDeriv}
As in \S\ref{sec:CoordinateTrans}, denote  $v^\prime(t,v(t,y)) = \partial_y v(t,y)$, $v^{\prime\prime}(t,v(t,y)) = \partial_{yy} v(t,y)$ 
and $[\partial_t v](t,v(t,y)) = \partial_t v(t,y)$. 
Since by \eqref{def:v}, $v(t,y)$ satisfies
\begin{align*}
\frac{d}{dt}(t(v_y(t,y) - 1)) = -\omega_0(t,y), 
\end{align*} 
we can directly derive \eqref{def:pdevp} via the chain rule.  
Similarly, we may also derive \eqref{def:vpp} via the chain rule using the definitions of $v^\prime$ and $v^{\prime\prime}$.  

Deriving \eqref{def:ddtv} requires more work. 
Notice that
\begin{align}
\frac{d}{dt}\left(t (v(t,y)  - y)  \right) = U^x_0(t,y),  \label{def:psidiff}
\end{align} 
and denote $C(t,v(t,y)) = v(t,y) - y$.  
From \eqref{def:psidiff} we get (recalling the definitions $[\partial_t v](t,v(t,y)) = \partial_t v(t,y)$ and $\tilde u_0(t,v(t,y)) = U_0^x(t,y)$):  
\begin{align} 
\partial_t v(t,y) & = \frac{1}{t}U_0^x(t,y) - \frac{1}{t}\left(v(t,y) - y\right) \nonumber \\ 
[\partial_t v](t,v) & = \frac{1}{t}\tilde u_0(t,v) - \frac{1}{t}C(t,v). \label{ineq:dtvCdef}
\end{align}
Via the chain rule, 
\begin{align} 
\partial_t C(t,v(t,y)) = \partial_t v(t,y) - \partial_v C(t,v(t,y)) \partial_t v(t,y). \label{def:dtc}
\end{align}
Differentiating \eqref{ineq:dtvCdef} in time and using \eqref{def:dtc}, \eqref{ineq:dtvCdef} and \eqref{eq:tildeu0_moment} eliminates $C$ entirely and derives \eqref{def:ddtv}.  

Moreover,
\begin{align*} 
v^\prime(t,v(t,y)) - 1 = \partial_y v(t,y) - 1 = \partial_v C(t,v(t,y)) \partial_y v(t,y),
\end{align*}
which in particular implies 
\begin{align} 
\partial_v C(t,v) & = \frac{v^\prime(t,v) - 1}{v^\prime(t,v)}. \label{ineq:vC}
\end{align}
Finally, notice that \eqref{ineq:dtvCdef} together with \eqref{ineq:vC} and \eqref{eq:tildeu0_simple} implies
\begin{align} 
v^\prime \partial_v [\partial_t v](t,v) & =  -\frac{1}{t}f_0(t,v) - \frac{1}{t}(v^\prime - 1)(t,v). \label{def:barh2}
\end{align} 

\begin{remark} \label{rmk:CoefWeak}
From \eqref{def:pdevp}, \eqref{def:vpp} and the bootstrap hypotheses, one can show with relative ease that 
\begin{align}
\norm{A(1-v^\prime)(t)}_2 + \norm{A \left( (v^{\prime})^2 - 1\right)(t) }_2 + \norm{\jap{\partial_v}^{-1}Av^{\prime\prime}(t)}_2 \lesssim \epsilon^2.  \label{ineq:CoeffBd_R}
\end{align} 
The estimates on $(1-v^\prime)$ follow from energy estimates on \eqref{def:pdevp} performed with the unknown $t(v^\prime(t,v) - 1)$ in a manner 
that is very similar to, but easier than, techniques applied in \S\ref{sec:Transport}, \S\ref{sec:ZeroMdReac} and \S\ref{sec:RemainderEnergy}.
Similar control on $(v^\prime)^2 - 1$ and $v^{\prime\prime}$ then follows from \eqref{ineq:Aalg} (recalling \eqref{def:vpp}).  

The estimates \eqref{ineq:CoeffBd_R} suffice for most purposes, however they do not obtain control on the CCK terms in \eqref{def:QL}. 
This is because the estimates just described are insensitive to whether the background shear flow is converging and so cannot ensure that the CCK integrals are convergent. 
Hence, in order to do better, in \S\ref{sec:PropCtrlReal} below we make estimates that imply the convergence of the shear flow.  
\end{remark}

\subsection{Proof of Proposition \ref{prop:CoefControl_Real}} \label{sec:PropCtrlReal}
The purpose of this section is to prove Proposition \ref{prop:CoefControl_Real}, announced in \S\ref{sec:MainEnergy}. 
In order to get good control on \eqref{def:QL}, we will rearrange \eqref{def:pdevp} in the following manner using also \eqref{def:barh2} (see also Remark \ref{rmk:CoefWeak}). For notational convenience, denote $h(t,v) = v^\prime(t,v) - 1$ and write
\begin{align} 
\partial_t h + [\partial_t v] \partial_v h & = \frac{1}{t}\left(-f_0 - h\right) = v^\prime \partial_v[\partial_t v]. \label{def:h}
\end{align}
For notational convenience we will denote
\begin{align}
\bar{h}(t,v) = v^\prime \partial_v[\partial_t v]  = \frac{1}{t}\left(-f_0 - h\right). 
\end{align} 
The decay of $\bar{h}$ quantifies how rapidly $h$ is converging to $-f_0$; one can also see this is as a measure of how rapidly the $x$-averaged vorticity is converging. 
The overline does not refer to complex conjugation but rather to emphasize that $\bar{h}$ is a measure of how close $h$ is to $-f_0$. 
From \eqref{def:h} and \eqref{Euler2} we derive, 
\begin{align}
\partial_t \bar{h} & = - \frac{\bar{h}}{t} - \frac{1}{t}\left(\partial_t f_0 + \partial_t h\right) 
 = -\frac{2}{t}\bar{h} - [\partial_t v] \partial_v\bar{h} + \frac{1}{t}\jap{v^\prime \grad^\perp P_{\neq 0} \phi \cdot \grad f}. \label{def:barh}
\end{align}
The crucial step of the proof of Proposition \ref{prop:CoefControl_Real} is the decay estimate on $\bar{h}$ given in \eqref{ineq:bhc}. The primary challenge to proving \eqref{ineq:bhc} is controlling the last term in \eqref{def:barh}, which is the transfer of information to the zero modes by nonlinear interactions of non-zero modes (see \eqref{ineq:barhenergy} below). 
Since it is most crucial, it is natural to begin there. 

\textit{Proof of \eqref{ineq:bhc}:}\\
From \eqref{def:barh} we have
\begin{align}
\frac{d}{dt} \left( \jap{t}^{2+2s} \norm{\frac{A}{\jap{\partial_v}^s} \bar{h}}_2^2\right) & = -(2-2s)t \jap{t}^{2s}\norm{\frac{A}{\jap{\partial_v}^s} \bar{h}}_2^2 - CK^{v,2}_\lambda - CK^{v,2}_w \nonumber \\ & \quad - 2 \jap{t}^{2+2s} \int \frac{A}{\jap{\partial_v}^s} \bar{h} \frac{A}{\jap{\partial_v}^s}\left([\partial_t v] \partial_v\bar{h} \right) dv \nonumber \\ 
& \quad + 2 t^{-1}\jap{t}^{2+2s} \int \frac{A}{\jap{\partial_v}^s} \bar{h} \frac{A}{\jap{\partial_v}^s} \jap{v^\prime \grad^\perp P_{\neq 0} \phi \cdot \grad f } dv \nonumber \\ 
& = -CK_L^{v,2} - CK^{v,2}_\lambda - CK^{v,2}_w + \mathcal{T}^h + F. \label{ineq:barhenergy}
\end{align} 
The term $\mathcal{T}^h$ is the same nonlinearity that occurs in $[\partial_t v]\partial_v f$ we can treat $\mathcal{T}^h$ in a manner similar to \eqref{eq:Aenergu} but with $u$ replaced just with
$[\partial_t v]$, $f$ replaced by $\bar{h}$, $A$ replaced with $\jap{\partial_v}^{-s}A$ and an additional $t^{2+2s}$ out front (balanced by the decay of $\bar{h}$). 
 We omit the details and conclude by the methods of \S\ref{sec:Transport}, \S\ref{sec:ZeroMdReac} and \S\ref{sec:RemainderEnergy} (except neither the zero mode reaction nor the transport have `D' contributions) and the bootstrap hypotheses that 
 \begin{align} 
\abs{\mathcal{T}^h} & \lesssim \epsilon CK_\lambda^{v,2} + \epsilon CK_{\lambda}^{v,1} + \epsilon \jap{t}^{2s + K_D\epsilon/2}\norm{\frac{A}{\jap{\partial_v}^s} \bar{h} }^2_2. \label{ineq:Th}
\end{align}
We apply the bootstrap control on $CK_\lambda^{v,1}$ and absorb the rest by $CK_\lambda^{v,2}$ and $CK_L^{v,2}$ in \eqref{ineq:barhenergy}.  

The main challenge is in the $F$ term (for `forcing'). This term describes the nonlinear feedback of the non-zero frequencies onto the zero frequencies, which could lead to potential instability by arresting the convergence of the background shear flow (which we are ruling out).
First divide into leading order and higher order contributions: 
\begin{align*}
F & = \sum_{k \neq 0}2t^{-1} \jap{t}^{2+2s} \int  \frac{A}{\jap{\partial_v}^s} \bar{h} \frac{A}{\jap{\partial_v}^s} \left(\grad^\perp \phi_{k} \cdot \grad f_{-k}\right) dv \\ & \quad  + \sum_{k \neq 0}2t^{-1}\jap{t}^{2+2s} \int  \frac{A}{\jap{\partial_v}^s} \bar{h} \frac{A}{\jap{\partial_v}^s} \left(h\grad^\perp \phi_{k} \cdot \grad f_{-k}\right) dv \\ 
& = F^0 + F^\epsilon. 
\end{align*} 
As suggested by \S\ref{sec:RNeps}, $F^\epsilon$ is not significantly harder than $F^0$, in fact the primary complications that arise in the treatment of $R_{N}^{\epsilon,1}$ do not arise in the treatment of $F^\epsilon$.
Hence we focus only on $F^0$; the control of $F^\epsilon$ results in, at worst, similar contributions with an additional power of $\epsilon$.
We begin the treatment of $F^0$ with a paraproduct in $v$ only: 
\begin{align*} 
F^0 & = 2\sum_{M \geq 8}\sum_{k \neq 0}t^{-1}\jap{t}^{2+2s} \int  \frac{A}{\jap{\partial_v}^s} \bar{h} \frac{A}{\jap{\partial_v}^s} \left( \left(\grad^\perp \phi_{k}\right)_{<M/8} \cdot \left(\grad f_{-k}\right)_{M}\right) dv \\ 
& \quad + 2\sum_{M \geq 8}\sum_{k \neq 0}t^{-1}\jap{t}^{2+2s} \int  \frac{A}{\jap{\partial_v}^s} \bar{h} \frac{A}{\jap{\partial_v}^s} \left( \left(\grad^\perp \phi_{k}\right)_{M} \cdot \left(\grad f_{-k}\right)_{<M/8}\right) dv \\ 
& \quad + 2\sum_{M \in \mathbb{D}} \sum_{M^\prime \approx M} \sum_{k \neq 0} t^{-1}\jap{t}^{2+2s} \int  \frac{A}{\jap{\partial_v}^s} \bar{h} \frac{A}{\jap{\partial_v}^s} \left( \left(\grad^\perp \phi_{k}\right)_{M^\prime} \cdot \left(\grad f_{-k}\right)_{M}\right) dv \\ 
& = F^0_{LH} + F^0_{HL} + F^0_{\mathcal{R}}.
\end{align*}
The term $F^0_{HL}$ looks something like the reaction term as studied in \S\ref{sec:RigReac}; dealing with it in a way that allows us to deduce \eqref{ineq:bhc} requires the use of the regularity gap between $\bar{h}$ and $\phi$. 
Consider a single dyadic shell and subdivide based on whether $\phi$ has resonant frequency or not: 
\begin{align*} 
F^0_{HL;M} & = \frac{1}{\pi}\sum_{k \neq 0} t^{-1}\jap{t}^{2+2s} \int_{\eta,\xi} \left(\chi^R + \chi^{NR}\right)  \frac{A_0(\eta)}{\jap{\eta}^s}\overline{\widehat{\bar{h}}}(\eta) \frac{A_0(\eta)}{\jap{\eta}^s} \left( \widehat{\grad^\perp \phi}_{k}(\xi)_{M} \cdot \widehat{\grad f}_{-k}(\eta-\xi)_{<M/8}\right) d\eta d\xi  \\ 
& = F^{0;R}_{HL;M} + F^{0;NR}_{HL;M}, 
\end{align*} 
where $\chi^R = \mathbf{1}_{t \in \mathbf{I}_{k,\xi}}$ and $\chi^{NR} = 1-\chi^{R}$. 

Consider first the NR contribution. 
Subdivide further based on the relationship between time and frequency: 
\begin{align} 
 F^{0;NR}_{HL;M} & =  \frac{1}{\pi}\sum_{k \neq 0}t^{-1}\jap{t}^{2+2s} \int_{10\abs{\eta} \geq t} \chi^{NR} \frac{A_0(\eta)}{\jap{\eta}^s}\overline{\widehat{\bar{h}}}(\eta) \frac{A_0(\eta)}{\jap{\eta}^s} \left( \widehat{\grad^\perp \phi}_{k}(\xi)_{M} \cdot \widehat{\grad f}_{-k}(\eta-\xi)_{<M/8}\right) d\eta d\xi \nonumber \\ 
 & \quad + \frac{1}{\pi}\sum_{k \neq 0}t^{-1}\jap{t}^{2+2s} \int_{10\abs{\eta} < t} \chi^{NR} \frac{A_0(\eta)}{\jap{\eta}^s}\overline{\widehat{\bar{h}}}(\eta) \frac{A_0(\eta)}{\jap{\eta}^s} \left( \widehat{\grad^\perp \phi}_{k}(\xi)_{M} \cdot \widehat{\grad f}_{-k}(\eta-\xi)_{<M/8}\right) d\eta d\xi \nonumber \\ 
& =  F^{0;NR,S}_{HL;M} + F^{0;NR,L}_{HL;M}, \label{ineq:F0NR}
\end{align}
where the `S' is for `short time' and the `L' is for `long time' (relative to frequency). 
Consider first the `S' contribution, for which we take advantage of the $\jap{\partial_v}^{-s}$ to reduce the power of $t$ (also using $\abs{\eta} \approx \abs{\xi} \gtrsim 1$), 
\begin{align*} 
F^{0;NR,S}_{HL;M} & \lesssim \sum_{k \neq 0}t^{1+s} \int_{10\abs{\eta} \geq t} \chi^{NR} \abs{\frac{A_0(\eta)}{\jap{\eta}^s}\widehat{\bar{h}}(\eta) A_0(\eta) \left( \widehat{\grad^\perp \phi}_{k}(\xi)_{M} \cdot \widehat{\grad f}_{-k}(\eta-\xi)_{<M/8}\right)} d\eta d\xi \\ 
& \lesssim \sum_{k \neq 0}t^{1+s} \int_{10\abs{\eta} \geq t} \chi^{NR}\abs{\frac{\abs{\eta}^{s/2} A_0(\eta)}{\jap{\eta}^s}\widehat{\bar{h}}(\eta) A_0(\eta) \abs{k,\xi}^{1-s/2} \widehat{\phi}_{k}(\xi)_{M} \abs{k}^{s/2} \abs{\widehat{\grad f}_{-k}(\eta-\xi)_{<M/8}}} d\eta d\xi. 
\end{align*} 
Hence, by \eqref{ineq:BasicJswap}, \eqref{lem:scon} (using the frequency localizations as usual), \eqref{ineq:IncExp}, \eqref{ineq:SobExp}, \eqref{ineq:L2L2L1} and the bootstrap hypotheses (along with \eqref{ineq:LPOrthoProject}), 
\begin{align}
\sum_{M \geq 8 } F^{0;NR,S}_{HL;M} & \lesssim \sum_{M \geq 8 }\epsilon t^{1+s} \norm{\abs{\partial_v}^{s/2}\frac{A}{\jap{\partial_v}^s}\bar{h}_{\sim M}}_2 \sum_{k \neq 0}\jap{k}^{-2} \norm{ \abs{\grad}^{1-s/2} \chi^{NR} \left(A\phi_{k}\right)_{M} }_2 \nonumber \\
 & \lesssim \epsilon t^{2} \norm{\abs{\partial_v}^{s/2} \frac{A}{\jap{\partial_v}^s}\bar{h}}^2_2 + \epsilon\jap{t}^{2s}\norm{ \abs{\grad}^{1-s/2} \chi^{NR} A\phi}_2^2. \label{ineq:F0S}
\end{align} 
The first term can be absorbed by $CK_{\lambda}^{v,2}$ whereas the latter requires \eqref{ineq:ReactionControl} and then Proposition \ref{lem:PrecisionEstimateNeq0} with the bootstrap controls on the CCK integrals. 

Turn next to the `long' contribution in \eqref{ineq:F0NR}. 
By \eqref{ineq:BasicJswap}, \eqref{lem:scon} (using also that $\abs{\abs{\eta} - \abs{\xi}} \leq \abs{\eta-\xi} < 3\abs{\eta}/16$ on the support of the integrand) and \eqref{ineq:IncExp} we have for some $c \in (0,1)$, 
\begin{align*} 
F^{0;NR,L}_{HL;M} & \lesssim \sum_{k \neq 0}  t^{1+2s} \int_{10\abs{\eta} < t} \chi^{NR} \abs{\frac{A_0(\eta)}{\jap{\eta}^s}\widehat{\bar{h}}(\eta)} \abs{k,\xi}^{1-s} \abs{A\phi_{k}(\xi)_{M}} e^{c\lambda\abs{k,\eta-\xi}^s}\abs{k}^s\abs{\widehat{\grad f}_{k}(\eta-\xi)_{<M/8}}  d\eta d\xi. 
\end{align*} 
Then we use the assumption $s \geq 1/2$ in order to deduce $\abs{k,\xi}^{1-s} \leq \abs{k,\xi}^{s}$ together
with $\abs{\xi - kt} \gtrsim t$, which holds on the support of the integrand since since $10\abs{\eta} < t$ and $1 \lesssim 13 \abs{\eta}/16 \leq \abs{\xi} \leq 19\abs{\eta}/16$. Hence, 
\begin{align*} 
F^{0;NR,L}_{HL;M} & \lesssim \sum_{k \neq 0}  t^{2s-1} \int_{10\abs{\eta} < t} \chi^{NR} \abs{\eta}^{s/2}\abs{\frac{A_0(\eta)}{\jap{\eta}^s}\widehat{\bar{h}}(\eta)} \abs{k,\xi}^{s/2}\left(k^2 + \abs{\xi-kt}^2\right)\abs{A\phi_{k}(\xi)_{M}} \\ & \quad\quad \times e^{c\lambda\abs{k,\eta-\xi}^s}\abs{k}^2 \abs{\widehat{\grad f}_{k}(\eta-\xi)_{<M/8}}  d\eta d\xi. 
\end{align*} 
Therefore (also multiplying by $1 \approx \jap{\frac{\xi}{kt}}^{-1}$), 
\begin{align*} 
F^{0;NR,L}_{HL;M} & \lesssim \sum_{k \neq 0}   t^{3s-1} \int_{10\abs{\eta} < t} \chi^{NR} \abs{\eta}^{s/2}\abs{\frac{A_0(\eta)}{\jap{\eta}^s}\widehat{\bar{h}}(\eta)} \jap{\frac{\xi}{tk}}^{-1} \left(k^2 + \abs{\xi - kt}^2 \right)  \frac{\abs{k,\xi}^{s/2}}{\jap{t}^s}\abs{A\phi_{k}(\xi)_{M}} \\ & \quad\quad \times e^{c\lambda\abs{k,\eta-\xi}^s}\jap{k}^2\abs{\widehat{\grad f}_{-k}(\eta-\xi)_{<M/8}}  d\eta d\xi. 
\end{align*}
Summing in $k$ and $M$ (using \eqref{ineq:LPOrthoProject}) and applying \eqref{ineq:L2L2L1} and \eqref{ineq:SobExp} we have, 
\begin{align}
\sum_{M \geq 8} F^{0;NR,L}_{HL;M} & \lesssim \epsilon \sum_{M \geq 8} t^{3s-1}\norm{\abs{\partial_v}^{s/2} \frac{A}{\jap{\partial_v}^s}\bar{h}_{\sim M}}_2 \sum_{k \neq 0} k^{-2} \norm{\jap{\frac{\partial_v}{t\partial_z}}^{-1}\Delta_L\frac{\abs{\grad}^{s/2}}{\jap{t}^s}\left(A\phi_{k}\right)_{M}}_2 \nonumber \\ 
 & \lesssim \epsilon t^{6s-2}\norm{\abs{\partial_v}^{s/2} \frac{A}{\jap{\partial_v}^s}\bar{h}}^2_2 + \epsilon\norm{\jap{\frac{\partial_v}{t\partial_z}}^{-1}\Delta_L\frac{\abs{\grad}^{s/2}}{\jap{t}^s} A\phi}_2^2.  \label{ineq:F0HLNRL}
\end{align}
One can verify that we may always choose $\tilde q>1/2$ such that $6s -2 < 2 + 2s - 2\tilde q$ provided that we take the restriction that $s < 3/4$; this is an artifact due to the fact that we took $s$ in Proposition \ref{lem:PrecisionEstimateNeq0} rather than $\tilde q$. However, as discussed in \S\ref{sec:MainEnergy}, we can without loss of generality take $s$ close to $1/2$. Therefore, the first term is absorbed by $CK_\lambda^{v,2}$ and the second term is controlled by Proposition \ref{lem:PrecisionEstimateNeq0} and the bootstrap controls on the $CCK$ terms, completing the treatment of $F^{0;NR,L}_{HL}$. 

Next consider $F^{0;R}_{HL;M}$ where, similar to the $R_N^{NR,R}$ term in \S\ref{sec:RigReac}, we have to use the gain from passing $A$ onto $\phi$.
By \eqref{ineq:NRRpartialtw}, \eqref{ineq:WFreqCompNRGain}, \eqref{lem:scon}, \eqref{ineq:SobExp} and \eqref{ineq:IncExp} we deduce for some $c \in (0,1)$,     
\begin{align*} 
\abs{F_{HL;M}^{0;R}} & \lesssim \sum_{k \neq 0} t^{1+2s} \int \chi^R \abs{ \left(\sqrt{\frac{\partial_t w_0(\eta)}{w_0(\eta)}} +  \frac{\abs{\eta}^{s/2}}{\jap{t}^{s}} \right)\frac{A_0(\eta)}{\jap{\eta}^s}\widehat{\bar{h}}(\eta)} \\ 
& \quad\quad\times   \sqrt{\frac{w_k(\xi)}{\partial_tw_k(\xi)}}\frac{\abs{k,\xi}}{\jap{\eta}^s} \frac{w_R(\xi)}{w_{NR}(\xi)} \abs{A\widehat{\phi}_{k}(\xi)_{M}} \abs{\widehat{\grad f}_{-k}(\eta-\xi)_{<M/8}}e^{c\lambda\abs{k,\eta-\xi}^s} d\eta d\xi \\
 & \lesssim \sum_{k \neq 0} t^{1+s} \int \chi^R \abs{ \left(\sqrt{\frac{\partial_t w_0(\eta)}{w_0(\eta)}} +  \frac{\abs{\eta}^{s/2}}{\jap{t}^{s}} \right)\frac{A_0(\eta)}{\jap{\eta}^s}\widehat{\bar{h}}(\eta)} \\ 
& \quad\quad\times   \sqrt{\frac{w_k(\xi)}{\partial_tw_k(\xi)}}\abs{k,\xi}\frac{w_R(\xi)}{w_{NR}(\xi)} \abs{A\widehat{\phi}_{k}(\xi)_{M}} \abs{\widehat{\grad f}_{-k}(\eta-\xi)_{<M/8}}e^{c\lambda\abs{k,\eta-\xi}^s} d\eta d\xi,
\end{align*}
where the last line followed since $\eta \approx kt$. Hence, \eqref{ineq:L2L2L1} and the bootstrap hypotheses imply (also using \eqref{ineq:SobExp} and the low frequency factor to sum in $k$ as well as $k^2 \lesssim \abs{\eta}$ to replace $A$ with $\tilde A$): 
\begin{align} 
\sum_{M \geq 8} \abs{F_{HL;M}^{0;R}} & \lesssim \epsilon t^2\norm{\abs{\partial_v}^{s/2}\frac{A}{\jap{\partial_v}^s}\bar{h}}_2^2 + \epsilon t^{2+2s}\norm{\sqrt{\frac{\partial_t w}{w}}\frac{A}{\jap{\partial_v}^s} \bar{h}}_2^2 \nonumber \\ & \quad  + \epsilon \norm{\sqrt{\frac{w}{\partial_t w}}\abs{\grad}\frac{w_R}{w_{NR}}\chi^R \tilde A \phi}^2_2. \label{ineq:FHLR}
\end{align}
The first two terms are absorbed by the $CK^{v,2}$ terms and hence this suffices for $F_{HL}^{0;R}$ after \eqref{ineq:ReactionControl}, Proposition \ref{lem:PrecisionEstimateNeq0} and the bootstrap controls on the CCK terms.  

Next, we deal with $F^0_{LH}$:
\begin{align*}
F_{LH;M}^0 & \lesssim \sum_{k \neq 0}t^{1+2s} \int \abs{\frac{A_0(\eta)}{\jap{\eta}^s} \hat{\bar{h}}(\eta)} \frac{A_0(\eta)}{\jap{\eta}^s} \abs{\widehat{\grad^\perp \phi}_{-k}(\eta-\xi)_{<M/8}} \abs{\widehat{\grad f}_{k}(\xi)_{M}} d\eta d\xi. 
\end{align*}
Here there is a serious loss of regularity from the $\grad f$ factor, and we will make fundamental use of $s \geq 1/2$. 
Indeed, by the frequency localization $\abs{\eta-\xi} \leq 3\abs{\eta}/16$, \eqref{ineq:BasicJswap} and \eqref{ineq:IncExp} we get the following, by absorbing $s$ derivatives using the regularity gap and $M \geq 8$,
\begin{align*}
\abs{F_{LH;M}^0} & \lesssim \sum_{k \neq 0}t^{1+2s} \int \abs{\frac{A_0(\eta)}{\jap{\eta}^s} \hat{\bar{h}}(\eta)} \abs{\xi}^{1-s}  e^{\lambda\abs{k,\eta-\xi}^s}\jap{k}\abs{\widehat{\grad^\perp \phi}_{-k}(\eta-\xi)_{<M/8}} \abs{A\widehat{f}_{k}(\xi)_{M}} d\eta d\xi. 
\end{align*}
Then crucially we use $1-s \leq s$ and $\abs{\eta} \approx \abs{\xi}$ to deduce
\begin{align*} 
\abs{F_{LH;M}^0} & \lesssim \sum_{k \neq 0}t^{1+2s} \int \abs{\eta}^{s/2}\abs{\frac{A_0(\eta)}{\jap{\eta}^s} \hat{\bar{h}}(\eta)} e^{\lambda\abs{k,\eta-\xi}^s}\jap{k}\abs{\widehat{\grad^\perp \phi}_{-k}(\eta-\xi)_{<M/8}} \abs{\abs{\xi}^{s/2}A\widehat{f}_{k}(\xi)_{M}} d\eta d\xi.
\end{align*}
Therefore, we apply \eqref{ineq:L2L2L1} and Lemma \ref{lem:LossyElliptic} to deduce (also gaining additional powers in $k$ to sum): 
\begin{align*} 
\abs{F_{LH;M}^0} \lesssim \epsilon \sum_{k \neq 0} k^{-2} t^{2s-1} \norm{\abs{\partial_v}^{s/2} \frac{A}{\jap{\partial_v}^s}\bar{h}_{\sim M}}_2\norm{\abs{\partial_v}^{s/2} (Af_{k})_{M}}_2.
\end{align*}
Therefore, summing in $k$ and $M$, 
\begin{align} 
\sum_{M \geq 8} \abs{F_{LH;M}^0} \lesssim \epsilon t^{2+2s - 2\tilde q} \norm{\abs{\partial_v}^{s/2} \frac{A}{\jap{\partial_v}^s}\bar{h}}_2^2   + \epsilon t^{2(s+\tilde q) - 4} \norm{\abs{\partial_v}^{s/2} Af}_2^2. \label{ineq:F0LH}
\end{align}
We have $2(s+\tilde q) - 4 < -2\tilde q$ if we choose $1/2 < \tilde q < 1-s/2$, which can always be done for $s$ close to $1/2$. 
Therefore for $\epsilon$ sufficiently small, the first term \eqref{ineq:F0LH} is absorbed by $CK_\lambda^{v,2}$ and the latter term is controlled by $\epsilon CK_\lambda$. 

The remainder term $F^0_{\mathcal{R}}$ is easy to handle, as in \S\ref{sec:RemainderEnergy}.
We omit the treatment and conclude from the bootstrap hypotheses
\begin{align} 
\abs{F_{\mathcal{R}}^0} \lesssim \jap{t}^{2s-1}\epsilon^2 \norm{\frac{A}{\jap{\partial_v}^s}\bar{h}}_2 \lesssim \epsilon t^{1+2s}\norm{\frac{A}{\jap{\partial_v}^s}\bar{h}}_2^2 + \epsilon^3 \jap{t}^{2s-3}. \label{ineq:F0R}
\end{align} 
The first term is absorbed by $CK_L^{v,2}$ and the latter is time integrable since $s < 1$. 

Combining \eqref{ineq:barhenergy}, \eqref{ineq:Th}, \eqref{ineq:F0S}, \eqref{ineq:F0HLNRL}, \eqref{ineq:FHLR}, \eqref{ineq:F0LH}, \eqref{ineq:F0R} with the bootstrap hypotheses and Proposition \ref{lem:PrecisionEstimateNeq0} completes the 
proof of \eqref{ineq:bhc}.

\textit{Proof of \eqref{ineq:hc}:}\\
From \eqref{def:h} we have
\begin{align}
\frac{1}{2}\frac{d}{dt}\norm{A^R h}_2^2 = -CK^{h}_\lambda - CK^h_w - \int A^Rh A^R\left([\partial_t v]\partial_v h \right) dv + \int A^R h A^R \bar{h} dv, \label{eq:henergy}
\end{align} 
where 
\begin{subequations} 
\begin{align}
CK_w^{h}(\tau) & = \norm{\sqrt{\frac{\partial_t w}{w}}A^R h(\tau)}_2^2 \\ 
CK_\lambda^{h}(\tau) & = (-\dot{\lambda}(\tau))\norm{\abs{\partial_v }^{s/2} A^R h(\tau)}_2^2.
\end{align} 
\end{subequations}
Using the integration by parts trick as in \S\ref{sec:MainEnergy}:
\begin{align*} 
-\int A^Rh A^R\left([\partial_t v]\partial_v h \right) dv & = \frac{1}{2}\int \partial_v [\partial_t v] \abs{A^R h}^2 dv 
         +\int A^Rh \left([\partial_t v] A^R \partial_v h - A^R\left([\partial_t v]\partial_v h \right) \right) dv \\ 
& = \mathcal{E} + \mathcal{M}. 
\end{align*} 
By Sobolev embedding and the bootstrap control on $[\partial_t v]$, 
\begin{align*} 
\mathcal{E} \lesssim \frac{\epsilon}{\jap{t}^{2 - K_D\epsilon/2}} \norm{A^R h}_2^2.
\end{align*}
As in \S\ref{sec:MainEnergy}, the commutator $\mathcal{M}$ is decomposed with a paraproduct, 
\begin{align*} 
\mathcal{M} & = \sum_{M \geq 8} \int A^Rh \left([\partial_t v]_{<M/8} A^R \partial_v h_M - A^R\left([\partial_t v]_{<M/8}\partial_v h_{M} \right) \right) dv \\ 
& \quad + \sum_{M \geq 8} \int A^Rh \left([\partial_t v]_{M} A^R \partial_v h_{<M/8} - A^R\left([\partial_t v]_{M}\partial_v h_{<M/8} \right) \right) dv \\  
& \quad + \sum_{M \in \mathbb{D}} \sum_{M^\prime \approx M} \int A^Rh \left([\partial_t v]_{M} A^R \partial_v h_{M^\prime} - A^R\left([\partial_t v]_{M}\partial_v h_{M^\prime} \right) \right) dv \\ 
& = T^h + R^h + \mathcal{R}^h.  
\end{align*}
The $T^h$ and $\mathcal{R}^h$ terms can be treated as in the methods used in \S\ref{sec:Transport} and \S\ref{sec:RemainderEnergy} with $u$ replaced just with $[\partial_t v]$ (also there are no  R vs NR losses from Lemma \ref{lem:Jswap} since the high frequency factors all have the same `resonant' regularity).
Hence, we omit the treatment and simply conclude
\begin{align} 
T^h + \mathcal{R}^h \lesssim \epsilon CK_\lambda^h + \epsilon^3 \jap{t}^{-2 + K_D\epsilon/2}.  \label{ineq:ThRemh}
\end{align}
In the `reaction' term, $R^h$, we have the same problem that arises in \S\ref{sec:ZeroMdReac}: a loss of regularity as $[\partial_t v]$ has $s$ fewer derivatives and only `non-resonant' regularity, resulting in as much as $1+s$ derivative loss. 
Consider one dyadic shell on the frequency side as in \S\ref{sec:RigReac}, 
\begin{align*} 
R^h_M & = -\frac{1}{2\pi}\int_{\eta,\xi} A^R \overline{\hat{h}}(\eta)_{\sim M} \left[A^R(\eta) - A^R(\eta-\xi)\right]\widehat{[\partial_t v]}(\xi)_M \widehat{\partial_v h}(\eta- \xi)_{<M/8} d\eta d\xi \\ 
& = R_M^{h;1} + R_M^{h;2}.
\end{align*}
The treatment of $R_M^{h;2}$ is essentially the same as the analogous $R_N^3$ in \S\ref{sec:RigReac} and is hence omitted: 
\begin{align} 
\sum_{M \geq 8}\abs{R_M^{h;2}} \lesssim \frac{\epsilon^3}{\jap{t}^{2-K_D\epsilon/2}}. \label{ineq:Rh2}
\end{align} 
The more interesting $R_M^{h,1}$ is treated in a manner similar to \S\ref{sec:ZeroMdReac} above.
We first divide the integral into the contributions where $A^R(t,\eta)$ disagrees noticeably with $A_0(t,\xi)$ and where it does not: 
\begin{align*}
R_M^{h,1} & = -\frac{1}{2\pi}\int_{\eta,\xi} \left[\sum_{k \neq 0} \mathbf{1}_{t \in \I_{k,\eta}} \mathbf{1}_{t \in \I_{k,\xi}} + \chi^{\ast}\right] A^R \overline{\hat{h}}(\eta)_{\sim M} A^R(\eta) \widehat{[\partial_t v]}(\xi)_M \widehat{\partial_v h}(\eta- \xi)_{<M/8} d\eta d\xi \\ 
& = \left(\sum_{k \neq 0}R_{M,k}^{h,1;D}\right) + R_M^{h,1;\ast}, 
\end{align*}
where $\chi^\ast = 1 - \sum_{k \neq 0} \mathbf{1}_{t \in \mathbf{I}_{k,\eta}} \mathbf{1}_{t \in \mathbf{I}_{k,\xi}}$. 
We first treat $R_M^{h,1;\ast}$. Due to the presence of $\chi^\ast$, we do not lose much by replacing $J^R(t,\eta)$ with $J_0(t,\xi)$ (recall the definition of $A^R$ and $J^R$ in \eqref{def:AR}).  Indeed, by the proof of Lemma \ref{lem:Jswap} we have on the support of the integrand: 
\begin{align*}
\chi^\ast \frac{J^R(t,\eta)}{J_0(t,\xi)} \lesssim e^{10\mu\abs{\eta-\xi}^{1/2}} \chi^\ast. 
\end{align*}
Therefore, since $\abs{\abs{\eta} - \abs{\xi}} \leq \abs{\eta-\xi} < 3\abs{\eta}/16$ on the support of the integrand, \eqref{lem:scon}, \eqref{ineq:IncExp} and \eqref{ineq:SobExp} together imply that for some $c \in (0,1)$ we have, 
\begin{align*} 
\abs{R_M^{h,1;\ast}} & \lesssim  \int_{\eta,\xi} \chi^{\ast} \abs{A^R \hat{h}(\eta)_{\sim M}}\abs{ A_0(\xi)\widehat{[\partial_t v]}(\xi)_M e^{c\lambda\abs{\eta-\xi}^s}\widehat{\partial_v h}(\eta- \xi)_{<M/8}} d\eta d\xi.  
\end{align*}
Using that $\abs{\eta} \approx \abs{\xi} \gtrsim 1$ on the support of the integrand and applying \eqref{ineq:L2L2L1} (with the bootstrap hypotheses) we deduce,
\begin{align} 
\abs{R_M^{h,1;\ast}} & \lesssim \int_{\eta,\xi} \chi^{\ast} \abs{A^R \hat{h}(\eta)_{\sim M}} \frac{\abs{\eta}^{s/2}\abs{\xi}^{s/2}}{\jap{\xi}^s} \abs{ A_0(\xi)\widehat{[\partial_t v]}(\xi)_M e^{c\lambda\abs{\eta-\xi}^s}\widehat{\partial_v h}(\eta- \xi)_{<M/8}} d\eta d\xi \nonumber \\ 
 & \lesssim  \epsilon \norm{\abs{\partial_v}^{s/2}A^R h_{\sim M}}_2 \norm{\abs{\partial_v}^{s/2}\frac{A}{\jap{\partial_v}^s} [\partial_t v]_M}_2 \nonumber \\ 
& \lesssim \epsilon \jap{t}^{-2s}\norm{\abs{\partial_v}^{s/2}A^R h_{\sim M}}_2^2 + \epsilon \jap{t}^{2s}\norm{\abs{\partial_v}^{s/2}\frac{A}{\jap{\partial_v}^s} [\partial_t v]_M}^2_2.  \label{ineq:RhM2ast}
\end{align}
For $\epsilon$ sufficiently small, the first term is absorbed by $CK_\lambda^h$ and the second term is controlled by $CK_\lambda^{v,1}$ since $2s < 2 + 2s - 2\tilde q$. 

Next turn to $R_{M,k}^{h,1;D}$. 
The following version of \eqref{ineq:RatJ2partt} applies on the support of the integrand with an analogous proof (recall \eqref{def:AR}): 
\begin{align} 
\frac{J^R(\eta)}{J_0(\xi)} \lesssim \frac{\abs{\eta}}{k^2}\sqrt{\frac{\partial_t w_0(t,\eta)}{w_0(t,\eta)}}\sqrt{\frac{\partial_t w_0(t,\xi)}{w_0(t,\xi)}}e^{20\mu\abs{\eta-\xi}^{1/2}}. \label{ineq:RatJ2parttJR}
\end{align} 
Therefore, by \eqref{lem:scon} and \eqref{ineq:IncExp} there exists $c\in (0,1)$ such that
\begin{align*} 
\abs{R_{M,k}^{h,1;D}} & \lesssim  \int_{\eta,\xi} \mathbf{1}_{t \in \I_{k,\eta}} \mathbf{1}_{t \in \I_{k,\xi}} \abs{\sqrt{ \frac{\partial_t w}{w} }A^R \hat{h}(\eta)_{\sim M}}\frac{\abs{\eta}}{k^2}\abs{\sqrt{\frac{\partial_t w}{w}}A\widehat{[\partial_t v]}(\xi)_M e^{c\lambda\abs{\eta-\xi}^s}\widehat{\partial_v h}(\eta- \xi)_{<M/8}} d\eta d\xi. 
\end{align*}
On the support of the integrand $kt \approx \eta \approx \xi$, so by \eqref{ineq:L2L2L1} and the bootstrap hypotheses,
\begin{align} 
\abs{R_{M,k}^{h,1;D}} & \lesssim t^{1+s} \int_{\eta,\xi} \mathbf{1}_{t \in \I_{k,\eta}} \mathbf{1}_{t \in \I_{k,\xi}} \abs{\sqrt{ \frac{\partial_t w}{w} }A^R \hat{h}(\eta)_{\sim M}} \frac{1}{\jap{\xi}^s} \abs{\sqrt{\frac{\partial_t w}{w}}A\widehat{[\partial_t v]}(\xi)_M e^{c\lambda\abs{\eta-\xi}^s}\widehat{\partial_v h}(\eta- \xi)_{<M/8}} d\eta d\xi \nonumber \\ 
& \lesssim \epsilon t^{1+s} \norm{\sqrt{ \frac{\partial_t w}{w}} A^R \mathbf{1}_{t \in \I_{k,\partial_v}} h_{\sim M}}_2\norm{\sqrt{ \frac{\partial_t w}{w} } \frac{A}{\jap{\partial_v}^{s}} \mathbf{1}_{t \in \I_{k,\partial_v}} [\partial_t v]_{M}}_2, \nonumber 
\end{align} 
where we are denoting the Fourier multiplier $\widehat{\left(\mathbf{1}_{t \in \I_{k,\partial_v}} f\right)}(t,\eta) = \mathbf{1}_{t \in \I_{k,\eta}} \hat{f}(t,\eta)$. 
For $t$ fixed, the supports of these multipliers for different $k$ are disjoint in frequency, and hence we can sum: 
\begin{align}
\sum_{k\neq 0}\abs{R_{M,k}^{h,1;D}} & \lesssim \sum_{k \neq 0} \epsilon \norm{\sqrt{ \frac{\partial_t w}{w}}A^R \mathbf{1}_{t \in \I_{k,\partial_v}} h_{\sim M}}^2_2 + \epsilon t^{2+2s}\norm{\sqrt{ \frac{\partial_t w}{w} } \frac{A}{\jap{\partial_v}^{s}}\mathbf{1}_{t \in \I_{k,\partial_v}} [\partial_t v]_{M}}^2_2 \nonumber \\  
& \approx \epsilon \norm{\sqrt{ \frac{\partial_t w}{w}}A^R h_{\sim M}}^2_2 + \epsilon t^{2+2s}\norm{\sqrt{ \frac{\partial_t w}{w} } \frac{A}{\jap{\partial_v}^{s}} [\partial_t v]_{M}}^2_2, \label{ineq:RhMD} 
\end{align}
which are respectively absorbed by $CK_w^h$ and bounded by $CK_w^{v,1}$. This concludes the treatment of the first (non CK) term in \eqref{eq:henergy}. 

The treatment of the term involving $\bar{h}$ in \eqref{eq:henergy} is similar to $R_M^{h,1}$.
That the dependence is linear is the reason for the presence of the large constant $K_v$ in \eqref{ineq:hc}.  
As in the treatment of $R_M^{h,1}$ we divide based on frequency: 
\begin{align*} 
\int A^R h A^R \bar{h} dv & = \int_{\eta} \left[\sum_{k \neq 0} \mathbf{1}_{t \in \I_{k,\eta}} + \chi^{\ast}\right] A^R \overline{\hat{h}}(\eta) A^R(\eta) \widehat{\bar{h}}(\eta) d\eta  = \left(\sum_{k \neq 0}H^D_k \right) + H^\ast, 
\end{align*}
where $\chi^\ast = 1 - \sum_{k \neq 0} \mathbf{1}_{t \in \mathbf{I}_{k,\eta}}$. 
  
First turn to $H^{D}_k$. Here, \eqref{ineq:RatJ2parttJR} with $\xi = \eta$ holds on the support of the integrand, as does $\abs{\eta} \gtrsim 1$, and hence
\begin{align*} 
\abs{H^D_k} &  \lesssim \int_{\eta} \mathbf{1}_{t \in \I_{k,\eta}} \abs{A^R \hat{h}(\eta)} \frac{\partial_t w(\eta)}{w(\eta)} \frac{\abs{\eta}^{1+s}}{k^2} \abs{\frac{A_0(\eta)}{\jap{\eta}^s}\widehat{\bar{h}}(\eta)} d\eta. 
\end{align*}
As the Fourier restrictions have disjoint support and since $kt \approx \eta$ on the support of the integrand, we get, by Cauchy-Schwarz, the following for some fixed constant $C > 0$, 
\begin{align} 
\sum_{k\neq 0}\abs{H_k^D} 
 & \leq \sum_{k \neq 0}\frac{1}{4}\norm{\sqrt{\frac{\partial_t w}{w}}A^R \mathbf{1}_{t \in \I_{k,\partial_v}} h }_2^2 + Ct^{2+2s}\norm{\sqrt{\frac{\partial_t w}{w}}\frac{A}{\jap{\partial_v}^s}\mathbf{1}_{t \in \I_{k,\partial_v}}\bar{h}}_2^2 \nonumber \\ 
 & \leq \frac{1}{4}\norm{\sqrt{\frac{\partial_t w}{w}}A^R h }_2^2 + Ct^{2+2s}\norm{\sqrt{\frac{\partial_t w}{w}}\frac{A}{\jap{\partial_v}^s} \bar{h}}_2^2. \label{ineq:HD}
\end{align} 
The first term is absorbed by $CK_w^h$ and the latter controlled by the bootstrap hypothesis on $CK^{v,2}_w$, provided we choose $K_v \gg C$.  

Next we turn to $H^\ast$. Due to the presence of $\chi^\ast$, $A^R(t,\eta) \approx A_0(t,\eta)$ on the support of the integrand, by the same proof as \eqref{ineq:BasicJswap}. Hence, we do not need to concern ourselves with the distinction. 
However, we still need to recover the gap of $s$ derivatives.
We treat high and low frequencies separately: for some $C > 0$, we have by Cauchy-Schwarz,
\begin{align} 
\abs{H^{\ast}} & \lesssim \int_{\abs{\eta} \geq 1} \chi^{\ast} \abs{A^R h(\eta)} \abs{\eta}^{s} \abs{\frac{A}{\jap{\eta}^s}\bar{h}(\eta)} d\eta + \int_{\abs{\eta} < 1} \chi^{\ast} \abs{A^R h(\eta)} \abs{\frac{A}{\jap{\eta}^s}\bar{h}(\eta)} d\eta \nonumber \\ 
& \lesssim \norm{\abs{\partial_v}^{s/2} A^R h}_2\norm{\abs{\partial_v}^{s/2} \frac{A}{\jap{\partial_v}^s} \bar{h}}_2 + \norm{A^Rh}_2\norm{\frac{A}{\jap{\partial_v}^s} \bar{h}}_2 \nonumber \\ 
& \leq \frac{\delta_\lambda}{10 t^{2\tilde q}}\norm{\abs{\partial_v}^{s/2} A^R h}_2^2 +  \frac{C}{\delta_\lambda} t^{2\tilde q}\norm{\abs{\partial_v}^{s/2} \frac{A}{\jap{\partial_v}^s} \bar{h}}_2^2 + C\norm{A^R h}_2 \norm{\frac{A}{\jap{\partial_v}^s} \bar{h}}_2.  \label{ineq:Hast}
\end{align}
The first term is absorbed by $CK^{h}_\lambda$ and the latter terms are controlled by the bootstrap hypotheses on $CK_\lambda^{v,2}$ and $\bar{h}$ provided we choose $K_v \gg C\delta_{\lambda}^{-2}$ and $2+2s > 4\tilde q$.   

Finally, summing together \eqref{ineq:ThRemh}, \eqref{ineq:Rh2}, \eqref{ineq:RhM2ast}, \eqref{ineq:RhMD}, \eqref{ineq:HD}, \eqref{ineq:Hast} (using almost orthogonality) with \eqref{eq:henergy} and the bootstrap hypotheses implies that for $\epsilon$ chosen sufficiently small,
\begin{align}
\norm{A^Rh(t)}_2^2 + \frac{1}{2}\int_1^t CK_w^h(\tau) d\tau + \frac{1}{2}\int_1^t  CK_\lambda^{h}(\tau) d\tau  \lesssim K_v\epsilon^2, \label{ineq:hcalmost}
\end{align}
which almost proves \eqref{ineq:hc}. To complete the proof we need to apply the product rules in Lemma \ref{lem:ProdAlg}. 
Indeed, writing $(v^{\prime})^2-1 = (v^\prime-1)^2 + 2(v^\prime - 1)$, and applying Lemma \ref{lem:ProdAlg} and \eqref{ineq:hcalmost} gives,
\begin{align} 
CCK_w^1 + CCK_\lambda^1 \lesssim  CK_w^h + CK_\lambda^{h} + \epsilon^2 \left(CK_w^h + CK_\lambda^{h}\right). \label{ineq:hcCCK}
\end{align} 
Similarly, for $CCK^2_\lambda$ and $CCK_w^2$ terms, write $v^\prime \partial_v v^\prime = \partial_v (v^\prime - 1) + (v^\prime -1)\partial_v (v^\prime - 1)$ and apply Lemma \ref{lem:ProdAlg}: 
\begin{align}
CCK_\lambda^2 + CCK_w^{2} & \lesssim CK_w^h + CK^h_\lambda + \norm{\sqrt{\frac{\partial_t w}{w}}\frac{A^R}{\jap{\partial_v}}\left( (1-v^\prime)\partial_v v^\prime\right)}_2^2 + \norm{\abs{\partial_v}^{s/2}\frac{A^R}{\jap{\partial_v}}\left( (1-v^\prime)\partial_v v^\prime\right)}_2^2 \nonumber \\ 
& \lesssim  CK^h_w + CK^h_\lambda + \epsilon^2 \left(CK_w^h + CK_\lambda^{h}\right).\label{ineq:CCK2ctrl}
\end{align}  
Hence, by possibly adjusting $K_v$ and choosing $\epsilon$ even smaller, \eqref{ineq:hcalmost}, \eqref{ineq:hcCCK} and \eqref{ineq:CCK2ctrl} imply \eqref{ineq:hc}.  

\textit{Proof of \eqref{ineq:ckvctrl}:}\\
Both of the terms in \eqref{ineq:ckvctrl} are controlled in essentially the same way. 
To see the control on the first term, start with dividing into high and low frequency and  use the bootstrap control on $[\partial_t v]$:
\begin{align}
\jap{\tau}^{2+2s}\abs{\dot\lambda(\tau)}\norm{\abs{\partial_v}^{s/2} \frac{A}{\jap{\partial_v}^s}[\partial_t v](\tau)}_2^2 & \leq \jap{\tau}^{2+2s}\abs{\dot\lambda(\tau)}\norm{\abs{\partial_v}^{s/2}\frac{A}{\jap{\partial_v}^s}[\partial_t v](\tau)_{\leq 1}}_2^2 \nonumber \\ & \quad + \jap{\tau}^{2+2s}\abs{\dot\lambda(\tau)} \norm{\abs{\partial_v}^{s/2}\frac{A}{\jap{\partial_v}^s}[\partial_t v](\tau)_{>1}}_2^2 \nonumber \\ 
& \hspace{-3cm}  \lesssim \jap{\tau}^{2s - 2 - 2\tilde q + K_D\epsilon}\epsilon^2 + \jap{\tau}^{2+2s}\abs{\dot\lambda(\tau)}\norm{\abs{\partial_v}^{s/2}\frac{A}{\jap{\partial_v}^s}\partial_v [\partial_t v](\tau)_{>1}}_2^2. \label{ineq:ckvbd}
\end{align}
The first term is integrable for $\epsilon$ small and the latter term is bounded by
\begin{align} 
\norm{\abs{\partial_v}^{s/2}\frac{A}{\jap{\partial_v}^s}\partial_v [\partial_t v](\tau)}_2^2
& \lesssim \norm{\abs{\partial_v}^{s/2}\frac{A}{\jap{\partial_v}^s} (v^\prime \partial_v [\partial_t v](\tau))}_2^2 \nonumber \\ & \quad + \norm{\abs{\partial_v}^{s/2}\frac{A}{\jap{\partial_v}^s} \left(\frac{v^\prime-1}{v^\prime} v^\prime\partial_v [\partial_t v](\tau)\right)}_2^2. \label{ineq:CKv1_ctrl2}
\end{align}
The first term in \eqref{ineq:CKv1_ctrl2} is already controlled by $CK_\lambda^{v,2}$. By \eqref{ineq:wtAProd}, the second term is bounded by the following for some $c \in (0,1)$: 
\begin{align} 
\norm{\abs{\partial_v}^{s/2}\frac{A}{\jap{\partial_v}^s} \left(\frac{v^\prime-1}{v^\prime} v^\prime\partial_v [\partial_t v](\tau)\right)}_2^2 & \lesssim  \norm{\abs{\partial_v}^{s/2}\frac{A}{\jap{\partial_v}^s} \left(\frac{v^\prime-1}{v^\prime} \right)}_2^2 \norm{v^\prime\partial_v [\partial_t v](\tau)}_{\G^{c\lambda,\sigma}}^2 \nonumber \\ & \quad + \norm{\abs{\partial_v}^{s/2}\frac{A}{\jap{\partial_v}^s}v^\prime\partial_v [\partial_t v](\tau)}_2^2 \norm{\frac{v^\prime-1}{v^\prime}}_{\G^{c\lambda,\sigma}}^2. \label{ineq:ckvpOv}
\end{align} 
By the bootstrap hypotheses, we may write $(v^\prime)^{-1}$ as the uniformly convergent geometric series
\begin{align*} 
\frac{1}{v^\prime(t,v)} = 1 + \sum_{n=1}^\infty (v^\prime(t,v)-1)^{n}. 
\end{align*}
Therefore, by \eqref{ineq:GAlg} and the bootstrap control on $v^\prime - 1$, for some $C>0$ we have for $\epsilon$ small: 
\begin{align} 
\norm{\frac{v^\prime-1}{v^\prime}}_{\G^{c\lambda,\sigma}} = \norm{\sum_{n=1}^\infty (v^\prime -1)^n}_{\G^{c\lambda,\sigma}} \leq \sum_{n=1}^{\infty}(C\epsilon)^n \lesssim \epsilon.  \label{ineq:ckvp1_ctrl2}
\end{align} 
Together with the bootstrap control on $CK_\lambda^{v,2}$, this suffices to treat the second term in \eqref{ineq:ckvpOv}.  
To control the first term, we use a repeated application of \eqref{ineq:AProd} and \eqref{ineq:Aalg}, choosing $\epsilon$ sufficiently small to sum the resulting series (denoting constants in these inequalities as $C_p$ and $C_a$ respectively),   
\begin{align*} 
\norm{\abs{\partial_v}^{s/2}\frac{A}{\jap{\partial_v}^s} \left(\frac{v^\prime-1}{v^\prime} \right)}_2^2 & = \norm{\abs{\partial_v}^{s/2}\frac{A}{\jap{\partial_v}^s} \sum_{n=1}^{\infty} (v^\prime-1)^n }_2^2 \\ 
& \leq \left( \sum_{n=1}^\infty \norm{\abs{\partial_v}^{s/2}\frac{A}{\jap{\partial_v}^s} (v^\prime-1)^n }_2 \right)^2 \\ 
& \leq \left( \norm{\abs{\partial_v}^{s/2}\frac{A}{\jap{\partial_v}^s} (v^\prime-1)}_2 \sum_{n=1}^\infty \epsilon^{n-1}4^{n-1}K_v^{n-1}\sum_{j=1}^n C_p^{n-j} C_a^{j-1}\right)^2 \\
& \leq \norm{\abs{\partial_v}^{s/2}\frac{A}{\jap{\partial_v}^s} (v^\prime-1)}_2^2 \left( \sum_{n=1}^\infty \epsilon^{n-1} n 4^{n-1} C_p^{n-1}K_v^{n-1}C_a^{n-1}\right)^2 \\ 
& \lesssim \norm{\abs{\partial_v}^{s/2}\frac{A}{\jap{\partial_v}^s} (v^\prime-1)}_2^2.
\end{align*} 
Therefore, putting this together with \eqref{ineq:ckvbd}, \eqref{ineq:CKv1_ctrl2}, \eqref{ineq:ckvpOv}, \eqref{ineq:ckvp1_ctrl2} and the bootstrap control on $v^\prime \partial_v[\partial_t v]$ implies, 
\begin{align*} 
\jap{\tau}^{2+2s}\abs{\dot{\lambda}(\tau)}\norm{\abs{\partial_v}^{s/2}\frac{A}{\jap{\partial_v}^s}\partial_v [\partial_t v](\tau)}_2^2
& \lesssim CK_\lambda^{v,2} + \epsilon^2\abs{\dot{\lambda}(\tau)}\norm{\abs{\partial_v}^{s/2}\frac{A}{\jap{\partial_v}^s} (v^\prime-1)}_2^2.  
\end{align*}
The first term is integrable by the bootstrap hypotheses and the latter term is integrable by the $CK_\lambda^h$ bound in \eqref{ineq:hcalmost}. From \eqref{ineq:ckvbd}, this completes the treatment of the first term in \eqref{ineq:ckvctrl}. 

To see control on the second term in \eqref{ineq:ckvctrl}, first divide into high and low frequency and use that $\partial_t w$ is only supported in frequencies larger than $1/2$ (see \eqref{def:wk}): 
\begin{align*}
\jap{\tau}^{2 + 2s} \norm{\sqrt{\frac{\partial_t w}{w}} \frac{A}{\jap{\partial_v}^s}[\partial_t v](\tau)}_2^2 
& \lesssim \jap{\tau}^{2 + 2s} \norm{\sqrt{\frac{\partial_t w}{w}}\frac{A}{\jap{\partial_v}^s}\partial_v [\partial_t v](\tau)_{>1/2}}_2^2.
\end{align*} 
From here, it is very similar to the treatment of the first term in \eqref{ineq:ckvctrl}, except now we apply \eqref{ineq:wtAProd} as opposed to \eqref{ineq:AProd}. We omit this for the sake of brevity.

\textit{Proof of \eqref{ineq:partialvintro}:} \\ 
This estimate is relatively straightforward to prove since, due to the lower regularity, we can utilize Lemma \ref{lem:LossyElliptic} as opposed to Proposition \ref{lem:PrecisionEstimateNeq0}. 
It is possible to only use a gap of $s$ derivatives and apply Proposition \ref{lem:PrecisionEstimateNeq0}, however, one will be repeating a more intricate version of the arguments used to deduce \eqref{ineq:bhc}. 

Computing the evolution of $[\partial_t v]$ gives
\begin{align} 
\frac{d}{dt}\left(\jap{t}^{4-K_D\epsilon}\norm{[\partial_t v]}_{\G^{\lambda(t),\sigma-6}}^2\right) & = (4-K_D\epsilon)t \jap{t}^{2-K_D\epsilon}\norm{[\partial_t v]}_{\G^{\lambda(t),\sigma-6}}^2 \nonumber \\ & \quad + \jap{t}^{4-K_D\epsilon}\frac{d}{dt}\norm{\frac{A}{\jap{\partial_v}^s}[\partial_t v]}_{\G^{\lambda(t),\sigma-6}}^2.  \label{ineq:ddtpartt}
\end{align}
Denoting the multiplier $A^S(t,\partial_v) = e^{\lambda(t)\abs{\partial_v}^s}\jap{\partial_v}^{\sigma-6}$ (`S' for `simple'), the latter term gives
\begin{align} 
\jap{t}^{4-K_D\epsilon}\frac{d}{dt}\norm{[\partial_t v]}_{\G^{\lambda(t),\sigma-6}}^2 & = 2\jap{t}^{4-K_D\epsilon} \dot\lambda(t)\norm{\abs{\partial_v}^{s/2}[\partial_t v]}_{\G^{\lambda(t),\sigma-6}}^2 \nonumber  \\ & \quad + 
  2\jap{t}^{4-K_D\epsilon}\int A^S[\partial_t v] A^S \partial_t[\partial_{t} v] dv, \label{ineq:timederivpartialt}
\end{align} 

From \eqref{def:ddtv}, 
\begin{align} 
 2\jap{t}^{4-K_D\epsilon}\int A^S[\partial_t v] A^S \partial_t[\partial_{t} v] dv & = -\frac{4\jap{t}^{4-K_D\epsilon}}{t}\norm{[\partial_t v]}_{\G^{\lambda(t),\sigma-6}}^2 \nonumber \\ 
& \quad - 2\jap{t}^{4-K_D\epsilon} \int A^S [\partial_t v] A^S \left([\partial_t v]\partial_v [\partial_t v]\right) dv 
 \nonumber \\ & \quad - \frac{2\jap{t}^{4-K_D\epsilon}}{t}\int A^S [\partial_t v] A^S\left(v^\prime \langle \grad^\perp P_{\neq 0}\phi\cdot \grad \tilde u\rangle \right) dv \nonumber \\ 
& = V_1 + V_2 + V_3. \label{def:V1V2V3}
\end{align}
By \eqref{ineq:GAlg}, an argument analogous to \eqref{ineq:ckvp1_ctrl2} and the bootstrap hypotheses we have, 
\begin{align}
V_2 & \lesssim \jap{t}^{4-K_D\epsilon}  \norm{[\partial_t v]}_{\G^{\lambda,\sigma-6}} \norm{[\partial_t v]\left(1 + \frac{1-v^\prime}{v^\prime} \right) v^\prime \partial_v [\partial_t v]}_{\G^{\lambda,\sigma-6}} \nonumber \\ 
& \lesssim \jap{t}^{4-K_D\epsilon}  \norm{[\partial_t v]}_{\G^{\lambda,\sigma-6}}^2 \norm{v^\prime \partial_v [\partial_t v]}_{\G^{\lambda,\sigma-6}}\left(1 + \norm{\frac{v^\prime -1}{v^\prime}}_{\G^{\lambda,\sigma-6}}\right) \nonumber \\ 
& \leq \frac{K_D\epsilon}{2} \jap{t}^{3-K_D\epsilon-s}  \norm{[\partial_t v]}_{\G^{\lambda,\sigma-6}}^2, \label{ineq:V2}
\end{align} 
where we define $K_D$ to be the maximum of the constant appearing in this term and one other below.

Treating $V_3$ is not hard due to the regularity gap of $6$ derivatives. 
Note that
\begin{align} 
\grad \tilde u  = -
\begin{pmatrix} 
v^\prime (\partial_v - t\partial_z)\partial_z\phi \\ 
\partial_vv^\prime (\partial_v - t\partial_z)\phi + v^\prime(\partial_v - t\partial_z)\partial_v \phi 
\end{pmatrix}
, \label{def:gradtu}
\end{align}
and therefore by \eqref{ineq:GAlg}, \eqref{def:gradtu}, Lemma \ref{lem:LossyElliptic} and the bootstrap hypotheses, 
\begin{align} 
\norm{\grad P_{\neq 0} \tilde u(t)}_{\G^{\lambda(t),\sigma-5}} \lesssim \frac{\epsilon}{\jap{t}}. \label{ineq:gradulossy}
\end{align}
Projecting to individual frequencies in $z$ and noting that $k \neq 0$ (by the $z$ average and the projection on $\phi$), by \eqref{ineq:GAlg} and Cauchy-Schwarz we have 
\begin{align*} 
V_3 & = \sum_{k \neq 0} \frac{2\jap{t}^{4-K_D\epsilon}}{t}\int A^S [\partial_t v] A^S\left[v^\prime  \grad^\perp \phi_k \cdot \grad \tilde u_{-k} \right] dv \\ 
& \lesssim \sum_{k \neq 0} \frac{\jap{t}^{4-K_D\epsilon}}{t}\norm{A^S[\partial_t v]}_2\norm{A^S\grad^\perp \phi_k}_2 \norm{A^S \grad \tilde u_{-k}}_2\left(1 + \norm{A^S(v^\prime - 1)}_2 \right) \\ 
& \lesssim \frac{\jap{t}^{4-K_D\epsilon}}{t}\norm{[\partial_t v]}_{\G^{\lambda,\sigma-6}}\norm{\grad^\perp P_{\neq 0}\phi}_{\G^{\lambda,\sigma-6}} \norm{\grad P_{\neq 0} \tilde u}_{\G^{\lambda,\sigma-6}}\left(1 + \norm{v^\prime - 1}_{\G^{\lambda,\sigma-6}} \right).
\end{align*}
By \eqref{ineq:gradulossy}, the bootstrap hypotheses on $v^\prime -1$, and Lemma \ref{lem:LossyElliptic}  we then have for some $C > 0$, 
\begin{align} 
V^3 & \lesssim \jap{t}^{-K_D\epsilon} \epsilon^2 \norm{[\partial_t v]}_{\G^{\lambda,\sigma-6}} \leq \frac{K_D\epsilon \jap{t}^{4-K_D\epsilon}}{2t} \norm{[\partial_t v]}_{\G^{\lambda,\sigma-6}}^2 + C\epsilon^3 t^{-3 -K_D\epsilon}. \label{ineq:V3}
\end{align}  
Putting together \eqref{ineq:V2},\eqref{ineq:V3} with \eqref{def:V1V2V3} and \eqref{ineq:timederivpartialt} and integrate, we derive \eqref{ineq:partialvintro}. This completes the proof of Proposition \ref{prop:CoefControl_Real}.
Being the last Proposition remaining, the proof of Theorem \ref{thm:Main} is complete.  

\section{Conclusion} \label{sec:Conclusion}
Our proof is very different from that of Mouhot and Villani \cite{MouhotVillani11}, however 
there are still some analogies and common themes that are worth pointing out. 
Let us mention  the most important mathematical parallels:
\begin{itemize}
\item In  \cite{MouhotVillani11}, the role of the plasma echoes has similarities with 
that of the Orr critical times, being that both are responsible for the main nonlinear growth.  
 The moment estimates in  \cite{MouhotVillani11} 
controls this growth whereas here the ``toy model'' plays this role. 
 Note that the  exponential growth in time of \cite{MouhotVillani11} is replaced here by a controlled regularity loss.  

\item The use of para-differential calculus (combined with the well-established existence theory) allows to avoid 
the use of the Newton iteration in \cite{MouhotVillani11}. 
For example, the paraproduct decomposition permits us to separate the natural transport effects and the reaction effects. 

\item Our treatment of transport  in the energy estimates 
and the   well-chosen change of variables  allow us 
 to avoid the use of `deflection maps' as in \cite{MouhotVillani11}.
\end{itemize}
Although the Euler and Vlasov-Poisson systems have several fundamental differences,
 after completing this work we succeeded, with C. Mouhot,
to extend the Landau damping result in \cite{MouhotVillani11}
to all Gevrey class smaller than three (e.g. $s > 1/3$) using some of  the  ideas of this work \cite{BMM13}.
 
An obvious question about Theorem \ref{thm:Main} is whether or not $s > 1/2$ is optimal. 
The toy model \eqref{toy}, which estimates the worst possible growth and is behind  
the regularity requirement, took absolute values and hence does not take into account 
 the potential for oscillations and   
cancellations.  
 In fact, numerical simulations of the 3-wave model of \cite{VMW98} show dramatic oscillations. 
Even when there are infinitely many interacting modes, oscillations may often
 cancel and yield much weaker growth than the one given by 
 \eqref{toy}.  However,   we think  it would be extremely difficult to rule out the possibility that there exist rare configurations which are ``resonant''  in some sense 
  and lead to a growth which matches our toy model (we expect that such configurations are highly non-generic). 

There are many other related problems where the linear operator predicts 
a decay by mixing  similar to \eqref{def:2DEuler}. 
The most obvious extension is to include a more general class of shear flows where now we linearize around the flow 
$(V(y),0)$. 
We believe that this requires the incorporation of several non-trivial enhancements, since it fundamentally changes the structure of the critical times which would manifest in our approach most clearly in the elliptic estimates. 
A related extension would be to study the problem in the presence of no-penetration boundaries in $y$.  
A third problem would be to remove the periodicity in $x$, altering the physical mechanism from mixing to \emph{filamentation} (in fact the behavior may be different, as is possible in Landau damping, see \cite{glassey94,glassey95}).
Here there are fundamental difficulties: our proof relies heavily on the `well-separation' of critical times, which no longer holds in the unbounded case. 
Other examples in fluid mechanics pointed out in \S\ref{sec:Intro} include the $\beta$-plane model, stratified flows and the vortex axisymmetrization problem.
Proving decay by mixing on the nonlinear level for any of these models seems to be a very interesting question. Of all of these, we expect the  $\beta$-plane model to be the easiest.

\subsection*{Acknowledgments}
The authors would like to warmly thank Clement Mouhot for sharing his many insights about this problem and for enlightening discussions on this topic and the topic of Landau damping.    
We would also like to thank the following people for references and suggestions: Antoine Cerfon, George Hagstrom, Susan Friedlander, Andrew Majda, Vladimir Sverak, Vlad Vicol, Cedric Villani and Chongchun Zeng as well as David G\'erard-Varet for pointing out an error in the first draft.  
J. Bedrossian would like to thank David Uminsky for pointing him to Landau damping and its connection with 2D Euler. 

\appendix
\section{Appendix}
\subsection{Littlewood-Paley decomposition and paraproducts} \label{Apx:LPProduct}
In this section we fix conventions and notation regarding Fourier analysis, Littlewood-Paley and paraproduct decompositions. 
See e.g. \cite{Bony81,BCD11}  for more details. 

For $f(z,v)$ in the Schwartz space, we define the Fourier transform $\hat{f}_k(\eta)$ where $(k,\eta) \in \Integer \times \Real$, 
\begin{align*} 
\hat{f}_k(\eta) = \frac{1}{2\pi}\int_{\Torus \times \Real} e^{-i z k - iv\eta} f(z,v) dz dv. 
\end{align*}
Similarly we have the Fourier inversion formula, 
\begin{align*} 
f(z,v) = \frac{1}{2\pi}\sum_{k \in \Integer} \int_{\Real} e^{i z k + iv\eta} \hat{f}_k(\eta) d\eta. 
\end{align*} 
As usual the Fourier transform and its inverse are extended to $L^2$ via duality. 
We also need to apply the Fourier transform to functions of $v$ alone, for which we use analogous conventions.
With these conventions note, 
\begin{align*} 
\int f(z,v) \overline{g}(z,v) dzdv & = \sum_{k}\int \hat{f}_k(\eta) \overline{\hat{g}}_{k}(\eta) d\eta \\ 
\widehat{fg} & = \frac{1}{2\pi}\hat{f} \ast \hat{g} \\ 
(\widehat{\grad f})_k(\eta) & = (ik,i\eta)\hat{f}_k(\eta). 
\end{align*}

This work makes heavy use of the Littlewood-Paley dyadic decomposition.  
Here we fix conventions and review the basic properties of this classical theory, see e.g. \cite{BCD11} for more details. 
First we define the Littlewood-Paley decomposition only in the $v$ variable. 
Let $\psi \in C_0^\infty(\Real;\Real)$ be such that $\psi(\xi) = 1$ for $\abs{\xi} \leq 1/2$ and $\psi(\xi) = 0$ for $\abs{\xi} \geq 3/4$ and define $\rho(\xi) = \psi(\xi/2) - \psi(\xi)$, supported in the range $\xi \in (1/2,3/2)$. 
Then we have the partition of unity 
\begin{align*} 
1 = \psi(\xi) + \sum_{M \in 2^\Naturals} \rho(M^{-1}\xi), 
\end{align*}  
where we mean that the sum runs over the dyadic numbers $M = 1,2,4,8,...,2^{j},...$
 and we define the cut-off $\rho_M(\xi) = \rho(M^{-1}\xi)$, each supported in $M/2 \leq \abs{\xi} \leq 3M/2$. 
For $f \in L^2(\Real)$ we define
\begin{align*} 
f_{M} & = \rho_M(\abs{\partial_v})f, \\ 
f_{\frac{1}{2}} & =  \psi(\abs{\partial_v})f, \\ 
f_{< M} & = f_{\frac{1}{2}} + \sum_{K \in 2^{\Naturals}: K < M} f_K, 
\end{align*}
which defines the decomposition  
\begin{align*} 
f = f_{\frac{1}{2}} + \sum_{M \in 2^\Naturals} f_M.  
\end{align*}
Normally one would use $f_0$ rather than the slightly inconsistent $f_{\frac{1}{2}}$, however $f_0$ is reserved for the much more commonly used projection onto the zero mode only in $z$ (or $x$).  
Recall the definition of $\mathbb D$ from \S\ref{sec:Notation}. There holds the almost orthogonality and the approximate projection property 
\begin{subequations} \label{ineq:LPOrthoProject}
\begin{align} 
\norm{f}^2_2 & \approx \sum_{M \in \mathbb D} \norm{f_M}_2^2 \\
 \norm{f_M}_2 & \approx  \norm{(f_{M})_M}_2. 
\end{align}
\end{subequations}
The following is also clear:
\begin{align*} 
\norm{\abs{\partial_v}f_M}_2 \approx M \norm{f_M}_2. 
\end{align*}
We make use of the notation 
\begin{align*} 
f_{\sim M} = \sum_{K \in \mathbb{D}: \frac{1}{C}M \leq K \leq CM} f_{K}, 
\end{align*}
for some constant $C$ which is independent of $M$.
Generally the exact value of $C$ which is being used is not important; what is important is that it is finite and independent of $M$. 
Similar to \eqref{ineq:LPOrthoProject} but more generally, if $f = \sum_{k} D_k$ for any $D_k$ with $\frac{1}{C}2^{k} \subset \textup{supp}\, D_k \subset C2^{k}$ it follows that 
\begin{align} 
\norm{f}^2_2 \approx_C \sum_{k} \norm{D_k}_2^2. \label{ineq:GeneralOrtho}
\end{align}
During much of the proof we are also working with Littlewood-Paley decompositions defined in the $(z,v)$ variables, 
with the notation conventions being analogous. 
Our convention is to use $N$ to denote Littlewood-Paley projections in $(z,v)$ and $M$ to denote projections only in the $v$ direction. 

We have opted to use the compact notation above, rather than the commonly used alternatives
\begin{align*} 
\Delta_{j}f  = f_{2^j}, \quad\quad S_jf  = f_{<2^j},
\end{align*}
in order to reduce the number of characters in long formulas. 
The last unusual notation we use is 
\begin{align*} 
P_{\neq 0}f = f - <f>, 
\end{align*}
which denotes projection onto the non-zero modes in $z$.

Another key Fourier analysis tool employed in this work is the paraproduct decomposition, introduced by Bony \cite{Bony81} (see also \cite{BCD11}). 
Given suitable functions $f,g$ we may define the paraproduct decomposition (in either $(z,v)$ or just $v$), 
\begin{align*} 
fg & = T_fg + T_gf + \mathcal{R}(f,g)\\ 
& = \sum_{N \geq 8} f_{<N/8}g_N + \sum_{N \geq 8} g_{<N/8}f_N + \sum_{N \in \mathbb{D}}\sum_{N/8 \leq N^\prime \leq 8N} g_{N^\prime}f_{N}, 
\end{align*}
where all the sums are understood to run over $\mathbb D$. 
In our work we do not employ the notation in the first line since at most steps in the proof we are forced to explicitly write the sums and treat them term-by-term anyway. 
This is due to the fact that we are working in non-standard regularity spaces 
and, more crucially, are usually applying multipliers which do not satisfy any version of $AT_f g \approx T_f Ag$. 
Hence, we have to prove almost everything `from scratch' and can only rely on standard para-differential calculus as a guide.

\subsection{Elementary inequalities and Gevrey spaces} \label{apx:Gev}

In the sequel we show some basic inequalities which are extremely useful for working in this scale of spaces.  
The first three are versions of Young's inequality (applied on the frequency-side here).
\begin{lemma}
Let $f(\xi),g(\xi) \in L_\xi^2(\Real^d)$, $\jap{\xi}^\sigma h(\xi) \in L_\xi^2(\Real^d)$ and $\jap{\xi}^\sigma b(\xi) \in L_\xi^2(\Real^d)$  for $\sigma > d/2$, 
Then we have 
\begin{align} 
\norm{f \ast h}_2 & \lesssim_{\sigma, d} \norm{f}_2\norm{\jap{\cdot}^\sigma h}_2, \label{ineq:L2L1}  \\
\int \abs{f(\xi) (g \ast h)(\xi)} d\xi & \lesssim_{\sigma,d} \norm{f}_2\norm{g}_2\norm{\jap{\cdot}^\sigma h}_2 \label{ineq:L2L2L1} \\ 
\int \abs{f(\xi) (g \ast h \ast b) (\xi)} d\xi & \lesssim_{\sigma,d} \norm{f}_2\norm{g}_2\norm{\jap{\cdot}^\sigma h}_2\norm{\jap{\cdot}^\sigma b}_2. \label{ineq:L2L2L1L1}  
\end{align}
\end{lemma}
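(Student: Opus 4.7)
The plan is to reduce all three inequalities to the classical Young convolution inequality, using the hypothesis $\sigma > d/2$ only to convert the weighted $L^2$ norms into $L^1$ control via Cauchy--Schwarz.

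First I would establish the auxiliary bound: if $\jap{\cdot}^\sigma h \in L^2(\Real^d)$ with $\sigma > d/2$, then $h \in L^1$ with
\[
\norm{h}_{L^1} \;\leq\; \norm{\jap{\cdot}^{-\sigma}}_{L^2}\,\norm{\jap{\cdot}^\sigma h}_{L^2} \;\lesssim_{\sigma,d}\; \norm{\jap{\cdot}^\sigma h}_{L^2},
\]
by Cauchy--Schwarz, the second factor being finite precisely because $\sigma>d/2$ makes $\jap{\cdot}^{-2\sigma}$ integrable.

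Next, for \eqref{ineq:L2L1} I would invoke Young's inequality in the form $\norm{f \ast h}_{L^2} \leq \norm{f}_{L^2}\norm{h}_{L^1}$ and apply the auxiliary bound. For \eqref{ineq:L2L2L1}, I would first use Cauchy--Schwarz in $\xi$ to get $\int |f(g\ast h)|\,d\xi \leq \norm{f}_{L^2}\norm{g\ast h}_{L^2}$, and then apply \eqref{ineq:L2L1} (with $g$ in place of $f$) to control $\norm{g\ast h}_{L^2}$. For \eqref{ineq:L2L2L1L1}, I would proceed identically, first using Cauchy--Schwarz to obtain $\int |f(g\ast h\ast b)|\,d\xi \leq \norm{f}_{L^2}\norm{g\ast h\ast b}_{L^2}$, then iterating Young's inequality $L^2 \ast L^1 \ast L^1 \hookrightarrow L^2$ to bound $\norm{g\ast h\ast b}_{L^2} \leq \norm{g}_{L^2}\norm{h}_{L^1}\norm{b}_{L^1}$, and finally converting each $L^1$ norm to the weighted $L^2$ norm via the auxiliary bound.

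There is essentially no obstacle: the inequalities are classical and the only mildly nontrivial point is the embedding $\jap{\cdot}^\sigma h \in L^2 \hookrightarrow h \in L^1$ for $\sigma>d/2$, which is immediate from Cauchy--Schwarz. The entire lemma is half a page of calculation with no hidden subtleties, so the proof can be kept extremely short.
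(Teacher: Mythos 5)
Your proposal is correct and follows exactly the paper's argument: Cauchy--Schwarz with $\sigma>d/2$ to get $\norm{h}_{L^1}\lesssim\norm{\jap{\cdot}^\sigma h}_{L^2}$, Young's inequality for \eqref{ineq:L2L1}, and Cauchy--Schwarz plus iterated Young for \eqref{ineq:L2L2L1} and \eqref{ineq:L2L2L1L1}. No differences worth noting.
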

\begin{proof} 
Inequality \eqref{ineq:L2L1} follows from the $L^2 \times L^1 \rightarrow L^2$ Young's inequality
and Cauchy-Schwarz: 
\begin{align*}
\int \abs{h(\xi)} d\xi \leq \left( \int \frac{1}{\jap{\xi}^{2\sigma}} d\xi \right)^{1/2} \norm{\jap{\cdot}^\sigma h}_{2} \lesssim \norm{\jap{\cdot}^\sigma h}_{2}. 
\end{align*}
Inequality \eqref{ineq:L2L2L1} follows from Cauchy-Schwarz and \eqref{ineq:L2L1}. 
For \eqref{ineq:L2L2L1L1}, apply Young's inequality twice: 
\begin{align*} 
\int \abs{f(\xi) (g \ast h \ast b) (\xi)} d\xi \lesssim \norm{f}_2 \norm{g \ast h \ast b}_2 \lesssim \norm{f}_2 \norm{g}_2 \norm{h \ast b}_1 \lesssim \norm{f}_2 \norm{g}_2 \norm{h}_1 \norm{b}_1, 
\end{align*} 
and proceed as above. 
\end{proof} 

The next set of inequalities show that one can often gain on the index of regularity when comparing frequencies which are not too far apart (provided $0 < s < 1$).

\begin{lemma}
Let $0 < s < 1$ and $x \geq y \geq 0$ (without loss of generality).  
\begin{itemize} 
\item[(i)] If $x + y > 0$,
\begin{align} 
\abs{x^s - y^s} \lesssim_s \frac{1}{x^{1-s} + y^{1-s}}\abs{x-y}. \label{ineq:TrivDiff}
\end{align}
\item[(ii)] If $\abs{x-y} \leq x/K$ for some $K > 1$ then 
\begin{align} 
\abs{x^s - y^s} \leq \frac{s}{(K-1)^{1-s}}\abs{x-y}^s. \label{lem:scon}
\end{align} 
Note $\frac{s}{(K-1)^{1-s}} < 1$ as soon as $s^{\frac{1}{1-s}} + 1 < K$. 
\item[(iii)] In general, 
\begin{align*} 
\abs{x + y}^s \leq \left(\frac{x}{x+y}\right)^{1-s}\left(x^s + y^s\right). 
\end{align*}  
In particular, if $y \leq x \leq Ky$ for some $K < \infty$ then 
\begin{align} 
\abs{x + y}^s \leq \left(\frac{K}{1 + K}\right)^{1-s}\left(x^s + y^s\right). \label{lem:strivial}
\end{align} 
\end{itemize}
\end{lemma}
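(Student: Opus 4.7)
The approach for all three parts is elementary and rests on the concavity of $t \mapsto t^s$ on $[0,\infty)$ for $0<s<1$, combined with the fundamental theorem of calculus. Assume without loss of generality $x \geq y \geq 0$ throughout, so $\abs{x-y} = x-y$ and $\abs{x^s-y^s}=x^s-y^s$. The starting point for (i) and (ii) will be the representation $x^s - y^s = s\int_y^x t^{s-1}\,dt$, valid when $y>0$, together with the fact that $t^{s-1}$ is decreasing, which gives the two-sided bound
\begin{align*}
s x^{s-1}(x-y) \;\leq\; x^s - y^s \;\leq\; s y^{s-1}(x-y).
\end{align*}
The case $y=0$ in (i) is immediate since both sides equal $x^s$.

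For (i), I would split into the two regimes $x \leq 2y$ and $x \geq 2y$. In the first regime, $y \leq x \leq 2y$ forces $x^{1-s}+y^{1-s}\approx y^{1-s}$, so the upper MVT bound $sy^{s-1}(x-y)$ matches the right-hand side of \eqref{ineq:TrivDiff} up to a constant. In the second regime, $x-y\geq x/2$ implies $x^{1-s} \leq 2^{1-s}(x-y)^{1-s}$ and $y^{1-s}\leq x^{1-s}$, so $x^{1-s}+y^{1-s} \lesssim (x-y)^{1-s}$; then $\frac{x-y}{x^{1-s}+y^{1-s}} \gtrsim (x-y)^s$, which dominates $x^s - y^s$ by the subadditivity $x^s = ((x-y)+y)^s \leq (x-y)^s + y^s$. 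Combining the two cases closes (i).

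Part (ii) follows almost immediately from the MVT upper bound once one observes that the hypothesis $x-y \leq x/K$ rearranges to $y \geq \frac{K-1}{K}x$, hence $y \geq (K-1)(x-y)$. Since $s-1<0$, this yields $y^{s-1} \leq (K-1)^{s-1}(x-y)^{s-1}$, and therefore
\begin{align*}
x^s - y^s \;\leq\; s y^{s-1}(x-y) \;\leq\; \frac{s}{(K-1)^{1-s}}(x-y)^s,
\end{align*}
which is exactly \eqref{lem:scon}.

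For (iii), the argument is purely algebraic. Multiplying the claimed inequality $(x+y)^s \leq (x/(x+y))^{1-s}(x^s+y^s)$ through by $(x+y)^{1-s}/x^{1-s}$ reduces it to $x+y \leq x + x^{1-s}y^s$, i.e.\ to $y \leq x^{1-s}y^s$, which is $y^{1-s} \leq x^{1-s}$ — precisely our standing hypothesis $y \leq x$. For the specialization, note that $x/(x+y) = 1/(1+y/x)$ is an increasing function of $x/y$, so the constraint $x \leq Ky$ gives $x/(x+y) \leq K/(K+1)$, and substitution into the general inequality yields \eqref{lem:strivial}. I do not foresee any substantive obstacle: each part is a short calculation, and the only minor subtlety is making sure to handle the boundary cases $y=0$ and $x=y$ in (i), which are both trivial.
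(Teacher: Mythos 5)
Your proof is correct and follows essentially the same route as the paper: the same split into $x\geq 2y$ versus $x<2y$ for (i), the same concavity bound $x^s\leq y^s+sy^{s-1}(x-y)$ combined with the rearranged hypothesis for (ii), and an algebraically equivalent one-line verification for (iii). No gaps.
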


\begin{proof}  

Inequality \eqref{ineq:TrivDiff} follows easily from considering separately $x \geq 2y$ and $x < 2y$. 

To prove \eqref{lem:scon} we use that in this case $y^{-1} \leq K/(K-1) x^{-1}$ and hence, 
\begin{align*} 
x^s \leq y^s + \frac{s}{y^{1-s}}(x-y) \leq y^s + \frac{s}{(K-1)^{1-s}}\abs{x-y}^{s} 
\end{align*}

To see \eqref{lem:strivial},  
\begin{align*} 
\abs{x+y}^s & = \left(\frac{x}{x+y}\right)\abs{x + y}^s + \left(\frac{y}{x+y}\right)\abs{x + y}^s
 \leq \left(\frac{x}{x+y}\right)^{1-s}\left(x^s + y^s\right). 
\end{align*}
\end{proof}

Using \eqref{ineq:L2L2L1}, \eqref{lem:scon} and \eqref{lem:strivial} together with a paraproduct expansion, the following product lemma is relatively straightforward. 
For contrast, the lemma holds when $s = 1$ only for $c = 1$. 
\begin{lemma}[Product lemma] \label{lem:GevProdAlg}
For all $0<s<1$, $\sigma \geq 0$ and $\sigma_0 > 1$, there exists $c = c(s,\sigma,\sigma_0) \in (0,1)$ such that the following holds for all $f,g \in \mathcal{G}^{\lambda,\sigma;s}$:
\begin{subequations}
\begin{align} 
\norm{fg}_{\G^{\lambda,\sigma;s}} & \lesssim \norm{f}_{\G^{c\lambda,\sigma_0;s}} \norm{g}_{\G^{\lambda,\sigma;s}} + \norm{g}_{\G^{c\lambda,\sigma_0;s}} \norm{f}_{\G^{\lambda,\sigma;s}}, \label{ineq:GProduct}
\end{align}
\end{subequations}
in particular, $\mathcal{G}^{\lambda,\sigma;s}$ has the algebra property:
\begin{align} 
\norm{f g}_{\G^{\lambda,\sigma}} & \lesssim \norm{f}_{\G^{\lambda,\sigma}} \norm{g}_{\G^{\lambda,\sigma}} \label{ineq:GAlg} 
\end{align} 
\end{lemma}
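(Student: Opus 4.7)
The plan is to apply a paraproduct decomposition in the full $(z,v)$ variables,
\[
fg = T_f g + T_g f + \mathcal{R}(f,g) = \sum_{N \geq 8} f_{<N/8} g_N + \sum_{N \geq 8} g_{<N/8} f_N + \sum_{N \in \mathbb D}\sum_{N' \approx N} f_{N'} g_N,
\]
and to bound each piece separately in the $\G^{\lambda,\sigma;s}$ norm; by the symmetry $f \leftrightarrow g$, only $T_f g$ and the remainder $\mathcal{R}$ require direct treatment.

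For the high-low term $T_f g$, on the Fourier support of $\widehat{f_{<N/8}\, g_N}(\xi)$ one has $|\xi-\xi'| \leq \tfrac{3}{16}|\xi'|$, where $\xi'$ is the frequency carried by $g_N$, so $|\xi| \approx |\xi'|$. Inequality \eqref{lem:scon} (applied with $K = 16/3$) then supplies some $c = c(s) \in (0,1)$ with
\[
e^{\lambda|\xi|^s} \leq e^{\lambda|\xi'|^s + c\lambda|\xi-\xi'|^s},
\]
while $\jap{\xi}^\sigma \lesssim \jap{\xi'}^\sigma$ since $\sigma \geq 0$; hence the multiplier $A(\xi) = e^{\lambda|\xi|^s}\jap{\xi}^\sigma$ splits on the convolution as a product of $A(\xi')$ and a Gevrey-weighted lower-order factor on $\xi - \xi'$. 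Applying \eqref{ineq:L2L1} with the low-frequency factor in the $L^1$ slot — a placement that requires a Sobolev cushion, which is precisely what $\sigma_0 > 1$ provides in the ambient dimension $d = 2$ — and summing in $N$ using almost-orthogonality \eqref{ineq:GeneralOrtho}, yields
\[
\norm{T_f g}_{\G^{\lambda,\sigma;s}} \lesssim \norm{f}_{\G^{c\lambda,\sigma_0;s}} \norm{g}_{\G^{\lambda,\sigma;s}}.
\]

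For the remainder $\mathcal R(f,g)$ the two factors are of comparable frequency, $(1/24)|\xi'| \leq |\xi - \xi'| \leq 24|\xi'|$, so \eqref{lem:strivial} yields some $c = c(s) \in (0,1)$ with $|\xi|^s \leq c|\xi'|^s + c|\xi - \xi'|^s$, together with $\jap{\xi}^\sigma \lesssim \jap{\xi'}^\sigma + \jap{\xi-\xi'}^\sigma$. Splitting according to which of $\jap{\xi'}$, $\jap{\xi-\xi'}$ is larger and again applying the weighted Young-type inequality \eqref{ineq:L2L1} in both slots symmetrically, followed by Cauchy-Schwarz in $N, N'$ and \eqref{ineq:GeneralOrtho}, gives the symmetric bound
\[
\norm{\mathcal R(f,g)}_{\G^{\lambda,\sigma;s}} \lesssim \norm{f}_{\G^{c\lambda,\sigma_0;s}}\norm{g}_{\G^{\lambda,\sigma;s}} + \norm{g}_{\G^{c\lambda,\sigma_0;s}}\norm{f}_{\G^{\lambda,\sigma;s}}.
\]
Combining the three pieces and taking $c$ to be the largest of the constants produced proves \eqref{ineq:GProduct}. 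The algebra property \eqref{ineq:GAlg} is then immediate: for any $c < 1$, $\sigma \geq 0$ and $\sigma_0 > 1$, the factor $e^{-(1-c)\lambda|\xi|^s}$ dominates any polynomial, so $\norm{h}_{\G^{c\lambda,\sigma_0;s}} \lesssim_{\lambda,\sigma,\sigma_0,s} \norm{h}_{\G^{\lambda,\sigma;s}}$, and both terms on the right of \eqref{ineq:GProduct} collapse to $\norm{f}_{\G^{\lambda,\sigma;s}}\norm{g}_{\G^{\lambda,\sigma;s}}$. No step poses a serious obstacle; the only real bookkeeping is ensuring that a single $c = c(s,\sigma,\sigma_0) \in (0,1)$ works uniformly across the three paraproduct pieces.
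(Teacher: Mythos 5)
Your proposal is correct and follows exactly the route the paper indicates: the paper gives no detailed proof of this lemma, stating only that it follows from \eqref{ineq:L2L2L1}, \eqref{lem:scon} and \eqref{lem:strivial} combined with a paraproduct expansion, which is precisely the argument you carry out (your use of \eqref{ineq:L2L1} in place of \eqref{ineq:L2L2L1} is immaterial). The details check out, including the observation that $\sigma_0>1$ is what makes the low-frequency factor summable/integrable on $\Integer\times\Real$ and that $c<1$ from \eqref{lem:strivial} gives the geometric decay needed to sum the remainder.
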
 

Gevrey and Sobolev regularities can be related with the following two inequalities. 
\begin{itemize}
\item[(i)] For all $x \geq 0$, $\alpha > \beta \geq 0$, $C,\delta > 0$, 
\begin{align} 
e^{Cx^{\beta}} \leq e^{C\left(\frac{C}{\delta}\right)^{\frac{\beta}{\alpha - \beta}}} e^{\delta x^{\alpha}};  \label{ineq:IncExp}
\end{align}
\item[(ii)] For all $x \geq 0$, $\alpha,\sigma,\delta > 0$, 
\begin{align} 
e^{-\delta x^{\alpha}} \lesssim \frac{1}{\delta^{\frac{\sigma}{\alpha}} \jap{x}^{\sigma}}. \label{ineq:SobExp}
\end{align}
\end{itemize}
Together these inequalities show that for $\alpha > \beta \geq 0$, $\norm{f}_{\mathcal{G}^{C,\sigma;\beta}} \lesssim_{\alpha,\beta,C,\delta,\sigma} \norm{f}_{\mathcal{G}^{\delta,0;\alpha}}$.

\subsection{Coordinate transformations in Gevrey spaces} \label{apx:coTrans}
The proof of Theorem \ref{thm:Main} requires moving from $(x,y)$ to $(z,v)$ coordinates at the beginning (in Lemma \ref{lem:shorttime}) and then back again in \S\ref{sec:ProofConcl}. 
It is crucial to notice the regularity losses incurred in this section, as discussed in more depth in \cite{MouhotVillani11} where related inequalities play an important role.  

It is well-known (see e.g. \cite{LevermoreOliver97}) that the $\mathcal{G}^{\lambda;s}$ norms have an equivalent `physical-side' representation which will be convenient here: 
\begin{align} 
\norm{f}_{\G^{\lambda}} & \approx \left[\sum_{n = 0}^\infty \left(\frac{\lambda^n}{(n!)^{\frac{1}{s}}} \norm{D^n f}_2\right)^2\right]^{1/2}.  \label{def:Gphys} 
\end{align} 
In this section it will also be useful to have a slightly more general scale of norms: 
\begin{align}
\norm{f}_{l^qL^p;\lambda} = \left[ \sum_{n = 0}^\infty \left(\frac{\lambda^n}{(n!)^{\frac{1}{s}}} \norm{D^n f}_p\right)^q \right]^{1/q}. \label{def:l1Lq}
\end{align} 
By H\"older's inequality and Sobolev embedding (also \eqref{ineq:SobExp}): for $\lambda > \lambda^\prime$ and $p,q \in [1,\infty]$,
\begin{subequations} \label{ineq:RelatScale}
\begin{align} 
\norm{f}_{l^pL^q;\lambda^\prime} & \leq \norm{f}_{l^1L^q;\lambda^\prime} \lesssim_{\lambda-\lambda^\prime} \norm{f}_{l^pL^q;\lambda}, \label{ineq:lpLqHo} \\  
\norm{f}_{l^2L^\infty;\lambda^\prime} & \lesssim_{\lambda-\lambda^\prime}  \norm{f}_{l^2L^2;\lambda}. \label{ineq:lpLqSE}
\end{align} 
\end{subequations} 
One of the norms in the scale \eqref{def:l1Lq} satisfies an algebra property:
\begin{align} 
\norm{fg}_{l^1L^\infty;\lambda} \leq \norm{f}_{l^1L^\infty;\lambda}\norm{g}_{l^1L^\infty;\lambda}, \label{ineq:l1LinftyProduct}
\end{align} 
which follows from Leibniz's rule and Young's inequality in a manner similar to several proofs in \cite{MouhotVillani11}; we omit the details.  
A more sophisticated inequality is the following, which estimates the effect of composition on Gevrey regularity. 
\begin{lemma}[Composition inequality]\label{lem:CompoIneq}
For all $s \in (0,1]$, $p \in [1,\infty]$ and $\lambda > 0$,  
\begin{align}
\norm{F \circ (Id +  G)}_{l^1L^p;\lambda} \leq \norm{\textup{det} \, (Id + \grad G)^{-1}}^{1/p}_{\infty}\norm{F}_{l^1L^p;\lambda + \nu}, \label{ineq:compoineq}
\end{align}   
where
\begin{align*} 
\nu = \norm{G}_{l^1L^\infty;\lambda}. 
\end{align*}
\end{lemma}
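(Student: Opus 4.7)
The plan is to apply Faà di Bruno's formula to $D^n(F \circ \Phi)$ with $\Phi = Id + G$, take $L^p$ norms with a change of variables, and then reorganize the resulting double sum into a multinomial expansion of $(\lambda+\nu)^k$. The main obstacle is a combinatorial inequality comparing $n!$ to $k!\prod_\ell(\ell!)^{j_\ell}$, which is where the hypothesis $s \leq 1$ becomes essential.

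First, Faà di Bruno gives (in its tensor-valued, multivariate form)
$$D^n(F \circ \Phi) = \sum_{k=1}^{n} (D^k F)\circ \Phi \; \cdot \; B_{n,k}\bigl(D\Phi, D^2\Phi, \ldots, D^{n-k+1}\Phi\bigr),$$
where $B_{n,k}$ denotes the partial Bell polynomial, a non-negative linear combination of monomials in its arguments. Setting $a_1 = 1 + \|DG\|_\infty$ and $a_\ell = \|D^\ell G\|_\infty$ for $\ell \geq 2$, I bound $B_{n,k}$ pointwise by $B_{n,k}(a_1,\ldots,a_{n-k+1})$ and take the $L^p$ norm of the identity. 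The change of variable $y = \Phi(x)$ moves $(D^k F)\circ\Phi$ to $D^k F$, producing the Jacobian factor $\|\det(Id + \grad G)^{-1}\|_\infty^{1/p}$, which is exactly the factor appearing in the lemma.

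Multiplying by $\lambda^n/(n!)^{1/s}$, summing over $n$, and interchanging with the sum over $k$, the whole inequality reduces to the coefficient-wise bound
$$S_k := \sum_{n \geq k} \frac{\lambda^n}{(n!)^{1/s}} B_{n,k}(a_1, a_2, \ldots) \leq \frac{(\lambda+\nu)^k}{(k!)^{1/s}}.$$
Expanding $B_{n,k}$ as the usual sum over tuples $(j_\ell)_{\ell \geq 1}$ with $\sum j_\ell = k$ and $\sum_\ell \ell\, j_\ell = n$, one matches each monomial $\prod_\ell (\lambda^\ell a_\ell /(\ell!)^{1/s})^{j_\ell}$ against the corresponding multinomial piece of the expansion of $\bigl(\sum_\ell \lambda^\ell a_\ell/(\ell!)^{1/s}\bigr)^k/(k!)^{1/s}$ on the right. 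The ratio of left-side to right-side coefficients collapses to $\bigl(n!/(k!\prod_\ell (\ell!)^{j_\ell})\bigr)^{1-1/s}$. Finally, using the splitting $a_1 = 1 + \|DG\|_\infty$, the series $\sum_\ell \lambda^\ell a_\ell/(\ell!)^{1/s}$ is bounded by $\lambda + \nu$: the $\ell = 1$ term contributes $\lambda + \lambda \|DG\|_\infty$, and the remaining terms are exactly (part of) the series defining $\nu$.

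The one real obstacle is thus the combinatorial claim $n! \geq k!\prod_\ell (\ell!)^{j_\ell}$, since $1 - 1/s \leq 0$ under the standing hypothesis $s \leq 1$ converts this into the ratio above being $\leq 1$. I prove it by induction on $k$: writing $n!/\prod_\ell (\ell!)^{j_\ell}$ as the multinomial coefficient $\binom{n}{\ell_1,\ldots,\ell_k}$ and extracting one block yields $\binom{n}{\ell_k} \cdot (n-\ell_k)!/\prod_{i<k}(\ell_i)!$; the second factor is at least $(k-1)!$ by induction, while $\binom{n}{\ell_k} \geq n \geq k$ since each $\ell_k \in \{1,\ldots,n-1\}$ (with the base case $k=1$ trivial). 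This combinatorial ``savings'' — that the Gevrey loss $(n!)^{1/s}$ for $s \leq 1$ at least matches the loss from differentiating a $k$-fold composition with block sizes $\ell_1,\ldots,\ell_k$ — is exactly what makes the composition estimate hold; the sign of $1 - 1/s$ would flip for $s > 1$ and the argument would break.
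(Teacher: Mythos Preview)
Your proof is correct and follows essentially the same route as the paper's: Fa\`a di Bruno, the combinatorial inequality $k!\prod_\ell(\ell!)^{j_\ell} \leq n!$ proved by induction on $k$, and the multinomial formula to collapse the sum into $(\lambda+\nu)^k$. Your framing via the coefficient ratio $\bigl(n!/(k!\prod_\ell(\ell!)^{j_\ell})\bigr)^{1-1/s}$ makes the role of $s\leq 1$ slightly more transparent, and your ``tear-down'' induction (removing a block) is the mirror image of the paper's ``build-up'' induction (adding a block), but the content is identical.
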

\begin{proof} 
For simplicity we restrict the following proof to one dimension, however it holds also in higher dimensions with a similar proof. 
Denote $H = Id + G$. 
Proceeding as in \cite{MouhotVillani11}, by the Fa\`a di Bruno formula we have 
\begin{align*} 
\norm{F \circ H}_{l^1L^p;\lambda} \leq \sum_{k = 0}^\infty \norm{ (D^kF)\circ H}_p \sum_{\sum_{j=1}^n jm_j = n, \sum_{j=1}^n m_j = k} \frac{\lambda^n}{(n!)^{\frac{1}{s}-1} m_1 ! ... m_n!} \prod_{j = 1}^n (j!)^{\left(\frac{1}{s}-1\right)m_j} \norm{\frac{D^{j}H}{(j!)^{\frac{1}{s}}}}_\infty^{m_j},
\end{align*}
where the second summation runs over all possible combinations which satisfy the conditions indicated.
Under these conditions, we claim by induction on $k$ that the following always holds:
\begin{align} 
k! \prod_{j=1}^n (j!)^{m_j} \leq n!. \label{ineq:induct}
\end{align} 
Since $k \leq n$ it is trivial for $k = n = 1$. 
To see the inductive step, suppose it is true for a given combination of $m_j$ with $\sum_{j = 1}^n m_j = k$ and 
$\sum_{j=1}^n jm_j = n$ for some choices of $k$ and $n$. Now consider replacing $m_{j_0} \mapsto m_{j_0}+1$ for some $1 \leq j_0 \leq n+1$ (where we consider $m_{n+1} = 0$). This increments $k$ by one and $n$ by $j_0$ and hence by the inductive hypotheses we need only check
\begin{align*} 
(k+1) j_0! \leq (n+1)...(n+j_0), 
\end{align*}
which is clear since $n \geq k$, from which \eqref{ineq:induct} follows. 
Hence by \eqref{ineq:induct}, 
\begin{align*} 
\norm{F \circ H}_{l^1L^p;\lambda} & \leq \sum_{k = 0}^\infty \norm{ (D^kF)\circ H}_p \frac{1}{(k!)^{\frac{1}{s}-1}} \sum_{\sum_{j=1}^n jm_j = n, \sum_{j=1}^n m_j = k} \frac{\lambda^n}{m_1 ! ... m_n!} \prod_{j = 1}^n\norm{\frac{D^{j}H}{(j!)^{\frac{1}{s}}}}_\infty^{m_j} \\ 
&  = \sum_{k = 0}^\infty \norm{ (D^kF) \circ H}_p \frac{1}{(k!)^{\frac{1}{s}}} \left[ \sum_{j = 1}^{\infty}\lambda^j \norm{\frac{D^{j}H}{(j!)^{\frac{1}{s}}}}_\infty\right]^k, \\
&  = \sum_{k = 0}^\infty \norm{ (D^kF) \circ H}_p \frac{1}{(k!)^{\frac{1}{s}}} \left[ \lambda + \sum_{j = 1}^{\infty}\lambda^j \norm{\frac{D^{j}G}{(j!)^{\frac{1}{s}}}}_\infty\right]^k,  
\end{align*}
where the second to last line followed from the multinomial formula. 
The proof is completed by changing variables in the Lebesgue norm. 
\end{proof} 

The next tool is the following inverse function theorem in Gevrey regularity. 
\begin{lemma}[Inverse function theorem] \label{lem:IFT}
Let $\alpha(x):\T\times \Real \rightarrow \T\times \Real$ be a given smooth function and consider the equation 
\begin{align} 
\beta(x) = \alpha(x + \beta(x)). \label{eq:IFT}
\end{align}
For all $\lambda > \lambda^\prime > 0$, there exists an $\epsilon_0 = \epsilon_0(\lambda,\lambda^\prime) > 0$ such that 
if $\norm{\alpha}_{l^1L^\infty;\lambda} < \epsilon_0$ then \eqref{eq:IFT} has a smooth solution $\beta$ which satisfies 
\begin{align*} 
\norm{\beta}_{l^1L^\infty;\lambda^\prime} \lesssim \norm{\alpha}_{l^1L^\infty;\lambda}. 
\end{align*}  
By \eqref{eq:IFT} and Lemma \ref{lem:CompoIneq} we have the following for $p \in [1,\infty]$ (the RHS is not necessarily finite) 
\begin{align} 
\norm{\beta}_{l^1L^p;\lambda^\prime} \lesssim \norm{\alpha}_{l^1L^p; \lambda^\prime + \norm{\beta}_{l^1L^\infty;\lambda^\prime}}. 
\end{align}  
\end{lemma}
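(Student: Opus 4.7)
\medskip
\noindent\textbf{Proof proposal for Lemma \ref{lem:IFT}.}

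The plan is to solve \eqref{eq:IFT} by the Picard iteration $\beta_0 \equiv 0$, $\beta_{n+1}(x) = \alpha(x+\beta_n(x))$, run entirely inside the Gevrey ball of radius $\lambda'$ in the $l^1L^\infty$ scale, and then pass to the limit. The role of the gap $\lambda-\lambda'$ will be twofold: it absorbs the composition ``swelling'' produced by Lemma~\ref{lem:CompoIneq} and it allows us to trade a derivative on $\alpha$ for a slightly smaller radius. Fix $\rho \in (0,1)$ and choose $\epsilon_0 \le \rho(\lambda-\lambda')$.

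First I would establish a uniform bound. Assuming inductively $\|\beta_n\|_{l^1L^\infty;\lambda'} \le \epsilon_0$, the composition inequality \eqref{ineq:compoineq} (with $p=\infty$, so the Jacobian factor disappears) gives
\begin{equation*}
\|\beta_{n+1}\|_{l^1L^\infty;\lambda'} = \|\alpha \circ (Id+\beta_n)\|_{l^1L^\infty;\lambda'} \le \|\alpha\|_{l^1L^\infty;\lambda'+\|\beta_n\|_{l^1L^\infty;\lambda'}} \le \|\alpha\|_{l^1L^\infty;\lambda} \le \epsilon_0,
\end{equation*}
which closes the induction since $\lambda'+\epsilon_0 \le \lambda$. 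Next I would prove contraction. Writing
\begin{equation*}
\beta_{n+1}-\beta_n = (\beta_n-\beta_{n-1})\int_0^1 (D\alpha)\bigl(Id + (1-\tau)\beta_{n-1}+\tau\beta_n\bigr)\,d\tau,
\end{equation*}
the algebra property \eqref{ineq:l1LinftyProduct} and the composition inequality applied to $D\alpha$ yield
\begin{equation*}
\|\beta_{n+1}-\beta_n\|_{l^1L^\infty;\lambda'} \le \|\beta_n-\beta_{n-1}\|_{l^1L^\infty;\lambda'}\, \sup_{0\le\tau\le1}\|D\alpha\|_{l^1L^\infty;\lambda'+\epsilon_0}.
\end{equation*}
A direct manipulation of the series defining $\|\cdot\|_{l^1L^\infty;\mu}$ (shifting the summation index $m=n+1$ and absorbing the factor $m^{1/s}$ into a geometric ratio $(\mu/\lambda)^m$ using $\mu=\lambda'+\epsilon_0<\lambda$) shows
\begin{equation*}
\|D\alpha\|_{l^1L^\infty;\lambda'+\epsilon_0} \lesssim_{s,\lambda-\lambda'} \|\alpha\|_{l^1L^\infty;\lambda} \le \epsilon_0.
\end{equation*}
Choosing $\epsilon_0$ small enough makes the contraction constant less than $1/2$, so $\{\beta_n\}$ is Cauchy in $l^1L^\infty;\lambda'$ and converges to a limit $\beta$ with $\|\beta\|_{l^1L^\infty;\lambda'} \lesssim \epsilon_0 \lesssim \|\alpha\|_{l^1L^\infty;\lambda}$.

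The continuity of the map $G \mapsto \alpha\circ(Id+G)$ in the $l^1L^\infty;\lambda'$ topology (a byproduct of the same mean-value/composition argument) lets me pass to the limit in the iteration to obtain $\beta(x)=\alpha(x+\beta(x))$. Smoothness of $\beta$ is inherited from that of $\alpha$ by the classical $C^\infty$ inverse function theorem (the small-data hypothesis ensures $\|\beta\|_\infty$ and $\|D\beta\|_\infty$ are as small as needed). The final $L^p$ bound then follows by applying Lemma~\ref{lem:CompoIneq} with $p$ arbitrary to the identity $\beta=\alpha\circ(Id+\beta)$ at radius $\lambda'$. The main obstacle I expect to have to handle carefully is the loss of a derivative in the contraction step: the trick is that in the $l^1L^\infty;\lambda$-scale one can recover a derivative at the price of an arbitrarily small decrease of radius, which is precisely what the cushion $\lambda-\lambda'-\epsilon_0>0$ buys us; this is what dictates the choice $\epsilon_0 \le \rho(\lambda-\lambda')$ and why the dependence of $\epsilon_0$ on $\lambda-\lambda'$ is essential to the statement.
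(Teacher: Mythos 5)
Your proposal is correct and follows essentially the same route as the paper: a Picard iteration for $\beta_{n+1}=\alpha\circ(Id+\beta_n)$, the mean-value/telescoping identity combined with the algebra property \eqref{ineq:l1LinftyProduct} and the composition inequality of Lemma \ref{lem:CompoIneq}, and smallness of $D\alpha$ at a slightly reduced radius to obtain contraction. The only difference is cosmetic (you start from $\beta_0\equiv 0$ rather than $\beta_0=\alpha$, and you spell out the derivative-for-radius trade that the paper leaves implicit in ``for $\epsilon_0$ chosen sufficiently small the iteration converges''), so no further comparison is needed.
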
 
\begin{proof} 
The proof follows from a Picard iteration. It is important that we chose a norm which has an algebra property \eqref{ineq:l1LinftyProduct} and for which composition does not lose too much regularity; otherwise one would have to use a Newton iteration.
Let $\beta_0(x) = \alpha(x)$ and inductively define
\begin{align*} 
\beta_{k+1}(x) = \alpha(x + \beta_{k}(x)). 
\end{align*} 
Hence, 
\begin{align*} 
\beta_{k+1}(x) - \beta_{k}(x) = \int_0^1D\alpha(x + s\beta_{k-1}(x) + (1-s)\beta_{k}(x))(\beta_{k}(x) - \beta_{k-1}(x)) ds. 
\end{align*} 
Therefore from \eqref{ineq:l1LinftyProduct} and Lemma \ref{lem:CompoIneq}. 
\begin{align*} 
\norm{\beta_{k+1} - \beta_k}_{l^1L^\infty;\lambda^\prime} \leq \left(\sup_{s \in (0,1)}\norm{D\alpha}_{l^1L^\infty;\lambda^\prime + s\norm{\beta_{k-1}}_{l^1L^\infty;\lambda^\prime} + (1-s)\norm{\beta_{k}}_{l^1L^\infty;\lambda^\prime} }\right)\norm{\beta_{k} - \beta_{k-1}}_{l^1L^\infty;\lambda^\prime}. 
\end{align*} 
By induction, for $\epsilon_0$ chosen sufficiently small the iteration converges and the lemma follows. 
\end{proof}

Using Lemma \ref{lem:IFT} we may prove Lemma \ref{lem:shorttime} stated in \S\ref{sec:MainEnergy}.

\begin{proof}[Lemma \ref{lem:shorttime}]
From the results of \cite{BB77,FoiasTemam89,LevermoreOliver97}, it follows that for any prescribed $\bar\lambda \in \left(3\lambda/4 + \lambda^\prime/4,\lambda_0\right)$, and $\bar\epsilon > 0$, we may choose $\epsilon^\prime$ sufficiently small such that $\sup_{t \in (0,1)} \norm{\omega(t)}_{\bar\lambda} < \bar\epsilon$ (note $\epsilon^\prime < \bar\epsilon < \epsilon$). Using the characteristics, we may also assert the spatial localization $\int \abs{y\omega(t,x,y)} dx dy \leq \bar \epsilon $ (possibly after reducing $\bar\epsilon$, and hence $\epsilon^\prime$). 
By Sobolev embedding, we also have control on every $L^p$ norm and in particular it is clear that the control on $\norm{1-\partial_y v}_\infty \leq 7/12$ can be imposed by reducing $\bar\epsilon$ if necessary. 
From \eqref{def:zv}, we have Gevrey control on $v = v(t,y)$ and $z = z(t,x,y)$ 
however since $f(t,z,v) = \omega(t,x(t,z,v),y(t,v))$ in order to control the Gevrey norm of $f$ we need to solve 
for $x,y$ in terms of $z,v$. 
By  \ref{def:zv} and the estimates on $\omega$, writing $\left(x(t,z,v),y(t,z,y)\right) = (z+tv,v) + \beta(z,v)$, we may apply \eqref{ineq:RelatScale} and a slight variant of Lemma \ref{lem:IFT} (adjusting $\bar \epsilon$ if necessary)
to solve for $\beta$ with Gevrey-$\frac{1}{s}$ regularity. 
Then by \eqref{ineq:RelatScale} and Lemma \ref{lem:CompoIneq} we can deduce (for $\bar \epsilon$ sufficiently small): 
\begin{align*} 
\sup_{t \in (0,1)}\norm{f(t)}_{3\lambda/4 + \lambda^\prime/4} & < \epsilon \\ 
E(1) & < \epsilon \\ 
\norm{1-v^\prime}_\infty & < 6/10. 
\end{align*} 
We also used the general inequality (verified by direct computation): for any $\nu > 0$ and $t \in [0,1]$, 
\begin{align*} 
\norm{\omega \circ (z+tv,v)}_{\G^{\nu}} \lesssim_{\nu} \norm{\omega}_{\G^{\nu}}. 
\end{align*} 
Finally, \eqref{ineq:CKctrlShort} now follows for $\bar \epsilon$ sufficiently small (also after an application of \eqref{ineq:SobExp} in the case of $CK_\lambda$ and possibly adjusting $\bar\lambda$). 
\end{proof} 

\subsection{Rapid convergence of background flow} \label{apx:LogCorrection}
The proof of \eqref{ineq:xdamping_slow} found in \S\ref{sec:ProofConcl} follows from writing the $x$ average of \eqref{def:2DEulerMomentum} in the $(z,y)$ variables to \eqref{def:Umomen} and integrating using that the priori estimates \eqref{ineq:apriori} provide decay estimates in the $(z,y)$ variables. 
In fact, the derivation of \eqref{def:Umomen} is capturing a subtle cancellation between the vorticity and $U^y$ that originates from the structure of the linear problem. 
 Consider \eqref{def:2DEulerMomentum} and take $x$ averages of the first equation. Then one derives the following: 
\begin{align*}
\partial_t <U^x> + <U^y \partial_ y U^x> = 0. 
\end{align*}
We will consider the nonlinear term with $U^y$ and $U^x$ replaced by solutions to the linearized Euler equations. 
Since $y$ derivatives are growing linearly, from a rough order of magnitude approximation one would expect that the nonlinear term decays like $O(t^{-2})$. In fact, we have $<U^y \partial_y U^x> = -<U^y \omega>$, so anything faster than a $O(t^{-2})$ decay indicates that something interesting is happening. 
First note from the Biot-Savart law:  
\begin{align*} 
<U^y \partial_ y U^x> = -\frac{1}{2\pi} \int \psi_x\psi_{yy} dx.
\end{align*} 
Writing $\psi_{yy}$ on the Fourier side and using the $(x,y)$ analogue of \eqref{orr-cri}: 
\begin{align} 
\widehat{\psi_{yy}}(t,k,\xi) & = \frac{\xi^2 \widehat{\omega_{in}}(k,\xi+kt)}{k^2 + \xi^2}  = \left(\abs{kt}^2 - 2 kt(\xi+kt) + \abs{\xi + kt}^2 \right)  \frac{\widehat{\omega_{in}}(k,\xi+kt)}{k^2 + \xi^2}. \label{eq:magical}
\end{align} 
Therefore, while $P_{\neq}\phi_{yy}$ is not decaying or strongly converging to anything, we have the remarkable property that $\psi_{yy} = t^2\psi_{xx} + O(t)$. 
Indeed, from \eqref{eq:magical} we have the following, since the leading order cancels due to the $x$ average:
\begin{align*} 
\abs{\left(\widehat{\frac{1}{2\pi} \int \psi_x\psi_{yy} dx}\right)(t,\eta)} & = \abs{\frac{i}{2\pi}\sum_{k \neq 0}\int_{\xi} k \left(\abs{\xi + kt}^2 -2 kt(\xi+kt)\right)\frac{\omega_{in}(-k,\eta-\xi - kt)\omega_{in}(k,\xi+kt)}{\left(k^2 +\xi^2 \right)\left(k^2 + \abs{\eta-\xi}^2\right)} d\xi } \\ 
& \lesssim t\sum_{k \neq 0}\int_\xi \abs{\frac{\omega_{in}(-k,\eta-\xi - kt)\jap{k,\xi+kt}^3\omega_{in}(k,\xi+kt)}{\left(k^2 +\xi^2 \right)\left(k^2 + \abs{\eta-\xi}^2\right)}} d\xi \\ 
& \lesssim t\sum_{k \neq 0}\int_\xi \abs{\frac{\omega_{in}(-k,\eta-\xi - kt)\jap{k,\eta-\xi-kt}^2\jap{k,\xi+kt}^5\omega_{in}(k,\xi+kt)}{\left(k^2 +\xi^2 \right)\left(k^2 + \abs{\eta-\xi}^2\right)\jap{k,\eta-\xi-kt}^2\jap{k,\xi+kt}^2}} d\xi \\ 
& \lesssim \frac{t}{\jap{t}^4}\sum_{k \neq 0}\int_\xi \jap{-k,\eta-\xi - kt}^2\abs{\omega_{in}(-k,\eta-\xi - kt)\jap{k,\xi+kt}^5\omega_{in}(k,\xi+kt)} d\xi. 
\end{align*}
Then from \eqref{ineq:L2L1} we have (without making an attempt to be optimal), 
\begin{align*} 
\norm{<U^y \partial_ y U^x>}_2 = \norm{\frac{1}{2\pi} \int \psi_x\psi_{yy} dx}_2 \lesssim \frac{1}{\jap{t}^3}\norm{\omega_{in}}_{H^5}^2. 
\end{align*} 
Since the nonlinear behavior matches the linear behavior to leading order, this indicates that indeed, \eqref{ineq:xdamping_slow} should be expected on the nonlinear level.

{\small\bibliographystyle{abbrv} \bibliography{eulereqns}}
\end{document}